\newcommand{\LLL}{\mathbb{L}}
\newcommand{\N}{\mathbb{N}}
\newcommand{\Z}{\mathbb{Z}}
\newcommand{\aaa}{\mathfrak{a}}
\newcommand{\mmm}{\mathfrak{m}}
\newcommand{\nnn}{\mathfrak{n}}
\newcommand{\qqq}{\mathfrak{q}}
\newcommand{\pr}{\operatorname{pr}}
\newcommand{\coker}{\operatorname{coker}}
\newcommand{\Der}{\operatorname{Der}}
\newcommand{\Ext}{\operatorname{Ext}}
\newcommand{\EExt}{\mathscr{E}\text{\kern -3pt {\calligra\large xt}}\ }
\newcommand{\Hom}{\operatorname{Hom}}
\newcommand{\HHom}{\mathscr{H}\text{\kern -3pt {\calligra\Large om}}\, }
\newcommand{\id}{\operatorname{id}}
\newcommand{\im}{\operatorname{Im}}
\newcommand{\Spec}{\operatorname{Spec}}
\newcommand{\Tor}{\operatorname{Tor}}
\newcommand{\Set}{\operatorname{Set}}
\newcommand{\gp}{\text{gp}}
\newcommand{\llog}{\text{log}}
\newcommand{\arrowr}{\arrow{r}}
\newcommand{\arrowd}{\arrow{d}}
\newcommand{\limdir}{\varinjlim}
\newcommand{\Lim}[1]{\raisebox{0.7ex}{\scalebox{0.8}{$\displaystyle\limdir_{i}\;$}}}
\newcommand*{\longhookrightarrow}{\ensuremath{\lhook\joinrel\relbar\joinrel\rightarrow}}
\newtheorem{theorem}{Theorem}[section]
\newtheorem*{theorem*}{Theorem}
\newtheorem{proposition}[theorem]{Proposition}
\newtheorem*{proposition*}{Proposition}
\newtheorem{lemma}[theorem]{Lemma}
\newtheorem{corollary}[theorem]{Corollary}
\theoremstyle{definition}
\newtheorem{definition}[theorem]{Definition}
\newtheorem{example}[theorem]{Example}				
\newtheorem{examples}[theorem]{Examples}				
\newtheoremstyle{mystyle}{}{}{}{}{\bfseries \itshape}{.}{ }{}
\theoremstyle{mystyle}
\newtheorem{remark}[theorem]{Remark}
\title[Homological characterization of regularity in LAG]{Homological characterization of regularity\\ in Logarithmic Algebraic Geometry}
\author[J. Conde--Lago]{Jes\'us Conde--Lago $^{(\star)}$}
\date{}
\address{Departamento de Matem\'aticas, Facultade de Matem\'aticas, Universidade de Santiago de Compostela, E-15782 Santiago de Compostela, Spain}
\email{jesus.conde@usc.es}
\author[J. Majadas]{Javier Majadas $^{(\star)}$}
\address{Departamento de Matem\'aticas, Facultad de Matem\'aticas, Universidad de Santiago de Compostela, E-15782 Santiago de Compostela, Spain}
\email{j.majadas@usc.es}
\thanks{$^{(\star)}$ This work was partially supported by Agencia Estatal de Investigaci\'{o}n (Spain), grant MTM2016-79661-P (European FEDER support included, UE). J. Conde-Lago was also supported by a scholarship, Xunta de Galicia (European Social Fund support included, UE)}
\date{\today}
\subjclass[2010]{14B10 (primary); 14B25 (secondary)}
\begin{document}

\maketitle

\vspace{-5ex}
\begin{abstract}
	We characterize K. Kato's log regularity in terms of vanishing of (co)homology of the logarithmic cotangent complex.
\end{abstract}
\vspace{5ex}
	
\section{Introduction}

In 1967, M. Andr\'e \cite{An-MS} and D. Quillen \cite{Quillen-MIT} introduce homology and cohomology modules \linebreak $H_n(A,B,W)$, $H^n(A,B,W)$ for an (always commutative) $A$-algebra $B$ and a $B$-module $W$, giving vanishing criteria for regularity and complete intersection in \cite[corollaries~10.8 and 10.12]{Quillen-MIT} and\linebreak \cite[\S27-\S28]{An-MS}:
\begin{theorem*}
	Let $(A,k)$ be a noetherian local ring. Then
	\begin{enumerate}
		\item[(i)] $(A,k)$ is regular if and only if $H_2(A,k,k)=0$.
		\item[(ii)]  $(A,k)$ is complete intersection if and only if $H_3(A,k,k)=0$.
	\end{enumerate}
\end{theorem*}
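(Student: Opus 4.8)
The plan is to reduce to a complete local ring and then read both Andr\'e--Quillen modules off a single transitivity exact sequence. First I would pass to the completion: since $A\to\hat A$ is flat with $\hat A\otimes_A k=k$, flat base change for Andr\'e--Quillen homology gives $H_n(A,k,k)\cong H_n(\hat A,k,k)$ for all $n$, and regularity (resp.\ being a complete intersection) is unchanged under completion, so I may assume $A=\hat A$. By the Cohen structure theorem I then fix a presentation $A=R/I$ with $(R,\mmm_R,k)$ a complete regular local ring of minimal dimension, so that $I\subseteq\mmm_R^2$; with this choice $A$ is regular exactly when $I=0$.

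\textbf{The key computation.} Since $R$ is regular, $\mmm_R$ is generated by a regular sequence, so the Koszul complex computes the cotangent complex of $R\to k$ and $H_n(R,k,k)=0$ for $n\ne1$, with $H_1(R,k,k)=\mmm_R/\mmm_R^2$. Plugging this into the Jacobi--Zariski exact sequence of $R\to A\to k$ with coefficients in $k$, the terms $H_2(R,k,k)$ and $H_3(R,k,k)$ vanish, leaving isomorphisms $H_2(A,k,k)\cong\ker\big(H_1(R,A,k)\to\mmm_R/\mmm_R^2\big)$ and $H_3(A,k,k)\cong H_2(R,A,k)$. Since $H_1(R,A,k)=(I/I^2)\otimes_A k=I/\mmm_R I$ and the map above is induced by the inclusion $I\hookrightarrow\mmm_R$, which vanishes because $I\subseteq\mmm_R^2$, I obtain
\[
H_2(A,k,k)\cong I/\mmm_R I,\qquad H_3(A,k,k)\cong H_2(R,A,k).
\]
Part (i) is then immediate: if $A$ is regular, $I=0$ and $H_2(A,k,k)=0$; conversely $H_2(A,k,k)=0$ forces $I=\mmm_R I$, so $I=0$ by Nakayama and $A$ is regular. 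The ``only if'' half of (ii) is equally short: if $A$ is a complete intersection then $I$ is generated by a regular sequence, so $H_n(R,A,-)=0$ for $n\ge2$ and hence $H_3(A,k,k)\cong H_2(R,A,k)=0$; this also yields $H_3(A,k,k)=0$ when $A$ is regular, since then $I=0$.

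\textbf{The converse of (ii), and the main obstacle.} The implication ``$H_3(A,k,k)=0\Rightarrow A$ is a complete intersection'' is the real content. Starting from $H_2(R,A,k)=0$, I would first pass to coefficients in $A$: in the universal coefficients spectral sequence $\Tor_p^A(H_q(R,A,A),k)\Rightarrow H_{p+q}(R,A,k)$, using $\Omega_{A/R}=0$ and $H_1(R,A,A)=I/I^2$, an inspection of total degree $2$ shows that $\Tor_1^A(I/I^2,k)$ is a subquotient of $H_2(R,A,k)=0$, whence $I/I^2$ is $A$-free by Nakayama (all modules being finitely generated over the local ring $A$); given this, the same spectral sequence forces $H_2(R,A,A)\otimes_A k=0$, hence $H_2(R,A,A)=0$. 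The last step, which I expect to be the genuine obstacle, is to deduce from $H_2(R,A,A)=0$ that $I$ is actually generated by a regular sequence: for this I would lift a basis of $I/I^2$ to a minimal system of generators $f_1,\dots,f_r$ of $I$ and compare $L_{A/R}$ with the Koszul complex $K_\bullet(f_1,\dots,f_r;R)$, the point being that $H_2(R,A,A)=0$ forces the Koszul homology $H_i(K_\bullet)$ to vanish for $i\ge1$, which in a Noetherian local ring is equivalent to $f_1,\dots,f_r$ being a regular sequence; therefore $A$ is a complete intersection. Everything before this last step is formal bookkeeping with exact sequences; the substance lies exactly in the passage from homological vanishing to an honest regular sequence.
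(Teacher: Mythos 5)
This statement is not proved in the paper at all: it is the classical Quillen--Andr\'e vanishing theorem, quoted in the introduction with references to Quillen's MIT notes (corollaries 10.8 and 10.12) and to Andr\'e, and used in the body only through citations such as \cite[6.25, 6.26]{An-1974}. So there is no in-paper argument to measure yours against; what you have written is, in outline, the standard proof from those sources, and it is correct. The reduction to $\hat A=R/I$ with $R$ complete regular and $I\subseteq\mmm_R^2$, the identifications $H_2(A,k,k)\cong I/\mmm_R I$ and $H_3(A,k,k)\cong H_2(R,A,k)$ from the Jacobi--Zariski sequence (using $H_n(R,k,k)=0$ for $n\ge 2$), and the universal-coefficients argument giving $I/I^2$ free and then $H_2(R,A,A)=0$ are all exactly as in Andr\'e. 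The one place where your sketch is thinner than a proof is the passage from $H_2(R,A,A)=0$ to the vanishing of Koszul homology: as stated, ``$H_2(R,A,A)=0$ forces $H_i(K_\bullet)=0$ for $i\ge1$'' is not literally true without further input. The precise mechanism is the exact sequence
\[
0 \longrightarrow H_2(R,A,A) \longrightarrow H_1\big(K_\bullet(f_1,\dots,f_r;R)\big) \longrightarrow A^r \longrightarrow I/I^2 \longrightarrow 0
\]
coming from the degree $\le 2$ (Lichtenbaum--Schlessinger) truncation of $\LLL_{A|R}$ built on a minimal generating set $f_1,\dots,f_r$ of $I$: here you must invoke the freeness of $I/I^2$, which is then free of rank exactly $r$ by minimality, to see that the surjection $A^r\to I/I^2$ is an isomorphism; only then does $H_2(R,A,A)=0$ yield $H_1(K_\bullet)=0$, and depth-sensitivity of the Koszul complex over the noetherian local ring $R$ gives that $f_1,\dots,f_r$ is a regular sequence (after which all higher Koszul homology vanishes). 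Since you have already established both ingredients (freeness of $I/I^2$ and minimality of the generators), this is a matter of making the comparison with the Koszul complex explicit rather than a genuine gap, but it is the technical heart of part (ii) and should be spelled out.
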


Formal smoothness was also characterized in terms of vanishing in \cite[Proposition 5.3]{Quillen-MIT} and \cite[Proposition~16.17]{An-1974} (see also \cite[0$_{\text{IV}}$.19.4.4]{EGA4}):
\begin{theorem*}
	Let $B$ be a noetherian $A$-algebra and $J$ an ideal of $B$. Then $B$ is formally smooth over $A$ for the $J$-adic topology if and only if $H^1(A,B,W)=0$ for all $B/J$-modules $W$.
\end{theorem*}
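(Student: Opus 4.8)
The statement combines two classical facts, and the plan is to isolate them. The first is the Andr\'e--Quillen interpretation $H^1(A,B,W)\cong\Exalcom_A(B,W)$, natural in the $B$-module $W$, where $\Exalcom_A(B,W)$ is the group of isomorphism classes of square-zero $A$-algebra extensions $0\to W\to E\to B\to 0$; I would simply quote this from \cite{An-1974} or \cite{Quillen-MIT} (it comes from reading off the class of such an extension from a free simplicial, or polynomial, resolution of $B$ over $A$). The second is that, since the cotangent complex $L_{B/A}$ consists of free $B$-modules, $\Hom_B(L_{B/A},-)$ is exact, so $H^{\bullet}(A,B,-)$ carries long exact sequences in the coefficient variable. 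With these, the theorem reduces to the assertion: $B$ is formally smooth over $A$ for the $J$-adic topology if and only if every square-zero $A$-algebra extension of $B$ by a $B/J$-module splits.

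For the ``if'' direction I would proceed in three steps. (i) Bootstrap the hypothesis from $B/J$-modules to every $B$-module killed by a power of $J$: if $J^{n+1}W=0$, then $J^{n}W$ is a $B/J$-module and $W/J^{n}W$ is killed by $J^{n}$, so the long exact sequence attached to $0\to J^{n}W\to W\to W/J^{n}W\to 0$ gives $H^1(A,B,W)=0$ by induction on $n$. (ii) In the lifting criterion one may assume the kernel is square-zero, since a surjection $C\to\bar C$ with nilpotent kernel $I$ factors through $C/I^{m}\to\cdots\to C/I^{2}\to C/I$ with square-zero steps. (iii) Given a square-zero extension $0\to I\to C\to\bar C\to 0$ of discrete $A$-algebras and a continuous $u\colon B\to\bar C$ --- so $u$ factors through some $B/J^{n}$, and $I$ becomes a $B/J^{n}$-module via $u$ --- form the pullback extension $0\to I\to C\times_{\bar C}B\to B\to 0$; its class lies in $\Exalcom_A(B,I)\cong H^1(A,B,I)$, which vanishes by (i), so it has an $A$-algebra section, whose composite with the projection to $C$ is an $A$-algebra lift $v$ of $u$. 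It is continuous because $v(J^{2n})\subseteq I^{2}=0$.

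For the ``only if'' direction, begin with a square-zero $A$-extension $0\to W\to E\xrightarrow{\pi}B\to 0$ with $W$ a $B/J$-module and try to split it. With $\tilde J=\pi^{-1}(J)$, the relations $W^{2}=0$ and $JW=0$ give $\tilde JW=0$, so for each $n$ the surjection $E/\tilde J^{n}\twoheadrightarrow B/J^{n}$ has square-zero kernel $(W+\tilde J^{n})/\tilde J^{n}$, a quotient of $W$ and hence again a $B/J$-module. As $B\twoheadrightarrow B/J^{n}$ is continuous, formal smoothness yields continuous $A$-algebra lifts $v_{n}\colon B\to E/\tilde J^{n}$. The family $(v_{n})$ is a collection of lifts of the compatible tower $(B\to B/J^{n})_{n}$ along the tower of square-zero extensions $(E/\tilde J^{n})_{n}$, whose kernels form a surjective, hence Mittag--Leffler, inverse system; a standard obstruction argument, correcting the $v_{n}$ one after another by $A$-derivations of $B$ into these kernels, turns them into a compatible system, i.e.\ an $A$-algebra map $B\to\varprojlim_{n}E/\tilde J^{n}$.

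The main obstacle is the final step: descending this map from the $\tilde J$-adic completion of $E$ back to $E$, so as to obtain a genuine section of the original, uncompleted extension $0\to W\to E\to B\to 0$. Here one either invokes the noetherian hypothesis together with Artin--Rees and the comparison of $H^1(A,B,W)$ with $H^1(A,\widehat B,W)$ for $W$ annihilated by $J$, or works throughout in the topological category as in \cite{EGA4}; this is essentially the heart of the arguments in \cite{Quillen-MIT}, \cite{An-1974} and \cite{EGA4}, and I would follow their route rather than reprove it.
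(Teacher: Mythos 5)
Your ``if'' direction is complete and correct: the identification $H^1(A,B,W)\cong\Exalcom_A(B,W)$, the bootstrap along $0\to J^nW\to W\to W/J^nW\to 0$ using exactness of $\Hom_B(\LLL_{B|A},-)$, the reduction to square-zero kernels, and the pullback argument together give formal smoothness from the vanishing of $H^1$, and indeed need no noetherian hypothesis. The genuine gap is in the ``only if'' direction, which is exactly where all the content of the theorem (and the only use of noetherianness) lies. Two concrete problems. First, the ``standard obstruction argument'' making the $v_n$ compatible is not as you describe: the discrepancies are $A$-derivations of $B$ into the surjective system $W/(W\cap\widetilde{J}^n)$, and $\Der_A(B,-)=\Hom_B(\Omega_{B|A},-)$ is only left exact, so surjectivity of the kernels does not allow you to lift the correcting derivations; the obstruction lives in $\Ext^1_B(\Omega_{B|A},-)$, which you cannot control at this stage without circularity. (This point is repairable: apply formal smoothness again to the square-zero continuous surjections $E/\widetilde{J}^{n+1}\to E/\widetilde{J}^n$ to extend each $v_n$ one step, so that the sets of lifts form a surjective inverse system.) Second, and more seriously, the descent from $\varprojlim_n E/\widetilde{J}^n$ back to $E$ requires $W\to\varprojlim_n W/(W\cap\widetilde{J}^n)$ to be an isomorphism, i.e.\ $W\cap\widetilde{J}^n=0$ for $n\gg0$; Artin--Rees gives this only when $E$ is noetherian, hence only when the $B/J$-module $W$ is finitely generated, whereas the statement quantifies over all $B/J$-modules and $H^1(A,B,-)$ does not commute with filtered colimits, so there is no obvious reduction to the finitely generated case. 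As it stands, the hard half of the theorem is deferred rather than proved.

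For comparison, the paper's proof of the logarithmic generalization (theorems~\ref{5.10}, \ref{5.12}, \ref{5.14} and Lemma~\ref{5.13}, specialized to trivial monoids) avoids both problems by never touching the completion of $E$: one first shows that formal smoothness for the $J$-adic topology is equivalent to $\varinjlim_n H^1(A,B/J^n,W)=0$ for all $B/J$-modules $W$ --- the key observation being that a lift $B\to D$ of $B\to B/J^n$ against a square-zero extension kills $J^{2n}$, hence factors through $B/J^{2n}$ and thereby exhibits the death of the extension class in $H^1(A,B/J^{2n},W)$ --- and then identifies this colimit with $H^1(A,B,W)$ via the Jacobi--Zariski sequence for $A\to B\to B/J^n$ together with the vanishing $\varinjlim_n H^i(B,B/J^n,W)=0$ for $i=1,2$ \cite[proofs of 10.7 and 10.14]{An-1974}. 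All of the noetherian-dependent analysis is concentrated in that last vanishing; if you want a self-contained argument, that is the statement to prove, in place of the completion descent you leave to the literature.
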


Since these (co)homology modules have a good behaviour, these vanishing criteria were immediately used 
to obtain new results (and easier proofs of already known results) on smoothness, regularity and complete intersection (see e.g. \cite{An-1974}, the references in this book to papers by Brezaleanu and Radu, \cite{An-LLF}, etc.).

The concept of log regularity (and log smoothness) in logarithmic algebraic geometry was introduced by Kato in \cite[Definition 2.1, Theorem 6.1]{Ka-TS}: let $(X,\mathcal{M})$ be a ``not very bad'' locally noetherian log scheme. Then $(X,\mathcal{M})$ is regular at a point $x$ if $A/I$ is a local regular ring and $\dim A=\dim A/I + \dim M^\gp/A^*$ where $A=\mathcal{O}_{X,x}$, $M=\mathcal{M}_x$, the unit group $A^*$ is identified canonically to a subgroup of $M$, $I$ is the ideal of $A$ generated by the image of $M-A^*$, and $M^\gp$ is the group completion of the monoid $M$.

We have also in this logarithmic context a definition of ``log'' Andr\'e-Quillen (co)homology (in fact, we have two definitions, one by O. Gabber and the other by M. Olsson; we will use Gabber's one. Both constructions are developed in \cite{Ol}; see also Sagave, Sch\"{u}rg and Vezzosi's paper \cite{SSV}). Moreover the characterization of log smoothness by vanishing of this cohomology is done in \cite{Ol}.

So the next step is to give vanishing criteria for log regularity. This is done in Theorem~\ref{6.01} and more particularly in Theorem~\ref{6.09} (see Section~\ref{LR} for notation in the (pre)logarithmic setting): let $(A,M)$ be a noetherian local prelog ring with $M$ integral. Without loss of generality, we can assume that $M^\gp$ is a free abelian group (see Theorem~\ref{6.03}, Lemma~\ref{6.06}); then
\begin{theorem*}
	$(A,M)$ is a log regular if and only if $H_2((A,M),(k,M/\mmm_M),k)=0$ (where $k$ is the residue field of $A$ and $\mmm_M\subset M$ is the ideal of non-units).
\end{theorem*}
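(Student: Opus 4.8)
\emph{Proof strategy.}
The plan is to carry the classical argument of Quillen and André\,---\,for the first theorem recalled in the introduction\,---\,over to the logarithmic world, with Kato's structure theorem replacing the Cohen presentation and Gabber's logarithmic cotangent complex replacing the ordinary one; for concreteness I describe the equicharacteristic case, the mixed characteristic one requiring the usual modifications with a Cohen ring $\Lambda$ of $k$. The ingredients are flat base change and the transitivity (Jacobi--Zariski) exact sequence for log André--Quillen homology; the facts that Gabber's complex coincides with the ordinary one along strict morphisms and depends only on the associated log structures (so we may assume all prelog rings in sight are log rings); Olsson's criterion, by which log smoothness of $(B,N)$ over $(A,M)$ means $H^1((A,M),(B,N),W)=0$ for all $W$, so that for a log smooth morphism $\LLL_{(B,N)/(A,M)}$ is a projective module in degree $0$; and one \emph{residual computation}, described below, where the hypothesis that $M^\gp$ is free enters.

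First reduce to $A$ complete: since $A\to\hat A$ is flat and carries $M$ over, base change gives $H_n((A,M),(\hat A,\hat M),W)=0$ for $n\geq1$, and, $(A,M)\to(\hat A,\hat M)$ being strict, transitivity for $(A,M)\to(\hat A,\hat M)\to(k,M/\mmm_M)$ gives $H_n((A,M),(k,M/\mmm_M),k)\cong H_n((\hat A,\hat M),(k,M/\mmm_M),k)$ for all $n$; as log regularity is stable under completion (Kato), we assume $A$ complete. Write $\bar M:=M/M^*$, a fine sharp monoid with $\bar M^\gp$ free (by the reduction), and $I$ for the ideal of $A$ generated by the non-units of $M$. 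The residual computation is that, when $M$ is non-trivial, the associated log structure of $(k,M/\mmm_M)$ is the ``zero-or-unit'' log structure $k^{*}\cup\{0\}\hookrightarrow k$ on $k$\,---\,whose group completion is trivial, all non-units of $M$ collapsing to one element\,---\,and that, because $\bar M^\gp$ is free, $H_n((k,\mathrm{triv}),(k,M/\mmm_M),k)=0$ for all $n$ (in mixed characteristic one retains only the length-one Koszul term $H_*(\LLL_{k/\Lambda})$ from the Cohen ring $\Lambda$, treated as in the classical proof). Granting this, the forward implication is quick: if $(A,M)$ is log regular then Kato's structure theorem gives $(\hat A,\hat M)\cong(k[[\bar M]][[x_1,\dots,x_r]],\text{canonical log structure})$, which is formally log smooth over $(k,\mathrm{triv})$, so transitivity for $(k,\mathrm{triv})\to(\hat A,\hat M)\to(k,M/\mmm_M)$ yields $H_2((\hat A,\hat M),(k,M/\mmm_M),k)\cong H_2((k,\mathrm{triv}),(k,M/\mmm_M),k)=0$, hence $H_2((A,M),(k,M/\mmm_M),k)=0$.

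For the converse, take a \emph{minimal strict log presentation}: with $x_1,\dots,x_N$ mapping to a minimal generating set of $\mmm_{A/I}$, put $R=k[[\bar M]][[x_1,\dots,x_N]]$ with its canonical log structure $N$; then $(R,N)\twoheadrightarrow(\hat A,\hat M)$ is a strict surjection, with kernel $J\subseteq\mmm_R$. Strictness gives $H_0((R,N),(\hat A,\hat M),k)=0$ and $H_1((R,N),(\hat A,\hat M),k)=J/\mmm_RJ$; and since $\bar M$ is fine with $\bar M^\gp$ torsion-free, $(R,N)$ is formally log smooth over $(k,\mathrm{triv})$, so transitivity for $(k,\mathrm{triv})\to(R,N)\to(k,M/\mmm_M)$ together with the residual computation gives $H_2((R,N),(k,M/\mmm_M),k)=0$ and $H_1((R,N),(k,M/\mmm_M),k)\cong\Omega^{\llog}_{(R,N)/k}\otimes_R k$. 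Substituting into the transitivity sequence for $(R,N)\to(\hat A,\hat M)\to(k,M/\mmm_M)$ leaves
\begin{equation*}
0\longrightarrow H_2((\hat A,\hat M),(k,M/\mmm_M),k)\longrightarrow J/\mmm_RJ\overset{\ \delta\ }{\longrightarrow}\Omega^{\llog}_{(R,N)/k}\otimes_R k,\qquad \delta(\bar f)=d^{\llog}f\bmod\mmm_R.
\end{equation*}
Now every monomial of $\bar M$ of positive degree lies in $\mmm_R$, so its logarithmic differential lies in $\mmm_R\Omega^{\llog}$; and because the $x_i$ were chosen minimally, $J$ is contained in $\mmm_R^2$ together with the ideal generated by the monoid directions. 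Hence $d^{\llog}f\in\mmm_R\Omega^{\llog}$ for every $f\in J$, i.e. $\delta=0$, and therefore $H_2((\hat A,\hat M),(k,M/\mmm_M),k)\cong J/\mmm_RJ$. By Nakayama this is zero iff $J=0$, i.e. iff $(\hat A,\hat M)\cong(R,N)=(k[[\bar M]][[x_1,\dots,x_N]],\text{canonical log structure})$, which is log regular (Kato's basic example); and log regularity descends from $\hat A$ to $A$, completing the equivalence.

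The step I expect to be the real obstacle is the residual computation together with the identification of $\delta$ with ``$\bar f\mapsto d^{\llog}f\bmod\mmm_R$''. The point is that $M/\mmm_M$ is not a fine log structure\,---\,its sharpening $\{1,0\}$ is not even integral\,---\,so Olsson's criterion is unavailable there, and one must compute Gabber's cotangent complex of the ``zero-or-unit'' log structure directly, by an explicit Koszul-type argument; it is precisely the freeness of $M^\gp$ (hence of $\bar M^\gp$) that kills the contributions in degrees $\geq2$, and one has to check compatibility with the strict presentation $(R,N)\to(\hat A,\hat M)$ and, in mixed characteristic, with the Koszul resolution of $k$ over the Cohen ring.
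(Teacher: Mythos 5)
Your route is genuinely different from the paper's: you transplant the classical Quillen--Andr\'e argument via a Cohen-type presentation, using Kato's structure theorem and a minimal strict presentation $(R,N)\twoheadrightarrow(\hat A,\hat M)$, whereas the paper never invokes the structure theorem at all. It takes Kato's Tor-theoretic criterion as the \emph{definition} of log regularity ($\ker(A/I\to k)$ generated by a regular sequence and $\Tor_1^{\Z[M]}(A,\Z[M/\mmm_M])=0$, condition (ii) of Theorem~\ref{6.01}) and proves the equivalence with $H_2=0$ by a direct diagram chase in the fundamental exact sequence of Theorem~\ref{4.03} combined with Andr\'e's Tor-obstruction sequence [An1, 15.18]. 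This is what lets the paper avoid saturation entirely. Your argument, by contrast, needs $M$ fine and \emph{saturated} for the structure theorem, a hypothesis absent from the statement; and since you never connect $\hat A\cong k[[\bar M]][[x_1,\dots,x_N]]$ back to whichever definition of log regularity is in force, the converse direction as written characterizes a different (a priori) property.

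The concrete gap is the step you yourself flag. The residual computation is false as stated: by Theorem~\ref{4.03}, since $(M/\mmm_M)^\gp=\{1\}$ and $\Z[M/\mmm_M]\cong\Z[M^*]\times\Z$, one gets $H_1((k,1),(k,M/\mmm_M),k)\cong\Omega_{\Z[M^*]|\Z}\otimes k\cong M^*\otimes_\Z k$, which is nonzero whenever $M^*$ is nontrivial (e.g.\ $M^*=\Z$, $k=\Q$). What is true, and what you actually need, is the vanishing of $H_2((k,1),(k,M/\mmm_M),k)\cong\Tor_1^\Z(M^*,k)$, which holds because $M^*$ is a subgroup of the free group $M^\gp$ --- but this has to be extracted, and the nonvanishing of $H_1$ contaminates your identification $H_1((R,N),(k,M/\mmm_M),k)\cong\Omega^{\llog}_{(R,N)|k}\otimes_Rk$ unless you have already replaced $M$ by the sharp chart $\bar M$ and $M/\mmm_M$ by $\{0,1\}$. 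That replacement cannot be justified by the invariance of Gabber's complex under change of prelog structure (Proposition~\ref{4.14}), because $M/\mmm_M$ is not integral; the paper needs a separate argument (Theorem~\ref{6.03}, itself a nontrivial Tor computation) to show its characterization is chart-independent, and it proves that statement for condition (ii), not for $H_2$ of the non-integral quotient. Until the residual computation and the chart-independence are supplied, the proposal is a plausible strategy for the fs equicharacteristic case rather than a proof of the stated theorem.
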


The main ingredient of the proof is a long exact sequence (Theorem~\ref{4.03}) relating the (Gabber's) log Andr\'e-Quillen (co)homology of a homomorphism of prelog rings $(A,M)\to(B,N)$ with the (usual) Andr\'e-Quillen (co)homology of the ring homomorphisms $\Z[M]\to\Z[N]$ and $A\to B$. This exact sequence is also very useful later for the applications, since once we have vanishing results in the log and non-log setting, we can relate log smoothness, log regularity and log complete intersection properties with the (non-log) corresponding properties of the underlying rings. In fact, we are able to reprove at the end of Section~\ref{S} results that are more or less already known, but usually our proofs are simpler and our hypotheses sometimes weaker. For example, we avoid the usual hypotheses of finitely generated and saturated monoids. We even avoid integrity in some of the main results. Also the morphisms in Section 5 are not necessarity of finite type in most cases.
Some of the applications of our main theorem at the end of Section~\ref{R} also use the long exact sequence of Theorem~\ref{4.03}.

The content of the paper is as follows. In Section~\ref{LR} we give definitions and introduce the notation for the logarithmic setting. We continue with definitions, notation and known results on derivations and differentials in this setting in Section~\ref{DD}. Though there is nothing new in this section, we give proofs since in a few points we were not able to find exact references.

In Section~\ref{CC} we introduce the Gabber's log Andr\'e-Quillen (co)homology and we obtain some results. This theory was already developed in \cite[\S8]{Ol}, but we include a complete exposition for several reasons. The first one is that a few of the properties we need are not included in \cite{Ol} (e.g. Theorem~\ref{4.03}, Proposition~\ref{4.04}, Proposition~\ref{4.05} and Proposition~\ref{4.10}) and others appear in \cite{Ol} but with stronger hypothesis (e.g. Proposition~\ref{4.08}). The second one is that since we only need a local definition (i.e. for rings instead of sheaves of rings), we can take any simplicial resolution (instead of a standard resolution) since they are all homotopically equivalent, making some proofs easier. And third, we want to promote the exact sequence of Theorem~\ref{4.03} relating log Andr\'e-Quillen (co)homology with non-log Andr\'e-Quillen (co)homology (for instance Proposition~\ref{4.14} is exactly \cite[Theorem 8.20]{Ol}, but we include here a different proof in order to show the usefulness of Theorem~\ref{4.03}; propositions~\ref{4.04} and~\ref{4.08} are also deduced from Theorem~\ref{4.03}).

In Section 5 we characterize log formal smoothness by the vanishing of cohomology. First we characterize the elements of $H^1$ as extensions. This is made in \cite{Ol} and outlined in \cite{Il-GL}, but we include here a proof since in \cite{Ol} (see algo \cite{SSV}) it is assumed integrity and in \cite{Il-GL} there is no explicit reference to cohomology. Characterization of log smoothness by the vanishing of $H^1$ is then deduced (\cite{Ol}; see also \cite{SSV}), but again we include proofs in order to take into account topologies
: theorems \ref{5.14} and \ref{5.15} describe the situation. Finally we give a few examples where this characterization is useful. Topologies are important when the morphisms are not of finite type: for example, in Theorem~\ref{6.14} we extend Kato's result \cite[8.3]{Ka-TS} comparing log smoothness and log regularity to morphisms not necessarily of finite type.

Section~\ref{R} contains the main result of the paper (the characterization of regularity) and some applications. We work in a more general context defining and characterizing \emph{log regular ideals}.

\section{Logarithmic rings}\label{LR}
We will fix some notation, see e.g. \cite{Og} for proofs. 

All rings and monoids will be commutative and all categorical constructions with rings (or monoids) will be within the category of commutative rings (or monoids). In addition, we will usually use multiplicative notation for monoids.

\begin{definition}\label{2.01}
	A \emph{congruence} $R$ in a monoid $M$ is an equivalence relation on $M$ compatible with multiplication, that is
	\vspace{-0.05cm}
	\[ (x,x'),(y,y')\in R \ \Rightarrow \ (xy,x'y')\in R . \]
	The quotient set $M/R$ is a monoid with the multiplication induced by the one in $M$.
	
	An \emph{ideal} $I$ of a monoid $M$ is a nonempty subset $I\subset M$ such that $ma\in I$ for all $m\in M$, $a\in I$. If $I\neq M$ is an ideal of $M$, then $R:=(I\times I)\cup \Delta(M)$ is a congruence on $M$, where $\Delta(M)=\{(x,x)\in M\times M\}$, and the quotient monoid will be denoted by $M/I$. If $M$ is a monoid and $A$ is a ring, $M^*$ and $A^*$ will denote their subgroups of units. The quotient monoid given by the congruence $R:=\{(x,ux) \mid x\in M, u\in M^*\}$ will be denoted $M/M^*$. If $A$ is a ring, $I$ is an ideal of $A$, $M$ is a monoid and $\alpha\colon M\to (A,\cdot)$ is a homomorphism of monoids, then $\alpha^{-1}(I)$ is an ideal of $M$. We have a homomorphism of monoids $M/\alpha^{-1}(I)\to(A/I,\cdot)$.
	
	If $M$ is a monoid and $S$ a submonoid, the localization $S^{-1}M$ will be the quotient monoid of $M\times S$ by the congruence defined as follows: $(x,s)\sim(y,t)$ in $M\times S$ if there exists $u\in S$ such that $syu=xtu$. An example is the group completion of $M$ (i.e. when $S=M$), denoted $M^\gp$. A monoid is \emph{integral} if the canonical homomorphism $M\to M^\gp$ is injective.
	
	If $M$ is a monoid and $R$ is a ring, $R[M]$ will denote the usual monoid $R$-algebra. If $M\to N$, $M\to P$ are monoid homomorphisms, we denote their pushout by $N\oplus_MP$.
	
	A \emph{prelog ring} $(A,M,\alpha)$ (or simply $(A,M)$ or $A$ if there is no confusion) consists of a commutative ring $A$, a commutative monoid $M$ and a multiplicative homomorphism of monoids $\alpha\colon M\to A$. In general, we will denote this structural map by $\alpha$ for any prelog ring (if it is necessary, to avoid confusion, we will use $\alpha_A$ or $\alpha_M$). A homomorphism of prelog rings \[f=(f^\sharp,f^\flat)\colon  (A,M,\alpha_A)\longrightarrow (B,N,\alpha_B)\] is a homomorphism of rings $f^\sharp\colon A\to B$ together with a homomorphism of monoids $f^\flat\colon M\to N$ such that $\alpha_B f^\flat =f^\sharp \alpha_A$. A \emph{log ring} is a prelog ring $(A,M,\alpha)$ such that the homomorphism $\alpha^{-1}(A^*)\to A^*$ induced by $\alpha$ (where $A^*$ is the group of units of $A$) is an isomorphism. If $(A,M,\alpha)$ is a prelog ring, define $M^\llog$ as the pushout
	\[
	\begin{tikzcd}[row sep=4em, column sep=4em]
	\alpha^{-1}(M) \arrowr{\alpha}\arrowd & A^* \arrowd\\
	M\arrowr & M^\llog
	\end{tikzcd}
	\]
	and let $\alpha^\llog\colon M^\llog\to A$ be the monoid homomorphism induced by the homomorphisms $\alpha\colon M \to A$ and $A^*\hookrightarrow A$. Then $(A,M,\alpha)^\llog:=(A,M^\llog,\alpha^\llog)$ is a log ring. The functor $(\ \ )^\llog$ from the category of prelog rings to the category of log rings is left adjoint to the forgetful functor.
	
	If $(A,M,\alpha)$ is a prelog ring and $f^\sharp\colon A\to B$ is a ring homomorphism, we have a homomorphism of prelog rings \[f=(f^\sharp,f^{b})\colon (A,M,\alpha)\longrightarrow (B,f^{*}(M),\alpha^*)\] where $(B,f^{*}(M),\alpha^*):=(B,M,f\circ\alpha)^\llog$. A homomorphism of log rings \[f\colon (A,M)\longrightarrow (B,N)\] is called \emph{strict} if the canonical map $f^*(M)\to N$ is a isomorphism.
	
	A noetherian local prelog ring $((A,\mathfrak{m},k),M)$ is a noetherian local ring $(A,\mathfrak{m},k)$, a monoid $M$ such that the ring $\mathbb{Z}[M]$ is noetherian (i.e. $M$ is finitely generated \cite[Theorem 20.7]{G}) and a local homomorphism of monoids $\alpha\colon M\to (A,\cdot)$ (that is, $\alpha(\mathfrak{m}_M)\subset \mathfrak{m}$, where $\mathfrak{m}_M=M-M^*$).
	
	We say that $(A,M)\to(B,N)$ is a homomorphism (essentially) of finite type of noetherian prelog rings if $M$, $N$ are finitely generated monoids, $A$ is a noetherian ring, and $A\to B$ is a homomorphism (essentially) of finite type. We have then that $\Z[M]\to \Z[N]$ is a homomorphism of finite type of noetherian rings, and $M^\gp$, $N^\gp$ are $\Z$-modules of finite type.
\end{definition}

\section{Derivations and differentials}\label{DD}
\begin{proposition}\label{3.01}
	Let $g\colon  L\to N$ be a surjective homomorphism of monoids and define $J:=\ker(\Z[L]\to\Z[N])$, $W:=\ker(L^\gp\to N^\gp)$ (note that $(-)^\gp$ and $\Z[-]$ preserve surjectivity). There exists a natural homomorphism of $\Z[N]$-modules
	\[\nu_g \colon  \  J/ J^2 \longrightarrow \Z[N]\otimes_\Z W\]
	defined by
	\[\nu_g\big(\overline{\sum_{i\in I}\lambda_i l_i}\big):=\sum_{i\in I} \lambda_i g(l_i)\otimes l_i \quad \in \Z[N]\otimes_\Z L^\gp , \]
	where $\lambda_i\in\Z$ and $l_i\in L$ for all $i\in I$, and we are denoting also by $l_i$ the image of $l_i$ in $L^\gp$.	
\end{proposition}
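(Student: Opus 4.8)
The plan is to verify first that the putative formula is well-defined, and then that it lands in the submodule $\Z[N]\otimes_\Z W$ of $\Z[N]\otimes_\Z L^\gp$. For well-definedness, one writes a general element of $J$ as a $\Z$-linear combination $\sum_i \lambda_i l_i$ with $l_i \in L$, subject to the defining relation that its image in $\Z[N]$ vanishes, i.e.\ $\sum_i \lambda_i\, g(l_i) = 0$ in $\Z[N]$. The map $\nu_g$ as written is $\Z$-linear on $\Z[L]$ by construction, so to get a map on $J/J^2$ I must check two things: that the formula sends $J$ into $\Z[N]\otimes_\Z L^\gp$ (rather than merely $\Z[L]\otimes_\Z L^\gp$, but note the target already has $\Z[N]$ coefficients via $g(l_i)$, so this is automatic), and that it kills $J^2$. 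For the latter, take a product $(\sum_i \lambda_i l_i)(\sum_j \mu_j l'_j)$ of two elements of $J$; expanding, $\nu_g$ of this is $\sum_{i,j} \lambda_i\mu_j\, g(l_i l'_j)\otimes (l_i + l'_j) = \sum_{i,j}\lambda_i\mu_j g(l_i)g(l'_j)\otimes l_i + \sum_{i,j}\lambda_i\mu_j g(l_i)g(l'_j)\otimes l'_j$, and each double sum factors, using that $\sum_j \mu_j g(l'_j) = 0$ and $\sum_i \lambda_i g(l_i) = 0$ in $\Z[N]$, so both terms vanish. Hence $\nu_g$ descends to $J/J^2$.

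Next I would check the formula actually takes values in $\Z[N]\otimes_\Z W$, i.e.\ that for $\xi = \sum_i \lambda_i l_i \in J$ the element $\sum_i \lambda_i\, g(l_i)\otimes l_i$ lies in the kernel of $\id_{\Z[N]}\otimes\, g^\gp \colon \Z[N]\otimes_\Z L^\gp \to \Z[N]\otimes_\Z N^\gp$. Applying that map gives $\sum_i \lambda_i\, g(l_i)\otimes g(l_i) \in \Z[N]\otimes_\Z N^\gp$. The point is that this expression, viewed through the relation $\sum_i \lambda_i g(l_i) = 0$ in $\Z[N]$, is forced to vanish: grouping the sum according to the value $n = g(l_i) \in N$, for each fixed $n$ the coefficient $\sum_{i\,:\,g(l_i)=n}\lambda_i$ must be zero (since distinct monomials in $\Z[N]$ are linearly independent), and so $\sum_i \lambda_i\, g(l_i)\otimes g(l_i) = \sum_n \big(\sum_{i:g(l_i)=n}\lambda_i\big)\, n\otimes n = 0$. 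Since $\Z[N]\otimes_\Z(-)$ is right exact, $\Z[N]\otimes_\Z W$ is the kernel of $\id_{\Z[N]}\otimes g^\gp$ (one needs $W = \ker(L^\gp\to N^\gp)$ here, together with right exactness giving that the image of $\Z[N]\otimes W$ is exactly that kernel — actually more carefully, the sequence $W \to L^\gp \to N^\gp \to 0$ tensored with $\Z[N]$ stays exact on the right, so the image of $\Z[N]\otimes W$ in $\Z[N]\otimes L^\gp$ is the kernel of the surjection onto $\Z[N]\otimes N^\gp$), so $\nu_g(\xi)$ lies there.

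Finally, $\Z[N]$-linearity of $\nu_g$ is immediate: multiplying $\xi$ by a monomial $n' = g(l') \in N$ (with $l'\in L$, using surjectivity of $g$) corresponds to multiplying $\sum_i\lambda_i l_i$ by $l'$ in $\Z[L]$, and $\nu_g$ sends this to $\sum_i \lambda_i g(l_i l')\otimes(l_i+l') = n'\sum_i\lambda_i g(l_i)\otimes l_i + (\sum_i\lambda_i g(l_i))\otimes l' = n'\cdot\nu_g(\xi)$, the second term vanishing as before; extending $\Z$-linearly handles general elements of $\Z[N]$. Naturality in $g$ is a routine diagram chase. I expect the only subtle point to be the bookkeeping in the value-of-$g$ grouping argument that forces the image to lie in $\Z[N]\otimes_\Z W$ — everything else is formal manipulation of monomials and the bilinear structure of the tensor product.
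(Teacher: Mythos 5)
Your proof is correct and follows the same overall skeleton as the paper's (define $\widetilde{\nu_g}$ on $J$, check the image lies in $\Z[N]\otimes_\Z W$, check that $J^2$ is killed, then verify $\Z[N]$-linearity and naturality), but two sub-steps are carried out differently. For the containment in $\Z[N]\otimes_\Z W$, the paper regroups the sum by the fibres of $g$ and exhibits $\widetilde{\nu_g}(x)$ directly as $\sum_n n\otimes w_n$ with each $w_n=\prod_{i\in I_n}l_i^{\lambda_i}\in W$; you instead show that $\widetilde{\nu_g}(x)$ dies under $\id_{\Z[N]}\otimes\, g^{\gp}$ and identify $\ker(\id_{\Z[N]}\otimes g^{\gp})$ with $\Z[N]\otimes_\Z W$. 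The grouping-by-fibres computation forcing $\sum_{i\in I_n}\lambda_i=0$ is the same in both, just performed in $\Z[N]\otimes_\Z N^{\gp}$ rather than in $\Z[N]\otimes_\Z L^{\gp}$; note that your identification needs not only right-exactness (which only gives that the \emph{image} of $\Z[N]\otimes_\Z W$ is the kernel) but also injectivity of $\Z[N]\otimes_\Z W\to\Z[N]\otimes_\Z L^{\gp}$ --- immediate since $\Z[N]$ is $\Z$-free, hence flat --- in order to land in $\Z[N]\otimes_\Z W$ itself rather than in a quotient of it; this should be said explicitly (the paper's direct exhibition of elementary tensors $n\otimes w_n$ sidesteps the point). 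For the vanishing on $J^2$ you compute on an arbitrary product of two elements of $J$ and factor each double sum through $\sum_j\mu_j g(l'_j)=0$ in $\Z[N]$, which is slightly cleaner and more general than the paper's computation on products of the standard generators $\lambda(l_{1}-l_{2})$. The $\Z[N]$-linearity check via a lift $l'$ of $n'$ and the treatment of naturality match the paper.
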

\begin{proof}
	We define $\widetilde{\nu_g}\colon  J\to \Z[N]\otimes_\Z L^\gp$ as $\widetilde{\nu_g}(\sum_{i\in I}\lambda_i l_i):=\sum_{i\in I} \lambda_i g(l_i)\otimes l_i$. First, we will show that $\widetilde{\nu_g}(x)\in \Z[N]\otimes_\Z W$ if $x=\sum_{i\in I}\lambda_i l_i\in J$. For each $n\in N$, we consider $I_n=\{i\in I \mid g(l_i)=n\}$, so that $\sum_{i\in I_n}\lambda_i=0$ for all $n\in N$. We have
	\[ \sum_{i\in I}\lambda_ig(l_i)\otimes l_i =\sum_{i\in I} g(l_i)\otimes l_i^{\lambda_i}=\sum_{n\in N} \sum_{i\in I_n} n\otimes l_i^{\lambda_i} = \sum_{n\in N} \big(n\otimes \prod_{i\in I_n} l_i^{\lambda_i}\big) \]
	and $\prod_{i\in I_n} l_i^{\lambda_i}\in W$ since if $g$ denotes also the map $L^\gp\to N^\gp$, then
	\[g\big(\prod_{i\in I_n}{l_i}^{\lambda_i}\big)=\prod_{i\in I_n} g(l_i)^{\lambda_i}=\prod_{i\in I_n} n^{\lambda_i}=n^{\sum_{i\in I_n} \lambda_i}=n^0=1 .\]
	The map $\widetilde{\nu_g}$ is additive, so in order to see that it induces a homomorphism
	\[\nu_g \colon \  J/ J^2 \longrightarrow \Z[N]\otimes_\Z W\]
	we have to show that $\widetilde{\nu_g}(x)=0$ for $x\in  J^2$. The elements of $ J$ are of the form
	\[ \sum_{i\in I} (\lambda_i l_{i,1}-\lambda_i l_{i,2}) \]
	with $\lambda_i\in\Z$, $l_{i,1},l_{i,2}\in L$ and  $g(l_{i,1})=g(l_{i,2})$ for all $i\in I$. Therefore let $\lambda_1,\lambda_2\in\Z$ and $l_{i,j}\in L$ for $i,j\in\{1,2\}$ such that $g(l_{1,1})=g(l_{1,2})=\colon n_1$ and $g(l_{2,1})=g(l_{2,2})=\colon n_2$. We obtain
	\begin{align*}
	\widetilde{\nu_g}\big((\lambda_1l_{1,1}-\lambda_1l_{1,2})(\lambda_2l_{2,1}-\lambda_2l_{2,2})\big)&=\lambda_1\lambda_2g(l_{1,1})g(l_{2,1})\otimes l_{1,1}l_{2,1} - \lambda_1\lambda_2g(l_{1,2})g(l_{2,1})\otimes l_{1,2}l_{2,1}\\
	&\quad - \lambda_1\lambda_2g(l_{1,1})g(l_{2,2})\otimes l_{1,1}l_{2,2} + \lambda_1\lambda_2g(l_{1,2})g(l_{2,2})\otimes l_{1,2}l_{2,2} \\
	&= \lambda_1\lambda_2n_1n_2\otimes l_{1,1}l_{2,1}-\lambda_1\lambda_2n_1n_2\otimes l_{1,2}l_{2,1} \\
	&\quad - \lambda_1\lambda_2n_1n_2\otimes l_{1,1}l_{2,2} + \lambda_1\lambda_2n_1n_2\otimes l_{1,2}l_{2,2} \\
	&= \lambda_1\lambda_2n_1n_2\otimes l_{1,1}l_{2,1}l_{1,2}^{-1}l_{2,1}^{-1}l_{1,1}^{-1}l_{2,2}^{-1}l_{1,2}l_{2,2}\\
	&= \lambda_1\lambda_2n_1n_2\otimes 1=0 .
	\end{align*}
	
	We now see that $\nu_g$ is a homomorphism of $\Z[N]$-modules. We already know that it is an additive homomorphism. Let $m\in N$ be and $l\in L$ such that $g(l)=m$. With the above definition for $I_n$,
	\begin{align*}
	\nu_g\big(m\cdot \overline{\sum_{i\in I_n}\lambda_i l_i}\big) &= \widetilde{\nu_g}(\sum_{i\in I_n}\lambda_i l l_i)=\sum_{i\in I_n} \lambda_i g(ll_i)\otimes ll_i = \sum_{i\in I_n} \lambda_i mn\otimes ll_i =mn\otimes\prod_{i\in I_n} l^{\lambda_i} l_i^{\lambda_i}\\
	&=mn\otimes \big(l^{\sum_{i\in I_n}\lambda_i} \prod_{i\in I_n} l_i^{\lambda_i}\big) =mn\otimes \big(l^{0} \prod_{i\in I_n} l_i^{\lambda_i}\big) =mn\otimes\prod_{i\in I_n} l_i^{\lambda_i} = m\cdot \widetilde{\nu_g}\big(\sum_{i\in I_n}\lambda_i l_i\big) \\
	&= m\cdot \nu_g \big(\overline{\sum_{i\in I_n}\lambda_i l_i} \big) .
	\end{align*}
	
	Finally, naturality means that if
	\[
	\begin{tikzcd}[row sep=4em, column sep=4em]
	L_1 \arrowr{g_1}\arrowd{\gamma} & N_1 \arrowd{\rho} \\
	L_2 \arrowr{g_2} & N_2 
	\end{tikzcd}
	\]
	is a commutative square of monoids with $g_1,g_2$ surjective homomorphisms, and $ J_1,W_1, J_2,W_2$ are defined in the obvious way, then the diagram
	\[
	\begin{tikzcd}[row sep=4em, column sep=3em]
	J_1/ J_1^2 \arrowr{\nu_{g_1}}\arrowd{\widetilde{\gamma}} & \Z[N_1]\otimes_\Z W_1 \arrowd{\widetilde{\rho}} \\
	J_2/ J_2^2 \arrowr{\nu_{g_2}} & \Z[N_2]\otimes_\Z W_2
	\end{tikzcd}
	\]
	is commutative, and this fact is obvious.
\end{proof}

\begin{remark}\label{3.02}
	We have seen in the proof of Proposition~\ref{3.01} that the elements of $J$ are $\sum_{i\in I}(\lambda_il_{i,1}-\lambda_il_{i,2})$ with $\lambda_i\in\Z$ and $l_{i,1},l_{i,2}\in L$ verifying $g(l_{i,1})=g(l_{i,2})$ for all $i\in I$, and the image of one such element by $\widetilde{\nu_g}$ is $\sum_{i\in I}\lambda_ig(l_{i,2})\otimes l_{i,1}l_{i,2}^{-1}$.
\end{remark}

\begin{example}\label{3.03}
	Let $h\colon  M\to N$ be a homomorphism of monoids and consider the induced surjective homomorphism $g\colon  N\oplus_MN \to N$. We have $\Z[N\oplus_MN]=\Z[N]\otimes_{\Z[M]}\Z[N]$ and then $J:=\ker(\Z[N\oplus_MN]\to \Z[N])$ is the kernel of the multiplication $\Z[N]\otimes_{\Z[M]}\Z[N]\to \Z[N]$. Therefore we have an isomorphism of $\Z[N]$-modules $J/ J^2=\Omega_{\Z[N]|\Z[M]}$, where $\Omega_{\Z[N]|\Z[M]}$ is the usual module of differentials.
	
	On the other hand, we have a exact sequence
	\[
	\begin{tikzcd}[column sep=2em,row sep=0ex]
	1\arrowr & M^\gp \arrowr& N^\gp \arrowr &N^\gp\oplus_{M^\gp}N^\gp \arrowr & N^\gp \arrowr & 1 \\
	& & a\rar[maps to] & (a,a^{-1}) & \\
	& & & (a,b) \rar[maps to] & ab
	\end{tikzcd}
	\]
	so we obtain an isomorphism of abelian groups
	\begin{align*}
	W:=\ker((N\oplus_MN)^\gp\to N^\gp)&=\ker(N^\gp\oplus_{M^\gp}N^\gp \to N^\gp) \\
	&=\coker(M^\gp\to N^\gp).
	\end{align*}
	
	Then the composition
	\[ \Omega_{\Z[N]|\Z[M]}= J/ J^2\overset{\nu_g}{\longrightarrow}\Z[N]\otimes_\Z W=\Z[N]\otimes_\Z N^\gp/\im(M^\gp) \]
	is the homomorphism of $\Z[N]$-modules defined by
	\[ dn\longmapsto (n,1)-(1,n)\longmapsto n\otimes(n,1)-n\otimes(1,n)=n\otimes(n,1)(1,n)^{-1}=n\otimes(n,n^{-1})\longmapsto n\otimes\bar{n}, \]
	where $\bar{n}$ is the class of $n\in N$ in $N^\gp/\im(M^\gp)$.
\end{example}

\begin{definition}\label{3.04}
	Let $(f,g)\colon  (C,L)\to (B,N)$ a surjective homomorphism of prelog rings (i.e. $f\colon  C\to B$ and $g\colon L\to N$ are surjective). Let us consider $ J=\ker(\Z[L]\to\Z[N])$, $W=\ker(L^\gp\to N^\gp)$ and $I=\ker(C\to B)$. We define the \textit{conormal module} of $(C,L)\to(B,N)$ as the pushout $N_{(B,N)|(C,L)}$ of the diagram of $B$-modules
	\[
	\begin{tikzcd}[row sep=4em, column sep=5em]
	B\otimes_{\Z[N]} J/ J^2 \arrowr{\id_B\otimes_{\Z[N]}\nu_g}\arrowd{u} & B\otimes_\Z W \\
	I/I^2 & 
	\end{tikzcd}
	\]
	where $\nu_g$ is the homomorphism of Proposition~\ref{3.01} and $u$ is the homomorphism of $B$-modules defined by $u\big(1\otimes\overline{\sum_{i\in I}\lambda_il_i}\big):=\overline{\sum_{i\in I}\lambda_i \alpha(l_i)}$, where $\alpha\colon  L\to C$ is the structural homomorphism and $\lambda_i\in\Z$, $l_i\in L$ for all $i\in I$.
\end{definition}

\begin{remark}\label{3.05}
	The definition of \cite[Expos\'e II, Proposition 4.11]{Il-GL} coincides with this one.
\end{remark}

\begin{proposition}\label{3.06}
	Let
	\[
	\begin{tikzcd}[row sep=4em, column sep=3em]
	(C_1,L_1) \arrowr\arrowd & (B_1,N_1)\arrowd \\
	(C_2,L_2) \arrowr & (B_2,N_2)
	\end{tikzcd}
	\]
	be a commutative square of prelog rings where the horizontal homomorphisms are surjective. Then there exists a natural homomorphism
	\[B_2\otimes_{B_1}N_{(B_1,N_1)|(C_1|L_1)} \longrightarrow N_{(B_2,N_2)|(C_2|L_2)} .\]
\end{proposition}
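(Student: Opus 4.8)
The plan is to build the desired map as the one induced, through the universal property of pushouts, by a morphism between the two pushout diagrams that define the conormal modules. Write the horizontal surjections of the square as $(f_i,g_i)\colon(C_i,L_i)\to(B_i,N_i)$ and set $I_i=\ker(f_i)$, $J_i=\ker(\Z[L_i]\to\Z[N_i])$, $W_i=\ker(L_i^\gp\to N_i^\gp)$ for $i=1,2$; let $\gamma\colon L_1\to L_2$, $\rho\colon N_1\to N_2$, $\varphi\colon C_1\to C_2$, $\psi\colon B_1\to B_2$ be the four vertical homomorphisms. First I would produce the three component maps. The surjection of rings, together with $\varphi$ and $\psi$, gives the usual functoriality map $B_2\otimes_{B_1}(I_1/I_1^2)\to I_2/I_2^2$. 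Since $\rho g_1=g_2\gamma$, passing to group completions gives $\rho^\gp g_1^\gp=g_2^\gp\gamma^\gp$, so $\gamma^\gp$ carries $W_1$ into $W_2$ and hence induces $B_2\otimes_\Z W_1\to B_2\otimes_\Z W_2$. Finally, Proposition~\ref{3.01} applied to the square formed by $\gamma$ and $\rho$ yields a $\Z[N_1]$-linear map $J_1/J_1^2\to J_2/J_2^2$ (viewing $J_2/J_2^2$ as a $\Z[N_1]$-module via $\Z[N_1]\to\Z[N_2]$); tensoring with $B_2$ over $\Z[N_1]$ and composing with the canonical change-of-rings map gives $B_2\otimes_{\Z[N_1]}(J_1/J_1^2)\to B_2\otimes_{\Z[N_2]}(J_2/J_2^2)$.

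Next I would check that these three maps assemble into a morphism of diagrams, i.e.\ that the two relevant squares commute. The square involving $\nu_{g_1}$ and $\nu_{g_2}$ commutes because it is precisely the naturality statement of Proposition~\ref{3.01}, base changed along $\Z[N_1]\to B_1\to B_2$ on the source row and along $\Z[N_2]\to B_2$ on the target row — the compatibility of $\psi$ with these scalar extensions being exactly what the commutative square of prelog rings supplies. The square involving the maps $u_i$ of Definition~\ref{3.04} commutes by a direct check on generators: an element $1\otimes\overline{\sum_j\lambda_j l_j}$ with $\lambda_j\in\Z$, $l_j\in L_1$ is sent, along one path, to $\overline{\sum_j\lambda_j\,\varphi(\alpha_{L_1}(l_j))}$ in $I_2/I_2^2$, and along the other to $\overline{\sum_j\lambda_j\,\alpha_{L_2}(\gamma(l_j))}$, and these agree since $(\varphi,\gamma)\colon(C_1,L_1)\to(C_2,L_2)$ is a homomorphism of prelog rings, so $\varphi\,\alpha_{L_1}=\alpha_{L_2}\,\gamma$.

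Finally, since $B_2\otimes_{B_1}(-)$ is a left adjoint it preserves pushouts, so $B_2\otimes_{B_1}N_{(B_1,N_1)|(C_1,L_1)}$ is the pushout of the diagram $B_2\otimes_{B_1}(I_1/I_1^2)\leftarrow B_2\otimes_{\Z[N_1]}(J_1/J_1^2)\to B_2\otimes_\Z W_1$; the morphism of diagrams constructed above, followed by the two structural maps of the pushout $N_{(B_2,N_2)|(C_2,L_2)}$, then induces the desired homomorphism $B_2\otimes_{B_1}N_{(B_1,N_1)|(C_1,L_1)}\to N_{(B_2,N_2)|(C_2,L_2)}$. Naturality with respect to morphisms of squares is automatic, each of the three component maps being itself natural. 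The only real work is the two compatibility checks above; the mildly delicate point is keeping straight the two base rings $\Z[N_1]$ and $\Z[N_2]$ over which one tensors and the change-of-rings map intertwining them, but there is no substantive difficulty.
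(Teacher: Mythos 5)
Your proof is correct and is exactly the argument the paper intends: its own proof consists of the single sentence that the statement follows from the naturality of $\nu_g$ and of $u$ in Definition~\ref{3.04}, which is precisely the morphism of pushout diagrams (together with the fact that $B_2\otimes_{B_1}(-)$ preserves pushouts) that you spell out. The two compatibility checks you carry out are the content of that naturality, so nothing is missing.
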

\begin{proof}
	It is consequence of the naturality of the homomorphisms $\nu_g$ and $u$ in Definition~\ref{3.04}.
\end{proof}

\begin{example}\label{3.07}
	If $L\to N$ is an isomorphism, then $ J=0$ and $W=\{1\}$. Therefore the conormal module of $(C,L)\to(B,N)$ coincides with the (usual) conormal module $I/I^2$ of $C\to B$. This takes place, for example, when $L=N=\{1\}$. 
\end{example}

\begin{remark}\label{3.08}
	If $C$ is noetherian and $L^\gp$ is a finitely generated abelian group, then $N_{(B,N)|(C,L)}$ is a $B$-module of finite type. 
\end{remark}

\begin{definition}\label{3.09}
	Let $(A,M)\to (B,N)$ be a homomorphism of prelog rings. The \textit{module of diffe\-rentials $\Omega_{(B,N)|(A,M)}$ of $(A,M)\to (B,N)$} is defined as the $B$-module $N_{(B,N)|(C,L)}$ where $(C,L)=$\linebreak$(B\otimes_AB,N\oplus_MN)$. By the Example~\ref{3.03}, the square
	\[
	\begin{tikzcd}[row sep=4em, column sep=2em]
	B\otimes_{\Z[N]}\Omega_{\Z[N]|\Z[M]} \arrowr{v}\arrowd{u} & B\otimes_\Z N^\gp/\im(M^\gp)\arrowd \\
	\Omega_{B|A} \arrowr & \Omega_{(B,N)|(A,M)}
	\end{tikzcd}
	\]
	is cocartesian, where $v$ is the homomorphism of $B$-modules verifying $v(1\otimes dn)=n\otimes \bar{n}$ with $n\in N$ and $\bar{n}$ its class in $N^\gp/\im(M^\gp)$. If $A$ is noetherian, $B$ an $A$-algebra essentially of finite type and $N^\gp/\im(M^\gp)$ an abelian group of finite type, then $\Omega_{(B,N)|(A,M)}$ is a $B$-module of finite type. 
	
	By Proposition~\ref{3.06}, if	
	\[
	\begin{tikzcd}[row sep=4em, column sep=3em]
	(A_1,M_1) \arrowr\arrowd & (B_1,N_1)\arrowd \\
	(A_2,M_2) \arrowr & (B_2,N_2)
	\end{tikzcd}
	\]
	is a commutative square of homomorphisms of prelog rings, then there exists a natural homomorphism of $B_2$-modules
	\[B_2\otimes_{B_1}\Omega_{(B_1,N_1)|(A_1,M_1)}\longrightarrow\Omega_{(B_2,N_2)|(A_2,M_2)}.\]
\end{definition}

The following two results are well-known.

\begin{proposition}[Base Change]\label{3.10}
	We consider a diagram of log rings
	\[
	\begin{tikzcd}[row sep=4em, column sep=3em]
	(A_1,M_1) \arrowr\arrowd & (B_1,N_1) \\
	(A_2,M_2) &
	\end{tikzcd}
	\]
	and let $(B_2,N_2)$ be its pushout, i.e. $B_2=A_2\otimes_{A_1}B_1$, $N_2=M_2\oplus_{M_1}N_1$. Then the canonical homomorphism
	\[B_2\otimes_{B_1}\Omega_{(B_1,N_1)|(A_1,M_1)} \longrightarrow \Omega_{(B_2,N_2)|(A_2,M_2)}\]
	is an isomorphism.
\end{proposition}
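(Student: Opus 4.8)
The plan is to read off $\Omega_{(B,N)|(A,M)}$ from Definition~\ref{3.09} as the pushout of a square of $B$-modules whose three other vertices are $B\otimes_{\Z[N]}\Omega_{\Z[N]|\Z[M]}$, $B\otimes_\Z N^\gp/\im(M^\gp)$ and $\Omega_{B|A}$, and to apply the functor $B_2\otimes_{B_1}-$ to this square for $(B_1,N_1)|(A_1,M_1)$. Since $B_2\otimes_{B_1}-$ is a left adjoint it preserves pushouts, so $B_2\otimes_{B_1}\Omega_{(B_1,N_1)|(A_1,M_1)}$ is the pushout of the base-changed square; the canonical homomorphism of the statement (which, via Proposition~\ref{3.06}, is the map on pushouts induced by the naturality of $\nu_g$ and $u$) is therefore induced by three natural maps between the corresponding vertices of the two squares, and it suffices to check that each of these three corner maps is an isomorphism. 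For the vertex $\Omega_{B|A}$ this is immediate: since $B_2=A_2\otimes_{A_1}B_1$, the classical base change for K\"ahler differentials says $B_2\otimes_{B_1}\Omega_{B_1|A_1}\to\Omega_{B_2|A_2}$ is an isomorphism.

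For the vertex $B\otimes_\Z N^\gp/\im(M^\gp)$: group completion is left adjoint to the inclusion of abelian groups into monoids, hence preserves pushouts, so $N_2^\gp=M_2^\gp\oplus_{M_1^\gp}N_1^\gp$. In any pushout square of abelian groups the cokernels of the two parallel maps agree, so the natural map $N_1^\gp\to N_2^\gp$ induces an isomorphism $N_1^\gp/\im(M_1^\gp)\xrightarrow{\sim}N_2^\gp/\im(M_2^\gp)$. Tensoring with $B_2$ over $\Z$ and using $B_2\otimes_{B_1}(B_1\otimes_\Z-)=B_2\otimes_\Z-$ gives the identification of this vertex.

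For the vertex $B\otimes_{\Z[N]}\Omega_{\Z[N]|\Z[M]}$: the monoid algebra functor $\Z[-]$ is left adjoint to $A\mapsto(A,\cdot)$, hence preserves pushouts, so $\Z[N_2]=\Z[N_1]\otimes_{\Z[M_1]}\Z[M_2]$; classical base change for K\"ahler differentials then gives $\Z[N_2]\otimes_{\Z[N_1]}\Omega_{\Z[N_1]|\Z[M_1]}\xrightarrow{\sim}\Omega_{\Z[N_2]|\Z[M_2]}$. Tensoring up to $B_2$ over $\Z[N_2]$, and using that the two $\Z[N_1]$-algebra structures on $B_2$ (through $\Z[N_2]$ and through $B_1$) coincide by commutativity of the given square of prelog rings, we obtain that $B_2\otimes_{B_1}(B_1\otimes_{\Z[N_1]}\Omega_{\Z[N_1]|\Z[M_1]})\to B_2\otimes_{\Z[N_2]}\Omega_{\Z[N_2]|\Z[M_2]}$ is an isomorphism.

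Finally, one must check that these three isomorphisms are compatible with the base changes of the structural maps $u$ and $v$ of the two squares; this follows from their explicit descriptions (the formula $v(1\otimes dn)=n\otimes\bar n$ in Definition~\ref{3.09} and the description of $u$ through the structural homomorphism $L\to C$ in Definition~\ref{3.04}) together with the naturality recorded in Proposition~\ref{3.01} and Example~\ref{3.03}. Passing to pushouts, the canonical homomorphism of the statement is then an isomorphism. There is no real obstacle: the argument reduces entirely to the three classical base change facts (for $\Omega$, for $\Z[-]$ and for $(-)^\gp$) plus the cokernel lemma for pushout squares of abelian groups, with the only mildly delicate point being the last compatibility verification.
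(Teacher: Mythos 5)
Your proof is correct. The paper in fact states this proposition without proof (it is introduced as ``well-known''), so there is no argument of the authors' to compare against; your route is the standard one implicit in Definition~\ref{3.09}: since $\Omega_{(B,N)|(A,M)}$ is by definition a pushout of $B$-modules and $B_2\otimes_{B_1}-$ preserves pushouts, everything reduces to the three classical base-change identifications at the corners --- $B_2\otimes_{B_1}\Omega_{B_1|A_1}\cong\Omega_{B_2|A_2}$, $\Z[N_2]\otimes_{\Z[N_1]}\Omega_{\Z[N_1]|\Z[M_1]}\cong\Omega_{\Z[N_2]|\Z[M_2]}$ (using that $\Z[-]$ and $(-)^\gp$ preserve pushouts), and the invariance of $\coker(M^\gp\to N^\gp)$ under pushout --- all of which you verify correctly, with the compatibility of the corner maps with $u$ and $v$ supplied by the naturality already recorded in Proposition~\ref{3.06}.
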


\begin{proposition}\label{3.12}
	Let $(A,M)\to (B,N)\to (C,P)$ be homomorphisms of prelog rings. There exists a natural exact sequence of $C$-modules
	\[ C\otimes_B\Omega_{(B,N)|(A,M)} \longrightarrow \Omega_{(C,P)|(A,M)} \longrightarrow \Omega_{(C,P)|(B,N)} \longrightarrow 0 . \]
\end{proposition}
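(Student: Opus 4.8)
The plan is to reduce Proposition~\ref{3.12} to the three classical first fundamental (transitivity) exact sequences --- for the ring homomorphisms $A\to B\to C$, for the monoid algebra homomorphisms $\Z[M]\to\Z[N]\to\Z[P]$, and for the abelian group homomorphisms $M^\gp\to N^\gp\to P^\gp$ --- combined with the description of $\Omega_{(B,N)|(A,M)}$ as a pushout given in Definition~\ref{3.09}.

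First I would record three right exact sequences of $C$-modules obtained by base change. (a) The usual first fundamental sequence $C\otimes_B\Omega_{B|A}\to\Omega_{C|A}\to\Omega_{C|B}\to0$, which is moreover exact at $\Omega_{C|A}$. (b) The sequence $C\otimes_{\Z[N]}\Omega_{\Z[N]|\Z[M]}\to C\otimes_{\Z[P]}\Omega_{\Z[P]|\Z[M]}\to C\otimes_{\Z[P]}\Omega_{\Z[P]|\Z[N]}\to0$, obtained by applying $C\otimes_{\Z[P]}(-)$ to the transitivity sequence of $\Z[M]\to\Z[N]\to\Z[P]$. (c) The sequence $C\otimes_\Z N^\gp/\im(M^\gp)\to C\otimes_\Z P^\gp/\im(M^\gp)\to C\otimes_\Z P^\gp/\im(N^\gp)\to0$, obtained by applying $C\otimes_\Z(-)$ to the exact sequence of abelian groups $N^\gp/\im(M^\gp)\to P^\gp/\im(M^\gp)\to P^\gp/\im(N^\gp)\to0$; this group sequence factors as a surjection $N^\gp/\im(M^\gp)\twoheadrightarrow K$, where $K:=\im(N^\gp\to P^\gp)/\im(M^\gp\to P^\gp)$, followed by a short exact sequence $0\to K\to P^\gp/\im(M^\gp)\to P^\gp/\im(N^\gp)\to0$, so that, since tensoring preserves surjections and is right exact, (c) is still exact at its middle term.

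By Definition~\ref{3.09}, each of $C\otimes_B\Omega_{(B,N)|(A,M)}$, $\Omega_{(C,P)|(A,M)}$ and $\Omega_{(C,P)|(B,N)}$ is the cokernel of a map of the shape $P_0\to P_1\oplus P_2$, where $P_0$ is the monoid algebra differentials term, $P_1$ the ring differentials term and $P_2$ the group term; for $C\otimes_B\Omega_{(B,N)|(A,M)}$ one tensors the cocartesian square over $B$ with $C$, using that base change preserves pushouts. The naturality of $\nu_g$ (Proposition~\ref{3.01}) and of the conormal module (Proposition~\ref{3.06}) assemble these three presentations into a single commutative diagram whose rows are these cokernel presentations, whose first column is the sequence (b), whose middle column is the direct sum of the sequences (a) and (c), and whose last column is therefore the sequence appearing in Proposition~\ref{3.12}; one checks directly that the two maps produced in the last column are the canonical ones.

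It then remains to carry out a diagram chase --- essentially a right exact version of the nine lemma. Surjectivity of $\Omega_{(C,P)|(A,M)}\to\Omega_{(C,P)|(B,N)}$ and vanishing of the composite are immediate, since the middle column is surjective onto the bottom row and the columns are complexes. For exactness at $\Omega_{(C,P)|(A,M)}$ one takes a class there dying in $\Omega_{(C,P)|(B,N)}$, lifts it to $\Omega_{C|A}\oplus\big(C\otimes_\Z P^\gp/\im(M^\gp)\big)$, notes that its image in $\Omega_{C|B}\oplus\big(C\otimes_\Z P^\gp/\im(N^\gp)\big)$ lies in the image of $C\otimes_{\Z[P]}\Omega_{\Z[P]|\Z[N]}$, lifts that element along the surjection $C\otimes_{\Z[P]}\Omega_{\Z[P]|\Z[M]}\to C\otimes_{\Z[P]}\Omega_{\Z[P]|\Z[N]}$, subtracts, and concludes using exactness of (a)$\oplus$(c) at its middle term. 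The main obstacle is purely bookkeeping: checking that every square of the resulting $3\times3$ diagram commutes --- which is where the naturality of $u$, $v$ and $\nu_g$, together with a sign/orientation check for the pushout maps, are used --- and keeping track of the fact that, although $-\otimes-$ is only right exact in general, the two places where one needs exactness at a middle term (the sequences (a) and (c)) do survive tensoring, for the reasons indicated above.
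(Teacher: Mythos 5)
Your proof is correct: the paper states Proposition~\ref{3.12} without proof (as ``well-known''), but your strategy --- presenting each logarithmic module of differentials as the cokernel of a map into a direct sum via the cocartesian square of Definition~\ref{3.09}, assembling the three classical transitivity sequences (for $A\to B\to C$, for $\Z[M]\to\Z[N]\to\Z[P]$, and for $M^\gp\to N^\gp\to P^\gp$) into a $3\times3$ diagram, and chasing --- is exactly the method the paper itself uses to prove the companion Proposition~\ref{3.13}. All the exactness inputs you invoke do survive the relevant base changes (right exact sequences are preserved by tensoring), and the surjectivity of $C\otimes_{\Z[P]}\Omega_{\Z[P]|\Z[M]}\to C\otimes_{\Z[P]}\Omega_{\Z[P]|\Z[N]}$ that your chase needs does hold, so the argument is complete.
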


\begin{proposition}\label{3.13}
	Let $(A,M)\to (C,L)\to (B,N)$ be homomorphisms of prelog rings with $C\to B$ and $L\to N$ surjective. There exists a natural exact sequence of $B$-modules
	\[ N_{(B,N)|(C,L)} \longrightarrow B\otimes_C\Omega_{(C,L)|(A,M)} \longrightarrow \Omega_{(B,N)|(A,M)} \to 0 . \]
\end{proposition}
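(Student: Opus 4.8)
The plan is to reduce this logarithmic conormal-differential sequence to its two constituent non-logarithmic pieces — the usual conormal sequence for the ring map $C\to B$ and the (now classical) exact sequence for monoid algebras — and then glue them using the pushout presentations of both $N_{(B,N)|(C,L)}$ (Definition~\ref{3.04}) and $\Omega_{(B,N)|(A,M)}$ (Definition~\ref{3.09}). First I would unwind the relevant pushout squares. By Definition~\ref{3.09} applied to $(A,M)\to(C,L)$, the module $B\otimes_C\Omega_{(C,L)|(A,M)}$ is the pushout of $B\otimes_{\Z[L]}\Omega_{\Z[L]|\Z[M]}\rightrightarrows$ (via $v$ and $u$) against $B\otimes_C\Omega_{C|A}$ and $B\otimes_\Z L^\gp/\im(M^\gp)$; similarly $\Omega_{(B,N)|(A,M)}$ is built from $\Omega_{B|A}$ and $B\otimes_\Z N^\gp/\im(M^\gp)$, and $N_{(B,N)|(C,L)}$ is built from $I/I^2$ and $B\otimes_\Z W$ with $W=\ker(L^\gp\to N^\gp)$. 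So everything in sight is a pushout of $B$-modules, and the statement is about exactness of a sequence assembled from these pushouts.

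The key input on the ring side is the classical conormal/cotangent exactness: $I/I^2\to B\otimes_C\Omega_{C|A}\to\Omega_{B|A}\to 0$. The key input on the monoid side is the analogous statement for the algebras $\Z[M]\to\Z[L]\to\Z[N]$, namely $\ker(\Z[L]\to\Z[N])/(\cdot)^2\to \Z[N]\otimes_{\Z[L]}\Omega_{\Z[L]|\Z[M]}\to\Omega_{\Z[N]|\Z[M]}\to 0$, together with the right-exact sequence of abelian groups $W\to L^\gp/\im(M^\gp)\to N^\gp/\im(M^\gp)\to 0$ (immediate from $W=\ker(L^\gp\to N^\gp)$ and surjectivity of $L^\gp\to N^\gp$). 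The compatibility that makes the gluing work is precisely the naturality squares recorded in Proposition~\ref{3.01} and the fact (Example~\ref{3.03}, Definition~\ref{3.09}) that $v$ factors through $\nu_g$; one checks that $\nu$ intertwines the monoid-algebra conormal map with the map $W\to L^\gp/\im(M^\gp)$, so the two ``rows'' of exact sequences are compatible with the vertical pushout maps.

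Concretely I would argue as follows. Right-exactness of a pushout is automatic: a pushout of $B$-modules $P=Q_1\oplus_R Q_2$ has $Q_1\to P\to Q_2/(\text{image of }R)\to 0$ exact, or more to the point, one can present each of the three terms of the desired sequence as a cokernel and chase. Form the commutative diagram whose columns are the three defining pushout presentations and whose rows are (i) the monoid-algebra sequence tensored up to $B$, (ii) an intermediate sequence, (iii) the sought sequence $N_{(B,N)|(C,L)}\to B\otimes_C\Omega_{(C,L)|(A,M)}\to\Omega_{(B,N)|(A,M)}\to 0$. Since pushout is a colimit and colimits commute with colimits (in particular with cokernels), the bottom row is the pushout of the top two, hence is right-exact; surjectivity of the last map follows because both $I/I^2\to\Omega_{B|A}$ pieces (through Proposition~\ref{3.12} for the monoid part, and the classical sequence for the ring part) already surject. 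Naturality of the resulting map is then formal, from naturality in Propositions~\ref{3.01} and~\ref{3.06}.

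The main obstacle is bookkeeping rather than any genuine difficulty: one must verify that the three pushout squares fit into a single commutative ``cube'' so that ``pushout of exact rows is exact'' actually applies — i.e. that the map $B\otimes_{\Z[L]}\Omega_{\Z[L]|\Z[M]}\to B\otimes_{\Z[N]}\Omega_{\Z[N]|\Z[M]}$ (base change along $\Z[L]\to\Z[N]$) is compatible, via $\nu$, with $B\otimes_\Z W\to B\otimes_\Z(N\oplus_M N)^\gp$-type maps, and that the structural map $u$ behaves well under $L\to N$. These are exactly the naturality assertions already proved; so the proof is essentially a diagram chase assembling Proposition~\ref{3.12}, the classical conormal sequence, and the right-exact sequence $W\to L^\gp/\im(M^\gp)\to N^\gp/\im(M^\gp)\to 0$ through the cocartesian squares of Definitions~\ref{3.04} and~\ref{3.09}.
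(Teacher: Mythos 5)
Your proposal is correct and follows essentially the same route as the paper: the paper's proof likewise deduces the statement from the classical conormal sequence for $A\to C\to B$, the corresponding sequence for $\Z[M]\to\Z[L]\to\Z[N]$ tensored up to $B$, and the right-exact sequence $B\otimes_\Z W\to B\otimes_\Z L^\gp/\im(M^\gp)\to B\otimes_\Z N^\gp/\im(M^\gp)\to 0$, glued through the defining pushout presentations. No substantive differences.
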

\begin{proof}
	It is deduced from the following exact sequences:
	\[ B\otimes_{\Z[N]} J/ J^2 \longrightarrow B\otimes_{Z[L]}\Omega_{\Z[L]|\Z[M]}\longrightarrow B\otimes_{Z[N]}\Omega_{\Z[N]|\Z[M]} \longrightarrow 0 , \]
	\[  I/ I^2 \longrightarrow B\otimes_{C}\Omega_{C|A}\longrightarrow \Omega_{B|A} \longrightarrow 0  \]
	and
	\[ B\otimes_{\Z}W \longrightarrow B\otimes_{Z}L^\gp/\im_{L^\gp}(M^\gp)\longrightarrow B\otimes_{Z}N^\gp/\im_{N^\gp}(M^\gp) \to 0 , \]
	where $ I=\ker(C\to B)$, $ J=\ker(\Z[L]\to\Z[N])$ and $W=\ker(L^\gp\to N^\gp)$. Note that the third sequence is exact because
	\[ W=\ker(L^\gp/\im_{L^\gp}(M^\gp)\to N^\gp/\im_{N^\gp}(M^\gp)) .\]
\end{proof}

\begin{examples}\label{3.14}
	\begin{enumerate}
		\item[(i)] Free prelog algebras. Let $(A,M)$ be a prelog ring, $X$ and $Y$ sets, $N=M\oplus \N^Y$ where $\N^Y$ is a direct sum of copies of the (additive) monoid of natural numbers indexed by $Y$, $B=A[X,Y]$ the polynomial $A$-algebra on $X\cup Y$, $N\to B$ the homomorphism which extends $M\to A$ in the obvious way. We will say that \emph{$(B,N)$ is a free $(A,M)$-prelog algebra}. Then $\Omega_{(B,N)|(A,M)}$ is a free $B$-module with basis $X\cup Y$.
		
		\item[(ii)] Let $(A,M)\to(B,N)$ a homomorphism essentially of finite type of noetherian prelog rings (Definition~\ref{2.01}). Then $\Omega_{(B,N)|(A,M)}$ is a $B$-module of finite type. It follows from the above example and Proposition~\ref{3.13}.	 
		
		\item[(iii)] Let $(A,M)\to (B,N)$ be a homomorphism of prelog rings with $M\to N$ surjective, then $\Omega_{(B,N)|(A,M)}=\Omega_{B|A}$. In fact, $N\oplus_MN\to N$ is an isomorphism and therefore the result follows from Example~\ref{3.07}.	
	\end{enumerate}
\end{examples}

\begin{definition}\label{3.15}
	Let $(A,M)\to (B,N)$ a homomorphism of prelog rings, $T$ a $B$-module. An $(A,M)$-\emph{derivation} of $(B,N)$ into $T$ is a pair $(D,\widetilde{D}$), where $D$ is an $A$-derivation $D\colon  B\to T$ and $\widetilde{D}$ is a homomorphism of groups
	\[\widetilde{D}\colon  N^\gp/\im(M^\gp)\longrightarrow (T,+)\]
	such that $D(n)=n\widetilde{D}(\bar{n})$ for all $n\in N$ (where $\bar{n}$ is its image in $N^\gp/\im(M^\gp)$ and we denote the image of $n\in N$ in $B$ again by $n$). We denote by
	\[\Der_{(A,M)}((B,N),T)\]
	the set of $(A,M)$-derivations of $(B,N)$ into $T$, which is a $B$-module with the induced structure by the structure of $B$-module of $T$.
\end{definition}

\begin{proposition}\label{3.16}
	With the above notation, there exists an isomorphism of $B$-modules
	\[ \Hom_B(\Omega_{(B,N)|(A,M)},T)=\Der_{(A,M)}((B,N),T) .\]
\end{proposition}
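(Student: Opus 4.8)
The plan is to construct the isomorphism by the universal-property route, mimicking the non-logarithmic case. First I would unwind the definition of $\Omega_{(B,N)|(A,M)}$ from Definition~\ref{3.09}: it is the pushout of $\Omega_{B|A} \xleftarrow{u} B\otimes_{\Z[N]}\Omega_{\Z[N]|\Z[M]} \xrightarrow{v} B\otimes_\Z N^\gp/\im(M^\gp)$. Since $\Hom_B(-,T)$ turns a pushout (colimit) into a pullback (limit), applying $\Hom_B(-,T)$ gives a pullback square expressing $\Hom_B(\Omega_{(B,N)|(A,M)},T)$ as the fiber product
\[
\Hom_B(\Omega_{B|A},T)\times_{\Hom_B(B\otimes_{\Z[N]}\Omega_{\Z[N]|\Z[M]},T)}\Hom_B(B\otimes_\Z N^\gp/\im(M^\gp),T).
\]

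Next I would identify the three terms of this fiber product with familiar objects. By the classical adjunction $\Hom_B(\Omega_{B|A},T)=\Der_A(B,T)$, the first factor is the module of $A$-derivations $D\colon B\to T$. The second factor is $\Hom_B(B\otimes_\Z N^\gp/\im(M^\gp),T)=\Hom_\Z(N^\gp/\im(M^\gp),T)$, i.e. group homomorphisms $\widetilde D\colon N^\gp/\im(M^\gp)\to(T,+)$. The base term over which we glue is $\Hom_B(B\otimes_{\Z[N]}\Omega_{\Z[N]|\Z[M]},T)=\Der_{\Z[M]}(\Z[N],T)$, which (again by the classical universal property of $\Omega$ for the monoid algebra) is the set of maps $N\to(T,+)$, $n\mapsto D_0(n)$, that are additive on exponents, i.e. $\Z$-linear extensions of maps $N^\gp\to T$. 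I would then trace through what the pullback condition says: a pair $(D,\widetilde D)$ lies in the fiber product iff $u^\vee(D)=v^\vee(\widetilde D)$ as elements of $\Der_{\Z[M]}(\Z[N],T)$. Reading off the explicit formulas for $u$ (Definition~\ref{3.04}: $1\otimes\overline{\sum\lambda_i l_i}\mapsto\overline{\sum\lambda_i\alpha(l_i)}$) and for $v$ (Definition~\ref{3.09}: $1\otimes dn\mapsto n\otimes\bar n$), the composite $u^\vee(D)$ sends the generator $dn$ of $\Omega_{\Z[N]|\Z[M]}$ to $D(\alpha_B(n))=D(n)\in T$ (writing $n$ for its image in $B$), while $v^\vee(\widetilde D)$ sends $dn$ to $n\cdot\widetilde D(\bar n)$. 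Hence the compatibility condition is precisely $D(n)=n\,\widetilde D(\bar n)$ for all $n\in N$ — which is exactly the defining relation of an $(A,M)$-derivation in Definition~\ref{3.15}. This identifies the fiber product with $\Der_{(A,M)}((B,N),T)$, and the identification is $B$-linear and natural because every arrow in sight is.

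The main obstacle I anticipate is bookkeeping rather than conceptual: one must be careful that the map $\Omega_{\Z[N]|\Z[M]}\to B\otimes_\Z N^\gp/\im(M^\gp)$ used in Definition~\ref{3.09} is really $v$ and not a twist of it, and that under the adjunction $\Hom_B(B\otimes_{\Z[N]}\Omega_{\Z[N]|\Z[M]},T)\cong\Der_{\Z[M]}(\Z[N],T)$ the two pulled-back maps $u^\vee(D)$ and $v^\vee(\widetilde D)$ are compared as the *same* kind of object (a $\Z[M]$-derivation $\Z[N]\to T$, equivalently a function $N\to T$ additive on products in the first slot), so that the equation $D(n)=n\widetilde D(\bar n)$ emerges cleanly. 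Once the dictionary between "pushout defining $\Omega_{(B,N)|(A,M)}$" and "pullback of derivation modules" is set up, checking that the resulting bijection respects the $B$-module structures is routine. I would also note that this argument is essentially the $\Hom_B(-,T)$-dual of Example~\ref{3.03}/Definition~\ref{3.09}, so no genuinely new input beyond the classical statement $\Hom_B(\Omega_{B|A},T)=\Der_A(B,T)$ and its monoid-algebra analogue is needed.
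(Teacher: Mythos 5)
Your argument is correct and is essentially the paper's proof: applying $\Hom_B(-,T)$ to the cocartesian square of Definition~\ref{3.09} and matching the resulting pullback condition $f\circ u=g\circ v$ on the generators $1\otimes dn$ with the relation $D(n)=n\widetilde{D}(\bar{n})$ is exactly what the paper does by writing out the two mutually inverse assignments explicitly and checking compatibility via $ui=vj$. The only slip is your parenthetical description of $\Der_{\Z[M]}(\Z[N],T)$ as maps $N\to(T,+)$ additive on exponents --- a $\Z[M]$-derivation satisfies the Leibniz rule $D_0(nn')=nD_0(n')+n'D_0(n)$, not additivity --- but this is harmless since you only ever compare the two derivations on the generators $dn$, which determine them.
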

\begin{proof}
	Let $(D,\widetilde{D})\in \Der_{(A,M)}((B,N),T)$. From the diagram
	\[
	\begin{tikzpicture}[scale=2]
	\node (A) at (0,2.3) {$B\otimes_{\Z[N]}\Omega_{\Z[N]|\Z[M]}$};
	\node (B) at (2,2.3) {$B\otimes_{\Z}N^\gp/\im(M^\gp)$};
	\node (C) at (0,1) {$\Omega_{B|A}$};
	\node (D) at (2,1) {$\Omega_{(B,N)|(A,M)}$};
	\node (E) at (3.5,0) {$T$};
	\path[->,font=\scriptsize]
	(A) edge node[above]{$i$} (B)
	(A) edge node[left]{$j$} (C)
	(B) edge node[left]{$u$} (D)
	(C) edge node[above]{$v$} (D);
	\path[->,bend left=15,font=\scriptsize]
	(B) edge node[above]{$g$} (E);
	\path[->,bend right=15,font=\scriptsize]
	(C) edge node[above]{$f$} (E);				
	\path[->,dashed,font=\scriptsize]
	(D) edge node[above]{$h$} (E);
	\end{tikzpicture}
	\]
	where $f$ is the homomorphism of $B$-modules induced by $D\colon B\to T$ and $g$ is the homomorphism of $B$-modules induced by $\widetilde{D}\colon N^\gp / \im(M^\gp)\to T$, we obtain a homomorphism $h\colon  \Omega_{(B,N)|(A,M)}\to T$ making commutative the corresponding triangles.
	
	Now, consider $h\in \Hom_B(\Omega_{(B,N)|(A,M)},T)$. We define $D\in\Der_A(B,T)$ by the composition
	\[B\overset{d_{B|A}}{\longrightarrow}\Omega_{B|A}\overset{v}{\longrightarrow} \Omega_{(B,N)|(A,M)} \overset{h}{\longrightarrow} T\]
	and $\widetilde{D}$ by the composition
	\[N^\gp/\im(M^\gp)\longrightarrow B\otimes_\Z N^\gp/\im(M^\gp)\overset{u}{\longrightarrow} \Omega_{(B,N)|(A,M)} \overset{h}{\longrightarrow} T . \]
	Since $v$ and $h$ are homomorphism of $B$-modules, $D$ is a derivation. Similarly, $\widetilde{D}$ is a homomorphism of groups. We also have
	\[hui(1\otimes dn) = hu(n\otimes \bar{n}) = n\cdot hu(1\otimes\bar{n})=n\widetilde{D}(\bar{n})\]
	and $hvj(1\otimes dn)=D(n)$. Since $ui=vj$, we obtain $D(n)=n\widetilde{D}(n)$.
\end{proof}
	
\section{The cotangent complex}\label{CC}	
The forgetful functor $\mathcal{U}$ from the category of prelog rings to the category $\Set\times\Set$ sending $(A,M)$ to their subjacent sets $(\mathcal{U}A,\mathcal{U}M)$ has a left adjoint defined as
\[ F(X,Y):=(\Z[X,Y],\N^Y) ,\]
as in Example~\ref{3.14}.(i).

The category of simplicial prelog rings (i.e. simplicial objects in the category of prelog rings) has a model structure where (trivial) fibrations are those maps which go to (trivial) fibrations by the functor $s\mathcal{U}$ (the simplicial extension of the above functor $\mathcal{U}$). With this structure, (trivial) fibrations are those maps $(A_*,M_*)\to (B_*,N_*)$ such that $A_*\to B_*$ and $M_*\to N_*$ are (trivial) fibrations in the usual model structure of the categories of simplicial commutative rings and simplicial commutative monoids. If $(A,M)$ is a (constant) prelog ring, and $(A,M)\overset{\varphi}{\to}(B_*,N_*)$ is a homomorphism of simplicial prelog rings with the property that for each $n$, $(B_n,N_n)$ is a free $(A,M)$-prelog algebra, $B_n=A[X_n,Y_n]$, $N_n=M\oplus\N^{Y_n}$ for some sets $X_n,Y_n$, and all degeneracies $B_n=A[X_n,Y_n]\to B_{n+1}=A[X_{n+1},Y_{n+1}]$, $N_n\to N_{n+1}$ send the basis $X_n$ to $X_{n+1}$ and $Y_n$ to $Y_{n+1}$, then $\varphi$ is a cofibration called \emph{free cofibration} (arbitrary cofibrations are retracts of these ones). Details can be seen in \cite[\S4]{Quillen-HA} or more precisely for prelog rings in \cite{SSV} and in \cite[\S5]{Bh}.

Let $(A,M)\to (B,N)$ be a homomorphism of prelog rings. Choose a factorization $(A,M)\to(F,R)\to(B,N)$ as a cofibration followed by a trivial fibration (in the category of simplicial prelog rings). We have a well defined object up to homotopy in the category of simplicial $B$-modules
\[ \LLL_{(B,N)|(A,M)}:=\Omega_{(F,R)|(A,M)}\otimes_FB . \]

\begin{definition}\label{4.01}
	If $W$ is a $B$-module, we define for each $n\geq 0$,
	\begin{align*}
	&H_n((A,M),(B,N),W):=H_n(\LLL_{(B,N)|(A,M)}\otimes_BW) , \\
	&H^n((A,M),(B,N),W):=H^n(\Hom_B(\LLL_{(B,N)|(A,M)},W)) .
	\end{align*}
\end{definition}

\begin{example}\label{4.02}
	Let $A\to B$ be a homomorphism of rings, $(A,\{1\})\to(B,\{1\})$ the induced map on prelog rings. If $A\to F\to B$ is a factorization cofibration-trivial fibration in the category of simplicial rings, so is $(A,\{1\})\to(F,\{1\})\to (B,\{1\})$ in the category of simplicial prelog rings. Therefore $H_n((A,\{1\}),(B,\{1\}),W)=H_n(A,B,W)=H_n(\LLL_{B|A}\otimes_BW)$ are the usual Andr\'e-Quillen homology modules (see \cite{An-1974} and \cite{Quillen-MIT}) by Example~\ref{3.14}.(iii). Similarly for cohomology.
	
	More generally, if $A\to B$ is a ring homomorphism where $(A,M)$ is a prelog ring, then 
	\[ H_n((A,M),(B,M),W)=H_n(A,B,W) \]
	and similarly for cohomology. The argument is the same.
\end{example}

\begin{theorem}[Fundamental exact sequences]\label{4.03}
	Let $(A,M)\to(B,N)$ be a homomorphism of prelog rings and let $W$ be a $B$-module. There exists a natural distinguished triangle in the derived category of $B$-modules
	\[ \LLL_{\Z[N]|\Z[M]}\otimes_{\Z[M]}W \longrightarrow (\LLL_{B|A}\otimes_BW)\oplus(X_{N|M}\otimes_\Z W) \longrightarrow \LLL_{(B,N)|(A,M)}\otimes_BW \longrightarrow \]
	where $X_{N|M}$ is a complex appearing in a distinguished triangle
	\[M^\gp \longrightarrow N^\gp \longrightarrow X_{N|M} \longrightarrow  \quad .\]
	
	We have then a natural exact sequence
	\[\begin{tikzpicture}[baseline= (a).base]
	\node[scale=0.88] (a) at (0,0){
		\begin{tikzcd}[column sep=1em,row sep=0.5ex]
		\cdots \arrowr & H_n(\Z[M],\Z[N],W) \arrowr & H_n(A,B,W) \arrowr & H_n((A,M),(B,N),W) \arrowr & \cdots \\
		\cdots \arrowr & H_3(\Z[M],\Z[N],W) \arrowr & H_3(A,B,W) \arrowr & H_3((A,M),(B,N),W) \arrowr & \quad \\
		\quad \arrowr & H_2(\Z[M],\Z[N],W) \arrowr & H_2(A,B,W)\oplus\Tor_1^\Z(\ker(M^\gp\to N^\gp), W) \arrowr & H_2((A,M),(B,N),W) \arrowr & \quad \\
		\quad \arrowr & H_1(\Z[M],\Z[N],W) \arrowr & H_1(A,B,W)\oplus\Gamma \arrowr & H_1((A,M),(B,N),W) \arrowr & \quad \\
		\quad \arrowr & H_0(\Z[M],\Z[N],W) \arrowr & H_0(A,B,W)\oplus\big((N^\gp/\im(M^\gp\to N^\gp)\otimes_\Z W\big) \arrowr & H_0((A,M),(B,N),W) \arrowr & 0 ,
		\end{tikzcd}
	};
	\end{tikzpicture}\]
	where $\Gamma$ is a $B$-module appearing in a  (non canonically split) exact sequence
	\[
	0\to \ker(M^\gp\to N^\gp)\otimes_\Z W \to  \Gamma \to  \Tor_1^\Z(N^\gp/\im(M^\gp\to N^\gp), W)\to  0 ,
	\]
	and a natural exact sequence
	\[\begin{tikzpicture}[baseline= (a).base]
	\node[scale=0.87] (a) at (0,0){
		\begin{tikzcd}[column sep=1em,row sep=0.5ex]
		0 \arrowr & H^0((A,M),(B,N),W) \arrowr & H^0(A,B,W)\oplus \Hom_\Z(N^\gp/\im(M^\gp\to N^\gp),W)  \arrowr & H^0(\Z[M],\Z[N],W)  \arrowr & \quad\\
		\quad \arrowr & H^1((A,M),(B,N),W) \arrowr & H^1(A,B,W)\oplus\Lambda  \arrowr & H^1(\Z[M],\Z[N],W)  \arrowr & \quad \\
		\quad \arrowr & H^2((A,M),(B,N),W) \arrowr & H^2(A,B,W)\oplus\Ext_\Z^1(\ker(M^\gp \to N^\gp), W)  \arrowr & H^2(\Z[M],\Z[N],W)  \arrowr & \quad \\
		\quad \arrowr & H^3((A,M),(B,N),W) \arrowr & H^3(A,B,W)  \arrowr & H^3(\Z[M],\Z[N],W)  \arrowr & \quad\\
		\cdots  \arrowr & H^n((A,M),(B,N),W) \arrowr & H^n(A,B,W)  \arrowr & H^n(\Z[M],\Z[N],W) \arrowr & \cdots
		\end{tikzcd}
	};
	\end{tikzpicture}\]
	with a (non canonically split) exact sequence
	\[ 0 \longrightarrow  \Ext_\Z^1(N^\gp/\im(M^\gp\to N^\gp), W) \longrightarrow  \Lambda \longrightarrow   \Hom_\Z(\ker(M^\gp\to N^\gp),W) \longrightarrow  0 . \]
\end{theorem}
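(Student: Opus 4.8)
The plan is to derive the distinguished triangle directly from the defining pushout square of $\Omega_{(B,N)|(A,M)}$ in Definition~\ref{3.09}, using a single free simplicial resolution of $(A,M)\to(B,N)$ that simultaneously resolves the three classical situations $A\to B$, $\Z[M]\to\Z[N]$ and $M^\gp\to N^\gp$. First I would choose a free cofibration $(A,M)\to(F_*,R_*)\to(B,N)$ (a cofibration followed by a trivial fibration in simplicial prelog rings), so $F_n=A[X_n,Y_n]$, $R_n=M\oplus\N^{Y_n}$, with all degeneracies carrying the bases $X_n,Y_n$ into $X_{n+1},Y_{n+1}$. Then $A\to F_*$ is a cofibration of simplicial rings and $F_*\to B$ a trivial fibration, so $\LLL_{B|A}\simeq\Omega_{F_*|A}\otimes_{F_*}B$; also $\Z[R_n]=\Z[M][Y_n]$, so $\Z[M]\to\Z[R_*]$ is a cofibration of simplicial rings and $\Z[R_*]\to\Z[N]$ a weak equivalence (the monoid-algebra functor $\Z[-]$ sends weak equivalences of simplicial monoids to weak equivalences of simplicial rings, being on underlying simplicial abelian groups the free abelian group functor, which computes the integral homology of the geometric realization), so $\LLL_{\Z[N]|\Z[M]}\simeq\Omega_{\Z[R_*]|\Z[M]}\otimes_{\Z[R_*]}\Z[N]$; and $R_n^\gp=M^\gp\oplus\Z^{Y_n}$, so the short exact sequence $0\to M^\gp\to R_*^\gp\to R_*^\gp/\im(M^\gp)\to 0$ of simplicial abelian groups, together with $R_*^\gp\simeq N^\gp$ in $D(\Z)$ (as $(-)^\gp$ likewise preserves weak equivalences of simplicial monoids), shows that $X_{N|M}:=R_*^\gp/\im(M^\gp)$ is a complex of free abelian groups fitting into the triangle $M^\gp\to N^\gp\to X_{N|M}\to$.

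\textbf{The triangle of cotangent complexes.} By Definition~\ref{3.09}, for each $n$ the $F_n$-module $\Omega_{(F_n,R_n)|(A,M)}$ is the pushout of $\Omega_{F_n|A}\xleftarrow{u_n}F_n\otimes_{\Z[R_n]}\Omega_{\Z[R_n]|\Z[M]}\xrightarrow{v_n}F_n\otimes_\Z R_n^\gp/\im(M^\gp)$. Applying $-\otimes_{F_n}B$ and using Examples~\ref{3.03} and~\ref{3.14}.(i) to make the maps explicit, $u_n$ becomes the inclusion of the free summand on $dY_n$ into the free $B$-module $B\otimes_{F_n}\Omega_{F_n|A}$ on $dX_n\sqcup dY_n$; in particular it is a split monomorphism, and the splitting is compatible with faces and degeneracies since the latter preserve the bases and $u,v$ are natural. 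Hence the levelwise pushout is a homotopy pushout and the squares assemble into a short exact sequence of simplicial $B$-modules
\[
0\to B\otimes_{\Z[R_*]}\Omega_{\Z[R_*]|\Z[M]}\to(B\otimes_{F_*}\Omega_{F_*|A})\oplus(B\otimes_\Z R_*^\gp/\im(M^\gp))\to\LLL_{(B,N)|(A,M)}\to 0 ,
\]
that is, a distinguished triangle in $D(B)$. By the previous paragraph its middle term is $\LLL_{B|A}\oplus(B\otimes_\Z X_{N|M})$ and its first term is $B\otimes^{\mathbf L}_{\Z[N]}\LLL_{\Z[N]|\Z[M]}$ (no derived correction is needed, since $\Omega_{\Z[R_n]|\Z[M]}$ and $\Omega_{F_n|A}$ are free); naturality in $(A,M)\to(B,N)$ follows from functoriality of the construction.

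\textbf{Tensoring with $W$ and extracting the sequences.} Since all three complexes are built from free modules, applying $-\otimes_BW$ yields the asserted distinguished triangle, with first term $\LLL_{\Z[N]|\Z[M]}\otimes^{\mathbf L}_{\Z[N]}W$ (whose homology is $H_*(\Z[M],\Z[N],W)$) and middle term $(\LLL_{B|A}\otimes_BW)\oplus(X_{N|M}\otimes_\Z W)$; in degree $0$ this recovers the exact sequence of Proposition~\ref{3.13}. It remains to compute the contribution of $X_{N|M}$: as $X_{N|M}$ is concentrated in homological degrees $0$ and $1$ with $H_0=N^\gp/\im(M^\gp)$ and $H_1=\ker(M^\gp\to N^\gp)$, and $\Z$ has global dimension $1$, the hyper-(co)homology spectral sequences degenerate and give $H_0(X_{N|M}\otimes_\Z W)=N^\gp/\im(M^\gp)\otimes_\Z W$, the module $\Gamma$ in degree $1$ as a (non-canonically split) extension of $\Tor_1^\Z(N^\gp/\im(M^\gp),W)$ by $\ker(M^\gp\to N^\gp)\otimes_\Z W$, the module $\Tor_1^\Z(\ker(M^\gp\to N^\gp),W)$ in degree $2$, and $0$ above; dually $\mathbf R\Hom_\Z(X_{N|M},W)$ contributes $\Hom_\Z(N^\gp/\im(M^\gp),W)$, the extension $\Lambda$, and $\Ext^1_\Z(\ker(M^\gp\to N^\gp),W)$. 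Splicing these into the long exact sequences of the triangle gives precisely the two displayed sequences.

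\textbf{Main obstacle.} The crux is the claim that the levelwise pushout defining $\Omega_{(F_*,R_*)|(A,M)}$ is a homotopy pushout, i.e. that $u_n$ is a degreewise split monomorphism compatibly with the simplicial operators; establishing this means unwinding Definition~\ref{3.09} (via Example~\ref{3.03} and the explicit free-basis descriptions) and checking the compatibility of the splittings. A secondary technical point, used in the choice of resolution, is that $\Z[-]$ and $(-)^\gp$ preserve weak equivalences of simplicial monoids; for $\Z[-]$ this is transparent on underlying simplicial abelian groups, and for $(-)^\gp$ one uses that it is a left adjoint together with the explicit shape $R_n=M\oplus\N^{Y_n}$ of the chosen resolution.
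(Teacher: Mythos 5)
Your proposal follows essentially the same route as the paper: choose a free cofibration--trivial fibration factorization $(A,M)\to(F_*,R_*)\to(B,N)$, observe that the levelwise pushout squares defining $\Omega_{(F_n,R_n)|(A,M)}$ have split injective top maps $B^{Y_n}\hookrightarrow B^{X_n\cup Y_n}$ (so they assemble, after $\otimes W$, into a short exact sequence of simplicial modules and hence a distinguished triangle), and then compute the contribution of $R_*^\gp/M^\gp$ by universal coefficients over $\Z$. One caveat: your parenthetical justification that ``$(-)^\gp$ preserves weak equivalences of simplicial monoids'' is false in general (group completion is not homotopy invariant, and being a left adjoint does not help since the constant object $N$ is not cofibrant); the needed fact that $R_*^\gp\to N^\gp$ is a weak equivalence for a free resolution $R_*\to N$ is a genuine input, which the paper obtains from Olsson's Theorem A.5 and Remark A.6. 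With that reference supplied in place of your heuristic, the argument is complete. Also note that compatibility of the splittings with the simplicial operators, which you single out as the main obstacle, is not actually needed: levelwise split injectivity alone guarantees that the levelwise short exact sequences survive $\otimes_B W$ and still form a short exact sequence of simplicial $B$-modules.
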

\begin{proof}
	By definition of the logarithmic module of differentials, we have a pushout for each $n$
	\[
	\begin{tikzcd}[row sep=4em, column sep=3em]
	\Omega_{\Z[R_n]|\Z[M]}\otimes_{\Z[R_n]} B \arrowr\arrowd{\alpha_n} & B\otimes_{\Z} R_n^\gp/M^\gp \arrowd{\gamma_n}\\
	\Omega_{F_n|A}\otimes_{F_n} B \arrowr & \Omega_{(F_n,R_n)|(A,M)}\otimes_{F_n} B
	\end{tikzcd}
	\]
	where $(A,M)\to(F_*,R_*)\to (B,N)$ is a factorization with $(A,M)\to(F_*,R_*)$ a free cofibration and $(F_*,R_*)\to(B,N)$ a trivial fibration. Since $A\to F_*\to B$ is a factorization cofibration - trivial fibration in the category of simplicial rings, $\Omega_{F_*|A}\otimes_{F_*}B=\LLL_{B|A}$ is the Andr\'e-Quillen cotangent complex of the $A$-algebra $B$. But $M\to R_n \to N$ is also a cofibration - trivial fibration in the category of simplicial monoids, and so by \cite[I.2.2.3, p.26]{Il-CC} and \cite[2.5, 2.6]{Quillen-MIT} $\Z[M]\to \Z[R_*]\to \Z[N]$ is a cofibration - trivial fibration in the category of simplicial rings.
	
	The maps $\alpha_n$ are injective and split since if $R_n=M\oplus \N^{Y_n}$ and $F_n=A[X_n,Y_n]$, then they are induced by the canonical injections
	\[\Omega_{\Z[R_n]|\Z[M]}\otimes_{\Z[R_n]}B= B^{Y_n}\longhookrightarrow B^{X_n\cup Y_n}=\Omega_{F_n|A}\otimes_{F_n}B .\] 

So we have a distinguished triangle
\[\Omega_{\Z[R]|\Z[M]}\otimes_{\Z[R]}W \longrightarrow  (\Omega_{F|A}\otimes_{F}W)\oplus(R^\gp/M^\gp\otimes_Z W) \longrightarrow \Omega_{(F,R)|(A,M)}\otimes_{F}W \longrightarrow \]
that is
\[\LLL_{\Z[N]|\Z[M]}\otimes_{\Z[N]}W \longrightarrow  (\LLL_{B|A}\otimes_{B}W)\oplus(R^\gp/M^\gp\otimes_Z W) \longrightarrow \LLL_{(B,N)|(A,M)}\otimes_{B}W \longrightarrow \ .\]

	It remains to compute $H_*(W\otimes_\Z R^\gp/M^\gp)$ and $H^*(\Hom_\Z(R^\gp/M^\gp,W))$. We have an exact\linebreak sequence, split in each degree
	\[0 \longrightarrow M^\gp \longrightarrow R^\gp \longrightarrow R^\gp/M^\gp \longrightarrow 0 .\]
	
	We also have $H_n(M^\gp)=0$ for $n>0$ and $H_0(M^\gp)=M^\gp$, since $M^\gp$ is concentrated in degree 0, and $H_n(R^\gp)=H_n(N^\gp)=0$ for $n>0$ and $H_0(R^\gp)=H_0(N^\gp)=N^\gp$ by \cite[Appendix A: Theorem A.5 and Remark A.6]{Ol}. So we have $H_n(R^\gp/M^\gp)=0$ for $n\geq2$ and an exact sequence
	\[
	0\longrightarrow H_1(R^\gp/M^\gp)\longrightarrow M^\gp \longrightarrow N^\gp \longrightarrow H_0(R^\gp/M^\gp)\longrightarrow 0 ,
	\]
	and then $H_1(R^\gp/M^\gp)=\ker(M^\gp \to N^\gp)$ and $H_0(R^\gp/M^\gp)=N^\gp/\im(M^\gp \to N^\gp)$.
	
	The result now follows from the universal coefficient exact sequences
	\[
	0\longrightarrow   W\otimes_\Z H_n( R^\gp/M^\gp) \longrightarrow   H_n(W\otimes_\Z R^\gp/M^\gp) \longrightarrow   \Tor_1^\Z(W, H_{n-1}(R^\gp/M^\gp)) \longrightarrow   0
	\]
	and
	\[0\longrightarrow  \Ext_\Z^1(H_{n-1}(R^\gp/M^\gp),W)\longrightarrow  H^n(\Hom_\Z(R^\gp/M^\gp,W)) \longrightarrow  \Hom_\Z(H_n( R^\gp/M^\gp), W)\longrightarrow  0 .\]
	
	Finally, the exact sequence $0 \to M^\gp \to R^\gp \to R^\gp/M^\gp \to 0 $ and the fact that $R^\gp$ is quasi-isomorphic to $N^\gp$ gives the desired triangle $M^\gp \to N^\gp \to R^\gp/M^\gp \to \ $.
\end{proof}

\begin{proposition}\label{4.04}
	Let $(A,M)\to (B,N)$ be a homomorphism essentially of finite type of noetherian prelog rings (see Definition~\ref{2.01}), let $C$ be a noetherian $B$-algebra and let $W$ be  a $C$-module of finite type. Then $H_n((A,M),(B,N),W)$ and $H^n((A,M),(B,N),W)$ are $C$-modules of finite type.
\end{proposition}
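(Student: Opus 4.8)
The plan is to derive everything from the fundamental exact sequences of Theorem~\ref{4.03}, reducing the assertion to the finiteness of ordinary André--Quillen (co)homology together with an elementary computation of the purely abelian-group ingredients. So the first thing I would do is collect the finiteness statements for all of the terms appearing in those exact sequences other than the $H_n((A,M),(B,N),W)$ themselves.

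Because $(A,M)\to(B,N)$ is essentially of finite type of noetherian prelog rings, the ring $A$ is noetherian and $A\to B$ is essentially of finite type, so $B$ is noetherian; moreover $M$ and $N$ are finitely generated monoids (Definition~\ref{2.01}), so $\Z[M]$ is noetherian and $\Z[M]\to\Z[N]$ is of finite type. The ring $C$ is a noetherian $B$-algebra, hence also a noetherian $\Z[N]$-algebra via $\Z[N]\to B\to C$, and $W$ is a finite $C$-module. By the finiteness theorem for André--Quillen homology (\cite{An-1974}; the essentially-of-finite-type case follows from the finite type case via the localization and flat base change properties of that theory), the modules $H_n(A,B,W)$ and $H_n(\Z[M],\Z[N],W)$, together with their cohomological analogues, are finite $C$-modules for every $n$.

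Next I would treat the group-theoretic summands. Again by Definition~\ref{2.01}, $M^\gp$ and $N^\gp$ are finitely generated abelian groups, hence so are $\ker(M^\gp\to N^\gp)$ and $N^\gp/\im(M^\gp\to N^\gp)$. Writing a finitely generated abelian group as a finite direct sum of copies of $\Z$ and of cyclic groups $\Z/d$, one sees that applying $-\otimes_\Z W$, $\Tor_1^\Z(-,W)$ or $\Ext_\Z^1(-,W)$ to such a group yields a finite direct sum of the $C$-modules $W$, $W[d]:=\ker(W\xrightarrow{d}W)$ and $W/dW$, all of which are finite over the noetherian ring $C$. Consequently the modules $\Gamma$ and $\Lambda$, which sit in short exact sequences whose other two terms are of this shape, are finite $C$-modules, and therefore so are the direct sums $H_0(A,B,W)\oplus(N^\gp/\im(M^\gp)\otimes_\Z W)$, $H_1(A,B,W)\oplus\Gamma$, $H_2(A,B,W)\oplus\Tor_1^\Z(\ker(M^\gp\to N^\gp),W)$ and their cohomological counterparts.

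Finally I would feed this into the long exact sequences of Theorem~\ref{4.03}. In each degree $n$ the module $H_n((A,M),(B,N),W)$ fits into an exact sequence whose left neighbour is a quotient of one of the finite $C$-modules listed above and whose right neighbour is a submodule of the finite $C$-module $H_{n-1}(\Z[M],\Z[N],W)$; since submodules and quotients of finite modules over the noetherian ring $C$ are finite, and an extension of finite modules is finite, $H_n((A,M),(B,N),W)$ is a finite $C$-module. The argument for $H^n((A,M),(B,N),W)$ is verbatim the same, using the cohomological exact sequence. The only genuinely non-formal input is the finiteness of ordinary André--Quillen (co)homology in the noetherian essentially-of-finite-type setting, so that is the step I expect to require the most care (or at least a careful citation); everything else is bookkeeping with the exact sequences of Theorem~\ref{4.03}.
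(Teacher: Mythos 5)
Your proof is correct and follows exactly the route the paper takes: the paper's own proof is the one-line observation that the claim follows from the fundamental exact sequences of Theorem~\ref{4.03} together with the finiteness theorem for ordinary Andr\'e--Quillen (co)homology \cite[4.55]{An-1974}. You have simply spelled out the routine details (finiteness of the $\Tor_1^\Z$, $\Ext_\Z^1$, $\Hom_\Z$ and tensor terms attached to the finitely generated abelian groups $\ker(M^\gp\to N^\gp)$ and $N^\gp/\im(M^\gp)$, and the bookkeeping in the long exact sequences) that the paper leaves implicit.
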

\begin{proof}
	It follows from Theorem~\ref{4.03} and the analogue result in Andr\'e-Quillen (co)homology \cite[4.55]{An-1974}.
\end{proof}

\begin{proposition}\label{4.05}
	Let $n\geq0$ be an integer. In the situation of Proposition~\ref{4.04}, the following are equivalent:
	\begin{enumerate}
		\item[(i)] $H_n((A,M),(B,N),W)=0$ for all $C$-modules $W$,
		\item[(ii)] $H^n((A,M),(B,N),W)=0$ for all $C$-modules $W$,
		\item[(iii)] $H_n((A,M),(B,N),C/\nnn)=0$ for all maximal ideals $\nnn$ of $C$,
		\item[(iv)] $H^n((A,M),(B,N),C/\nnn)=0$ for all maximal ideals $\nnn$ of $C$.
	\end{enumerate}
\end{proposition}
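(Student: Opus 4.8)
The plan is to reduce the statement to a property of an ordinary complex of finite free $C$-modules, and then to settle that property by a local argument with minimal complexes. For the reduction I would put $X:=\LLL_{(B,N)|(A,M)}\otimes_B C$; by construction this is a complex of free $C$-modules concentrated in non-negative degrees (base change of the free modules $\Omega_{(F_n,R_n)|(A,M)}$ appearing in the proof of Theorem~\ref{4.03}), and for every $C$-module $W$ one has $H_n((A,M),(B,N),W)=H_n(X\otimes_C W)$ and $H^n((A,M),(B,N),W)=H^n(\Hom_C(X,W))$. By Proposition~\ref{4.04} (applied to the noetherian $B$-algebra $C$ and the module $C$) each $H_i(X)=H_i((A,M),(B,N),C)$ is a finitely generated $C$-module, so $X$ is a bounded-below complex with finitely generated homology over the noetherian ring $C$; by the standard fact that such a complex is pseudo-coherent there is a quasi-isomorphism $P\to X$ with $P$ a complex of finite free $C$-modules and $P_i=0$ for $i<0$. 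Since $P$ and $X$ are bounded-below complexes of projectives this quasi-isomorphism is a homotopy equivalence, hence induces $H_n(X\otimes_C W)\cong H_n(P\otimes_C W)$ and $H^n(\Hom_C(X,W))\cong H^n(\Hom_C(P,W))$ for every $C$-module $W$; and because each $P_i$ is finite free one may rewrite $\Hom_C(P,W)=P^\vee\otimes_C W$, where $P^\vee:=\Hom_C(P,C)$ is a complex of finite free $C$-modules concentrated in non-positive degrees, so that the cohomology side becomes $H_{-n}(P^\vee\otimes_C W)$.

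The heart of the matter is then the following lemma, which I would prove next: if $Q$ is a complex of finite free modules over the noetherian ring $C$ that is bounded on one side, and $m\in\Z$, then $H_m(Q\otimes_C W)=0$ for all $C$-modules $W$ if and only if $H_m(Q\otimes_C C/\nnn)=0$ for all maximal ideals $\nnn$ of $C$. One implication is trivial. For the converse, assume the vanishing at all residue fields $C/\nnn$; since each $Q_i$ is flat, $H_m(Q\otimes_C-)$ carries short exact sequences of modules to long exact sequences and commutes with filtered colimits, so by dévissage of a finitely generated module along a filtration with quotients $C/\ppp$ ($\ppp$ prime) it suffices to show $H_m(Q\otimes_C C/\ppp)=0$ for every prime $\ppp$. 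If some such homology is nonzero it is a nonzero finitely generated $C/\ppp$-module, so its support in $\Spec C$ contains a maximal ideal $\nnn\supseteq\ppp$; localising at $\nnn$, over the noetherian local ring $R:=C_\nnn$ with residue field $k=C/\nnn$ the complex $Q_R:=Q\otimes_C R$ of finite free $R$-modules is still bounded on one side, hence splits as $Q_R\cong Q_R^{\min}\oplus T$ with $T$ a direct sum of contractible complexes $(\cdots\to0\to R\xrightarrow{\id}R\to0\to\cdots)$ and every differential of $Q_R^{\min}$ having all entries in $\nnn R$. Then $0=H_m(Q\otimes_C k)=H_m(Q_R^{\min}\otimes_R k)=(Q_R^{\min})_m\otimes_R k$, so $(Q_R^{\min})_m=0$ by Nakayama, and therefore $H_m(Q\otimes_C C/\ppp)_\nnn=H_m(Q_R^{\min}\otimes_R R/\ppp R)$ is a subquotient of $(Q_R^{\min})_m\otimes_R R/\ppp R=0$, contradicting the choice of $\nnn$.

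Granting the lemma, I would conclude as follows: applying it to $(Q,m)=(P,n)$ gives (i)$\Leftrightarrow$(iii); applying it to $(Q,m)=(P^\vee,-n)$, together with $\Hom_C(P,C/\nnn)=P^\vee\otimes_C C/\nnn$ and the identifications of the first paragraph, gives (ii)$\Leftrightarrow$(iv); and for a maximal ideal $\nnn$ with $k=C/\nnn$ the complex $\Hom_C(P,k)$ is the $k$-linear dual of the complex of $k$-vector spaces $P\otimes_C k$, so by exactness of $k$-linear duality $H^n(\Hom_C(P,k))$ is the $k$-dual of $H_n(P\otimes_C k)$, giving (iii)$\Leftrightarrow$(iv) (this last point in fact needs only $X$, not the finite free model). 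Together these three equivalences close up into (i)$\Leftrightarrow$(ii)$\Leftrightarrow$(iii)$\Leftrightarrow$(iv). I expect the only slightly delicate points to be two: locating, or else spelling out, the pseudo-coherence fact used to produce $P$; and checking that the minimal decomposition $Q_R\cong Q_R^{\min}\oplus T$ really goes through for complexes that are merely bounded on one side --- the degree-by-degree splitting-off of unit entries in the differentials must have a lowest (or highest) degree from which to start, and must stabilise in each fixed degree after finitely many steps. Everything else --- the dévissage, the colimit reduction, the support argument, and the duality observation --- is routine.
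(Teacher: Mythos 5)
Your argument is correct and is essentially the proof the paper intends: the paper's proof is just the citation ``Similar to that of \cite[4.57]{An-1974}'', and Andr\'e's argument there proceeds exactly as yours does, replacing the cotangent complex by a quasi-isomorphic bounded-below complex of finite free $C$-modules and reducing all four vanishing conditions to the residue fields $C/\nnn$ via minimal complexes over $C_\nnn$ and Nakayama's lemma. The two points you flag as delicate (pseudo-coherence of a bounded-below complex with finitely generated homology over a noetherian ring, and the splitting $Q_R\cong Q_R^{\min}\oplus T$ for one-sided bounded complexes of finite free modules over a noetherian local ring) are both standard and go through as you describe.
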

\begin{proof}
	Similar to that of \cite[4.57]{An-1974}.
\end{proof}

\begin{proposition}\label{4.06}
	Let $(A,M)\to (B,N)$ be a homomorphism of prelog rings and let $W$ be a $B$-module. Then
	\begin{enumerate}
		\item[(i)] $H_0((A,M),(B,N),W)=\Omega_{(B,N)|(A,M)}\otimes_BW$.
		\item[(ii)] $H^0((A,M),(B,N),W)=\Der_{(A,M)}((B,N),B)$.
	\end{enumerate}
	Moreover, if $A \to B$ and $M\to N$ are surjective, then
	\begin{enumerate}
		\item[(iii)] $H_1((A,M),(B,N),W)=N_{(B,N)|(A,M)}\otimes_BW$.
		\item[(iv)] $H^1((A,M),(B,N),W)=\Hom_B(N_{(B,N)|(A,M)},W)$.
	\end{enumerate}
\end{proposition}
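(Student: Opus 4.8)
The plan is to reduce everything to Theorem~\ref{4.03} and some low-degree bookkeeping, handling (i)--(ii) (the degree-$0$ statements) and (iii)--(iv) (the degree-$1$ statements, under surjectivity) separately.

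For (i), recall that $H_0$ of a simplicial $B$-module $C_\bullet$ is the cokernel of $d_0-d_1\colon C_1\to C_0$, so $H_0$ commutes with the right-exact functor $-\otimes_BW$; hence $H_0((A,M),(B,N),W)=H_0(\LLL_{(B,N)|(A,M)})\otimes_BW$, and it remains to identify $H_0(\LLL_{(B,N)|(A,M)})$ with $\Omega_{(B,N)|(A,M)}$. With a factorization $(A,M)\to(F_\bullet,R_\bullet)\to(B,N)$ as in Section~\ref{CC}, each degree of $\LLL_{(B,N)|(A,M)}=\Omega_{(F_\bullet,R_\bullet)|(A,M)}\otimes_{F_\bullet}B$ is, by Definition~\ref{3.09}, the pushout of $\Omega_{F_n|A}\otimes_{F_n}B\leftarrow\Omega_{\Z[R_n]|\Z[M]}\otimes_{\Z[R_n]}B\to B\otimes_\Z R_n^\gp/M^\gp$; since $H_0$ (a colimit) commutes with pushouts, $H_0(\LLL_{(B,N)|(A,M)})$ is the pushout of $H_0(\LLL_{B|A})=\Omega_{B|A}$, $H_0(\LLL_{\Z[N]|\Z[M]}\otimes_{\Z[N]}B)=\Omega_{\Z[N]|\Z[M]}\otimes_{\Z[N]}B$ and $B\otimes_\Z H_0(R_\bullet^\gp/M^\gp)=B\otimes_\Z N^\gp/\im(M^\gp\to N^\gp)$, the last two computations being exactly those made inside the proof of Theorem~\ref{4.03}; this is precisely the pushout defining $\Omega_{(B,N)|(A,M)}$. (Equivalently, (i) is the bottom row of the homology sequence of Theorem~\ref{4.03}, once one recalls $H_0(A,B,W)=\Omega_{B|A}\otimes_BW$ and $H_0(\Z[M],\Z[N],W)=\Omega_{\Z[N]|\Z[M]}\otimes_{\Z[N]}W$.) Part (ii) is then immediate: $H^0((A,M),(B,N),W)=\ker(\Hom_B(C_0,W)\to\Hom_B(C_1,W))=\Hom_B(H_0(\LLL_{(B,N)|(A,M)}),W)=\Hom_B(\Omega_{(B,N)|(A,M)},W)$, which equals $\Der_{(A,M)}((B,N),W)$ by Proposition~\ref{3.16}.

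For (iii) and (iv) assume $A\to B$ and $M\to N$ surjective; then $\Z[M]\to\Z[N]$ and $M^\gp\to N^\gp$ are surjective as well, so $\Omega_{B|A}=0$, $\Omega_{\Z[N]|\Z[M]}=0$ and $N^\gp/\im(M^\gp\to N^\gp)=0$. Consequently, in Theorem~\ref{4.03}, $H_0(A,B,W)=H_0(\Z[M],\Z[N],W)=0$, $\Gamma=\ker(M^\gp\to N^\gp)\otimes_\Z W$ and $\Lambda=\Hom_\Z(\ker(M^\gp\to N^\gp),W)$; moreover, writing $I=\ker(A\to B)$ and $J=\ker(\Z[M]\to\Z[N])$, we have $H_0(\LLL_{B|A})=0$, $H_1(\LLL_{B|A})=I/I^2$, and similarly for $J$, and — since $\LLL_{B|A}$, $\LLL_{\Z[N]|\Z[M]}$ are complexes of free modules whose $H_0$ vanishes (so the relevant cycle module is a direct summand, hence flat, and the universal-coefficient sequence degenerates) — one gets $H_1(A,B,W)=(I/I^2)\otimes_BW$, $H^1(A,B,W)=\Hom_B(I/I^2,W)$, and likewise for $J$. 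Feeding these into the degree-$\le 1$ part of the two sequences of Theorem~\ref{4.03}, and noting $H_0((A,M),(B,N),W)=0=H^0((A,M),(B,N),W)$ (so the neighbours of $H_1$, resp.\ $H^1$, drop out), one obtains
\[ H_1((A,M),(B,N),W)=\coker\bigl((J/J^2)\otimes_{\Z[N]}W\longrightarrow ((I/I^2)\otimes_BW)\oplus(\ker(M^\gp\to N^\gp)\otimes_\Z W)\bigr) \]
and, dually, $H^1((A,M),(B,N),W)=\ker\bigl(\Hom_B(I/I^2,W)\oplus\Hom_\Z(\ker(M^\gp\to N^\gp),W)\to\Hom_{\Z[N]}(J/J^2,W)\bigr)$. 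On the other hand, applying $-\otimes_BW$ (resp.\ $\Hom_B(-,W)$, together with the adjunctions $\Hom_B(B\otimes_\Z-,W)=\Hom_\Z(-,W)$ and $\Hom_B(B\otimes_{\Z[N]}-,W)=\Hom_{\Z[N]}(-,W)$) to the pushout of Definition~\ref{3.04} defining $N_{(B,N)|(A,M)}$ (case $(C,L)=(A,M)$) produces exactly the same cokernel (resp.\ kernel).

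The one nontrivial point — and the one I expect to cost the most work — is that the maps in the sequences of Theorem~\ref{4.03} agree, under the identifications above, with the structural maps $u$ and $\id\otimes\nu_g$ of Definition~\ref{3.04} (and, dually, with their $\Hom_B(-,W)$-duals). This is bookkeeping rather than a new idea: the triangle of Theorem~\ref{4.03} was built degreewise from the pushout squares of Definition~\ref{3.09}, which are in turn assembled from the maps $v$ (hence $\nu_g$) and $u$; since $A\to B$ and $M\to N$ are surjective one can arrange the resolution $(F_\bullet,R_\bullet)$ so that in low degrees the simplicial differential on $\Omega_{(F_\bullet,R_\bullet)|(A,M)}\otimes_{F_\bullet}B$ realizes the presentation of $N_{(B,N)|(A,M)}$ of Definition~\ref{3.04} (compare also Proposition~\ref{3.13}). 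Granting this, the cokernel and kernel displayed above are literally $N_{(B,N)|(A,M)}\otimes_BW$ and $\Hom_B(N_{(B,N)|(A,M)},W)$, which proves (iii) and (iv).
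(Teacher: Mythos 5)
Your proposal is correct and follows essentially the same route as the paper: both deduce (i)--(iv) from the fundamental exact sequences of Theorem~\ref{4.03} together with the classical identifications $H_0(A,B,W)=\Omega_{B|A}\otimes_BW$ and, in the surjective case, $H_1(A,B,W)=(I/I^2)\otimes_BW$ (and their analogues for $\Z[M]\to\Z[N]$ and for cohomology), then recognize the resulting pushout/cokernel as the defining presentation of $\Omega_{(B,N)|(A,M)}$, resp.\ $N_{(B,N)|(A,M)}$. You are merely more explicit than the paper about the one point it leaves implicit, namely that the maps induced on $H_1$ coincide with $u$ and $\nu_g$ from Definition~\ref{3.04}; that compatibility is indeed routine naturality of the degreewise pushout construction, as you say.
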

\begin{proof}
	(i) $H_0(\Z[M],\Z[N],W)=\Omega_{Z[N]|\Z[M]}\otimes_{\Z[N]}W$ and $H_0(A,B,W)=\Omega_{B|A}\otimes_B N$ by \cite[6.3]{An-1974} and therefore the result follows from Theorem~\ref{4.03}  and the definition of the logarithmic module of differentials.
	
	(iii) $H_0(\Z[M],\Z[N],W)=0$, $H_1(\Z[M],\Z[N],W)=W\otimes_{\Z[N]} J/ J^2$ where $ J:=\ker(\Z[M]\to\Z[N])$ and $H_1(A,B,M)=W\otimes I/ I^2$ where $ I:=\ker(A\to B)$ \cite[6.1, 6.3]{An-1974}. With the notation of\linebreak Theorem~\ref{4.03}, $\Gamma=W\otimes_\Z\ker(M^\gp\to N^\gp)$. So again we obtain the isomorphism by definition of the conormal module.
	
	(ii) and (iv) are similar.
\end{proof}

\begin{proposition}\label{4.07}
	Let $(A,M)$ be a prelog ring and $(B,N)$ a free $(A,M)$-prelog algebra (Example~\ref{3.14}.(i)). Then
	\[H_n((A,M),(B,N),W)=0=H^n((A,M),(B,N),W)\]
	for all $n>0$ and all $B$-modules $W$.
\end{proposition}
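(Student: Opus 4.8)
The plan is to compute the logarithmic cotangent complex $\LLL_{(B,N)|(A,M)}$ from the tautological (constant) resolution of $(B,N)$ over $(A,M)$. Writing $B=A[X,Y]$ and $N=M\oplus\N^Y$ as in Examples~\ref{3.14}.(i), I would take $(F_*,R_*)$ to be the constant simplicial prelog ring with $(F_n,R_n)=(B,N)$ for all $n$ and with all faces and degeneracies equal to the identity. Then $(A,M)\to(F_*,R_*)$ is a free cofibration in the sense of Section~\ref{CC}: each $(F_n,R_n)$ is a free $(A,M)$-prelog algebra, and the degeneracies, being identities, carry the bases $X$ and $Y$ to themselves; moreover the identity map $(F_*,R_*)\to(B,N)$, with $(B,N)$ also regarded as constant, is a trivial fibration, since the identity of a simplicial ring or simplicial monoid is one. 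Hence $(A,M)\to(F_*,R_*)\to(B,N)$ is an admissible cofibration--trivial fibration factorization, which may be used to compute $\LLL_{(B,N)|(A,M)}=\Omega_{(F_*,R_*)|(A,M)}\otimes_{F_*}B$.

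Next I would observe that, since $\Omega_{(-,-)|(A,M)}$ and the base change $(-)\otimes_{F_*}B$ are formed degreewise, applying them to the constant object $(F_*,R_*)$ produces the constant simplicial $B$-module $\Omega_{(B,N)|(A,M)}\otimes_B B=\Omega_{(B,N)|(A,M)}$, which by Examples~\ref{3.14}.(i) is a single (free) $B$-module placed in homological degree $0$. It then follows that, for any $B$-module $W$, both $\LLL_{(B,N)|(A,M)}\otimes_B W$ and $\Hom_B(\LLL_{(B,N)|(A,M)},W)$ are the constant (co)simplicial $B$-modules on $\Omega_{(B,N)|(A,M)}\otimes_B W$ and $\Hom_B(\Omega_{(B,N)|(A,M)},W)$, respectively. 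Since the normalized (co)chain complex of a constant (co)simplicial module is concentrated in degree $0$, one obtains $H_n((A,M),(B,N),W)=0=H^n((A,M),(B,N),W)$ for every $n>0$, the degree-$0$ terms being as in Proposition~\ref{4.06}.

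I do not expect a genuine obstacle: the only two points needing a line of justification are that the constant object $(B,N)$ really furnishes a cofibration--trivial fibration factorization (immediate from the explicit description of free cofibrations recalled in Section~\ref{CC}) and that $\Omega_{(-,-)|(A,M)}$ and $(-)\otimes_{F_*}B$ commute with the passage to constant simplicial objects (formal, since both are defined degreewise). As an alternative one could deduce the statement from the fundamental exact sequence of Theorem~\ref{4.03}: since $A\to B=A[X,Y]$ and $\Z[M]\to\Z[N]=\Z[M][Y]$ are polynomial extensions their Andr\'e-Quillen (co)homology vanishes in positive degrees, and $M^\gp\to N^\gp=M^\gp\oplus\Z^Y$ is a split injection with free cokernel, so the remaining $\ker$-, $\Tor_1^\Z$- and $\Ext_\Z^1$-contributions vanish as well; but the direct argument above is shorter and more transparent.
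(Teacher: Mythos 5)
Your main argument is exactly the paper's proof: the paper simply observes that $(A,M)\to(B,N)\to(B,N)$ is already a cofibration--trivial fibration factorization, so the cotangent complex is the constant simplicial module on $\Omega_{(B,N)|(A,M)}$ and the higher (co)homology vanishes. Your write-up spells out the same one-line idea correctly (and your alternative via Theorem~\ref{4.03} would also work), so there is nothing to fix.
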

\begin{proof}
	$(A,M)\to (B,N) \to (B,N)$ is a cofibration - trivial fibration.
\end{proof}

\begin{proposition}[Base Change]\label{4.08}
	Let $(A,M)\to(B,N)$ and $(A,M)\to(C,P)$ be homomorphisms of prelog rings, let $(D,Q):=(B\otimes_AC,N\oplus_MP)$, and let $W$ be a $B\otimes_AC$-module. Assume that the following conditions hold:
	\begin{enumerate}
		\item[(i)] $\Tor_i^A(B,C)=0$ for all $i>0$.
		\item[(ii)] $\Tor_i^{\Z[M]}(\Z[N],\Z[P])=0$ for all $i>0$.
		\item[(iii)] The canonical homomorphism $M^\gp\to N^\gp\oplus P^\gp$ is injective (e.g. if $M^\gp\to N^\gp$ or $M^\gp\to P^\gp$ is injective).
	\end{enumerate}
	Then the canonical homomorphisms
	\begin{align*}
	&H_n((A,M),(B,N),W)\longrightarrow H_n((C,P),(D,Q),W) ,\\
	&H^n((C,P),(D,Q),W)\longrightarrow H^n((A,M),(B,N),W)
	\end{align*}
	are isomorphisms for all $n\geq0$.
\end{proposition}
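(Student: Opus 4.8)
The plan is to deduce the statement from the naturality of the distinguished triangle of Theorem~\ref{4.03}, applied to both rows of the base-change square, together with three quasi-isomorphisms, one for each of its three vertices. Fix a factorization $(A,M)\to(F_*,R_*)\to(B,N)$ into a free cofibration followed by a trivial fibration; as in the proof of Theorem~\ref{4.03} this produces, \emph{before} tensoring with $W$, a distinguished triangle of bounded-below complexes of free $B$-modules
\[ \LLL_{\Z[N]|\Z[M]}\otimes_{\Z[N]}B \longrightarrow \LLL_{B|A}\oplus\big(X_{N|M}\otimes_\Z B\big) \longrightarrow \LLL_{(B,N)|(A,M)} \longrightarrow \ , \]
where $X_{N|M}$ is represented by the complex of free abelian groups $R_*^\gp/M^\gp$ and fits in a triangle $M^\gp\to N^\gp\to X_{N|M}\to$. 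The same construction applied to $(C,P)\to(D,Q)$ gives an analogous triangle of complexes of free $D$-modules. Since $(D,Q)=(B,N)\oplus_{(A,M)}(C,P)$, the base-change morphism of prelog rings induces, by the naturality asserted in Theorem~\ref{4.03}, a morphism from the first triangle (tensored up along $B\to D$) to the second.

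First I would treat the two ``ring'' vertices. We have $\Z[Q]=\Z[N\oplus_MP]=\Z[N]\otimes_{\Z[M]}\Z[P]$ and $D=B\otimes_AC$, so conditions (i) and (ii) say precisely that the pairs $(B,C)$ over $A$ and $(\Z[N],\Z[P])$ over $\Z[M]$ are $\Tor$-independent. Hence the base-change property of the (non-logarithmic) Andr\'e--Quillen cotangent complex (see \cite{An-1974}) gives quasi-isomorphisms $\LLL_{B|A}\otimes_BD\overset{\sim}{\longrightarrow}\LLL_{D|C}$ and $\LLL_{\Z[N]|\Z[M]}\otimes_{\Z[N]}\Z[Q]\overset{\sim}{\longrightarrow}\LLL_{\Z[Q]|\Z[P]}$; tensoring the latter up along $\Z[Q]\to D$ identifies the first two vertices of the two triangles.

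Next I would treat the group-completion vertex. Since $Q^\gp=(N\oplus_MP)^\gp=N^\gp\oplus_{M^\gp}P^\gp$, the commutative square of abelian groups with corners $M^\gp$, $N^\gp$, $P^\gp$, $Q^\gp$ is a pushout; its homotopy pushout is the mapping cone of the canonical map $M^\gp\to N^\gp\oplus P^\gp$, whose $H_0$ is the ordinary pushout $Q^\gp$ and whose $H_1$ is $\ker(M^\gp\to N^\gp\oplus P^\gp)$, which vanishes by condition (iii). Thus the square is homotopy cocartesian, so the induced map from the mapping cone of $M^\gp\to N^\gp$ to that of $P^\gp\to Q^\gp$, i.e.\ $X_{N|M}\to X_{Q|P}$, is a quasi-isomorphism; since both sides have free representatives over $\Z$, it remains a quasi-isomorphism after $-\otimes_\Z D$.

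Combining, the morphism of distinguished triangles is a quasi-isomorphism on two of the three vertices, hence on the third: $\LLL_{(B,N)|(A,M)}\otimes_BD\to\LLL_{(D,Q)|(C,P)}$ is a quasi-isomorphism of bounded-below complexes of free $D$-modules, hence a chain homotopy equivalence, and so survives $-\otimes_DW$ and $\Hom_D(-,W)$. Using $(\LLL_{(B,N)|(A,M)}\otimes_BD)\otimes_DW=\LLL_{(B,N)|(A,M)}\otimes_BW$ and the dual identity for $\Hom$, passing to homology and cohomology yields the asserted isomorphisms, and unwinding the construction shows that they are the canonical base-change maps. The most delicate point I anticipate is making the naturality in Theorem~\ref{4.03} precise enough to obtain an honest morphism of triangles over the base-change square (not merely termwise compatible triangles), together with the bookkeeping identifying condition (iii) with the homotopy-cocartesianness of the square of group completions; the two ``ring'' comparisons reduce directly to classical Andr\'e--Quillen base change.
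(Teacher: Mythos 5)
Your proposal is correct and follows essentially the same route as the paper: conditions (i) and (ii) feed into classical Andr\'e--Quillen base change for the two ``ring'' terms, condition (iii) guarantees that the pushout square of group completions preserves kernel and cokernel (equivalently, is homotopy cocartesian, so $X_{N|M}\to X_{Q|P}$ is a quasi-isomorphism), and the conclusion is extracted from the fundamental triangle of Theorem~\ref{4.03}. The only difference is presentational: you run a two-out-of-three argument on a morphism of triangles at the level of complexes, while the paper tensors with $W$ first and applies the five lemma to the resulting long exact sequences.
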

\begin{proof}
	The canonical homomorphisms
	\begin{align*}
	H_n(A,B,W)&\longrightarrow H_n(C,D,W) , \\
	H^n(C,D,W)&\longrightarrow H^n(A,B,W) , \\
	H_n(\Z[M],\Z[N],W) &\longrightarrow H_n(\Z[P],\Z[Q],W) , \\
	H^n(\Z[P],\Z[Q],W) &\longrightarrow H^n(\Z[M],\Z[N],W)
	\end{align*}
	are isomorphisms for all $n\geq0$ by \cite[4.54]{An-1974}.
	
	Since $(\ \ )^\gp$ is left adjoint, we have a pushout
	\[
	\begin{tikzcd}[column sep=4em,row sep=4em]
	M^\gp \arrowd{j} \arrowr{u} & N^\gp \arrowd{v} \\
	P^\gp \arrowr{w} & Q^\gp
	\end{tikzcd}
	\]
	Since $u\oplus j\colon M^\gp\to N^\gp\oplus P^\gp$ is injective, we have isomorphisms of abelian groups
	\begin{align*}
	\ker(u)&= \ker(w) ,\\
	\coker(u)&= \coker(w) .
	\end{align*}
	The result then follows from the fundamental exact sequences (Theorem~\ref{4.03}).
\end{proof}

\begin{proposition}[Jacobi-Zariski exact sequence]\label{4.09}
	Let $(A,M)\to(B,N)\to (C,L)$ be homomorphisms of prelog rings and $W$ a $C$-module. There exist natural exact sequences
	\[
	\begin{tikzcd}[column sep=1em,row sep=0ex]
	\cdots \arrowr & H_n((A,M),(B,N),W) \arrowr & H_n((A,M),(C,L),W) \arrowr & H_n((B,N),(C,L),W) \arrowr & \; \\
	\; \arrowr & H_{n-1}((A,M),(B,N),W) \arrowr & \cdots \arrowr & H_0((B,N),(C,L),W) \arrowr & 0
	\end{tikzcd}
	\]
	and
	\[
	\begin{tikzcd}[column sep=1em,row sep=0ex]
	0\arrowr & H^0((B,N),(C,L),W)\arrowr & H^0((A,M),(C,L),W)\arrowr & H^0((A,M),(B,N),W) \arrowr & \; \\
	\; \arrowr & H^1((B,N),(C,L),W) \arrowr & \cdots .
	\end{tikzcd}
	\]
\end{proposition}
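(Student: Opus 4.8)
The plan is to deduce this from the classical Jacobi-Zariski exact sequences in ordinary Andr\'e-Quillen (co)homology via the fundamental exact sequences of Theorem~\ref{4.03}. However, the cleanest route is a direct construction at the level of cotangent complexes: given $(A,M)\to(B,N)\to(C,L)$, I would first produce a distinguished triangle of complexes of $C$-modules
\[
\LLL_{(B,N)|(A,M)}\otimes_B C \longrightarrow \LLL_{(C,L)|(A,M)} \longrightarrow \LLL_{(C,L)|(B,N)} \longrightarrow \quad ,
\]
and then apply $-\otimes_C W$ and $\RHom_C(-,W)$ to obtain the long exact sequences after passing to (co)homology. To build this triangle, choose a free cofibration $(A,M)\to(F_*,R_*)$ with a trivial fibration $(F_*,R_*)\to(B,N)$, and then a further free cofibration $(F_*,R_*)\to(G_*,S_*)$ followed by a trivial fibration $(G_*,S_*)\to(C,L)$; then $(A,M)\to(G_*,S_*)$ computes $\LLL_{(C,L)|(A,M)}$, while $(F_*,R_*)\to(G_*,S_*)$ computes $\LLL_{(C,L)|(B,N)}$ after base change to $C$ (using that a cofibration between cofibrant objects stays a cofibration after cobase change, so $B\otimes_F(-)$ of the factorization is still a valid resolution for the fiber term, up to weak equivalence). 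The triangle is then the simplicial/degreewise transitivity sequence
\[
\Omega_{(F_*,R_*)|(A,M)}\otimes_{F_*} G_* \longrightarrow \Omega_{(G_*,S_*)|(A,M)} \longrightarrow \Omega_{(G_*,S_*)|(F_*,R_*)} \longrightarrow \quad ,
\]
which is degreewise split exact because at each level the prelog algebras are free (Example~\ref{3.14}.(i)) and the transitivity sequence of free modules on disjoint bases is split exact; a degreewise split short exact sequence of simplicial modules yields a distinguished triangle in the derived category.

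After base-changing by $-\otimes_{G_*} C$ and identifying the three terms with $\LLL_{(B,N)|(A,M)}\otimes_B C$, $\LLL_{(C,L)|(A,M)}$, and $\LLL_{(C,L)|(B,N)}$ respectively (the first identification uses Base Change, Proposition~\ref{3.10}, applied simplicially, i.e. $\Omega_{(F_*,R_*)|(A,M)}\otimes_{F_*} B$ is the cotangent complex of $(B,N)$ over $(A,M)$ and tensoring further with $C$ over $B$ is harmless since everything is cofibrant; the identification of the fiber term uses that $(F_*,R_*)\to(G_*,S_*)$ is a free cofibration over a cofibrant base, so it remains a cofibration after applying $C\otimes_{F_*}(-)$, and $(C\otimes_{F_*}G_*, \dots)\to(C,L)$ is still a trivial fibration by the K\"unneth-type vanishing coming from freeness), applying $-\otimes_C W$ and taking homology gives the first long exact sequence, and applying $\Hom_C(-,W)$ (i.e. $\RHom$, since the complexes are degreewise free hence the naive Hom computes it) and taking cohomology gives the second. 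The zero on the right of the homology sequence and the zero on the left of the cohomology sequence come from the fact that $\LLL$ is connective (concentrated in non-negative degrees) together with right-exactness of $\otimes$ and left-exactness of $\Hom$ on $H_0$.

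Naturality of both sequences is automatic from the functoriality of the cotangent complex construction (Definition~\ref{3.09} and Proposition~\ref{3.06} give functoriality of $\Omega$ in commutative squares of prelog rings, which lifts to the simplicial resolutions since any two choices of cofibrant replacement are connected by a weak equivalence). The main obstacle I anticipate is the bookkeeping in the identification of the fiber term $\LLL_{(C,L)|(B,N)}$: one must check carefully that pushing the chosen factorization of $(B,N)\to(C,L)$ along $(F_*,R_*)\to(B,N)$ — or equivalently, base-changing the two-stage factorization $(F_*,R_*)\to(G_*,S_*)\to(C,L)$ by $B\otimes_F(-)$ — still yields a cofibration followed by a trivial fibration; the cofibration part is formal (cobase change of a cofibration), and the triviality of the fibration part reduces to the fact that for free prelog algebras the relevant Tor-groups vanish, which is exactly the kind of computation underlying Example~\ref{4.02} and Proposition~\ref{4.08}. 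Alternatively, if one prefers to avoid this, the entire statement follows mechanically by feeding the classical Jacobi-Zariski sequences for $A\to B\to C$ and $\Z[M]\to\Z[N]\to\Z[L]$ (Andr\'e \cite[5.1]{An-1974}), together with the evident long exact sequence coming from the octahedron applied to the composite $M^\gp\to N^\gp\to L^\gp$, into the two fundamental exact sequences of Theorem~\ref{4.03} and chasing; but the cotangent-complex argument is shorter and more transparent.
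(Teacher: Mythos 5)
Your argument is correct, but it is worth noting that the paper does not prove this proposition at all: its ``proof'' is a citation of Olsson's Theorem 8.18. What you have written is essentially a self-contained reconstruction of that cited result, by the standard transitivity argument: factor $(A,M)\to(F_*,R_*)\to(B,N)$ and then $(F_*,R_*)\to(G_*,S_*)\to(C,L)$, observe that the degreewise transitivity sequence of modules of differentials is split exact because everything in sight is a free prelog algebra on an explicit basis (Example~\ref{3.14}.(i)), and pass to the derived category. You correctly isolate the one genuinely delicate point, namely the identification $\Omega_{(G_*,S_*)|(F_*,R_*)}\otimes_{G_*}C\simeq \LLL_{(C,L)|(B,N)}$, which requires invariance of the cotangent complex under the weak equivalence $(F_*,R_*)\to(B,N)$ of bases; your cobase-change argument is the right one. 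Two small caveats. First, after cobase change along $(F_*,R_*)\to(B,N)$ the map $(B\otimes_{F_*}G_*,N\oplus_{R_*}S_*)\to(C,L)$ should only be claimed to be a weak equivalence (by the flatness of $G_n$ over $F_n$ and of $\Z[S_n]$ over $\Z[R_n]$), not a trivial fibration; this is harmless, since any factorization into a cofibration followed by a weak equivalence computes $\LLL$ up to quasi-isomorphism. Second, your closing aside that the statement ``follows mechanically'' by chasing the two fundamental exact sequences of Theorem~\ref{4.03} together with the classical Jacobi--Zariski sequences is not accurate as stated: one cannot splice three long exact sequences into a fourth by a diagram chase; one would need an octahedron argument at the level of the distinguished triangles of Theorem~\ref{4.03}, which is essentially the argument you already gave. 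Since that remark is offered only as an alternative, it does not affect the validity of your main proof.
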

\begin{proof}
	\cite[Theorem 8.18]{Ol}
\end{proof}

\begin{proposition}[Localization]\label{4.10}
	Let $(A,M)\to (B,N)$ be a homomorphism of prelog rings $W$ a $B$-module. Let $S\subset N$, $T\subset(B,\cdot)$ be submonoids such that $\alpha(S)\subset T$ where $\alpha\colon N \to B$ is the structural homomorphism of $(B,N)$. Then
	\begin{enumerate}
		\item[(i)] 
		$H_n((B,N),(T^{-1}B,S^{-1}N),T^{-1}W)=0=H^n((B,N),(T^{-1}B,S^{-1}N),T^{-1}W)$ for all $n\geq 0$.
		\item[(ii)] 
		$H_n((A,M),(B,N),T^{-1}W)=H_n((A,M),(T^{-1}B,S^{-1}N),T^{-1}W)$ and \\
		$H^n((A,M),(B,N),T^{-1}W)=H^n((A,M),(T^{-1}B,S^{-1}N),T^{-1}W)$ for all $n\geq 0$.
		\item[(iii)] 
		$H_n((A,M),(B,N),T^{-1}W)=T^{-1}H_n((A,M),(B,N),W)$ for all $n\geq 0$.
	\end{enumerate}
\end{proposition}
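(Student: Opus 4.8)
The plan is to reduce everything to the fundamental exact sequences of Theorem~\ref{4.03}, the Jacobi--Zariski sequences of Proposition~\ref{4.09}, and the exactness of localization of modules; no genuinely new computation is needed.

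For (i) I would apply the fundamental exact sequences of Theorem~\ref{4.03} to the homomorphism $(B,N)\to(T^{-1}B,S^{-1}N)$ with coefficient module $T^{-1}W$ (which is indeed a module over both $T^{-1}B$ and $\Z[S^{-1}N]$, using $\alpha(S)\subset T$), and check that all three ``non-logarithmic'' inputs degenerate. First, $B\to T^{-1}B$ is a localization of rings, so $H_n(B,T^{-1}B,T^{-1}W)=0$ for every $n\geq0$. Second, there is a canonical identification $\Z[S^{-1}N]=S^{-1}\Z[N]$ (localization of the monoid ring at the image of $S$), so $\Z[N]\to\Z[S^{-1}N]$ is again a localization and $H_n(\Z[N],\Z[S^{-1}N],T^{-1}W)=0$ for every $n\geq0$. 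Third, the canonical map $N^\gp\to(S^{-1}N)^\gp$ is an isomorphism of abelian groups, since the elements of $S$ are already invertible in $N^\gp$; hence $\ker(N^\gp\to(S^{-1}N)^\gp)=0$ and $(S^{-1}N)^\gp/\im(N^\gp)=0$, so all the $\Tor_1^\Z$ and $\Ext^1_\Z$ terms, the modules $\Gamma$ and $\Lambda$, and the complex $X_{S^{-1}N|N}$ all vanish. Substituting these into the homology (resp. cohomology) exact sequence of Theorem~\ref{4.03} leaves the entire middle column equal to $0$, which forces $H_n((B,N),(T^{-1}B,S^{-1}N),T^{-1}W)=0=H^n((B,N),(T^{-1}B,S^{-1}N),T^{-1}W)$ for all $n\geq0$.

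Part (ii) is then immediate: one feeds the composition $(A,M)\to(B,N)\to(T^{-1}B,S^{-1}N)$ into the Jacobi--Zariski exact sequences (Proposition~\ref{4.09}) with coefficients $T^{-1}W$; by (i) the relative terms $H_*((B,N),(T^{-1}B,S^{-1}N),T^{-1}W)$ and $H^*((B,N),(T^{-1}B,S^{-1}N),T^{-1}W)$ all vanish, so the connecting maps identify $H_n((A,M),(B,N),T^{-1}W)$ with $H_n((A,M),(T^{-1}B,S^{-1}N),T^{-1}W)$ and likewise in cohomology, in every degree. Part (iii) does not involve the logarithmic structure at all: representing $\LLL_{(B,N)|(A,M)}$ by a complex of $B$-modules and using $T^{-1}W=W\otimes_BT^{-1}B$, associativity of the tensor product gives $\LLL_{(B,N)|(A,M)}\otimes_BT^{-1}W=T^{-1}\big(\LLL_{(B,N)|(A,M)}\otimes_BW\big)$ as complexes, and since $T^{-1}(-)$ is exact it commutes with passage to homology, yielding $H_n((A,M),(B,N),T^{-1}W)=T^{-1}H_n((A,M),(B,N),W)$.

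The only step that requires real verification is the trio of degeneracy facts behind (i) --- in particular the identification $\Z[S^{-1}N]=S^{-1}\Z[N]$ and the isomorphism $N^\gp\cong(S^{-1}N)^\gp$, together with the standard vanishing of André--Quillen (co)homology along a localization of rings; granting these, everything else is a formal chase through the exact sequences already established in the paper.
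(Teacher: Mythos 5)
Your proof is correct, and for parts (ii) and (iii) it coincides with the paper's argument: (ii) is exactly the Jacobi--Zariski reduction to (i), and (iii) is the flat-base-change identity $\LLL_{(B,N)|(A,M)}\otimes_B T^{-1}W=T^{-1}(\LLL_{(B,N)|(A,M)}\otimes_BW)$, which the paper phrases as the degenerate case of the universal coefficient spectral sequence $\Tor_p^B(H_q((A,M),(B,N),W),T^{-1}B)\Rightarrow H_{p+q}((A,M),(B,N),T^{-1}W)$ --- the same computation in different clothing. The only real divergence is in (i): the paper invokes Base Change (Proposition~\ref{4.08}) applied to the pushout of $(B,N)\to(T^{-1}B,S^{-1}N)$ with itself, which identifies $H_n((B,N),(T^{-1}B,S^{-1}N),T^{-1}W)$ with the (co)homology of the identity of $(T^{-1}B,S^{-1}N)$ and hence with $0$; you instead feed $(B,N)\to(T^{-1}B,S^{-1}N)$ directly into the fundamental exact sequence of Theorem~\ref{4.03} and kill every term in the middle column. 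Since Proposition~\ref{4.08} is itself deduced from Theorem~\ref{4.03}, the two routes rest on exactly the same three verifications (vanishing of Andr\'e--Quillen (co)homology along the ring localization $B\to T^{-1}B$, the identification $\Z[S^{-1}N]=S^{-1}\Z[N]$ making $\Z[N]\to\Z[S^{-1}N]$ a localization, and the isomorphism $N^{\gp}\cong(S^{-1}N)^{\gp}$), and all three are correctly justified in your write-up; your version simply inlines the base-change argument in this special case, at the cost of no extra work and with the small benefit of not needing to observe that the self-pushout of a localization is itself.
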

\begin{proof}
	\begin{enumerate}
		\item[(i)] 
		It follows from Base Change (Proposition~\ref{4.08}) applied to the pushout
		\[
		\begin{tikzcd}[row sep=4em, column sep=3em]
		(B,N) \arrowr\arrowd & (T^{-1}B,S^{-1}N)\arrowd\\
		(T^{-1}B,S^{-1}N) \arrowr & (T^{-1}B,S^{-1}N)
		\end{tikzcd}
		\]
		\item[(ii)] 
		It follows from (i) and the Jacobi-Zariski exact sequence (Proposition~\ref{4.09}).
		\item[(iii)] 
		Property (iii) follows from the universal coefficient spectral sequence
		\begin{align*}
		E_{p,q}^2&=\Tor_p^B(H_q(\LLL_{(B,N)|(A,M)}\otimes_BW),T^{-1}B)\\
		&=\Tor_p^B(H_q((A,M),(B,N),W),T^{-1}B) \;\Rightarrow\; H_n((A,M),(B,N),T^{-1}W) .
		\end{align*}
	\end{enumerate}
\end{proof}

\begin{remark}\label{4.11}
	Let $H\overset{i}{\to}X\overset{p}{\to}N$ be homomorphisms of simplicial monoids with $H,N$ constants, $H$ a group and $p i$ injective. We suppose that $p$ is a trivial fibration in the category of the simplicial monoids. Since $N$ is a Kan complex (because it is constant) and $p$ is a fibration, $X$ is a Kan complex too (the composition of fibrations is a fibration). We consider the induced actions of $H$ in $X$ and in $N$ and also the quotients $X/H$, $N/H$. We will assume that $H$ acts freely on $X$. Then $X\overset{\pi}{\to}X/H$ is a fibration \cite[18.2]{May} with fibre $H$, and then for any basepoint we have an exact sequence 
	\[ \cdots \longrightarrow \pi_n(H) \longrightarrow \pi_n(X) \longrightarrow \pi_n(X/H) \longrightarrow \pi_{n-1}(H)\longrightarrow \cdots \longrightarrow \pi_0(X/H) \longrightarrow 1 . \]
	
	We have $\pi_n(H)=\pi_n(X)=1$ for all $n>0$, and so we obtain $\pi_n(X/H)=1$ for all $n>1$ and an exact sequence
	\[ 1 \longrightarrow \pi_1(X/H) \longrightarrow \pi_0(H) \longrightarrow \pi_0(X) \longrightarrow \pi_0(X/H) \longrightarrow 1 \]
	which takes the form
	\[ 1 \longrightarrow \pi_1(X/H) \longrightarrow H \overset{p i}{\longrightarrow} N \longrightarrow \pi_0(X/H) \longrightarrow 1 . \]
	Therefore we deduce
	\[   
	\pi_n(X/H) = 
	\begin{cases}
	N/H &\quad\text{if } n=0 , \\
	1 &\quad\text{if } n>0 .
	\end{cases}
	\]
	
	That is, $X/H\to N/H$ is a weak equivalence, and then by \cite[I.2.2.3]{Il-CC} $\Z[X/H]\to \Z[N/H]$ is a weak equivalence.

\end{remark}

\begin{lemma}\label{4.12} Let $H$ be an abelian group and $W$ a $\Z[H]$-module. Then
	\begin{align*}
	H_n(\Z,\Z[H],W)&=\Tor_n^\Z(H,W) , \\
	H^n(\Z,\Z[H],W)&=\Ext_n^\Z(H,W)
	\end{align*}
	for all $n\geq 0$. In particular, these modules vanish for all $n\geq 2$.
\end{lemma}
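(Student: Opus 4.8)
The plan is to compute the cotangent complex $\LLL_{\Z[H]|\Z}$ of the ring map $\Z\to\Z[H]$ explicitly and then read off (co)homology. By Example~\ref{4.02} the modules $H_n(\Z,\Z[H],W)$ and $H^n(\Z,\Z[H],W)$ are nothing but the ordinary Andr\'e-Quillen (co)homology of $\Z\to\Z[H]$, so I would work in the non-logarithmic theory and freely quote \cite{An-1974}. Since $\Z$ is a principal ideal domain, every subgroup of a free abelian group is free abelian, so I may fix a short exact sequence of abelian groups $0\to R\overset{\iota}{\longrightarrow}F\to H\to 0$ with $F$ and $R$ free. Choosing a set of representatives for the cosets of $R$ in $F$ exhibits $\Z[F]$ as a \emph{free} $\Z[R]$-module; in particular $\Z[R]\to\Z[F]$ is flat and $\Z[F]\otimes_{\Z[R]}\Z=\Z[H]$, where $\Z[R]\to\Z$ is the augmentation.

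The main step is to build a two-term \emph{free} model of $\LLL_{\Z[H]|\Z}$. The rings $\Z[R]$ and $\Z[F]$ are Laurent polynomial algebras over $\Z$, so their cotangent complexes over $\Z$ reduce to the (free) modules of K\"ahler differentials; under the standard identification $\Omega_{\Z[G]|\Z}\cong\Z[G]\otimes_\Z G$ for an abelian group $G$ (sending $[g]^{-1}d[g]$ to $1\otimes g$) this gives $\LLL_{\Z[R]|\Z}\cong\Z[R]\otimes_\Z R$ and $\LLL_{\Z[F]|\Z}\cong\Z[F]\otimes_\Z F$, both in degree $0$. Hence the Jacobi-Zariski triangle of $\Z\to\Z[R]\to\Z[F]$ takes the form
\[ \Z[F]\otimes_\Z R\ \overset{\id\otimes\iota}{\longrightarrow}\ \Z[F]\otimes_\Z F\ \longrightarrow\ \LLL_{\Z[F]|\Z[R]}\ \longrightarrow\ , \]
the left-hand map being induced by $R\hookrightarrow F$ (a routine computation with the identifications above, in the style of Example~\ref{3.03}). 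Now I apply $-\otimes_{\Z[F]}\Z[H]$ to this triangle: the first two terms are $\Z[F]$-free, hence become $\Z[H]\otimes_\Z R$ and $\Z[H]\otimes_\Z F$, while flat base change \cite[4.54]{An-1974} (applicable since $\Z[F]$ is $\Z[R]$-flat and $\Z[H]=\Z[F]\otimes_{\Z[R]}\Z$) identifies $\LLL_{\Z[F]|\Z[R]}\otimes_{\Z[F]}\Z[H]$ with $\LLL_{\Z[H]|\Z}$. I conclude that $\LLL_{\Z[H]|\Z}$ is represented by the complex of free $\Z[H]$-modules
\[ P_\bullet:\qquad \Z[H]\otimes_\Z R\ \overset{\id_{\Z[H]}\otimes\,\iota}{\longrightarrow}\ \Z[H]\otimes_\Z F \]
concentrated in homological degrees $1$ and $0$.

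Finally, since $P_\bullet$ consists of free $\Z[H]$-modules, it computes the relevant derived functors: $H_n(\Z,\Z[H],W)=H_n\big(P_\bullet\otimes_{\Z[H]}W\big)=H_n\big(W\otimes_\Z R\overset{\id\otimes\iota}{\longrightarrow}W\otimes_\Z F\big)$ and, dually, $H^n(\Z,\Z[H],W)=H^n\big(\Hom_{\Z[H]}(P_\bullet,W)\big)=H^n\big(\Hom_\Z(F,W)\overset{\iota^{*}}{\longrightarrow}\Hom_\Z(R,W)\big)$. As $(R\overset{\iota}{\to}F)$, placed in degrees $1$ and $0$, is a free $\Z$-resolution of $H$, these are exactly $\Tor_n^\Z(H,W)$ and $\Ext_\Z^n(H,W)$, and they vanish for $n\geq 2$ because $P_\bullet$ is concentrated in degrees $0$ and $1$ (equivalently, $\Z$ has global dimension one). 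I expect the only substantive obstacle to be the middle step — producing the free model $P_\bullet$ with the correct differential — for which one must combine the free presentation of $H$, the freeness of $\Z[F]$ over $\Z[R]$, and flat base change; once that is in place the rest is bookkeeping. (An alternative route is to reduce to $H$ finitely generated by a filtered colimit, then to $H$ cyclic by a K\"unneth argument for the cotangent complex, and treat $H=\Z$ and $H=\Z/n$ by hand; the base-change argument above seems cleaner and uniform.)
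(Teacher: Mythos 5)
Your proof is correct, but it follows a genuinely different route from the paper's. The paper constructs a simplicial free resolution $X\to H$ of $H$ as a $\Z$-module, observes that $\Z\to\Z[X]\to\Z[H]$ is a cofibration--trivial fibration factorization (using \cite[I.2.2.3]{Il-CC} to see that $\Z[X]\to\Z[H]$ is a weak equivalence), and then reduces everything to the single computation $\Omega_{\Z[X]|\Z}\simeq X\otimes_\Z\Z[X]$, proved by passing to finite rank via colimits, applying the K\"unneth formula for differentials, and treating the case $\Z[t,t^{-1}]$ by hand; the identification with $\Tor$ and $\Ext$ then falls out because $X\otimes_\Z W$ is the resolution tensored with $W$. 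You instead take an ordinary two-term free presentation $0\to R\to F\to H\to 0$, use the Jacobi--Zariski triangle for $\Z\to\Z[R]\to\Z[F]$ together with flat base change along the augmentation $\Z[R]\to\Z$ to produce the explicit two-term free $\Z[H]$-model $\Z[H]\otimes_\Z R\to\Z[H]\otimes_\Z F$ of $\LLL_{\Z[H]|\Z}$, and read off $\Tor^\Z_n(H,W)$ and $\Ext^n_\Z(H,W)$ directly. Both arguments hinge on the same computational core, namely $\Omega_{\Z[G]|\Z}\cong\Z[G]\otimes_\Z G$ for $G$ free abelian (in your case for $R$ and $F$, in the paper's for the levels $X_n$), and your appeals to the freeness of $\Z[F]$ over $\Z[R]$ via coset representatives and to $\Z[F]\otimes_{\Z[R]}\Z\cong\Z[H]$ are both sound. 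What your approach buys is a small, explicit, strictly two-term model and the avoidance of any explicit simplicial cofibrant replacement, exploiting that $\Z$ has global dimension one; what the paper's approach buys is a presentation-free description of the whole complex $\LLL_{\Z[H]|\Z}\simeq H\otimes^{\mathbf L}_\Z\Z[H]$ obtained directly from the definition of the cotangent complex used throughout the paper. Either proof establishes the lemma, including the vanishing for $n\geq2$.
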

\begin{proof}
	Let $X\to H$ be a simplicial resolution of the $\Z$-module $H$, that is, $X\to H$ is surjective, $X$ is a free simplicial $\Z$-module with $\pi_n(X)\cong\pi_n(H)$ for all $n$. Then $\Z[X]$ is a free simplicial $\Z$-algebra, $\Z[X]\to \Z[H]$ is surjective and a weak equivalence by \cite[I.2.2.3]{Il-CC}. So $\Z\to \Z[X]\to\Z[H]$ is a free cofibration - trivial fibration factorization of simplicial rings \cite[Proposition 2.5]{Quillen-MIT}. Therefore $H_n(\Z,\Z[H],W)=H_n(\Omega_{\Z[X]|\Z}\otimes_{\Z[X]}W)$ and similarly for cohomology.
	
	So it suffices to show that $\Omega_{\Z[X]|\Z}\simeq X\otimes_\Z\Z[X]$. Each $X_n$ is a free $\Z$-module. Since $\Omega$, $\otimes$ and $\Z[-]$ commute with colimits, we can assume that $X_n$ is a free $\Z$-module of finite rank and by K\"{u}nneth formula for the module of differentials \cite[1.14]{An-1974} we can assume $X_n\simeq \Z$:
	\begin{align*}
	\Omega_{\Z[H_1\oplus H_2]|\Z} &= \Omega_{\Z[H_1]\otimes_\Z\Z[H_2]|\Z}\\
	&= \Omega_{\Z[H_1]|\Z}\otimes_\Z\Z[H_2]\oplus\Z[H_1]\otimes_\Z\Omega_{\Z[H_2]|\Z}\\
	&=H_1\otimes_\Z\Z[H_1]\otimes_\Z\Z[H_2]\oplus \Z[H_1]\otimes_\Z H_2 \otimes_\Z\Z[H_2]\\
	&=(H_1\oplus H_2) \otimes_\Z(\Z[H_1]\otimes_\Z\Z[H_2])\\
	&=(H_1\oplus H_2) \otimes_\Z \Z[H_1\oplus H_2] .
	\end{align*}
	
	So we only have to prove $\Omega_{\Z[H]|\Z}=H\otimes_\Z\Z[H]$ when $H=\Z$. We have $\Z[H]=\Z[t,\frac{1}{t}]=S^{-1}\Z[t]$ with $S=\{1,t,t^2,\ldots\}$. Then $\Omega_{\Z[H]|\Z}=S^{-1}\Omega_{\Z[t]|\Z}=S^{-1}\Z[t]=\Z\otimes_\Z S^{-1}\Z[t]=H\otimes_\Z\Z[H]$.
\end{proof}

\begin{lemma}\label{4.13}
	Let $(A,H)\to(B,G)$ be a homomorphism of prelog rings, where $H$ and $G$ are groups. Then $H_n((A,H),(B,G),W)=H_n(A,B,W)$ for all $n\geq 0$ and all $B$-modules $W$, and similarly for cohomology.
\end{lemma}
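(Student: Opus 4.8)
The plan is to construct an explicit free-cofibrant model of $\LLL_{(B,G)|(A,H)}$ and show that, because $G$ is a group, it is \emph{degreewise isomorphic} — not merely quasi-isomorphic — to the analogous model of $\LLL_{B|A}$; both the homological and the cohomological assertions then follow immediately.

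First I would fix a factorization $(A,H)\to(F_*,R_*)\to(B,G)$ of the given morphism as a free cofibration followed by a trivial fibration, so that $F_n=A[X_n,Y_n]$ and $R_n=H\oplus\N^{Y_n}$ for suitable sets $X_n,Y_n$, with the degeneracies carrying bases to bases. Then $A\to F_*\to B$ is a cofibration--trivial fibration factorization of simplicial rings, so $\LLL_{B|A}=\Omega_{F_*|A}\otimes_{F_*}B$, while $\LLL_{(B,G)|(A,H)}=\Omega_{(F_*,R_*)|(A,H)}\otimes_{F_*}B$ by definition. As in the proof of Theorem~\ref{4.03}, for each $n$ there is a cocartesian square of $B$-modules
\[
\begin{tikzcd}[row sep=3em, column sep=2.5em]
\Omega_{\Z[R_n]|\Z[H]}\otimes_{\Z[R_n]}B \arrowr{i_n}\arrowd{\alpha_n} & B\otimes_\Z R_n^\gp/H^\gp \arrowd \\
\Omega_{F_n|A}\otimes_{F_n}B \arrowr{\phi_n} & \Omega_{(F_n,R_n)|(A,H)}\otimes_{F_n}B
\end{tikzcd}
\]
in which, by Example~\ref{3.03} and Definition~\ref{3.09}, the top arrow $i_n$ is the base change along $\Z[R_n]\to B$ of the map $dr\mapsto r\otimes\bar r$. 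Writing $t_y$ ($y\in Y_n$) for the standard generators of $\N^{Y_n}$ and $g\colon R_n\to G$ for the structural monoid map of the resolution, this means $i_n(1\otimes dt_y)=\alpha_B(g(t_y))\otimes\overline{t_y}$.

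The key observation is that $\{1\otimes dt_y\}_{y\in Y_n}$ is a $B$-basis of $\Omega_{\Z[R_n]|\Z[H]}\otimes_{\Z[R_n]}B$ and $\{1\otimes\overline{t_y}\}_{y\in Y_n}$ is a $B$-basis of $B\otimes_\Z R_n^\gp/H^\gp=B\otimes_\Z\Z^{Y_n}$, and with respect to these bases $i_n$ is diagonal with diagonal entries $\alpha_B(g(t_y))$; since $G$ is a group, $\alpha_B(g(t_y))\,\alpha_B(g(t_y)^{-1})=\alpha_B(1)=1$, so each entry is a unit of $B$ and $i_n$ is an isomorphism. Hence its cobase change $\phi_n$ along $\alpha_n$ is also an isomorphism, for every $n$; as the square and all its maps are natural in the simplicial variable, $\phi_*$ is an isomorphism of simplicial $B$-modules $\LLL_{B|A}\xrightarrow{\ \sim\ }\LLL_{(B,G)|(A,H)}$. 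All these simplicial $B$-modules are degreewise free, so applying $-\otimes_B W$ and $\Hom_B(-,W)$ and taking (co)homology yields $H_n((A,H),(B,G),W)=H_n(A,B,W)$ and $H^n((A,H),(B,G),W)=H^n(A,B,W)$.

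I do not expect a genuine obstacle here: the whole point is to notice that the top arrow of the square is the ``$d\log$'' map of Definition~\ref{3.09} and that it becomes invertible over $B$ for the trivial reason that $\alpha_B$ sends the group $G$ into $B^*$. The only step needing a little care is the choice of bases, so that $i_n$ is an honest isomorphism (rather than just a quasi-isomorphism) and one obtains a \emph{levelwise} isomorphism of simplicial modules, which is what lets the homology and cohomology statements come out together. Alternatively one could argue more abstractly from the distinguished triangle of Theorem~\ref{4.03}, combining the Jacobi--Zariski triangle for $\Z\to\Z[H]\to\Z[G]$ with the identification $\LLL_{\Z[H]|\Z}\simeq H\otimes_\Z\Z[H]$ from the proof of Lemma~\ref{4.12} to get $\LLL_{\Z[G]|\Z[H]}\simeq X_{G|H}\otimes_\Z\Z[G]$ and thus that the relevant arrow in the triangle is a quasi-isomorphism; but that route is heavier and requires checking that two a priori distinct maps agree.
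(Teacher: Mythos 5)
Your proof is correct, and it takes a genuinely different route from the paper's. You work directly with a free cofibrant model $(A,H)\to(F_*,R_*)\to(B,G)$ and observe that in the defining pushout square the ``$d\log$'' arrow $1\otimes dt_y\mapsto \alpha_B(g(t_y))\otimes\overline{t_y}$ is a diagonal map between free $B$-modules with unit entries (precisely because $\alpha_B$ carries the group $G$ into $B^*$), so its cobase change $\LLL_{B|A}\to\LLL_{(B,G)|(A,H)}$ is a \emph{levelwise} isomorphism of the chosen models; homology and cohomology with any coefficients then follow at once. The paper instead argues homologically: it applies the Jacobi--Zariski sequence to $(A,H)\to(B,H)\to(B,G)$ and then to $(B,\{1\})\to(B,H)\to(B,G)$, reducing everything to the vanishing of $H_*((B,\{1\}),(B,H),W)$ for a group $H$, which it extracts from the fundamental exact sequence of Theorem~\ref{4.03} together with the computation $H_n(\Z,\Z[H],W)=\Tor_n^\Z(H,W)$ of Lemma~\ref{4.12} (the connecting maps $\kappa_0,\kappa_1$ being isomorphisms). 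Your argument is more self-contained --- it needs neither Lemma~\ref{4.12} nor the Jacobi--Zariski sequence --- and it yields the slightly stronger statement that the canonical comparison map of cotangent complexes is an isomorphism in the derived category, which gives both (co)homology claims simultaneously and functorially; the paper's proof is shorter given the machinery it has already set up, and its pattern of reduction through $(B,\{1\})\to(B,H)$ is reused elsewhere (e.g.\ in Proposition~\ref{4.14}). Your closing remark about the heavier triangle-based alternative is also accurate; it is essentially the paper's strategy in disguise.
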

\begin{proof}
	We have a Jacobi-Zariski exact sequence
	\[
	\begin{tikzcd}[column sep=1em,row sep=0ex]
	& & & H_{n+1}((B,H),(B,G),W) \arrowr & \;\\
	\;\arrowr & H_n((A,H),(B,H),W) \arrowr & H_n((A,H),(B,G),W) \arrowr & H_n((B,H),(B,G),W) \arrowr & \cdots
	\end{tikzcd}
	\]
	Since $H_n((A,H),(B,H),W)=H_n(A,B,W)$ for all $n\geq 0$ by Example~\ref{4.02}, it suffices to show that $H_n((B,H),(B,G),W)=0$ for all $n\geq 0$. From the Jacobi-Zariski sequence associated to
	$(B,\{1\})\to(B,H)\to (B,G)$, we see that it suffices to show that $H_n((B,\{1\}),(B,H),W)=0$ for any group $H$ and all $n\geq 0$.
	
	Since $H_n(B,B,W)=0$ for all $n\geq 0$, the fundamental exact sequence gives isomorphisms
	\[ H_n((B,\{1\}),(B,H),W)=H_{n-1}(\Z,\Z[H],W)=0\] for $n\geq 3$ by Lemma~\ref{4.12}.(ii)  and an exact sequence
	\[\begin{array}{llllllll}
	&&&\quad\quad0&\longrightarrow&H_2((B,\{1\}),(B,H),W)&\longrightarrow&\\
	\longrightarrow&H_1(\Z,\Z[H],W)&\overset{\kappa_1}{\longrightarrow}&\Tor_1^\Z(W,H)&\longrightarrow&H_1((B,\{1\}),(B,H),W)&\longrightarrow&\\
	\longrightarrow&H_0(\Z,\Z[H],W)&\overset{\kappa_0}{\longrightarrow}&\ \  W\otimes_\Z H&\longrightarrow&H_0((B,\{1\}),(B,H),W)&\longrightarrow&0 . \\
	\end{array} \]
	Furthermore, since $\kappa_1$ and $\kappa_0$ are isomorphisms by Lemma~\ref{4.12} we deduce also $H_i((B,\{1\}),(B,H),W)=0$ for $i=0,1,2$.
\end{proof}

\begin{proposition}\label{4.14}\cite[Theorem 8.20]{Ol}
	Let $(A,M)\to(B,N)$ be a homomorphism of prelog rings with $M$ and $N$ integral monoids and $W$ a $B$-module. Then, for all $n\geq 2$, we have
	\begin{enumerate}
		\item[(i)] $H_n((A,M),(B,N),W)=H_n((A,M),(B,N)^\llog,W)=H_n((A,M)^\llog,(B,N)^\llog,W)$.
		\item[(ii)] $H^n((A,M),(B,N),W)=H^n((A,M),(B,N)^\llog,W)= H^n((A,M)^\llog,(B,N)^\llog,W)$.
	\end{enumerate}
\end{proposition}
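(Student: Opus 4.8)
The statement compares log André--Quillen (co)homology with respect to the map $(A,M)\to(B,N)$, to the version where we replace $(B,N)$ by $(B,N)^\llog$, and further where we also replace $(A,M)$ by $(A,M)^\llog$. The strategy is to use the Jacobi--Zariski exact sequence (Proposition~\ref{4.09}) to reduce everything to showing that the relevant "logarithmification" maps are acyclic in degrees $\geq 2$, and then to identify those relative complexes using the fundamental exact sequence (Theorem~\ref{4.03}) together with Lemma~\ref{4.12} on $H_*(\Z,\Z[H],W)$.

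First I would treat (i), the homology statement, since (ii) will follow by an identical argument (or by Proposition~\ref{4.05} in the finite-type case, but more honestly by repeating the cohomological version of each step). For the first equality $H_n((A,M),(B,N),W)=H_n((A,M),(B,N)^\llog,W)$, apply Proposition~\ref{4.09} to $(A,M)\to(B,N)\to(B,N)^\llog$: since $(B,N)\to(B,N)^\llog$ is the identity on the underlying ring $B$, it suffices to show $H_n((B,N),(B,N)^\llog,W)=0$ for all $n\geq 1$ (one extra degree, to close the long exact sequence in degree $n\geq 2$). Here I would invoke the fundamental exact sequence for $(B,N)\to(B,N)^\llog$: the ring part $\LLL_{B|B}$ is zero, so $H_n((B,N),(B,N)^\llog,W)$ is controlled by $H_{n}(\Z[N],\Z[N^\llog],W)$ and by the complex $X_{N^\llog|N}$ built from $N^\gp\to (N^\llog)^\gp$. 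By the pushout defining $M^\llog$ (Definition~\ref{2.01}) and the fact that $(-)^\gp$ preserves pushouts, $(N^\llog)^\gp = N^\gp \oplus_{(\alpha^{-1}(N))^\gp} B^*$; the key observation, using integrality of $N$, is that $\alpha^{-1}(N)\to B^*$ and $N\to N^\llog$ become isomorphisms on the relevant quotient, so $(N^\llog)^\gp/\im(N^\gp)$ and $\ker(N^\gp\to(N^\llog)^\gp)$ are both trivial, hence $X_{N^\llog|N}\simeq 0$. Simultaneously one checks $\Z[N]\to\Z[N^\llog]$ is a homology isomorphism (again via the pushout and integrality, using that tensoring with $\Z[-]$ of an isomorphism of monoids is an isomorphism after the localization involved), so $H_n(\Z[N],\Z[N^\llog],W)=0$; the fundamental sequence then forces $H_n((B,N),(B,N)^\llog,W)=0$ for all $n$.

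For the second equality $H_n((A,M),(B,N)^\llog,W)=H_n((A,M)^\llog,(B,N)^\llog,W)$, note first that $(A,M)^\llog\to(B,N)^\llog$ is the log-algebra structure induced from $(A,M)^\llog\to B$, so by Base Change (Proposition~\ref{4.08}) applied to the pushout $(B,N)^\llog = (A,M)^\llog \oplus_{(A,M)} (B,N)$-type diagram — more precisely I would write $(B,N)^\llog$ as the pushout of $(A,M)^\llog \leftarrow (A,M) \to (B,N)^\llog$, whose hypotheses (i), (ii), (iii) are trivially satisfied because the ring map $A\to A$ is an isomorphism, so all Tor's vanish, and $M^\gp\to (M^\llog)^\gp$ is injective by integrality — one gets $H_n((A,M),(B,N)^\llog,W)\cong H_n((A,M)^\llog,(B,N)^\llog,W)$ for all $n\geq 0$. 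Alternatively, and perhaps cleaner, use Jacobi--Zariski on $(A,M)\to(A,M)^\llog\to(B,N)^\llog$ and reduce to $H_n((A,M),(A,M)^\llog,W')=0$, which is the same computation as above with $(A,M)$ in place of $(B,N)$.

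\textbf{Main obstacle.} The delicate point is the claim that $X_{N^\llog|N}$ is acyclic and that $\Z[N]\to\Z[N^\llog]$ induces zero relative homology — i.e., that passing from $N$ to $N^\llog$ changes nothing at the level of the cotangent complex. Unwinding the pushout square defining $M^\llog$ requires care: one must verify that $\ker(N^\gp\to (N^\llog)^\gp)$ and $\coker(N^\gp\to (N^\llog)^\gp)$ vanish, which is exactly where integrality of $N$ (and the unit-group identification in the definition of $(-)^\llog$) is used, and is precisely why the hypothesis "$M$, $N$ integral" appears in the statement. Everything else is a formal bookkeeping exercise with the two exact sequences of Proposition~\ref{4.09} and Theorem~\ref{4.03}.
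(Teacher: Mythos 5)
Your overall skeleton agrees with the paper's: by Jacobi--Zariski everything reduces to showing that $H_n((B,N),(B,N)^{\llog},W)=0$ for all $n$ (and likewise for $(A,M)$), and this is indeed where all the work lies. But the way you propose to establish that vanishing does not work. You claim that $\ker(N^{\gp}\to(N^{\llog})^{\gp})$ and $\coker(N^{\gp}\to(N^{\llog})^{\gp})$ are trivial, so that $X_{N^{\llog}|N}\simeq 0$, and that $H_n(\Z[N],\Z[N^{\llog}],W)=0$. Both claims are false in general, even for $N$ integral. Take $N=\{1\}$ (so $(B,N)\to(B,N)^{\llog}$ is $(B,\{1\})\to(B,B^*)$): then $(N^{\llog})^{\gp}=B^*$ and $\coker(N^{\gp}\to(N^{\llog})^{\gp})=B^*\neq 1$, while $H_n(\Z,\Z[B^*],W)=\Tor_n^{\Z}(B^*,W)$ by Lemma~\ref{4.12}, which is nonzero for $n=0,1$ in general. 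Since $(N^{\llog})^{\gp}=N^{\gp}\oplus_{\alpha^{-1}(B^*)^{\gp}}B^*$, the cokernel is $\coker(\alpha^{-1}(B^*)^{\gp}\to B^*)$ and the kernel is (for $N$ integral) $\ker(\alpha^{-1}(B^*)^{\gp}\to B^*)$, neither of which need vanish for a prelog ring. What is true --- and is the actual content of the proposition --- is that the two outer terms of the fundamental triangle, $\LLL_{\Z[N]|\Z[N^{\llog}]}$-homology on one side and the $X_{N^{\llog}|N}$-contribution on the other, \emph{cancel}: the first arrow of the triangle is an isomorphism on homology. Your argument never addresses why this cancellation happens, which is the main obstacle you yourself flag.

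The paper handles this by factoring $\alpha^{-1}(A^*)\to A^*$ as $\alpha^{-1}(A^*)\to\alpha^{-1}(A^*)^{\gp}\to\alpha^{-1}(A^*)^{\gp}/\ker v\to A^*$, pushing out along $M$ at each stage, and applying Base Change (Proposition~\ref{4.08}) three times: the first stage is a localization (flat), the third is an injection of groups (flat group-ring extension), and the middle one is where integrality of $M$ is actually used --- not to kill a kernel or cokernel, but to guarantee that $\ker(v)$ acts freely on a cofibrant resolution so that Remark~\ref{4.11} gives the required $\Tor$-vanishing. Each stage then reduces to a map of prelog rings with identity on the underlying ring whose monoid part is a localization or a map of groups, killed by Proposition~\ref{4.10} or Lemma~\ref{4.13}. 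A secondary issue: for the second equality you assert that $(B,N)^{\llog}$ is the pushout of $(A,M)^{\llog}\leftarrow(A,M)\to(B,N)^{\llog}$; this pushout is $(B,N^{\llog}\oplus_{\alpha_M^{-1}(A^*)}A^*)$, which maps onto $N^{\llog}$ but need not be isomorphic to it, so that route is also unjustified. Your alternative (Jacobi--Zariski on $(A,M)\to(A,M)^{\llog}\to(B,N)^{\llog}$) is the correct one, but it again rests on the vanishing $H_n((A,M),(A,M)^{\llog},W)=0$, whose proof is the missing core of the argument.
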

\begin{proof}
	We will prove (i). By the Jacobi-Zariski exact sequences, it suffices to show that for a prelog ring $(A,M)$ with $M$ integral, $H_n((A,M),(A,M)^\llog,W)=0$ for any $A$-module $W$ for all $n\geq 0$. Let $\alpha\colon M\to A$ be the structural map. Consider the diagram of pushouts
	\[
	\begin{tikzcd}[column sep=3em,row sep=4em]
	(A,\alpha^{-1}(A^*)) \arrowr\arrowd & (A,\alpha^{-1}(A^*)^\gp) \arrowr\arrowd & (A,\alpha^{-1}(A^*)^\gp/\ker v) \arrowr\arrowd & (A,A^*) \arrowd \\
	(A,M) \arrowr & (A,M_1) \arrowr & (A,M_2) \arrowr & (A,M^\llog)
	\end{tikzcd}
	\]
	where $v\colon \alpha^{-1}(A^*)^\gp \to A^*$ is the canonical map. Applying base change (Proposition~\ref{4.08}) to the pushout
	\[
	\begin{tikzcd}[column sep=3em,row sep=4em]
	(A,\alpha^{-1}(A^*)) \arrowr\arrowd & (A,\alpha^{-1}(A^*)^\gp) \arrowd \\
	(A,M) \arrowr & (A,M_1)
	\end{tikzcd}
	\]
	(since $\Tor_n^{\Z[\alpha^{-1}(A^*)]}(\Z[M],\Z[\alpha^{-1}(A^*)^\gp]))=0$ for $n>0$ because we know that $\Z[\alpha^{-1}(A^*)]\longrightarrow \Z[\alpha^{-1}(A^*)^\gp]$ being a localization is flat), we obtain
	\[ H_n((A,M),(A,M_1),W)= H_n((A,\alpha^{-1}(A^*)),(A,\alpha^{-1}(A^*)^\gp),W)=0 \]
	where this last module vanishes by Proposition~\ref{4.10}.(ii).
	
	Now consider to the pushout
	\[
	\begin{tikzcd}[column sep=3em,row sep=4em]
	(A,\alpha^{-1}(A^*)^\gp) \arrowr\arrowd & (A,\alpha^{-1}(A^*)^\gp/\ker v) \arrowd \\
	(A,M_1) \arrowr & (A,M_2)
	\end{tikzcd}
	\]
	since $M$ is integral, the map $\alpha^{-1}(A^*)^\gp \to M_1$ is injective, being a localization of the injective map $\alpha^{-1}(A^*)\to M$. In particular, $\ker(v)\to M_1$ is injective. If we take a cofibration-trivial fibration factorization
	\[ \alpha^{-1}(A^*)^\gp \longrightarrow X \longrightarrow M_1 \]
	in the category of simplicial monoids with $X_n=\alpha^{-1}(A_1^*)^\gp\oplus\N^{\widetilde{X}_n}$ a free $\alpha^{-1}(A^*)^\gp$-monoid, then the action of $\ker(v)$ on $X$ is free, since $\alpha^{-1}(A^*)^\gp$ is a group. Therefore from \cite[I.2.2.3]{Il-CC} and Remark~\ref{4.11}:
	\begin{align*}
	\Tor_n^{\Z[\alpha^{-1}(A^*)^\gp]}(\Z[M_1],\Z[\alpha^{-1}(A^*)^\gp/\ker(v)]) &= H_n(\Z[X]\otimes_{\Z[\alpha^{-1}(A^*)^\gp]}\Z[\alpha^{-1}(A^*)^\gp/\ker(v)]) \\
	&= H_n(\Z[X/\ker(v)])=0
	\end{align*}
	for $n>0$.
	
	So base change gives an isomorphism
	\[ H_n((A,M_1),(A,M_2),W)= H_n((A,\alpha^{-1}(A^*)^\gp),(A,\alpha^{-1}(A^*)^\gp/\ker(v)),W)=0 \]
	where the last module vanishes by Lemma~\ref{4.13} above.
	
	Finally, applying base change to the pushout
	\[
	\begin{tikzcd}[column sep=3em,row sep=4em]
	(A,\alpha^{-1}(A^*)^\gp/\ker(v)) \arrowr\arrowd & (A,A^*) \arrowd \\
	(A,M_2) \arrowr & (A,M^\llog)
	\end{tikzcd}
	\]
	(since $\alpha^{-1}(A^*)^\gp/\ker(v)\to A^*$ is an injective homomorphism of groups, we know that the homomorphism $\Z[\alpha^{-1}(A^*)^\gp/\ker(v)] \to \Z[A^*]$ is flat) we obtain similarly
	\[ H_n((A,M_2),(A,M^\llog),W)= H_n((A,\alpha^{-1}(A^*)^\gp/\ker(v)),(A,A^*),W)=0 \]
	which vanishes again by Lemma~\ref{4.13}.
	
	By the Jacobi-Zariski exact sequence associated to $(A,M)\longrightarrow (A,M_1)\longrightarrow (A,M_2)$ we obtain $H_n((A,M),(A,M_2),W)=0$, and then by the Jacobi-Zariski exact sequence associated to $(A,M)\to (A,M_2)\to (A,M^\llog)$ we obtain $H_n((A,M),(A,M^\llog),W)=0$ as desired.
\end{proof}

\section{Smoothness}\label{S}
\begin{definition}\label{5.01}
	\cite{Il-GL, Og}
	Let $(A,M)\to(B,N)$ be a homomorphism of prelog rings where $(B,N)$ is a log ring. An \emph{$(A,M)$-extension of $(B,N)$ by $I$} is a strict homomorphism of log rings $\tau\colon (C,P)\to(B,N)$ such that $\tau^\sharp\colon C\to B$ is surjective with kernel $I$, verifying $I^2=0$ ($\tau^\flat\colon P\to N$ is then surjective) and the subgroup $1+I$ of $C^*$ acts freely on $P$.
\end{definition}

\begin{remark}\label{5.02}
	Let $\tau:(C,P)\to(B,N)$ be a homomorphism of log rings with $\tau^\sharp\colon C\to B$ surjective and kernel $I$. If $P$ is integral, then $C^*$ (and so $1+I$) acts freely on $P$.
\end{remark}

\begin{definition}\label{5.03}
	Let $(A,M)\to (B,N)$ be a homomorphism of prelog rings and $J$ an ideal of $B$. We say that $(B,N)$ is \emph{log formally smooth} over $(A,M)$ for the $J$-adic topology if for any commutative diagram of prelog rings
	\[
	\begin{tikzcd}[column sep=4em,row sep=4em]
	(A,M) \arrowr\arrowd & (B,N) \arrowd{f} \\
	(C,P) \arrowr & (C',P')
	\end{tikzcd}
	\]
	where $(C,P)\to (C',P')$ is an $(A,M)$-extension and $f(J^t)=0$ for some $t>0$, there exist a homomorphism of prelog rings $h\colon (B,N)\to (C,P)$ making commutative the triangles.
\end{definition}

\begin{proposition}\label{5.04}
	Let $(A,M)\to(B,N)$ be a homomorphism of prelog rings and $J$ an ideal of $B$. The following are equivalent:
	\begin{enumerate}
		\item[(i)] $(B,N)$ is a log formally smooth $(A,M)$-algebra for the $J$-adic topology,
		\item[(ii)] $(B,N)^\llog$ is a log formally smooth $(A,M)$-algebra for the $J$-adic topology,
		\item[(iii)] $(B,N)^\llog$ is a log formally smooth $(A,M)^\llog$-algebra for the $J$-adic topology.
	\end{enumerate}
\end{proposition}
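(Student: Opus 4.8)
The plan is to reduce everything to the adjunction between $(\ \ )^\llog$ and the forgetful functor recorded in Definition~\ref{2.01}: for every prelog ring $(A',M')$ and every \emph{log} ring $(C,P)$, precomposition with the unit $\eta_{(A',M')}\colon (A',M')\to (A',M')^\llog$ gives a natural bijection
\[ \Hom_{\mathrm{prelog}}\big((A',M'),(C,P)\big)\;=\;\Hom_{\mathrm{log}}\big((A',M')^\llog,(C,P)\big) , \]
and moreover $\eta^\sharp=\id$ on underlying rings. The key observation is that in any lifting problem as in Definition~\ref{5.03} the middle and right terms $(C,P)$, $(C',P')$ of the $(A,M)$-extension are always log rings, while the conditions ``$(A,M)$-extension'' (Definition~\ref{5.01}) and ``$f(J^t)=0$'' concern only the underlying rings and monoids and do not actually refer to the base; in particular an $(A,M)$-extension is the same datum as an $(A,M)^\llog$-extension. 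Thus the whole matter becomes a bookkeeping exercise with homomorphisms \emph{into} log rings, where the adjunction applies.

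\textbf{(i) $\Leftrightarrow$ (ii).} Here only the source changes; write $\rho\colon(A,M)\to(B,N)$ for the structure map, so that the $(A,M)$-algebra structure on $(B,N)^\llog$ is $\eta\rho$ with $\eta\colon(B,N)\to(B,N)^\llog$ the unit. Given a square as in Definition~\ref{5.03} with source $(B,N)$, applying the bijection with $(C',P')$ turns $f\colon(B,N)\to(C',P')$ into a unique $\bar f\colon(B,N)^\llog\to(C',P')$ with $\bar f\eta=f$; since $\bar f^\sharp=f^\sharp$ and $\bar f\circ\eta\rho=f\rho$, this is a lifting problem with source $(B,N)^\llog$ over $(A,M)$ satisfying the same hypotheses, and every such problem arises this way. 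Applying the bijection with $(C,P)$ shows that a solution $h\colon(B,N)\to(C,P)$ of the first corresponds to $\bar h\colon(B,N)^\llog\to(C,P)$ with $\bar h\eta=h$, and the triangles for $\bar h$ follow from those for $h$ because $\tau\mapsto\tau\eta$ is injective on $\Hom((B,N)^\llog,-)$ into a log ring; symmetrically, a solution of the second problem restricts along $\eta$ to a solution of the first. Hence (i) and (ii) are equivalent.

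\textbf{(ii) $\Leftrightarrow$ (iii).} Now the source $(B,N)^\llog$ is fixed and the base changes from $(A,M)$ to $(A,M)^\llog$, the latter acting on $(B,N)^\llog$ through the map corresponding to $(A,M)\to(B,N)^\llog$ under the adjunction. In a lifting problem over $(A,M)$, all three ``corner'' maps out of the base --- into $(C,P)$, into $(C',P')$, and into $(B,N)^\llog$ --- have log-ring targets, so by the universal property of the unit each factors uniquely through $\eta_{(A,M)}\colon(A,M)\to(A,M)^\llog$. Replacing each corner map by its factorization produces a lifting problem over $(A,M)^\llog$ with the \emph{same} horizontal maps, the \emph{same} extension $(C,P)\to(C',P')$, and the same condition $f(J^t)=0$; this assignment is a bijection between the two families of lifting problems. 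A homomorphism $h\colon(B,N)^\llog\to(C,P)$ makes the triangles of Definition~\ref{5.03} commute over $(A,M)$ if and only if it does so over $(A,M)^\llog$, because the two sets of triangles differ only in the corner maps out of the base, which were matched under the adjunction (using again that $\phi\mapsto\phi\eta_{(A,M)}$ is injective on maps out of $(A,M)^\llog$ into a log ring). Therefore a solution exists over $(A,M)$ iff one exists over $(A,M)^\llog$, i.e.\ (ii) $\Leftrightarrow$ (iii).

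The only delicate point --- and the main obstacle, such as it is --- is the initial observation that Definitions~\ref{5.01} and~\ref{5.03} impose no genuine constraint tied to the particular base (beyond the existence of a structure map, which the adjunction supplies automatically), together with the routine but somewhat lengthy verification that every triangle occurring in the definition of log formal smoothness transports correctly along the unit of the adjunction. No commutative algebra or homotopy theory enters: the argument is entirely formal, relying only on the adjoint pair $(\ \ )^\llog\dashv$ forgetful together with the full faithfulness of the forgetful functor from log rings to prelog rings.
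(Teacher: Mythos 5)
Your proposal is correct and follows essentially the same route as the paper, whose proof is the one-line observation that everything follows from the universal property of $(\ \ )^\llog$ applied to the diagram obtained by inserting $(A,M)^\llog$ and $(B,N)^\llog$ between the base/source and the extension $(C,P)\to(C',P')$; your write-up simply makes explicit the bookkeeping (that extensions and the condition $f(J^t)=0$ are base-independent, and that all relevant targets are log rings so the adjunction bijections apply) that the paper leaves to the reader.
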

\begin{proof}
	It follows easily from the universal property of the functor $(\ \ )^\llog$ and the diagram
	\[
	\begin{tikzcd}[column sep=4em,row sep=4em]
	(A,M) \arrowr\arrowd & (B,N) \arrowd \\
	(A,M)^\llog \arrowr\arrowd & (B,N)^\llog \arrowd \\
	(C,P) \arrowr & (C',P')
	\end{tikzcd}
	\]
	where $(C,P)\to(C',P')$ is an $(A,M)$-extension and $f(J^t)=0$ for some $t$. 
\end{proof}

\begin{proposition}\label{5.05}
	Let $\tau:(C,P)\to(B,N)$ be an $(A,M)$-extension of $(B,N)$ by $I$. Then the map
	\begin{align*}
	(1+I)\times P \;&\longrightarrow\; P\times_NP \\
	(u,p) \;&\longmapsto\; (p,up)
	\end{align*}
	is an isomorphism.
\end{proposition}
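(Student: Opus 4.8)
The plan is to check that the displayed map is well defined and bijective; it is automatically a homomorphism of monoids (both sides carry componentwise multiplication, and $(u_{1},p_{1})(u_{2},p_{2})=(u_{1}u_{2},p_{1}p_{2})$ is sent to $(p_{1}p_{2},u_{1}u_{2}p_{1}p_{2})$, the product of the images), so this will suffice.

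For well-definedness I would first show $\tau^\flat(u)=1_{N}$ for every $u\in 1+I$. Note $1+I\subseteq C^{*}$ because $I^{2}=0$ gives $(1+i)(1-i)=1-i^{2}=1$; under the log structure of $(C,P)$ we regard $C^{*}=\alpha_{P}^{-1}(C^{*})$ as a subgroup of $P$ with $\alpha_{P}(u)=u$, so $\alpha_{N}(\tau^\flat(u))=\tau^\sharp(\alpha_{P}(u))=\tau^\sharp(u)=1$, and since $(B,N)$ is a log ring the injectivity of $\alpha_{N}\colon\alpha_{N}^{-1}(B^{*})\to B^{*}$ forces $\tau^\flat(u)=1_{N}$. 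Hence $\tau^\flat(up)=\tau^\flat(u)\tau^\flat(p)=\tau^\flat(p)$, so $(p,up)\in P\times_{N}P$. Injectivity is then immediate from the hypothesis that $1+I$ acts freely on $P$: if $(p,u_{1}p)=(p,u_{2}p)$ then $u_{1}p=u_{2}p$, so $u_{1}=u_{2}$.

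The substance is surjectivity, and this is where I would use strictness of $\tau$. Since $\tau^\sharp\colon C\to B$ is surjective with kernel $I$ and $I^{2}=0$, any $c\in C$ with $\tau^\sharp(c)\in B^{*}$ is a unit: lifting an inverse of $\tau^\sharp(c)$ to some $c'\in C$ gives $cc'=1-i$ with $i\in I$, whence $cc'(1+i)=1$. Therefore $(\tau^\sharp)^{-1}(B^{*})=C^{*}$ and, using the log structure of $(C,P)$, $(\tau^\sharp\alpha_{P})^{-1}(B^{*})=\alpha_{P}^{-1}(C^{*})=C^{*}$, so the monoid underlying $\tau^{*}(P)=(B,P,\tau^\sharp\alpha_{P})^{\llog}$ is the pushout $P\oplus_{C^{*}}B^{*}$; strictness identifies this with $N$ in such a way that $\tau^\flat\colon P\to N$ becomes $p\mapsto[p,1]$. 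The congruence defining a pushout of commutative monoids along a \emph{group} is easy to make explicit, and it shows that $[p_{1},1]=[p_{2},1]$ forces $p_{2}=cp_{1}$ for some $c\in C^{*}$ with $\tau^\sharp(c)=1$, i.e. $c\in 1+I$. Consequently every $(p_{1},p_{2})\in P\times_{N}P$ is the image of $(c,p_{1})$, which proves surjectivity.

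The only genuinely non-formal point — the step I expect to be the obstacle — is this translation of ``strict'': identifying $N$ with the pushout $P\oplus_{C^{*}}B^{*}$ (which rests on the equality $(\tau^\sharp\alpha_{P})^{-1}(B^{*})=C^{*}$, and hence on $I^{2}=0$) and spelling out the congruence on a pushout of monoids along a group. Everything else — well-definedness, injectivity, and, once $N$ is so described, surjectivity — is a short verification.
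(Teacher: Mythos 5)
Your proof is correct, and it is worth noting that the paper does not actually prove Proposition~\ref{5.05}: its ``proof'' is a citation of \cite[Expos\'e II, Proposition 2.3.(b)]{Il-GL} and \cite[Proposition IV.2.1.2.3]{Og}, so you are supplying an argument that the paper outsources. Your argument is the standard one and is complete. Well-definedness ($\tau^\flat(u)=1_N$ via $\alpha_N\tau^\flat=\tau^\sharp\alpha_P$ and the injectivity of $\alpha_N$ on $\alpha_N^{-1}(B^*)$) and injectivity (freeness of the action plus the fact that $1+I$ is a group, so $u_1p=u_2p$ gives $u_2^{-1}u_1p=p$) are exactly as you say. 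The step you rightly flag as the only non-formal one does go through: since $I^2=0$ one gets $(\tau^\sharp)^{-1}(B^*)=C^*$, hence the monoid underlying $\tau^*(P)$ is $P\oplus_{C^*}B^*$; and because $C^*$ is a \emph{group}, the pushout congruence on $P\times B^*$ is precisely the orbit relation of the action $c\cdot(p,b)=(cp,\tau^\sharp(c)^{-1}b)$ --- this orbit relation is a congruence, contains the generating relations $(cp,b)\sim(p,\tau^\sharp(c)b)$, and is generated by them --- so $[p_1,1]=[p_2,1]$ forces $p_2=cp_1$ with $\tau^\sharp(c)=1$, i.e.\ $c\in\ker(C^*\to B^*)=1+I$. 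Since strictness identifies $\tau^\flat$ with $p\mapsto[p,1]$, surjectivity follows. (This quotient-by-a-group description of such pushouts is consistent with how the paper itself handles them, e.g.\ in Remark~\ref{4.11} and in the construction of $(\;)^{\llog}$.)
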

\begin{proof}
	\cite[Expos\'{e} II, Proposition 2.3.(b)]{Il-GL} or \cite[Proposition IV.2.1.2.3]{Og}.
\end{proof}

\begin{definition}\label{5.06}
	\cite{Il-GL}
	Let $\tau:(C,P)\to(B,N)$ be an $(A,M)$-extension of $(B,N)$ by $I$ and $\tau'\colon (C',P')\to(B,N)$ an $(A,M)$-extension of $(B,N)$ by $I'$. A \emph{morphism of extensions} is a homomorphism of log rings $h:(C,P)\to(C',P')$ verifying $\tau'h=\tau$. We have then $h^\flat(up)=uh^\flat(p)$ for all $u\in 1+I$ and $p\in P$ (the action of $1+I$ on $P'$ is via $1+I\to1+I'$). 
	
	An \emph{isomorphism of extensions} is a morphism with inverse.
\end{definition}

\begin{definition}\label{5.07}
	Let $\tau:(C,P)\to(B,N)$ be an $(A,M)$-extension of $(B,N)$ by $W$ and let $(B',N')\to(B,N)$ a homomorphism of log rings over $(A,M)$. Then we have a commutative diagram whose rows are $(A,M)$-extensions by $W$
	\[
	\begin{tikzcd}[column sep=3em,row sep=4em]
	(C\times_BB',P\times_NN') \arrowr\arrowd & (B',N') \arrowd \\
	(C,P) \arrowr & (B,N)
	\end{tikzcd}
	\]
\end{definition}

\begin{proposition}\label{5.08}
	Let $\tau:(C,P)\to(B,N)$, $\tau':(C',P')\to(B,N)$ be $(A,M)$-extensions of $(B,M)$ by $I$, and let $h:(C,P)\to(C',P')$ a morphism of extensions. Then $h$ is an isomorphism of extensions.
\end{proposition}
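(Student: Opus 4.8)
The plan is to produce an inverse morphism of extensions $h^{-1}\colon(C',P')\to(C,P)$; equivalently, it suffices to check that the ring map $h^\sharp\colon C\to C'$ and the monoid map $h^\flat\colon P\to P'$ are both bijective. Indeed, once this is known, the pair $\big((h^\sharp)^{-1},(h^\flat)^{-1}\big)$ is automatically a homomorphism of log rings, and composing the defining relation $\tau'h=\tau$ on the right with $h^{-1}$ gives $\tau\,h^{-1}=\tau'$, so $h^{-1}$ is a morphism of extensions and a two-sided inverse of $h$, whence $h$ is an isomorphism of extensions in the sense of Definition~\ref{5.06}. Throughout I use that, both $\tau$ and $\tau'$ being extensions of $(B,N)$ by the same module $I$, the morphism $h$ restricts to the identity on $I$ (so that the map $1+I\to 1+I'$ of Definition~\ref{5.06} is the identity of $1+I$ and $h^\flat(up)=u\,h^\flat(p)$ for $u\in 1+I$, $p\in P$), and that $h^\sharp$ induces the identity of $B$ under the canonical identifications $C/I\cong B\cong C'/I$.

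The ring part is the five lemma for the square-zero extensions $0\to I\to C\to B\to0$ and $0\to I\to C'\to B\to0$, made explicit. If $h^\sharp(c)=0$, then $\tau^\sharp(c)=(\tau')^\sharp h^\sharp(c)=0$, so $c\in I$ and hence $c=h^\sharp(c)=0$; thus $h^\sharp$ is injective. Given $c'\in C'$, choose $c\in C$ with $\tau^\sharp(c)=(\tau')^\sharp(c')$; then $i:=c'-h^\sharp(c)$ lies in $\ker(\tau')^\sharp=I$, so $i=h^\sharp(i)$ and $c'=h^\sharp(c)+h^\sharp(i)=h^\sharp(c+i)$; thus $h^\sharp$ is surjective.

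The monoid part is where Proposition~\ref{5.05} is essential. For injectivity, suppose $h^\flat(p_1)=h^\flat(p_2)$; applying $(\tau')^\flat$ gives $\tau^\flat(p_1)=\tau^\flat(p_2)$, so $(p_1,p_2)\in P\times_N P$, and by Proposition~\ref{5.05} there is a unique $u\in1+I$ with $p_2=up_1$. Then $h^\flat(p_1)=h^\flat(p_2)=h^\flat(up_1)=u\,h^\flat(p_1)$, so $u=1$ because $1+I$ acts freely on $P'$ (Definition~\ref{5.01}), and therefore $p_1=p_2$. For surjectivity, let $p'\in P'$ and put $n=(\tau')^\flat(p')$; since $\tau^\flat$ is surjective (Definition~\ref{5.01}), choose $p\in P$ with $\tau^\flat(p)=n$, so $(\tau')^\flat(h^\flat(p))=n=(\tau')^\flat(p')$ and hence $(h^\flat(p),p')\in P'\times_N P'$. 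Proposition~\ref{5.05} applied to $\tau'$ then yields $u\in1+I$ with $p'=u\,h^\flat(p)=h^\flat(up)$. Thus $h^\flat$ is bijective, and together with the ring part this finishes the proof as explained in the first paragraph.

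The genuinely delicate point is the monoid part: bijectivity of $h^\flat$ is not a formal consequence of that of $h^\sharp$, and one really needs the explicit ``torsor'' description $P\times_N P\cong(1+I)\times P$ from Proposition~\ref{5.05}, together with the surjectivity of $\tau^\flat$ and the freeness of the $(1+I)$-action, to transport injectivity and surjectivity along $h^\flat$. Everything else is bookkeeping with the identities $(\tau')^\sharp h^\sharp=\tau^\sharp$, $(\tau')^\flat h^\flat=\tau^\flat$ and $h^\flat(up)=u\,h^\flat(p)$.
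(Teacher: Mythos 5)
Your proof is correct and follows essentially the same route as the paper: bijectivity of $h^\flat$ via Proposition~\ref{5.05} together with the freeness of the $(1+I)$-action (injectivity) and the surjectivity of $\tau^\flat$ (surjectivity), with the ring part being routine. You merely spell out the bijectivity of $h^\sharp$, which the paper dismisses with ``the result then follows easily,'' and you correctly invoke freeness of the action on $P'$ (rather than $P$) at the step $h^\flat(p_1)=u\,h^\flat(p_1)$.
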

\begin{proof}
	We will see that $h^\flat$ is bijective. The result then follows easily. Let $p_1,p_2\in P$ be such that $h^\flat(p_1)=h^\flat(p_2)$. Then $\tau^\flat(p_1)=\tau^\flat(p_2)$ and so $(p_1,p_2)\in P\times_NP$. By Proposition~\ref{5.05}, there exists $u\in 1+I$ such that $p_2=up_1$. Since $h^\flat(p_1)=h^\flat(p_2)=h^\flat(up_1)=uh^\flat(p_1)$ and $1+I$ acts freely on $P$, we deduce $u=1$ and so $p_2=p_1$. This proves injectivity. Now let $p'\in P'$. Let $p\in P$ such that $\tau^\flat(p)=\tau'^\flat(p')$, and let $p''=h^\flat(p)$. We have $(p',p'')\in P'\times_NP'$ and then by Proposition~\ref{5.05} there exists $u\in 1+I$ such that $p'=up''$. Then $h^\flat(up)=uh^\flat(p)=up''=p'$ proving surjectivity.
\end{proof}

\begin{definition}\label{5.09}
	Let $(A,M)\to(B,N)$ be a homomorphism of prelog rings with $(B,N)$ a log ring and let $W$ be a $B$-module. We define the \emph{trivial $(A,M)$-extension of $(B,N)$ by $W$} as $\tau:(C,P)\to(B,N)$ where $C:=B\oplus W$ with product $(b_1,w_1)(b_2,w_2):=(b_1b_2,b_1w_2+b_2w_1)$, $P:=N\oplus(W,+)$ the product monoid, the structural map $\alpha_C:P\to C$ given by $\alpha_C(n,w)=(\alpha_B(n),\alpha_B(n)w)$, and $\tau^\sharp$, $\tau^\flat$ are the canonical projections. We have $C^*=\{(b,w) \mid b\in B^*\}$ since for $b\in B^*$, $(b,w)^{-1}=(b^{-1},-b^{-2}w)$. Since $(B,N)$ is a log ring, $\alpha_{B_{|\alpha_B^{-1}(B^*)}}\colon \alpha_B^{-1}(B^*) \to B^*$ is an isomorphism, and then $\alpha_C^{-1}(C^*)=\alpha_B^{-1}(B^*)\oplus W$. So $\alpha_{C_{|\alpha_C^{-1}(C^*)}}\colon \alpha_C^{-1}(C^*)\to C^*$ is bijective and $(C,P)$ is a log ring. Moreover, $\tau$ is strict and $1+W$ acts freely on $P$. 
\end{definition}

\begin{theorem}\label{5.10}
	Let $(A,M)\to(B,N)$ be a homomorphism of prelog rings with $(B,N)$ a log ring and $W$ a $B$-module. There exists a bijection, natural on $(B,N)$ in the sense of Definition~\ref{5.07}, between isomorphisms classes of $(A,M)$-extensions of $(B,N)$ by $W$ and $H^1((A,M),(B,N),W)$, where the trivial extension goes to zero.
\end{theorem}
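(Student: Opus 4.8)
The statement is the logarithmic form of the classical Andr\'e--Quillen--Illusie identification of $H^{1}$ with the set of square-zero extensions; my plan is to reproduce that argument, letting Proposition~\ref{5.05} and Definition~\ref{5.09} absorb the monoid-specific bookkeeping. Fix, once and for all, a free $(A,M)$-prelog algebra $(F,R)$ together with a surjection $p\colon(F,R)\to(B,N)$ over $(A,M)$ (take $F=A[X,Y]$, $R=M\oplus\N^{Y}$ with the $p^{\sharp}(x)$, $x\in X$, generating $B$ over $A$ and the $p^{\flat}(e_{y})$, $y\in Y$, generating $N$ over $\im(M\to N)$, so that both $F\to B$ and $R\to N$ are onto). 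By Proposition~\ref{4.07}, $H^{n}((A,M),(F,R),W)=0$ for $n>0$, so the Jacobi--Zariski exact sequence (Proposition~\ref{4.09}) for $(A,M)\to(F,R)\to(B,N)$, combined with Proposition~\ref{4.06}.(iv), gives
\[
H^{1}\big((A,M),(B,N),W\big)=\coker\!\big(\Der_{(A,M)}((F,R),W)\xrightarrow{\rho}\Hom_{B}(N_{(B,N)|(F,R)},W)\big),
\]
where $\rho$ is the connecting map, namely the restriction of an $(A,M)$-derivation of $(F,R)$ to the conormal module $N_{(B,N)|(F,R)}$. It thus suffices to identify isomorphism classes of $(A,M)$-extensions of $(B,N)$ by $W$ with this cokernel, compatibly with the trivial extension and with the pullback operation of Definition~\ref{5.07}.

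\emph{From an extension to a class.} Given an $(A,M)$-extension $\tau\colon(C,P)\to(B,N)$ by $W$, lift $p$ to a homomorphism $\widetilde{p}\colon(F,R)\to(C,P)$ over $(A,M)$ with $\tau\widetilde{p}=p$: lift the images of the $x\in X$ to $C$ arbitrarily (using that $C\to B$ is onto), lift each $p^{\flat}(e_{y})$ to $P$ (using that $P\to N$ is onto), and let the image of $y$ in $C$ be the structural image of that lift. Then $\widetilde{p}^{\sharp}$ maps $\ker(F\to B)$ into $\ker(C\to B)=W$ and kills its square, while $\widetilde{p}^{\flat}$ induces a map $\ker(R^{\gp}\to N^{\gp})\to\ker(P^{\gp}\to N^{\gp})$, and the latter group is canonically $1+W\cong(W,+)$ by Proposition~\ref{5.05}; using the structural compatibility of $\widetilde{p}$ and the description of $\nu_{g}$ in Proposition~\ref{3.01}, these two maps agree over $\nu_{g}$ and so assemble into an element $\psi(\tau,\widetilde{p})\in\Hom_{B}(N_{(B,N)|(F,R)},W)$. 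Two lifts of $p$ differ by an $(A,M)$-derivation of $(F,R)$ into $W$ (its monoid component being precisely the datum furnished by Proposition~\ref{5.05}), and the corresponding elements of $\Hom_{B}(N_{(B,N)|(F,R)},W)$ differ by $\rho$ of that derivation, so the class $[\psi(\tau,\widetilde{p})]$ in the cokernel is independent of $\widetilde{p}$; one then checks it depends only on the isomorphism class of $(C,P)$. For the trivial extension of Definition~\ref{5.09} one may take $\widetilde{p}$ to be $p$ followed by the zero section, which kills $\ker(F\to B)$ and $\ker(R^{\gp}\to N^{\gp})$, so its class is $0$.

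\emph{From a class to an extension.} Conversely, write $\psi\in\Hom_{B}(N_{(B,N)|(F,R)},W)$ with ring component $\psi_{1}\colon\ker(F\to B)\to W$ and monoid component $\psi_{2}\colon\ker(R^{\gp}\to N^{\gp})\to W$, and build the extension by the standard twist: on rings put $C_{\psi}:=(F\oplus W)\big/\{(x,-\psi_{1}(x)):x\in\ker(F\to B)\}$, with $F\oplus W$ the square-zero extension of $F$ by $W$ regarded as an $F$-module through $F\to B$, so that $C_{\psi}\to B$ is square-zero with kernel canonically $W$; build $P_{\psi}$ over $N$ in the parallel way from $R$, $\psi_{2}$ and the identification $\ker(P_{\psi}^{\gp}\to N^{\gp})\cong1+W$ dictated by Proposition~\ref{5.05}, with structural map $P_{\psi}\to C_{\psi}$ induced by $\alpha_{R}$, by $\psi_{1}$ and by $1+W\hookrightarrow C_{\psi}^{\ast}$. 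One verifies, using that $(B,N)$ is a log ring, that $(C_{\psi},P_{\psi})$ is a log ring, that $\tau_{\psi}\colon(C_{\psi},P_{\psi})\to(B,N)$ is strict with $\ker\tau_{\psi}^{\sharp}=W$, $W^{2}=0$, and that $1+W$ acts freely on $P_{\psi}$ (again Proposition~\ref{5.05}), i.e.\ that $\tau_{\psi}$ is an $(A,M)$-extension by $W$ (Definition~\ref{5.01}). Replacing $\psi$ by $\psi+\rho(D)$ for an $(A,M)$-derivation $D$ of $(F,R)$ produces an isomorphic extension ($D$ supplies the isomorphism over $(B,N)$), so the class of $(C_{\psi},P_{\psi})$ depends only on $[\psi]$, and $[\psi]=0$ yields the trivial extension.

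Unwinding the two constructions shows they are mutually inverse: for an extension $(C,P)$ with lift $\widetilde{p}$, the maps $[(f,w)]\mapsto\widetilde{p}^{\sharp}(f)+w$ on rings and its monoid analogue define an isomorphism of extensions $(C_{\psi(\tau,\widetilde{p})},P_{\psi(\tau,\widetilde{p})})\xrightarrow{\sim}(C,P)$; and for $\psi$, applying the first construction to $(C_{\psi},P_{\psi})$ with the tautological lift $(F,R)\to(C_{\psi},P_{\psi})$ returns $\psi$. Naturality in $(B,N)$ in the sense of Definition~\ref{5.07} holds because the lift $\widetilde{p}$, the conormal module (Proposition~\ref{3.06}) and the presentation of $H^{1}$ above are all compatible with pullback along $(B',N')\to(B,N)$, the pulled-back extension $(C\times_{B}B',P\times_{N}N')$ being assembled from the pulled-back data. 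The main obstacle is the class-to-extension step: one must check that the twisted monoid $P_{\psi}$ with its structural map is genuinely a \emph{log} ring and that $1+W$ acts \emph{freely} on it — the parts with no nonlogarithmic counterpart — together with the agreement of the ring and monoid twists over $\nu_{g}$; Proposition~\ref{5.05} (and Remark~\ref{5.02} when $N$ is integral) is the tool that makes this work.
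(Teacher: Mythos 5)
Your proposal follows essentially the same route as the paper: you present $H^1((A,M),(B,N),W)$ as the cokernel of $\Der_{(A,M)}((F,R),W)\to\Hom_B(N_{(B,N)|(F,R)},W)$ via a free surjection $(F,R)\to(B,N)$ (Propositions~\ref{4.06}, \ref{4.07}, \ref{4.09}), pass from an extension to a class by lifting $(F,R)\to(C,P)$ and using Proposition~\ref{5.05} to read off the monoid component on $\ker(R^{\gp}\to N^{\gp})$, and go back via the pushout $F\oplus_I W$ on rings together with the group-level pushout $R^{\gp}\oplus_T W$ pulled back over $N$ on monoids. The verifications you defer --- the explicit structural map $P_\psi\to C_\psi$ and its independence of the choice of lift $r\in R$ of $n\in N$, the strictness over $(B,N)$, and the freeness of the $1+W$-action --- are precisely where the paper's proof spends most of its effort, but your outline identifies them correctly and they do go through.
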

\begin{proof}
	Let $(F,R)\overset{\pi}{\longrightarrow}(B,N)$ be a homomorphism of prelog rings with $\pi^\sharp$, $\pi^\flat$ surjective and  $(F,R)$ a log free $(A,M)$-algebra (see Example~\ref{3.14}.(i)). Let $I=\ker(\pi^\sharp: F\to B)$, $J=\ker(Z[R]\to\Z[N])$, $T=\ker(R^\gp\to N^\gp)$. We have an exact sequence of $B$-modules:
	\[
	\Der_{(A,M)}((F,R),W)\overset{\theta}{\to}
	\Hom_B(N_{(B,N)|(F,R)},W)\to
	H^1((A,M),(B,N),W)\to 0
	\]
	by propositions~\ref{4.09}, \ref{4.07} and~\ref{4.06}.
	
	Let $(C,P)\overset{\tau}{\longrightarrow}(B,N)$ be an $(A,M)$-extension of $(B,N)$ by $W$. Since $(F,R)$ is log free over $(A,M)$, we have a commutative diagram of prelog rings
	\[
	\begin{tikzcd}[column sep=4em,row sep=4em]
	(F,R) \arrowr{\pi}\arrowd{\varphi} & (B,N) \arrow[equal]{d} \\
	(C,P) \arrowr{\tau} & (B,N)
	\end{tikzcd}
	\]
	inducing an  $F$-module homomorphism ${\varphi^\sharp}_{|I}\colon I\to W$ and then a $B$-module homomorphism $\varphi_1\colon I/I^2\to W$. We define also a homomorphism of $B$-modules
	\[ \varphi_2\colon B\otimes_\Z T \longrightarrow W \]
	as follows. Let $\frac{r_1}{r_2}\in T$ (i.e. $r_1,r_2\in R$ with $\pi^\flat(r_1)=\pi^\flat(r_2)$). Since $\tau^\flat\big(\varphi^\flat(r_1)\big)=\tau^\flat\big(\varphi^\flat(r_2)\big)$, we have $\big(\varphi^\flat(r_1),\varphi^\flat(r_2)\big)\in P\times_NP$ and so by Proposition~\ref{5.05} there exists $u\in 1+W$ such that $\varphi^\flat(r_1)=u\varphi^\flat(r_2)$. This $u$ is unique since the action of $1+W$ on $P$ is free. We define $\varphi_2\big(1\otimes\frac{r_1}{r_2}\big):=u-1$.
	
	Consider the square
	\[
	\begin{tikzcd}[column sep=4em,row sep=4em]
	B\otimes_{\Z[N]}J/J^2 \arrowr{\nu}\arrowd{\eta} & B\otimes_\Z T \arrowd{\varphi_2} \\
	I/I^2 \arrowr{\varphi_1} & W
	\end{tikzcd}
	\]
	where $\nu$, $\eta$ are the maps defined in Definition~\ref{3.04}. This square is commutative since if $r_1,r_2\in R$ and $\pi^\flat(r_1)=\pi^\flat(r_2)$, then
	\begin{align*}
	-\varphi_2\nu\big(1\otimes(r_1-r_2)\big)&=\varphi_2\big(\alpha_B\pi^\flat(r_2)\otimes\frac{r_1}{r_2}\big)=\alpha_B\pi^\flat(r_2)\cdot(u-1)=\alpha_C\varphi^\flat(r_2)\cdot(u-1) \\
	&=u\alpha_C\varphi^\flat(r_2)-\alpha_C\varphi^\flat(r_2)=\alpha_C\varphi^\flat(r_1)-\alpha_C\varphi^\flat(r_2)
	\end{align*}
	where $u$ is chosen as above, and
	\[
	-\varphi_1\eta\big(1\otimes(r_1-r_2)\big)=\varphi^\sharp\big(\alpha_F(r_1)-\alpha_F(r_2)\big)=\alpha_C\varphi^\flat(r_1)-\alpha_C\varphi^\flat(r_2) .
	\]
	Since $B\otimes_{\Z[N]}J/J^2$ is generated as $B$-module by the elements $1\otimes(r_1-r_2)$ as before, we are done.
	
	Then by Definition~\ref{3.04}, $(\varphi_1,\varphi_2)$ determine an element $\widetilde{\varphi}\in\Hom_B(N_{(B,N)|(F,R)},W)$.
	
	Let now $\psi\colon (F,R)\to (B,N)$ be another homomorphism of prelog rings such that $\tau\psi=\pi$. We will see that $\widetilde{\psi}-\widetilde{\varphi}\in\im(\theta)$.
	
	We consider the log derivation $(D,\widetilde{D})\in\Der_{(A,M)}((F,R),W)$ given by the $A$-derivation
	\[ D=\psi^\sharp-\varphi^\sharp\colon F\longrightarrow W \]
	and the homomorphism of groups
	\[\widetilde{D} \colon R^\gp/\im(M^\gp\to R^\gp) \longrightarrow (W,+) \]
	defined as follows:
	Let $\frac{r_1}{r_2}\in R^\gp$. Since $\big(\psi^\flat(r_i),\varphi^\flat(r_i)\big)\in P\times_NP$ for $i=1,2$, there exists unique elements $u_1,u_2\in 1+W$ such that $u_i\varphi^\flat(r_i)=\psi^\flat(r_i)$. We define $\widetilde{D}\big(\overline{\frac{r_1}{r_2}}\big)=u_1-u_2\in W$. If $\frac{r_1}{r_2}\in \im(M^\gp\to R^\gp)$, then $u_1=1=u_2$ and so $\widetilde{D}$ is well defined. It is a group homomorphism since $W$ is of square zero as ideal of $C$. If $r\in R$, $\widetilde{D}(\bar{r})=u-1$ where $u\in1+W$ is such that $u\varphi^\flat(r)=\psi^\flat(r)$. Then
	\begin{align*}
	\alpha_B\pi^\flat(r)\widetilde{D}(\bar{r})&=\alpha_B\pi^\flat(r)\cdot(u-1)=\alpha_C\varphi^\flat(r)\cdot(u-1)=\alpha_C\varphi^\flat(r)u-\alpha_C\varphi^\flat(r) \\
	&=\alpha_C\psi^\flat(r)-\alpha_C\varphi^\flat(r)=\psi^\sharp\alpha_F(r)-\varphi^\sharp\alpha_F(r)=D\big(\alpha_F(r)\big)
	\end{align*}
	and so $(D,\widetilde{D})\in\Der_{(A,M)}((F,R),W)$. Finally, $\theta(D,\widetilde{D})=\widetilde{\psi}-\widetilde{\varphi}$.
	
	Conversely, let $h\in\Hom_B(N_{(B,N)|(F,R)},W)$, that is, $h$ is defined by a commutative diagram
	\[
	\begin{tikzcd}[column sep=3em,row sep=4em]
	B\otimes_{\Z[N]}J/J^2 \arrowr{\nu}\arrowd{\eta} & B\otimes_\Z T \arrowd{h_2} \\
	I/I^2 \arrowr{h_1} & W
	\end{tikzcd}
	\]
	Let $h_1'\colon I\to W$ the $F$-module homomorphism induced by $h_1$ and $C=F\oplus_IW$ the pushout
	\[
	\begin{tikzcd}[column sep=4em,row sep=4em]
	I \arrowr\arrowd{h_1'} & F \arrowd \\
	W \arrowr & C
	\end{tikzcd}
	\]
	with the usual ring structure: $(f_1,w_1)\cdot(f_2,w_2)=(f_1f_2,f_1w_1+f_2w_2)$. Let $h_2'\colon T\to W$
	be the homomorphism of abelian groups induced by $h_2$. The pushout
	\[
	\begin{tikzcd}[column sep=3em,row sep=4em]
	T \arrowr\arrowd{h_2'} & R^\gp \arrowd \\
	W \arrowr & R^\gp\oplus_TW
	\end{tikzcd}
	\]
	gives a commutative diagram of exact sequences of abelian groups
	\begin{center}
		\begin{tikzpicture}[scale=2]
		\node (B) at (1,1) {$1$};
		\node (C) at (2,1) {$T$};
		\node (D) at (3.25,1) {$R^\gp$};
		\node (E) at (4.5,1) {$N^\gp$};
		\node (F) at (5.5,1) {$1$};
		\node (H) at (1,0) {$0$};
		\node (I) at (2,0) {$W$};
		\node (J) at (3.25,0) {$R^\gp\oplus_TW$};
		\node (K) at (4.5,0) {$N^\gp$};
		\node (L) at (5.5,0) {$1$};
		\path[->,font=\scriptsize]
		(B) edge node[above]{} (C)
		(C) edge node[above]{} (D)
		(D) edge node[above]{} (E)
		(E) edge node[above]{} (F)
		(H) edge node[above]{} (I)
		(I) edge node[above]{} (J)
		(J) edge node[above]{$\varepsilon$} (K)
		(K) edge node[right]{} (L)
		(C) edge node[] {} (I)
		(D) edge node[right]{} (J);
		\draw[double equal sign distance] (E) to node {} (K);
		\end{tikzpicture}
	\end{center}
	Let $P$ be the pullback of monoids
	\[
	\begin{tikzcd}[column sep=3em,row sep=4em]
	P \arrowr\arrowd & R^\gp\oplus_TW \arrowd{\varepsilon} \\
	N \arrowr & N^\gp
	\end{tikzcd}
	\]
	We define $\alpha_C\colon P\to C=F\oplus_IW$ as follows: Let $(n,(\frac{r_1}{r_2},w))\in N\times_{N^\gp}(R^\gp\oplus_TW)=P$. Then $\varepsilon\big(\frac{r_1}{r_2},w\big)=\frac{\pi^\flat(r_1)}{\pi^\flat(r_2)}=\frac{n}{1}$. Let $r\in R$ be such that $\pi^\flat(r)=n$. Then (denoting the images of $r\in R$ and $n\in N$ in $B$ also by $r$ an $n$) we define
	\[ \alpha_C\big(n,\big(\frac{r_1}{r_2},w\big)\big):=\big(\alpha_F(r),h_2\big(r\otimes \frac{rr_2}{r_1}\big)-nw\big) . \]
	We will see first that this definition is independent of the choice of $r$ such that $\pi^\flat(r)=n$. Let $r,r'\in R$ be such that $\pi^\flat(r)=\pi^\flat(r')=n$. Then
	\begin{align*}
	&\big(\alpha_F(r'),h_2\big(r'\otimes \frac{r'r_2}{r_1}\big)-nw\big)-\big(\alpha_F(r),h_2\big(r\otimes \frac{rr_2}{r_1}\big)-nw\big) = \\
	&\qquad = \big(\alpha_F(r')-\alpha_F(r),nh_2\big(1\otimes \frac{r'r_2}{r_1}\cdot\frac{r_1}{rr_2}\big)\big) = \big(\alpha_F(r')-\alpha_F(r),nh_2\big(1\otimes \frac{r'}{r}\big)\big) \\
	&\qquad = \big(0,nh_2\big(1\otimes \frac{r'}{r}\big)-h_1'\big(\alpha_F(r')-\alpha_F(r)\big)\big)=\big(0,h_2\nu(r'-r)-h_1\eta(r'-r)\big)=0		
	\end{align*}
	since $h_2\nu=h_1\eta$.
	
	Now assume that $\big(n,(\frac{r_1}{r_2},w)\big)=\big(n,(\frac{r_1'}{r_2'},w')\big)\in N\times_{N^\gp}(R^\gp\oplus_TW)$ and we will see that
	\[
	\big(\alpha_F(r),h_2\big(r\otimes \frac{rr_2}{r_1}\big)-nw\big)=\big(\alpha_F(r),h_2\big(r\otimes \frac{rr_2'}{r_1'}\big)-nw'\big) .
	\]
	Let $\frac{s_1}{s_2}\in T$ be such that
	$\frac{r_1'}{r_2'}=\frac{r_1}{r_2}\frac{s_1}{s_2}$, $w'=h_2(1\otimes\frac{s_2}{s_1})+w$. Then
	\begin{align*}
	\big(\alpha_F(r),h_2\big(r\otimes\frac{rr_2'}{r_1'}\big)-nw'\big)&=	\big(\alpha_F(r),h_2\big(r\otimes\frac{rr_2s_2}{r_1s_1}\big)-nh_2\big(1\otimes\frac{s_2}{s_1}\big)-nw\big) \\
	&= \big(\alpha_F(r),h_2\big(r\otimes\frac{rr_2s_2}{r_1s_1}+r\otimes\frac{s_1}{s_2}\big)-nw\big) \\
	&= \big(\alpha_F(r),h_2\big(r\otimes\frac{rr_2}{r_1}\big)-nw\big) .
	\end{align*}
	
	Finally, it is immediate to check that $\alpha_C$ is a monoid homomorphism.
	
	Now we define
	\[ \tau\colon (C,P) \longrightarrow (B,N) \]
	by $\tau^\sharp(f,w):=\pi^\sharp(f)$ and $\tau^\flat(n,(\frac{r_1}{r_2},w)):=n$.
	
	We will see now that $\tau^\llog\colon (C,P^\llog)\to(B,N)$ is strict.
	
	Consider the pushout
	\[
	\begin{tikzcd}[column sep=2em,row sep=4em]
	(\tau\alpha_C)^{-1}(B^*) \arrowr\arrowd & B^* \arrowd \\
	P \arrowr & P\oplus_{(\tau\alpha_C)^{-1}(B^*)}B^*
	\end{tikzcd}
	\]
	We will see that the canonical map $\lambda\colon P\oplus_{(\tau\alpha_C)^{-1}(B^*)}B^* \to N $ has an inverse $\mu\colon N\to P\oplus_{(\tau\alpha_C)^{-1}(B^*)}B^*$. Note first that $((1,(1,w)),1)=((1,(1,0)),1)$ in $(N\times_{N^\gp}(R^\gp\oplus_TW))\oplus_{(\tau\alpha_C)^{-1}(B^*)}B^*$, since $\tau\alpha_C(1,(1,w))=1\in B^*$.
	
	Let $n\in N$. Let $r\in R$ such that $\pi^\flat(r)=n$. We define
	\[ \mu(n)=\big((n,(r,0)),1\big)\in (N\times_{N^\gp}(R^\gp\oplus_TW))\oplus_{(\tau\alpha_C)^{-1}(B^*)}B^* . \]
	If $\pi^\flat(r)=\pi^\flat(r')$, then $\frac{r}{r'}\in T$ and so $(r',0)=(r'\frac{r}{r'},h_2'(\frac{r'}{r}))=(r,h_2'(\frac{r'}{r}))$ in $R^\gp\oplus_TW$ and so
	\[ \big((n,(r',0),1\big)=\big(\big(n,\big(r,h_2'\big(\frac{r'}{r}\big)\big)\big),1\big)=\big((n,(r,0)),1\big) . \]
	
	Therefore $\mu$ does not depend on the choice of $r$. It is surjective since
	\begin{align*}
	\big(\big(n,\big(\frac{r_1}{r_2},w\big)\big),b\big) &= \big(\big(bn,\big(\frac{\tilde{b}r_1}{r_2},w\big)\big),1\big) = \big(\big(bn,\big(\frac{\tilde{b}r_1}{r_2},0\big)\big),1\big) \\
	&= \big((bn,(r',0)),1\big)
	\end{align*}
	where $r'=\frac{br_1}{r_2}\in R$ is such that $\pi^\flat(r')=bn$, we have identified $b\in B^*$ with its image via $B^*=\alpha_B^{-1}(B^*)\subset N$ and $\tilde{b}\in R$ is such that $\pi^\flat(\tilde{b})=b\in N$. It is injective since $\lambda\mu=\id_N$.
	
	Now let us see that $1+W$ acts freely on $P^\llog$. Since $\tau^\sharp\colon C\to B$ is surjective with square zero kernel, we have $C^*=(\tau^\sharp)^{-1}(B^*)=\{(f,x)\in F\oplus_{I}W \mid \pi^\sharp(f)\in B^* \}$. Let $[(n,(\frac{r_1}{r_2},w)),(f,x)]\in \big(N\times_{N^\gp}(R^\gp \oplus_TW)\big)\oplus_{\alpha_C^{-1}(C^*)}C^*=P^\llog$. We have $\frac{\pi^\flat(r_1)}{\pi^\flat(r_2)}=\frac{n}{1}$, $\pi^\sharp(f)\in B^*$. Since $(B,N)$ is a log ring, there exists $m\in N$ such that $\alpha_B(m)=\pi^\sharp(f)$. Let $\widetilde{m}\in R$ be such that $\pi^\flat(\widetilde{m})=m$. We have $\alpha_F(\widetilde{m})-f\in I$ and so $(f,x)=(\alpha_F(\widetilde{m}),x')$ where $x'=x-h_1'(\alpha_F(\widetilde{m})-f)$. Let $g\in C$ be such that $\pi^\sharp(g)=\tau^\sharp (f)^{-1}\in B^*$. We have
	\[ \alpha_C\big(m,\big(\frac{\widetilde{m}}{1},-gx'\big)\big)=\big(\alpha_F\big(\widetilde{m}\big),h_2\big(\widetilde{m}\otimes\frac{\widetilde{m}}{\widetilde{m}}\big)+mgx'\big)=(\alpha_F(\widetilde{m}),x')=(f,x') . \]
	Therefore
	\[ \big[(n,\big(\frac{r_1}{r_2},w\big)\big),(f,x)\big]=\big[\big(n,\big(\frac{r_1}{r_2},w\big)\big)\cdot \big(m,\big(\frac{\widetilde{m}}{1},-gx'\big)\big),(1,0)\big]=\big[\big(nm,\big(\frac{\widetilde{m}r_1}{r_2},w-gx'\big)\big),(1,0)\big] . \]
	So we can write any element of $P^\llog$ as $[(n,(\frac{r_1}{r_2},w)),(1,0)]$. An element $1+z\in 1+W$ acts on $P^\llog$ as the multiplication by $[(1,(\frac{1}{1},-z)),(1,0)]$ and so the action of $1+W$ on $P^\llog$ is free, since $(\frac{r_1}{r_2},w)=(\frac{r_1}{r_2},w-z)\in R^\gp\oplus_TW$ implies $z=0$, the map $T\to R^\gp$ being injective.

	If $h\in\theta(D,\widetilde{D})$, then $h_2'\colon T\to W$ factorizes as
	\[
	\begin{tikzcd}[column sep=4em,row sep=4em]
	T \arrowr & R^\gp \arrowr \arrow[bend right=20]{rr}{h_2''} & R^\gp/\im(M^\gp\to R^\gp) \arrowr{\widetilde{D}} & W
	\end{tikzcd}
	\]
	and then the exact sequence
	\[0 \longrightarrow W \longrightarrow R^\gp\oplus_T W \overset{\varepsilon}{\longrightarrow} N^\gp \longrightarrow 1 \]
	splits by $s\colon R^\gp\oplus_T W\to W$, $s(t,w)=h_2''(t)+w$. Therefore $R^\gp\oplus_T W=N^\gp\times W$ and so $P=N\times W$. Similarly, we can show that $C=B\oplus W$ and then the extension obtained is the trivial one.
	
	Finally, the two maps defined between classes of extensions and $H^1((A,M),(B,N),W)$ are inverse, and natural on $(B,N)$.
\end{proof}

\begin{lemma}\label{5.11}
	Let $B$ a ring, $J$ an ideal of $B$, $t>0$ an integer, $\{F_n\colon B/J^t-\text{modules} \to B/J^t-\text{modules} \}$ a direct system of half-exact functors (i.e. if $0\to U\to W\to V\to 0$ is exact, then $F_n(U)\to F_n(W)\to F_n(V)$ is exact). The following are equivalent:
	\begin{enumerate}
		\item[(i)] $\varinjlim\limits_n F_n(W)=0$ for all $B/J$-modules $W$ (considering $W$ as a $B/J^t$-module via $B/J^t\to B/J$),
		\item[(ii)] $\varinjlim\limits_n F_n(W)=0$ for all $B/J^t$-modules $W$.
	\end{enumerate}
\end{lemma}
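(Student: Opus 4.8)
The implication (ii)$\Rightarrow$(i) is trivial, since every $B/J$-module is a $B/J^t$-module via $B/J^t\to B/J$, so the vanishing asserted in (ii) specializes to that in (i). Thus the plan is to prove (i)$\Rightarrow$(ii). Fix a $B/J^t$-module $W$. Because $J^t$ annihilates $W$, there is a finite filtration
\[ W \supseteq JW \supseteq J^2W \supseteq \cdots \supseteq J^tW = 0 \]
by $B/J^t$-submodules, and each successive quotient $J^iW/J^{i+1}W$ is annihilated by $J$, hence is a $B/J$-module (viewed as a $B/J^t$-module in the way prescribed in (i)).

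The main step is a descending induction on $i$, from $i=t-1$ down to $i=0$, proving that $\varinjlim\limits_n F_n(J^iW)=0$. For the base case $i=t-1$ one has $J^{t-1}W = J^{t-1}W/J^tW$, which is a $B/J$-module, so the claim is exactly hypothesis (i). For the inductive step, assume $\varinjlim\limits_n F_n(J^{i+1}W)=0$ and apply each $F_n$ to the short exact sequence of $B/J^t$-modules
\[ 0 \longrightarrow J^{i+1}W \longrightarrow J^iW \longrightarrow J^iW/J^{i+1}W \longrightarrow 0 ; \]
half-exactness of $F_n$ gives that $F_n(J^{i+1}W)\to F_n(J^iW)\to F_n(J^iW/J^{i+1}W)$ is exact at the middle term. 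Since $\{F_n\}$ is a direct system of functors, these three-term sequences assemble into a direct system of sequences exact in the middle, and as filtered colimits of modules are exact, the sequence
\[ \varinjlim_n F_n(J^{i+1}W) \longrightarrow \varinjlim_n F_n(J^iW) \longrightarrow \varinjlim_n F_n(J^iW/J^{i+1}W) \]
is still exact at the middle term. The left-hand term vanishes by the induction hypothesis and the right-hand term vanishes by (i), since $J^iW/J^{i+1}W$ is a $B/J$-module; hence the middle term vanishes. Taking $i=0$ yields $\varinjlim\limits_n F_n(W)=0$, which is (ii).

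There is no genuine obstacle here; the only points needing attention are bookkeeping. First, one must start the induction at $i=t-1$ rather than at $i=t$, so as never to evaluate $F_n$ on the zero module (we are given no additivity of the $F_n$, only middle-exactness). Second, one should note explicitly that the transition maps $F_n\to F_{n+1}$ of the direct system are natural transformations, so that they commute with the maps induced on the fixed short exact sequence above; this is what legitimizes passing to the colimit of exact sequences, and it is precisely the hypothesis that $\{F_n\}$ is a \emph{direct system} of half-exact functors.
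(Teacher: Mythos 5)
Your proof is correct and is essentially the same argument as the paper's: both reduce to the filtration of $W$ by powers of $J$, apply half-exactness of each $F_n$ to the resulting short exact sequences, and use exactness of direct limits. The paper organizes this as an induction on $t$ using the single sequence $0\to JW\to W\to W/JW\to 0$, while you unwind it into a descending induction on the filtration steps; the content is identical.
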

\begin{proof}
	We have to show (i) $\Rightarrow$ (ii). Induction on $t$. The case $t=1$ is trivial. Let $t>1$, and assume the result valid for $t-1$. Let $W$ be a $B/J^t$-module. In the exact sequence
	\[ 0 \longrightarrow JW \longrightarrow W \longrightarrow W/JW \longrightarrow 0\]
	we have $\varinjlim\limits_n F_n(JW)=0$ since $JW$ is a $B/J^{t-1}$-module, and similarly $\varinjlim\limits_n F_n(W/JW)=0$. Since $\varinjlim\limits_n$ and $F_n$ are half-exact, we deduce $\varinjlim\limits_n F_n(W)=0$.
\end{proof}

\begin{theorem}\label{5.12}
	Let $(A,M) \to (B,N)$ be a homomorphism of prelog rings and $J$ an ideal of $B$. The following are equivalent:
	\begin{enumerate}
		\item[(i)] For each integer $n\geq 1$, let $p_n\colon B\to B/ J^n$ be the canonical map. Then \[\varinjlim\limits_n H^1((A,M),(B/ J^n,p_n^*N),W)=0\] for all $B/ J$-modules $W$.
		\item[(ii)] $(B,N)$ is log formally smooth over $(A,M)$ for the $J$-adic topology.
	\end{enumerate}
\end{theorem}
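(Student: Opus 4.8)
\textit{Preliminaries.} For each $n$ the cotangent complex $\LLL_{(B/J^n,p_n^{*}N)|(A,M)}$ is, by its very construction, degreewise a free $B/J^n$-module, so $\Hom_{B/J^n}(-,W)$ carries a short exact sequence of $B/J^n$-modules to a short exact sequence of complexes; hence $W\mapsto H^1((A,M),(B/J^n,p_n^{*}N),W)$ is half-exact, and for $n\geq t$ these functors, restricted to $B/J^t$-modules and equipped with the natural transition maps from the Jacobi--Zariski sequences of Proposition~\ref{4.09} for $(A,M)\to(B/J^m,p_m^{*}N)\to(B/J^n,p_n^{*}N)$ ($m\geq n$), form a direct system. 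Thus by Lemma~\ref{5.11}, condition~(i) is equivalent to the vanishing of $\varinjlim_n H^1((A,M),(B/J^n,p_n^{*}N),W)$ for every $t\geq1$ and every $B/J^t$-module $W$. I will also use, via Theorem~\ref{5.10} and the naturality in Definition~\ref{5.07}, that each transition map sends the class of an $(A,M)$-extension $(C,P)\to(B/J^n,p_n^{*}N)$ by $W$ to the class of its pullback along the canonical $(B/J^m,p_m^{*}N)\to(B/J^n,p_n^{*}N)$; that a class is $0$ iff the corresponding extension is trivial (Theorem~\ref{5.10}); and that any $(A,M)$-extension admitting a section over its base is trivial --- construct a morphism out of the trivial extension of Definition~\ref{5.09} from the section, using Proposition~\ref{5.05} for the monoid part, and conclude by Proposition~\ref{5.08}.

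\textit{(ii) $\Rightarrow$ (i).} Let $W$ be a $B/J$-module (hence a $B/J^n$-module for every $n$) and let $\zeta\in H^1((A,M),(B/J^n,p_n^{*}N),W)$; by Theorem~\ref{5.10} it is the class of an $(A,M)$-extension $\sigma\colon(C,P)\to(B/J^n,p_n^{*}N)$ by $W$. Apply Definition~\ref{5.03} to the square formed by $(A,M)\to(B,N)$, the canonical map $f\colon(B,N)\to(B/J^n,p_n^{*}N)$ (which kills $J^n$), and $\sigma$: this gives $h\colon(B,N)\to(C,P)$ with $\sigma h=f$. Then $h^\sharp(J^n)\subseteq\ker\sigma^\sharp=W$ and $W^2=0$ in $C$, so $h^\sharp$ kills $J^{2n}$; hence $h$ factors as $(B,N)\to(B/J^{2n},p_{2n}^{*}N)\xrightarrow{\bar h}(C,P)$ with $\sigma\bar h$ the canonical map $(B/J^{2n},p_{2n}^{*}N)\to(B/J^n,p_n^{*}N)$. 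Thus $\bar h$ is a section of the pullback of $\sigma$ along that canonical map, so this pullback is trivial, i.e.\ $\zeta$ maps to $0$ in $H^1((A,M),(B/J^{2n},p_{2n}^{*}N),W)$. Therefore $\varinjlim_n H^1((A,M),(B/J^n,p_n^{*}N),W)=0$, which is~(i).

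\textit{(i) $\Rightarrow$ (ii).} Given a test square as in Definition~\ref{5.03} --- $(A,M)\to(B,N)\xrightarrow{f}(C',P')$, an $(A,M)$-extension $\sigma\colon(C,P)\to(C',P')$ by $W:=\ker\sigma^\sharp$, and $t>0$ with $f(J^t)=0$ --- we must produce a lifting $h\colon(B,N)\to(C,P)$. As $f^\sharp$ kills $J^t$, the ring $C'$ is a $B/J^t$-algebra, $W$ is a $B/J^t$-module, and $f$ factors as $(B,N)\to(B/J^t,p_t^{*}N)\xrightarrow{\bar f}(C',P')$. Pull $\sigma$ back along $\bar f$ to an $(A,M)$-extension $\sigma_t$ of $(B/J^t,p_t^{*}N)$ by $W$, with class $[\sigma_t]\in H^1((A,M),(B/J^t,p_t^{*}N),W)$. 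By the Preliminaries, hypothesis~(i) forces $[\sigma_t]$ to die in $\varinjlim_n$; choose $m\geq t$ with image $0$ in $H^1((A,M),(B/J^m,p_m^{*}N),W)$. This image is the class of the pullback $\sigma_m$ of $\sigma$ along the composite $\bar f_m\colon(B/J^m,p_m^{*}N)\to(B/J^t,p_t^{*}N)\xrightarrow{\bar f}(C',P')$, so $\sigma_m$ is trivial and admits a section $s\colon(B/J^m,p_m^{*}N)\to(C\times_{C'}(B/J^m),\,P\times_{P'}(p_m^{*}N))$. Composing $s$ with the projection to $(C,P)$ and precomposing with the canonical $(B,N)\to(B/J^m,p_m^{*}N)$ gives $h\colon(B,N)\to(C,P)$, and unwinding the fibre products yields $\sigma h=\bar f_m\circ(\text{can})=f$. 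Thus $h$ is the required lifting and (ii) holds.

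\textit{Expected main obstacle.} The fiddly part is the bookkeeping in (i)$\Rightarrow$(ii): identifying the transition maps of $\varinjlim_n H^1$ with pullbacks of extensions through the naturality of Theorem~\ref{5.10} and Definition~\ref{5.07}, keeping straight the canonical factorizations through $B/J^t$ and $B/J^m$, and verifying that the section $s$ descends to a genuine lifting making the triangle of Definition~\ref{5.03} commute. In the converse direction the single real idea is pushing the lift $h$ down to $B/J^{2n}$: this is exactly what upgrades ``the obstruction vanishes in $H^1((A,M),(B,N),W)$'' to ``it vanishes at a finite stage of the direct system'', i.e.\ it is where the $J$-adic (rather than discrete) topology enters.
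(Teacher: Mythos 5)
Your proposal is correct and follows essentially the same route as the paper's proof: for (ii)$\Rightarrow$(i) you lift the extension, observe the lift kills $J^{2n}$, and obtain a section of the pulled-back extension at stage $2n$; for (i)$\Rightarrow$(ii) you pull the test extension back to the quotients $B/J^m$, use Lemma~\ref{5.11} to pass from $B/J$-modules to $B/J^t$-modules, and extract a section at a finite stage via Theorem~\ref{5.10}. The only cosmetic difference is that you spell out the half-exactness of $H^1$ and the ``section implies trivial'' step (which the paper handles with its explicit isomorphism $S\times W\to R$ at the end of the proof), but the substance is identical.
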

\begin{proof}
	(i) $\Rightarrow$ (ii) Consider a diagram as in Definition~\ref{5.03}.
	\[
	\begin{tikzcd}[column sep=4em,row sep=4em]
	(A,M) \arrowr\arrowd & (B,N) \arrowd{f} \\
	(C,P) \arrowr{\tau} & (C',P')
	\end{tikzcd}
	\]
	and let $I=\ker\tau^\sharp(C\to C')$. For all $n\geq t$ we have a commutative square
	\[
	\begin{tikzcd}[column sep=3em,row sep=4em]
	(D_n,Q_n) \arrowr{\theta_n}\arrowd{g_n} & (B/J^n,p_n^*N) \arrowd{f_n} \\
	(C,P) \arrowr{\tau} & (C',P')
	\end{tikzcd}
	\]
	where $p_n\colon B\to B/J^n$ is the canonical projection, $f_n$ is the map induced by $f$, $D_n=C\times_{C'}B/J^n$ and $Q_n=P\times_{P'}p_n^*N$.
	
	Both lines are $(A,M)$-extensions by $I$ (see Definition~\ref{5.07}). Lemma~\ref{5.11}.(i) tell us that
	\[\varinjlim_n  H^1((A,M),(B/J^n,p_n^*N),I)=0 , \]
	and so by Theorem~\ref{5.10} there exists some $s\geq t$ such that $\theta_s$ has a section $\sigma$. The map
	\[(B,N)\longrightarrow (B/J^s,p_s^*N) \overset{\sigma}{\longrightarrow} (D_s,Q_s) \overset{g_s}{\longrightarrow} (C,P)\]
	has the required properties of (ii).
	
	(ii) $\Rightarrow$ (i) Let  $\varphi_n\colon (D,Q) \to (B/J^n,p_n^*N)$ be an $(A,M)$-extension of $(B/J^n,p_n^*N)$ by $W$. By (ii), there exists a homomorphism of prelog rings $\phi_n\colon (B,N)\to (D,Q)$ such that $\varphi_n\phi_n=p_n$. Since $W^2=0$ as ideal of $D$, we have $\phi_n(J^{2n})=0$ and so $\phi_n$ factorizes as
	\[ (B,N) \overset{p_{2n}}{\longrightarrow} (B/J^{2n},p_{2n}^*N) \overset{\xi_{2n}}{\longrightarrow} (D,Q) .\]
	
	We have a commutative diagram where the rows are $(A,M)$-extensions by $W$
	\[
	\begin{tikzcd}[column sep=3em,row sep=4em]
	(D\times_{B/J^n}B/J^{2n},Q\times_{p_n^*N}p_{2n}^*N) \arrowr{\theta_{2n}}\arrowd & (B/J^{2n},p_{2n}^*N) \arrowd\arrow{ld}{\xi_{2n}} \\
	(D,Q) \arrowr & (B/J^{n},p_{n}^*N)
	\end{tikzcd}
	\]
	The homomorphism $\xi_{2n}$ induces a section of $\theta_{2n}$, showing that the element of $H^1((A,M),(B/J^n,p_n^*N),W)$ corresponding to the extension $\varphi_n$ (se Theorem~\ref{5.10}) goes to 0 in $H^1((A,M),(B/J^{2n},p_{2n}^*N),W)$. Note that if $(E,R)\overset{\theta}{\longrightarrow}(F,S)$ is an $(A,M)$-extension of $(F,S)$ by $W$ and $\sigma$ a section of $\theta$, then the map $S\times W\to R$, $(s,w)\mapsto (1+w)\sigma(s)$ is an isomorphism with inverse $r\mapsto(\theta(r),w')$ where $w'\in W$ is such that $(1+w')\sigma\theta(r)=r$ (there exists by Proposition~\ref{5.05}).	
\end{proof}

\begin{lemma}\label{5.13}
	Let $(B,N)$ be a prelog ring, $ J$ an ideal of $B$, $p_n\colon B\to B/ J^n$ the canonical homomorphism and $W$ a $B/J$-module.
	\begin{enumerate}
		\item[(i)] Assume that $J=0$ or $N$ is integral, then
		\[ \varinjlim_n H^i((B,N),(B/ J^n,p_n^*N),W)=0 \]
		for $i=0$ and $i=1$. 
		\item[(ii)] If $J=0$, or $B$ is a noetherian ring and $N$ integral, then
		\[ \varinjlim_n H^i((B,N),(B/ J^n,p_n^*N),W)=0 \]
		for all $i\geq 0$.
	\end{enumerate}
\end{lemma}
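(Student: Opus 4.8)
The plan is to peel off the logarithmic structure and reduce to the ordinary André--Quillen statement $\varinjlim_n H^i(B,B/J^n,W)=0$, then handle $i\le 1$ by hand and quote the noetherian case for $i\ge 2$. The case $J=0$ is immediate: the system is then constant, equal to $H^i((B,N),(B,N)^{\llog},W)$, which vanishes because the logification map has trivial relative log cotangent complex (as in the proof of Proposition~\ref{4.14}). So from now on assume $N$ integral.

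For each $n$, factor the structural map as $(B,N)\to(B/J^n,N)\to(B/J^n,p_n^*N)$, where the monoid of the middle term carries the composite $N\to B\to B/J^n$ and the last arrow is the canonical map onto its logification, so that $(B/J^n,p_n^*N)=(B/J^n,N)^{\llog}$. Since $N$ is integral, the argument in the proof of Proposition~\ref{4.14}, applied with $(B/J^n,N)$ in place of $(A,M)$, gives $H^i((B/J^n,N),(B/J^n,p_n^*N),W)=0$ for all $i\ge 0$ and all modules $W$ (note a $B/J$-module is in particular a $B/J^n$-module). The Jacobi--Zariski exact sequence of Proposition~\ref{4.09} for this composite then yields natural isomorphisms $H^i((B,N),(B/J^n,p_n^*N),W)\cong H^i((B,N),(B/J^n,N),W)$, and by Example~\ref{4.02} the latter is the ordinary André--Quillen cohomology $H^i(B,B/J^n,W)$, compatibly with the transition maps induced by $B/J^{n+1}\twoheadrightarrow B/J^n$. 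Thus it suffices to prove $\varinjlim_n H^i(B,B/J^n,W)=0$.

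For part (i) this is a direct computation. In degree $0$, $H^0(B,B/J^n,W)=\Hom_{B/J^n}(\Omega_{(B/J^n)|B},W)=0$ for every $n$, since $B\to B/J^n$ is surjective, so the colimit vanishes. In degree $1$, $H^1(B,B/J^n,W)=\Hom_{B/J^n}(J^n/J^{2n},W)$, and as $W$ is killed by $J$ and $2n\ge n+1$ this equals $\Hom_{B/J}(J^n/J^{n+1},W)$; reading off the transition map $H^1(B,B/J^n,W)\to H^1(B,B/J^{n+1},W)$ from the Jacobi--Zariski sequence of $B\to B/J^{n+1}\to B/J^n$, together with the identification of $H_1$ of the cotangent complex with the conormal module, one finds it is precomposition with the map $J^{n+1}/J^{n+2}\to J^n/J^{n+1}$ induced by the inclusion $J^{n+1}\subseteq J^n$, which is the zero map; hence $\varinjlim_n H^1(B,B/J^n,W)=0$. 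For part (ii) one needs $\varinjlim_n H^i(B,B/J^n,W)=0$ for all $i\ge 0$ when $B$ is noetherian — the genuinely non-logarithmic input, and the only place noetherianity is used. For $i\ge 2$ the individual transition maps need not vanish, so one must instead exploit the finiteness of the homology of $\LLL_{(B/J^n)|B}$ (the non-logarithmic analogue of Proposition~\ref{4.04}) together with the standard dévissage of the cotangent complex along the filtration $\{J^n\}_n$ — equivalently, the fact that the pro-complex $\{\LLL_{(B/J^n)|B}\}_n$ is pro-isomorphic to $0$ — which I would quote from André~\cite{An-1974}. This last step is the main obstacle; everything else is bookkeeping with the exact sequences already established.
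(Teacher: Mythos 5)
Your proof follows the paper's route essentially verbatim: identify $(B/J^n,p_n^*N)=(B/J^n,N)^{\llog}$, use the vanishing of the relative (co)homology of logification for integral $N$ (the content of Proposition~\ref{4.14}, which you re-derive through the Jacobi--Zariski sequence rather than cite) together with Example~\ref{4.02} to reduce everything to $\varinjlim_n H^i(B,B/J^n,W)=0$, and then invoke Andr\'e --- whose proofs of 10.7 and 10.14 are exactly your hand computation for $i\le 1$ (vanishing transition maps on $\Hom_{B/J}(J^n/J^{n+1},W)$) and the noetherian pro-vanishing of $\{\LLL_{(B/J^n)|B}\}_n$ for $i\ge 2$. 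The one spot where your justification is thinner than it should be is the case $J=0$ with $N$ \emph{not} integral, since there the system is constant equal to $H^i((B,N),(B,N)^{\llog},W)$ and the appeal to the proof of Proposition~\ref{4.14} is not available (that argument uses integrality of the monoid to get the free action of $\ker(v)$); the paper's own proof is silent on this case as well, so this is an inherited rather than an introduced gap.
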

\begin{proof}
	If $J\neq 0$, since $(B/ J^n,p_n^*N)=(B/ J^n,N)^\llog$, we have by Proposition~\ref{4.14} and Example~\ref{4.02}
	\begin{align*}
	H^i((B,N),(B/ J^n,p_n^*N),W)&=H^i((B,N),(B/ J^n,N)^\llog,W)\\
	&=H^i((B,N),(B/ J^n,N),W)\\
	&=H^i(B,B/ J^n,W)
	\end{align*}
	and so the result follows from \cite[proof of 10.7 and of 10.14]{An-1974}.
\end{proof}

\begin{theorem}\label{5.14}
	Let $(A,M)\to (B,N)$ be a homomorphism of prelog rings, $ J$ an ideal of $B$. Assume $ J=0$, or $B$ noetherian and $N$ integral. The following are equivalent:
	\begin{enumerate}
		\item[(i)] $(B,N)$ is an $(A,M)$-algebra log formally smooth for the $ J$-adic topology.
		\item[(ii)] $H^1((A,M),(B,N),W)=0$ for all $B/ J$-modules $W$.
		\item[(iii)] $H_1((A,M),(B,N),B/ J)=0$ and $\Omega_{(B,N)|(A,M)}\otimes_BB/ J$ is a projective $B/ J$-module.
		\item[(iv)] For any homomorphism $(F,R)\to(B,N)$ of prelog rings where $(F,R)$ is a free $(A,M)$-prelog algebra and the homomorphisms $F\to B$ and $R\to N$ are surjective, the homomorphism
		\[ N_{(B,N)|(F,R)}\otimes_B B/ J \longrightarrow \Omega_{(F,R)|(A,M)}\otimes_F B/ J \]
		is split injective.
	\end{enumerate}
\end{theorem}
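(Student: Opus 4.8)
The plan is to establish the chain (i) $\Leftrightarrow$ (ii), then fix one free presentation of $(B,N)$ over $(A,M)$ and deduce (ii) $\Leftrightarrow$ (iv) $\Leftrightarrow$ (iii) by elementary homological algebra applied to two exact sequences that are already available.

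\textbf{Step 1: (i) $\Leftrightarrow$ (ii).} By Theorem~\ref{5.12}, condition (i) is equivalent to $\varinjlim_n H^1((A,M),(B/J^n,p_n^*N),W)=0$ for every $B/J$-module $W$. Feed the Jacobi--Zariski exact sequence (Proposition~\ref{4.09}) for $(A,M)\to(B,N)\to(B/J^n,p_n^*N)$ into $\varinjlim_n$, which is exact; noting that $H^i((A,M),(B,N),W)$ does not depend on $n$, the portion around degree $1$ reads
\[
\cdots\to\varinjlim_n H^1((B,N),(B/J^n,p_n^*N),W)\to\varinjlim_n H^1((A,M),(B/J^n,p_n^*N),W)\to H^1((A,M),(B,N),W)\to\varinjlim_n H^2((B,N),(B/J^n,p_n^*N),W)\to\cdots
\]
This is the only place where the standing hypothesis ($J=0$, or $B$ noetherian and $N$ integral) is used: Lemma~\ref{5.13}.(ii) makes the outer $\varinjlim$ terms vanish, so $\varinjlim_n H^1((A,M),(B/J^n,p_n^*N),W)\cong H^1((A,M),(B,N),W)$, whence (i) $\Leftrightarrow$ (ii).

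\textbf{Step 2: the two structural sequences.} Choose a surjective homomorphism $(F,R)\to(B,N)$ from a free $(A,M)$-prelog algebra (e.g. $F=A[B]$ on the set $B$ and $R=M\oplus\N^{N}$ with the copy of $\N^N$ mapping onto $N$). Let $\phi\colon N_{(B,N)|(F,R)}\to B\otimes_F\Omega_{(F,R)|(A,M)}$ be the natural map of Proposition~\ref{3.13} and set $\bar\phi:=\phi\otimes_BB/J$. From the proof of Theorem~\ref{5.10} (via Propositions~\ref{4.09}, \ref{4.07}, \ref{4.06}) one has the exact sequence
\[
\Hom_B(B\otimes_F\Omega_{(F,R)|(A,M)},W)\xrightarrow{\phi^*}\Hom_B(N_{(B,N)|(F,R)},W)\to H^1((A,M),(B,N),W)\to 0 ,
\]
and from Jacobi--Zariski for $(A,M)\to(F,R)\to(B,N)$, using $H_{>0}((A,M),(F,R),W)=0$ (Proposition~\ref{4.07}), $\Omega_{(B,N)|(F,R)}=\Omega_{B|F}=0$ (Example~\ref{3.14}.(iii) together with surjectivity of $F\to B$) and Proposition~\ref{4.06}, one obtains the four-term exact sequence
\[
0\to H_1((A,M),(B,N),W)\to N_{(B,N)|(F,R)}\otimes_BW\xrightarrow{\phi\otimes_BW}\Omega_{(F,R)|(A,M)}\otimes_FW\to\Omega_{(B,N)|(A,M)}\otimes_BW\to 0 .
\]

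\textbf{Step 3: (ii) $\Leftrightarrow$ (iv) $\Leftrightarrow$ (iii).} Since $W$ is a $B/J$-module, $\Hom_B(-,W)=\Hom_{B/J}((-)\otimes_BB/J,W)$, so the first sequence identifies $H^1((A,M),(B,N),W)$ with $\coker(\bar\phi^*)$; hence it vanishes for all $B/J$-modules $W$ iff $\bar\phi^*$ is surjective for all such $W$, iff $\bar\phi$ is split injective (testing with $W=N_{(B,N)|(F,R)}\otimes_BB/J$ gives one direction, the other being trivial). As (ii) is independent of the presentation, this both proves (ii) $\Leftrightarrow$ (iv) and shows (iv) holds for \emph{every} such presentation. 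For (iv) $\Leftrightarrow$ (iii), put $W=B/J$ in the four-term sequence: then $\ker\bar\phi=H_1((A,M),(B,N),B/J)$, $\coker\bar\phi\cong\Omega_{(B,N)|(A,M)}\otimes_BB/J$, and $\Omega_{(F,R)|(A,M)}\otimes_FB/J$ is a free $B/J$-module (Example~\ref{3.14}.(i)). If $\bar\phi$ is split injective, then $H_1((A,M),(B,N),B/J)=0$ and $\Omega_{(B,N)|(A,M)}\otimes_BB/J$, being a direct summand of a free module, is projective, i.e. (iii); conversely, given (iii) the surjection $\Omega_{(F,R)|(A,M)}\otimes_FB/J\twoheadrightarrow\Omega_{(B,N)|(A,M)}\otimes_BB/J$ onto a projective splits, so its kernel $\im\bar\phi$ is a direct summand, and $\bar\phi$ — injective because $H_1((A,M),(B,N),B/J)=0$ — is split injective.

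The only genuinely delicate point is Step 1: everything else (the Hom--tensor adjunction, the split-exactness manipulations, the vanishing $\Omega_{B|F}=0$) is immediate once the two sequences of Step 2 are written down. The hypotheses "$B$ noetherian, $N$ integral" (or $J=0$) are needed precisely to apply Lemma~\ref{5.13}.(ii) and pass from the limit criterion of Theorem~\ref{5.12} to the vanishing of $H^1((A,M),(B,N),-)$ on $B/J$-modules.
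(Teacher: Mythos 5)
Your proof is correct, and Step 1 ((i) $\Leftrightarrow$ (ii)) is word for word the paper's argument: pass the Jacobi--Zariski sequences through $\varinjlim_n$ and kill the outer terms with Lemma~\ref{5.13}.(ii), which is indeed the only place the hypothesis ``$J=0$, or $B$ noetherian and $N$ integral'' enters. Where you diverge is in how the remaining three conditions are linked. The paper proves (ii) $\Leftrightarrow$ (iii) directly via the universal coefficient spectral sequence, extracting the five-term exact sequence
\[ 0\to \Ext^1_{B/J}(\Omega_{(B,N)|(A,M)}\otimes_BB/J,W)\to H^1((A,M),(B,N),W)\to \Hom_{B/J}(H_1((A,M),(B,N),B/J),W)\to \Ext^2_{B/J}(\cdots) , \]
and then gets (iii) $\Leftrightarrow$ (iv) from the same four-term Jacobi--Zariski homology sequence you write down. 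You instead reorder the chain as (ii) $\Leftrightarrow$ (iv) $\Leftrightarrow$ (iii), replacing the spectral sequence by the presentation description of $H^1$ as $\coker(\bar\phi^*)$ (the cohomological Jacobi--Zariski sequence already used in the proof of Theorem~\ref{5.10}), and recovering projectivity of $\Omega_{(B,N)|(A,M)}\otimes_BB/J$ as a direct summand of the free module $\Omega_{(F,R)|(A,M)}\otimes_FB/J$. This is slightly more elementary (no spectral sequence, only the two exact sequences and the $\Hom$--$\otimes$ adjunction) at the cost of routing everything through a choice of free presentation; the paper's route makes (ii) $\Leftrightarrow$ (iii) presentation-free. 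Both arguments are complete. One cosmetic slip: your sample presentation should be $F=A[X,Y]$ with $X=B$ \emph{and} $Y=N$ (so that $R=M\oplus\N^{N}$ maps into $F$ as in Example~\ref{3.14}.(i)), not just $A[B]$; this does not affect anything since only the existence of some surjective free presentation is needed.
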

\begin{proof}
	(i) $\Leftrightarrow$ (ii) We have an exact sequence induced by Jacobi-Zariski exact sequences
	\begin{align*}
	& \varinjlim_n H^1((B,N),(B/ J^n,p_n^*N),W) \longrightarrow \varinjlim_n H^1((A,M),(B/ J^n,p_n^*N),W) \longrightarrow  \\
	\longrightarrow & \varinjlim_n H^1((A,M),(B,N),W) \longrightarrow \varinjlim_n H^2((B,N),(B/ J^n,p_n^*N),W) .
	\end{align*}
	Then, Lemma~\ref{5.13} gives an isomorphism
	\[\varinjlim_n H^1((A,M),(B/ J^n,p_n^*N),W)=H^1((A,M),(B,N),W) . \]
	So the result follows from Theorem~\ref{5.12}.
	
	(ii) $\Leftrightarrow$ (iii) The universal coefficient spectral sequence
	\begin{align*}
	E_2^{pq}=\Ext_{B/ J}^p(H_q( \LLL_{(B,N)|(A,M)}\otimes_BB/ J),W) \Rightarrow & \ H^{p+q}(\Hom_{B/ J}( \LLL_{(B,N)|(A,M)}\otimes_BB/ J, W)) \\
	& \quad = H^{p+q}((A,M),(B,N), W)
	\end{align*}	
	gives an exact sequence
	\[ 0\longrightarrow \Ext_{B/ J}^1(\Omega_{(B,N)|(A,M)}\otimes_BB/ J,W) \longrightarrow H^{1}((A,M),(B,N), W) \longrightarrow \]
	\[ \longrightarrow \Hom_{B/ J}(H_{1}((A,M),(B,N),B/J), W) \longrightarrow \Ext_{B/ J}^2(\Omega_{(B,N)|(A,M)}\otimes_BB/ J,W) , \]
	from which we obtain the desired equivalence.
	
	(iii) $\Leftrightarrow$ (iv) We have a Jacobi-Zariski exact sequence, taking by Proposition~\ref{4.06} the form
	\[ H_{1}((A,M),(F,R), B/ J) \longrightarrow H_{1}((A,M),(B,N), B/ J) \longrightarrow N_{(B,N)|(F,R)}\otimes_B B/ J \longrightarrow \]
	\[ \longrightarrow \Omega_{(F,R)|(A,M)}\otimes_F B/ J \longrightarrow \Omega_{(B,N)|(A,M)}\otimes_B B/ J \longrightarrow \Omega_{(B,N)|(F,R)}\otimes_B B/ J=0 .\]
	The first module vanishes by Proposition~\ref{4.07} 
	and furthermore $\Omega_{(F,R)|(A,M)}\otimes_F B/ J$ is a free $B/ J$ module by Example~\ref{3.14}.(i). Then the result follows.
\end{proof}

\begin{theorem}\label{5.15}
	Let $(A,M)\to (B,N)$ be a homomorphism essentially of finite type of noetherian prelog rings (see Definition~\ref{2.01}) and $J$ an ideal of $B$. Assume $J=0$ or $N$ is integral. The following are equivalent:
	\begin{enumerate}
		\item[(i)] $(B,N)$ is an $(A,M)$-algebra log formally smooth for the $ J$-adic topology.
		\item[(ii)]  $H_1((A,M),(B,N),W)=0$ for all $B/ J$-modules $W$.
	\end{enumerate}
	If moreover $B$ is local and $ J$ a proper ideal of $B$, these conditions are also equivalent to
	\begin{enumerate}
		\item[(iii)] $(B,N)$ is an $(A,M)$-algebra log formally smooth for the $0$-adic topology.
	\end{enumerate}
\end{theorem}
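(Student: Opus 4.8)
The plan is to derive everything from Theorem~\ref{5.14} together with Proposition~\ref{4.05}; the ``essentially of finite type over a noetherian ring'' hypothesis enters exactly to make the latter applicable. Observe first that the hypotheses force $B$ to be noetherian, so $B/J$ is a noetherian $B$-algebra and $(A,M)\to(B,N)$ is in the situation of Proposition~\ref{4.04} both with $C=B/J$ and with $C=B$.

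\textbf{Equivalence of (i) and (ii).} By Proposition~\ref{4.05} applied with $n=1$ and $C=B/J$, the condition ``$H_1((A,M),(B,N),W)=0$ for all $B/J$-modules $W$'' is equivalent to ``$H^1((A,M),(B,N),W)=0$ for all $B/J$-modules $W$''. Under our standing assumption (``$J=0$ or $N$ integral''), the equivalence (i)$\Leftrightarrow$(ii) of Theorem~\ref{5.14} says this last condition is equivalent to $(B,N)$ being log formally smooth over $(A,M)$ for the $J$-adic topology. Chaining the two equivalences gives (i)$\Leftrightarrow$(ii) of the statement.

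\textbf{The local case.} Assume now $B$ local with maximal ideal $\mathfrak{m}$ and $J$ a proper ideal, so $J\subseteq\mathfrak{m}$ and in particular $k:=B/\mathfrak{m}$ is a $B/J$-module. The implication (iii)$\Rightarrow$(i) is immediate: for the $0$-adic topology the condition ``$f(J^t)=0$ for some $t>0$'' in Definition~\ref{5.03} reads ``$f(0)=0$'', which always holds, so $0$-adic log formal smoothness demands the lifting property against \emph{all} $(A,M)$-extensions and hence implies $J$-adic log formal smoothness. For (ii)$\Rightarrow$(iii): assuming (ii), we get $H_1((A,M),(B,N),k)=0$ because $k$ is a $B/J$-module; applying Proposition~\ref{4.05} with $n=1$ and $C=B$, and using that $\mathfrak{m}$ is the only maximal ideal of $B$, the vanishing of $H_1((A,M),(B,N),B/\mathfrak{m})$ forces $H_1((A,M),(B,N),W)=0$, and therefore $H^1((A,M),(B,N),W)=0$, for every $B$-module $W$. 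By Theorem~\ref{5.14} applied with the zero ideal in place of $J$ (for which the hypothesis is trivially met, since $B/0=B$), $(B,N)$ is log formally smooth over $(A,M)$ for the $0$-adic topology, i.e.\ (iii) holds. Together with (i)$\Leftrightarrow$(ii) this closes the cycle.

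\textbf{Where the work sits.} There is no analytic content: the proof is an assembly of the two cited results, and the step requiring the most care is the local-case passage --- one must notice that properness of $J$ places the residue field $k$ among the $B/J$-modules, and then invoke Proposition~\ref{4.05} at $C=B$ to propagate vanishing from $k$ to all $B$-modules. It is also precisely the noetherian/finite-type hypothesis that lets one dispense with the projectivity clause in Theorem~\ref{5.14}(iii): once $H^1$ vanishes against every $B/J$-module, $\Omega_{(B,N)|(A,M)}\otimes_B B/J$ is automatically $B/J$-projective by the universal coefficient exact sequence used there. Finally, one should check at each invocation of Theorem~\ref{5.14} that its hypothesis (``$J=0$ or $B$ noetherian and $N$ integral'') is satisfied: in the $J$-adic case it is our standing assumption, and in the $0$-adic case the relevant ideal is $0$.
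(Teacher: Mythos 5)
Your proposal is correct and follows essentially the same route as the paper: both reduce (i)$\Leftrightarrow$(ii) to Theorem~\ref{5.14} plus Proposition~\ref{4.05}, and both obtain the $0$-adic statement by using Proposition~\ref{4.05} (with $C=B$ and the single maximal ideal) to propagate vanishing from the residue field to all $B$-modules before reapplying Theorem~\ref{5.14} with the zero ideal. The only cosmetic difference is that you run the local-case implication from (ii) while the paper runs it from (i); since these are already shown equivalent, the arguments coincide.
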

\begin{proof}
	Conditions (i) and (ii) are equivalent by Theorem~\ref{5.14} and Proposition~\ref{4.05}.
	
	(iii) $\Rightarrow$ (ii) is clear by Theorem~\ref{5.14}.(ii) or by definition.
	
	Assume (i). Then by Theorem~\ref{5.14}.(ii) we have $H^1((A,M),(B,N),B/\nnn)=0$ where $\nnn$ is the maximal ideal of $B$, and so by Proposition~\ref{4.05} we deduce $H^1((A,M),(B,N),W)=0$ for any $B$-module $W$. So we get (iii) by Theorem~\ref{5.14}.
\end{proof}

We will see some examples where homological methods give short proofs of known results. If $C$ is a finitely generated abelian group and $R$ is a ring, then $C$ is finite of order inversible in $R$ if and only if $\Hom_{\Z}(C,W)=0$ for any $R$-module $W$, and the torsion part of $C$ is of order inversible in $R$ if and only if $\Ext_{\Z}^1(C,W)=0$ for any $R$-module $W$. So the following two results are included in \cite[Theorem IV.3.1.8, Proposition IV.3.1.13]{Og} in the case of finitely generated monoids (and taking $J=0$).

\begin{proposition}\label{5.16}
	Let $M\to N$ be a homomorphism of monoids with $N$ integral, $R$ a ring, $A=R[M]$, $B=R[N]$ and $J$ an ideal of $B$. If
	\[ \Hom_{\Z}(\ker(M^\gp\to N^\gp),W)=0=\Ext_{Z}^1(\coker(M^\gp\to N^\gp),W) \]
	for any $B/J$-module $W$, then $(A,M)\to(B,N)$ is log formally smooth for the $J$-adic topology.
\end{proposition}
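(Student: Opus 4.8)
The plan is to establish condition (i) of Theorem~\ref{5.12}, namely that $\varinjlim_n H^1((A,M),(B/J^n,p_n^*N),W)=0$ for every $B/J$-module $W$; log formal smoothness for the $J$-adic topology then follows from that theorem. The heart of the argument is the vanishing $H^1((A,M),(B,N),W)=0$ for all $B/J$-modules $W$.

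To prove that, I would first reduce to the case $R=\Z$ by Base Change. Apply Proposition~\ref{4.08} to the homomorphisms $(\Z[M],M)\to(\Z[N],N)$ and $(\Z[M],M)\to(R[M],M)=(A,M)$ (the latter induced by $\Z\to R$ and $\id_M$); their pushout is $(R[N],N)=(B,N)$. Condition (iii) holds since $M^\gp\to N^\gp\oplus M^\gp$ has identity second component; (ii) is immediate because $\Z[M]$ is free over itself; and (i), i.e. $\Tor_i^{\Z[M]}(\Z[N],R[M])=0$ for $i>0$, follows by taking a free resolution of $R$ over $\Z$, tensoring it with the $\Z$-flat module $\Z[M]$ to get a free $\Z[M]$-resolution of $R[M]=\Z[M]\otimes_\Z R$, and then using that $\Z[N]$ is $\Z$-free. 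Hence $H^n((A,M),(B,N),W)=H^n((\Z[M],M),(\Z[N],N),W)$ for all $n\ge 0$ and all $B$-modules $W$.

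Next I would compute $H^1((\Z[M],M),(\Z[N],N),W)$ from the fundamental exact sequence of Theorem~\ref{4.03}. Here the underlying ring homomorphism $\Z[M]\to\Z[N]$ is exactly $\Z[-]$ applied to $M\to N$, so the maps $H^n(A,B,W)\to H^n(\Z[M],\Z[N],W)$ occurring in that sequence are isomorphisms: at the chain level they are the comparison map between the two cofibrant resolutions of $\Z[N]$ over $\Z[M]$ appearing in the proof of Theorem~\ref{4.03} (namely $\Z[R_\bullet]\hookrightarrow F_\bullet$), hence a quasi-isomorphism. A short diagram chase then breaks the sequence into split-off short exact sequences identifying $H^n((\Z[M],M),(\Z[N],N),W)$ with the correction term in degree $n$; in degree $1$ this is the module $\Lambda$ of Theorem~\ref{4.03}, i.e.
\[0\longrightarrow\Ext^1_\Z\big(N^\gp/\im(M^\gp\to N^\gp),W\big)\longrightarrow H^1((\Z[M],M),(\Z[N],N),W)\longrightarrow\Hom_\Z\big(\ker(M^\gp\to N^\gp),W\big)\longrightarrow 0 .\]
Since $N^\gp/\im(M^\gp\to N^\gp)=\coker(M^\gp\to N^\gp)$, the hypothesis annihilates both outer terms, so $H^1((A,M),(B,N),W)=0$ for every $B/J$-module $W$.

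To finish, for each $n$ the Jacobi--Zariski exact sequence (Proposition~\ref{4.09}) of $(A,M)\to(B,N)\to(B/J^n,p_n^*N)$ contains
\[H^1((B,N),(B/J^n,p_n^*N),W)\longrightarrow H^1((A,M),(B/J^n,p_n^*N),W)\longrightarrow H^1((A,M),(B,N),W)=0 ,\]
so $H^1((A,M),(B/J^n,p_n^*N),W)$ is a quotient of $H^1((B,N),(B/J^n,p_n^*N),W)$, naturally in $n$. Since $N$ is integral, Lemma~\ref{5.13}(i) gives $\varinjlim_n H^1((B,N),(B/J^n,p_n^*N),W)=0$, whence (the direct limit being exact) $\varinjlim_n H^1((A,M),(B/J^n,p_n^*N),W)=0$, and Theorem~\ref{5.12} applies. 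The main obstacle is the degeneration of the sequence of Theorem~\ref{4.03} in the case $A=\Z[M]$, $B=\Z[N]$: one must verify that the natural maps $H^n(\Z[M],\Z[N],W)\to H^n(A,B,W)$ there are isomorphisms, which is homotopy-invariance of the cotangent complex but has to be read off carefully from the construction in that proof.
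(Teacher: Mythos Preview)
Your argument is correct, and the overall strategy---show $H^1((A,M),(B,N),W)=0$ via the fundamental exact sequence of Theorem~\ref{4.03}, then deduce formal smoothness---is the same as the paper's. The execution differs in two places.

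First, where you reduce to $R=\Z$ by the base-change Proposition~\ref{4.08} (so that the comparison maps $H^n(A,B,W)\to H^n(\Z[M],\Z[N],W)$ become isomorphisms and the sequence collapses to $H^1\cong\Lambda$), the paper stays with general $R$ and instead argues directly that in the fundamental sequence the map $H^0(R[M],R[N],W)\to H^0(\Z[M],\Z[N],W)$ is an isomorphism (Propositions~\ref{3.10}, \ref{3.16}) and $H^1(R[M],R[N],W)\to H^1(\Z[M],\Z[N],W)$ is injective (the cohomological analogue of \cite[2.6.2]{MR}); combined with the hypothesis this forces $H^1((A,M),(B,N),W)=0$. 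Your route avoids the external reference and the separate treatment of $H^0$ and $H^1$, at the cost of checking the Tor conditions for Proposition~\ref{4.08}; the paper's route is a line shorter but leans on \cite{MR}.

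Second, to pass from $H^1((A,M),(B,N),W)=0$ to formal smoothness the paper simply invokes Theorem~\ref{5.14}, whereas you re-derive this directly from Jacobi--Zariski, Lemma~\ref{5.13}(i), and Theorem~\ref{5.12}. This is not wasted effort: your version uses only Lemma~\ref{5.13}(i), hence only the integrality of $N$, while Theorem~\ref{5.14} as stated assumes in addition that $B$ is noetherian (via Lemma~\ref{5.13}(ii) in its proof). Since Proposition~\ref{5.16} imposes no finiteness on $R$ or $N$, your path is cleaner here. Incidentally, your worry about the ``degeneration'' step is slightly overstated: by the naturality of the triangle in Theorem~\ref{4.03}, applied to the identity square $\Z[M]\to\Z[N]$ over itself, the comparison map is automatically an isomorphism; the resolution argument you sketch works too.
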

\begin{proof}
	Let $W$ be a  $B/J$-module and consider the fundamental exact sequence
	\[ H^0(R[M],R[N],W)\oplus\Hom_{\Z}(\coker(M^\gp\to N^\gp),W)\longrightarrow H^0(\Z[M],\Z[N],W)\longrightarrow\]
	\[ \longrightarrow H^1((A,M),(B,N),W)\longrightarrow \]
	\[ \longrightarrow H^1(R[M],R[N],W)\oplus\Ext_{\Z}^1(\coker(M^\gp\to N^\gp),W)\oplus\Hom_{\Z}(\ker(M^\gp\to N^\gp),W) \longrightarrow\]
	\[ \longrightarrow H^1(\Z[M],\Z[N],W) . \]
	
	The canonical homomorphism $H^0(R[M],R[N],W)\to  H^0(\Z[M],\Z[N],W)$ is an isomorphism by propositions~\ref{3.10} and \ref{3.16}, and the homomorphism \[ H^1(R[M],R[N],W)\longrightarrow H^1(\Z[M],\Z[N],W) \] is injective by the cohomological analogue (same proof) of \cite[2.6.2]{MR}. The result follows from\linebreak Theorem~\ref{5.14}.
\end{proof}

\begin{proposition}\label{5.17}
	Let $\varphi\colon (A,M)\to(B,N)$ be a homomorphism of prelog rings and $J$ an ideal of $B$. Assume that $N$ is integral and
	\begin{align*}
	\Hom_{\Z}(\ker(M^\gp\to N^\gp),W)&=\Ext_\Z^1(\coker(M^\gp\to N^\gp),W) \\
	&=\Hom_\Z(\coker(M^\gp\to N^\gp), W)=0
	\end{align*}
	for any $B/J$-module $W$. The following are equivalent:
	\begin{enumerate}
		\item[(i)] $\varphi$ is log formally smooth for the $J$-adic topology.
		\item[(ii)] $B$ is formally smooth over $A\otimes_{\Z[M]}\Z[N]$ for the $J$-adic topology.
	\end{enumerate}
\end{proposition}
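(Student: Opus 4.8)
The plan is to translate both conditions into the vanishing of an $H^1$ and to compare the two $H^1$'s through the fundamental exact sequence of Theorem~\ref{4.03}. Put $D:=A\otimes_{\Z[M]}\Z[N]$; from $\alpha_B\circ\varphi^\flat=\varphi^\sharp\circ\alpha_A$ one gets a canonical $D$-algebra structure on $B$ (and on every $B/J^n$). Since $N$ is integral, Theorem~\ref{5.14} gives that (i) is equivalent to $H^1((A,M),(B,N),W)=0$ for all $B/J$-modules $W$; when $J\neq0$ and $B$ is not noetherian one uses instead the limit criterion of Theorem~\ref{5.12}, noting that $(B/J^n,p_n^*N)=(B/J^n,N)^\llog$ and that, $N$ being integral, $H^\bullet((B/J^n,N),(B/J^n,p_n^*N),W)=0$ by the argument in the proof of Proposition~\ref{4.14}. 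On the other side, the non-logarithmic characterization of formal smoothness recalled in the Introduction gives that (ii) is equivalent to $H^1(D,B,W)=0$ for all $B/J$-modules $W$. Thus everything reduces to proving, for every $B/J$-module $W$,
\[ H^1((A,M),(B,N),W)=0 \iff H^1(D,B,W)=0 . \]

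Fix such a $W$. In the cohomological fundamental exact sequence of Theorem~\ref{4.03} for $(A,M)\to(B,N)$ the hypotheses force $\Hom_\Z(\coker(M^\gp\to N^\gp),W)=0$ and $\Lambda=0$ (because $\Ext^1_\Z(\coker(M^\gp\to N^\gp),W)=0$ and $\Hom_\Z(\ker(M^\gp\to N^\gp),W)=0$), so the sequence collapses to an exact sequence
\[ H^0(A,B,W)\xrightarrow{\;\phi\;}H^0(\Z[M],\Z[N],W)\longrightarrow H^1((A,M),(B,N),W)\longrightarrow H^1(A,B,W)\xrightarrow{\;\rho\;}H^1(\Z[M],\Z[N],W). \]
In parallel, the Jacobi--Zariski sequence of the ring homomorphisms $A\to D\to B$ reads
\[ H^0(A,B,W)\xrightarrow{\;\phi'\;}H^0(A,D,W)\longrightarrow H^1(D,B,W)\longrightarrow H^1(A,B,W)\xrightarrow{\;\rho'\;}H^1(A,D,W). \]
Because $D=A\otimes_{\Z[M]}\Z[N]$ one has $\Omega_{D|A}=D\otimes_{\Z[N]}\Omega_{\Z[N]|\Z[M]}$, hence a natural identification $H^0(A,D,W)=\Der_A(D,W)=\Der_{\Z[M]}(\Z[N],W)=H^0(\Z[M],\Z[N],W)$ under which $\phi$ and $\phi'$ coincide; and $\rho=\iota\circ\rho'$, where $\iota\colon H^1(A,D,W)\to H^1(\Z[M],\Z[N],W)$ is the restriction map attached to the cocartesian square with edges $\Z[M]\to\Z[N]$ and $A\to D$, the equality holding because $\Z[N]\to D\to B$ is the canonical map $\Z[N]\to B$. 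The essential point is that $\iota$ is injective: if an $A$-square-zero extension $E$ of $D$ by $W$ splits after pulling back along $\Z[N]\to D$, then a $\Z[M]$-algebra section $\Z[N]\to E$ of that pullback together with the structural map $A\to E$ (the two agree over $\Z[M]$) induces, by the universal property of the pushout $D=A\otimes_{\Z[M]}\Z[N]$, a section $D\to E$ — this is the cohomological analogue of the injectivity invoked in the proof of Proposition~\ref{5.16}.

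Granting this, $H^1((A,M),(B,N),W)$ and $H^1(D,B,W)$ sit in short exact sequences
\[ 0\to\coker\phi\to H^1((A,M),(B,N),W)\to\ker\rho\to0,\qquad 0\to\coker\phi'\to H^1(D,B,W)\to\ker\rho'\to0, \]
with $\coker\phi=\coker\phi'$ and $\ker\rho=\ker(\iota\circ\rho')=\ker\rho'$; hence one of the two modules vanishes precisely when the other does, giving the displayed equivalence and therefore (i)$\Leftrightarrow$(ii). Running the same computation with each $B/J^n$ in place of $B$ disposes of the non-noetherian case via the limit criteria. The main obstacle is the injectivity of $\iota$ — that a square-zero deformation of $D$ over $A$ is determined, up to equivalence, by its restriction to $\Z[N]$; the remaining work is to verify that the identifications of the $H^0$'s and the relations $\phi=\phi'$, $\rho=\iota\rho'$ are the natural ones, so that the two short exact sequences genuinely share their outer terms.
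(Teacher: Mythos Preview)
Your proof is correct and follows essentially the same line as the paper's: both reduce via the fundamental exact sequence (Theorem~\ref{4.03}) and the Jacobi--Zariski sequence for $A\to D\to B$ to comparing the maps you call $\phi,\rho$ with $\phi',\rho'$, and both hinge on the bijectivity of $H^0(A,D,W)\to H^0(\Z[M],\Z[N],W)$ and the injectivity of your $\iota$ (the paper's $\gamma_1$, for which it cites the cohomological analogue of \cite[2.6.2]{MR} rather than your direct extension argument). Your extra care with the non-noetherian case via Theorem~\ref{5.12} is not in the paper's proof, which tacitly relies on Theorem~\ref{5.14}.
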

\begin{proof}
	From the fundamental sequence and Theorem~\ref{5.14}
	\[
	\begin{tikzcd}[column sep=2em,row sep=0.5em]
	& H^0(A,B,W) \arrowr{\beta_0} & H^0(\Z[M],\Z[N],W) \arrowr &  \ \\
	H^1((A,M),(B,N),W) \arrowr & H^1(A,B,W) \arrowr{\beta_1} & H^1(\Z[M],\Z[N],W) \\
	\end{tikzcd}
	\]
	we see that (i) holds if and only if $\beta_1$ is injective and $\beta_0$ is surjective for all $B/J$-modules $W$.
	
	Now, from the Jacobi-Zariski exact sequence
	\[
	\begin{tikzcd}[column sep=2em,row sep=0.5em]
	& H^0(A,B,W) \arrowr{\beta_0'} & H^0(A,A\otimes_{Z[M]}\Z[N],W) \arrowr &  \ \\
	H^1(A\otimes_{Z[M]}\Z[N],B,W) \arrowr & H^1(A,B,W) \arrowr{\beta_1'} & H^1(A,A\otimes_{Z[M]}\Z[N],W) \\
	\end{tikzcd}
	\]
	we see that (ii) holds if and only if $\beta_1'$ is injective and $\beta_0$ is surjective.
	
	Since in the commutative triangle
	\[
	\begin{tikzcd}[column sep=0em,row sep=3em]
	H^i(A,B,W) \arrow{rr}{\beta_i}\arrow{rd}{\beta_i'} & & H^i(\Z[M],\Z[N],W) \\
	& H^i(A,A\otimes_{Z[M]}\Z[N],W) \arrow{ru}{\gamma_i} & 
	\end{tikzcd}
	\]
	$\gamma_1$ is injective by the cohomological analogue of \cite[2.6.2]{MR} and $\gamma_0$ is bijective, we are done.	
\end{proof}

Here is a version of \cite[Theorem 0.1]{INT}.

\begin{proposition}\label{5.18}
	Let $(A,M)\to (B,N)$ and $(A,M)\to(C,P)$ be homomorphisms of prelog rings. Then (i) $\Rightarrow$ (ii) $\Rightarrow$ (iii) $\Rightarrow$ (iv) $\Rightarrow$ (v) $\Rightarrow$ (vi), where:
	\begin{enumerate}
		\item[(i)] $M\to P$ is a Kummer homomorphism of integral monoids, $M$ is saturated and $A\otimes_{\Z[M]}\Z[P]\to C$ is flat.
		\item[(ii)] $M\to P$ is an exact and injective homomorphism of integral monoids and $A\otimes_{\Z[M]}\Z[P]\to C$ is flat.
		\item[(iii)] $\Z[M]\to\Z[P]$ and $A\otimes_{\Z[M]}\Z[P]\to C$ are flat, and $M^\gp\to P^\gp$ is injective.
		\item[(iv)] $\Z[M]\to\Z[P]$ and $A\to C$ are flat, and $M^\gp\to P^\gp$ is injective.
		\item[(v)] $\Tor_n^{\Z[M]}(\Z[N],\Z[P]))=0=\Tor_n^A(B,C)$ for all $n>0$, and $M^\gp\to P^\gp$ is injective.
		\item[(vi)] For any $B\otimes_AC$-module $W$, we have
		\[H_n((A,M),(B,N),W)=H_n((C,P),(B\otimes_AC,N\oplus_MP),W)\]
		for all $n\geq 0$. 
	\end{enumerate}
\end{proposition}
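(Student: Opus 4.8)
The plan is to verify the six implications in the order listed; most are routine reductions in commutative and monoid algebra, and the one genuinely homological step $(v)\Rightarrow(vi)$ is a direct appeal to Base Change (Proposition~\ref{4.08}). I begin with $(i)\Rightarrow(ii)$, which is pure monoid theory. A Kummer homomorphism $M\to P$ of integral monoids is by definition injective, and it induces an injection $M^\gp\hookrightarrow P^\gp$: a difference of elements of $M$ that dies in $P^\gp$ already dies in $P$ (as $P$ is integral), hence in $M$ by injectivity. For exactness, given $x\in M^\gp$ whose image lies in $P\subseteq P^\gp$, the Kummer property yields $n\geq1$ with $x^n\in M$, and saturation of $M$ forces $x\in M$; so $M\to P$ is exact and injective. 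The flatness hypothesis on $A\otimes_{\Z[M]}\Z[P]\to C$ is common to (i) and (ii), so nothing further is needed.

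For $(ii)\Rightarrow(iii)$: injectivity of $M^\gp\to P^\gp$ was just observed to follow from integrality of $M,P$ together with injectivity of $M\to P$, and flatness of $A\otimes_{\Z[M]}\Z[P]\to C$ is again shared between the two conditions. The remaining point is flatness of $\Z[M]\to\Z[P]$, which for an exact injective homomorphism of integral monoids is the known flatness criterion for monoid rings (the link between exactness/integrality of a monoid homomorphism and flatness of the induced ring map, as developed by Kato and in Ogus's book); this is the one place where a nontrivial structural fact about monoids enters.

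Next, $(iii)\Rightarrow(iv)$: since $\alpha\colon M\to A$ makes $A$ a $\Z[M]$-algebra, the base change $A\to A\otimes_{\Z[M]}\Z[P]$ of the flat map $\Z[M]\to\Z[P]$ is flat, and composing with the flat map $A\otimes_{\Z[M]}\Z[P]\to C$ --- through which $A\to C$ factors, by the compatibility $\alpha_C f^\flat=f^\sharp\alpha_A$ --- shows $A\to C$ is flat; the other two clauses of (iv) are already part of (iii). Then $(iv)\Rightarrow(v)$ is immediate: flatness of $\Z[M]\to\Z[P]$ kills $\Tor_n^{\Z[M]}(\Z[N],\Z[P])$ for $n>0$ (here $\Z[N]$ and $\Z[P]$ are $\Z[M]$-algebras through $M\to N$ and $M\to P$), flatness of $A\to C$ kills $\Tor_n^A(B,C)$ for $n>0$, and $M^\gp\to P^\gp$ injective is unchanged. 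Finally $(v)\Rightarrow(vi)$: condition (v) is exactly the hypothesis list of Proposition~\ref{4.08}, whose clause (iii) holds because injectivity of $M^\gp\to P^\gp$ forces injectivity of $M^\gp\to N^\gp\oplus P^\gp$; the conclusion of that proposition gives the isomorphisms $H_n((A,M),(B,N),W)\to H_n((C,P),(B\otimes_AC,N\oplus_MP),W)$ asserted in (vi).

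The main obstacle is $(ii)\Rightarrow(iii)$, specifically the passage from an exact injective homomorphism of integral monoids to flatness of the associated monoid rings; this is the single step requiring structural input about monoids, while the remaining implications are formal bookkeeping with flat base change, Tor-vanishing, and Proposition~\ref{4.08}.
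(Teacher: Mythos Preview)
Your proof is correct and follows the same chain of implications as the paper, with the same appeal to Proposition~\ref{4.08} for $(v)\Rightarrow(vi)$. The one place worth comparing is $(ii)\Rightarrow(iii)$, which you flag as the main obstacle and defer to a general flatness criterion in Kato/Ogus. The paper instead gives a two-line elementary argument (citing \cite[Lemma~2.1]{INT}): if $m\in M$, $p\in P$ and $mp\in M$, then $p=(mp)/m\in M^{\gp}$ maps into $P$, so exactness forces $p\in M$; hence $P\setminus M$ is $M$-stable and the $\Z$-linear projection $\Z[P]\to\Z[M]$ sending $p\mapsto 0$ for $p\in P\setminus M$ is a $\Z[M]$-module splitting of $\Z[M]\hookrightarrow\Z[P]$. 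Together with the fact that $M$ acts freely on $P$ (immediate from integrality and injectivity), this makes $\Z[P]$ a free $\Z[M]$-module, so no outside structural result is needed.
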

\begin{proof}
	(i) $\Rightarrow$ (ii) follows from the definition. (ii) $\Rightarrow$ (iii), \cite[Lemma 2.1]{INT}: Let $m\in M$ and $p\in P$. If $mp\in M$ then $p=(mp)/m \in M^\gp$ and then $p\in M$ since $M\to P$ is exact. So the homomorphism of $\Z[M]$-modules $\Z[M]\to \Z[P]$ is split by $\Z[P]\to\Z[M]$, $p\mapsto0$, if $p\in P-M$.
	
	On the other hand, if $\Z[M]\to\Z[P]$ is flat then $A\to A\otimes_{\Z[M]}\Z[P]$ is flat. If moreover $A\otimes_{\Z[M]}\Z[P]\to C$ is flat, then by composition $A\to C$ is flat too. This shows (iii) $\Rightarrow$ (iv).
	
	Finally, (iv) $\Rightarrow$ (v) is trivial and (v) $\Rightarrow$ (vi) is Proposition~\ref{4.08}.
\end{proof}

\begin{corollary}\label{5.19}
	Let $(A,M)\to (B,N)$ and $(A,M)\to(C,P)$ be homomorphisms of prelog rings with $\Spec(C)\to\Spec(A)$ surjective and assume that condition (iv) of Proposition~\ref{5.18} holds. Let $J$ be an ideal of $B$. Then $(B,N)$ is an $(A,M)$-algebra formally smooth for the $J$-adic topology if and only if $(B\otimes_AC,N\oplus_MP)$ is a $(C,P)$-algebra formally smooth for the $J\otimes_AC$-adic topology.
\end{corollary}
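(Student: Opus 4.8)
First some bookkeeping: writing $\bar J:=J(B\otimes_AC)=\im(J\otimes_AC)$ we have $\bar J^n=J^n(B\otimes_AC)$ and $(B\otimes_AC)/\bar J^n=(B/J^n)\otimes_AC$, and the hypotheses that $A\to C$ is flat and $\Spec C\to\Spec A$ is surjective together say that $C$ is faithfully flat over $A$. I would prove the two implications separately; only the reverse one uses the standing hypotheses.

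For the forward implication the argument is formal. Given a $(C,P)$-extension $\tau\colon(E,S)\to(E',S')$ and a homomorphism $f\colon(B\otimes_AC,N\oplus_MP)\to(E',S')$ over $(C,P)$ with $f(\bar J^t)=0$, I would precompose $f$ with $(B,N)\to(B\otimes_AC,N\oplus_MP)$ and view $\tau$ as an $(A,M)$-extension through $(A,M)\to(C,P)\to(E,S)$; the resulting $(A,M)$-lifting problem for $(B,N)$ kills $J^t$ (which maps into $\bar J^t$), so log formal smoothness of $(B,N)/(A,M)$ produces $h\colon(B,N)\to(E,S)$ lifting it and compatible with the $(A,M)$-structure factoring through $(C,P)$. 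Then $h$ and the structural map $(C,P)\to(E,S)$ agree over $(A,M)$, so the universal property of the prelog pushout $(B\otimes_AC,N\oplus_MP)=(B,N)\oplus_{(A,M)}(C,P)$ glues them into a solution. This direction uses no hypothesis on $C$.

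For the reverse implication I would pass to Theorem~\ref{5.12}, by which the two smoothness statements say, respectively, that $\varinjlim_n H^1((A,M),(B/J^n,p_n^*N),W)=0$ for all $B/J$-modules $W$, and that $\varinjlim_n H^1((C,P),((B/J^n)\otimes_AC,q_n^*(N\oplus_MP)),W)=0$ for all $(B/J)\otimes_AC$-modules $W$, where $q_n\colon B\otimes_AC\to(B/J^n)\otimes_AC$. For each $n$ I would form the cocartesian square of prelog rings pushing $(A,M)\to(B/J^n,p_n^*N)$ along $(A,M)\to(C,P)$, with prelog pushout $(D_n,Q_n)=((B/J^n)\otimes_AC,(p_n^*N)\oplus_MP)$. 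Condition (iv) of Proposition~\ref{5.18} involves only $A\to C$, $\Z[M]\to\Z[P]$ and $M^\gp\to P^\gp$, hence holds for this pair and supplies exactly the three hypotheses of Base Change (Proposition~\ref{4.08}): flatness of $A\to C$ kills $\Tor^A_{>0}(B/J^n,C)$, flatness of $\Z[M]\to\Z[P]$ kills $\Tor^{\Z[M]}_{>0}(\Z[p_n^*N],\Z[P])$, and $M^\gp\hookrightarrow P^\gp$ forces $M^\gp\hookrightarrow(p_n^*N)^\gp\oplus P^\gp$. Thus $H^1((A,M),(B/J^n,p_n^*N),W)\cong H^1((C,P),(D_n,Q_n),W)$, naturally in every $D_n$-module $W$. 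Since $(-)^\llog$ preserves pushouts and $(B/J^n,p_n^*N)$ is the log-ification of $(B/J^n,N)$, I would identify $(D_n,Q_n)^\llog=((B/J^n)\otimes_AC,q_n^*(N\oplus_MP))$ and invoke the invariance of log Andr\'e--Quillen cohomology under log-ification of the target (compare the proof of Proposition~\ref{4.14}) to conclude, from the hypothesis on the base change, that $\varinjlim_n H^1((A,M),(B/J^n,p_n^*N),W)=0$ for every $(B/J)\otimes_AC$-module $W$.

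The hard part will be promoting this vanishing from $(B/J)\otimes_AC$-modules to arbitrary $B/J$-modules $W$. My plan is flat base change along $A\to C$: the Tor-vanishing already used yields $\LLL_{(B/J^n,p_n^*N)|(A,M)}\otimes_AC\simeq\LLL_{(D_n,Q_n)|(C,P)}$, so, $C$ being faithfully flat over $A$, one recovers the vanishing of $\varinjlim_n H^1((A,M),(B/J^n,p_n^*N),W)$ from that of its $C$-base change applied to $W\otimes_AC$. Since these cotangent complexes need not be degreewise finitely generated, ``$\Hom$ commutes with $\otimes_AC$'' is not available verbatim, so I would actually route the descent through the homological form of the criterion --- vanishing of the pertinent $H_1$ together with flatness of $\Omega_{(B,N)|(A,M)}$, in the spirit of Theorem~\ref{5.14}(iii) --- where $\otimes_AC$ does commute with the homology and with $\Omega$. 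That base-change/descent step, together with checking the log-ification invariance used above in the present (not necessarily integral) generality, is where the real content sits; the rest is assembly of Theorem~\ref{5.12} and Propositions~\ref{4.08}, \ref{4.14} and~\ref{5.18}.
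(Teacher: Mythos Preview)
Your forward direction is fine (and indeed needs nothing about $C$). For the reverse direction, however, you have taken a long detour that both complicates the argument and introduces a genuine gap, while the paper's route is a two-line application of faithful flatness to \emph{homology}.

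The paper does not go through Theorem~\ref{5.12} and the tower $(B/J^n,p_n^*N)$ at all. It uses the homological criterion directly on $(B,N)$: log formal smoothness for the $J$-adic topology is equivalent to $H_1((A,M),(B,N),W)=0$ for all $B/J$-modules $W$. Since $A\to C$ is faithfully flat, $H_1((A,M),(B,N),W)=0$ iff $H_1((A,M),(B,N),W)\otimes_AC=0$; flatness gives $H_1((A,M),(B,N),W)\otimes_AC=H_1((A,M),(B,N),W\otimes_AC)$; and Proposition~\ref{5.18}(iv)$\Rightarrow$(vi) identifies this with $H_1((C,P),(B\otimes_AC,N\oplus_MP),W\otimes_AC)$. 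Both implications then follow immediately (for ``only if'' note that any $(B\otimes_AC)/\bar J$-module is in particular a $B/J$-module). No quotients $B/J^n$, no log-ification, no colimits.

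Your route runs into two problems. First, the step where you replace $(D_n,Q_n)$ by its log-ification and invoke invariance ``in the spirit of Proposition~\ref{4.14}'' requires the monoids to be integral, which is not assumed here; without that, you have no control over $H^1$ under $(\ )^\llog$. Second, you yourself diagnose that $H^1$ does not commute with $-\otimes_AC$, and propose to fix this by passing to the homological criterion of Theorem~\ref{5.14}(iii). But once you are willing to use $H_1$, the entire detour through Theorem~\ref{5.12}, the tower over $n$, Proposition~\ref{4.08} applied level by level, and log-ification becomes unnecessary: you can apply Proposition~\ref{5.18}(vi) once, to $(A,M)\to(B,N)$ itself, exactly as the paper does. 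In short, your final ``fix'' \emph{is} the paper's proof; everything preceding it can be deleted.
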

\begin{proof}
	We know that $A\to C$ is a faithfully flat homomorphism, so for any $B/J$-module $W$ we have that $H_1((A,M),(B,N),W)=0$ if and only if \[H_1((A,M),(B,N),W)\otimes_AC=H_1((A,M),(B,N),W\otimes_AC)=0 . \] Then the result follows from Proposition~\ref{5.18}, (iv) $\Rightarrow$ (vi).
\end{proof}

\section{Regularity}\label{R}
\begin{theorem}\label{6.01}
	Let $((A,\mmm,k),M)$ be a noetherian local prelog ring, $J$ a proper ideal of $A$ and $B:=A/J$. Let $N:=M/\alpha_M^{-1}(J)$ be the quotient of the monoid $M$ by the ideal $\alpha_M^{-1}(J)$, and so $(B,N)$ is a noetherian local prelog ring with residue field $k$. Let $I$ be the ideal of $A$ generated by $\alpha(M)\cap J$. The following statements are equivalent:
	\begin{enumerate}
		\item[(i)] For any $B$-module $W$, the map
		\[ H_2(A,B,W)\oplus\Tor_1^\Z(\ker(M^\gp\to N^\gp), W)\longrightarrow H_2((A,M),(B,N),W) \]
		in the fundamental exact sequence is zero on the first summand and an isomorphism on the second one.
		
		\item[(i')] The map
		\[ H_2(A,B,k)\oplus\Tor_1^\Z(\ker(M^\gp\to N^\gp), k)\longrightarrow H_2((A,M),(B,N),k) \]
		in the fundamental exact sequence is zero on the first summand and an isomorphism on the second one.
		
		\item[(ii)] $\ker(A/I\to B)$ is generated by a regular sequence and $\Tor_1^{\Z[M]}(A,\Z[N])=0$.
	\end{enumerate}
\end{theorem}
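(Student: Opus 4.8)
The implication (i) $\Rightarrow$ (i') is trivial (set $W=k$), so the plan is to close the circle by proving (i') $\Rightarrow$ (ii) and (ii) $\Rightarrow$ (i); both directions run through the fundamental exact sequence of Theorem~\ref{4.03} for $(A,M)\to(B,N)$, after some reductions. Write $\mathfrak{a}:=\alpha_M^{-1}(J)$. Since $J$ is proper one has $\mathfrak{a}\subseteq\mathfrak{m}_M$ and $\alpha_M^{-1}(I)=\mathfrak{a}$; if $\mathfrak{a}=\emptyset$ then $N=M$, $I=0$, and the statement degenerates to the classical fact that $H_2(A,A/J,-)$ vanishes exactly when $J$ is generated by a regular sequence, so assume $\mathfrak{a}\neq\emptyset$. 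Then $N=M/\mathfrak{a}$ has a zero element, so $N^\gp$ is trivial; hence $\ker(M^\gp\to N^\gp)=M^\gp$, $N^\gp/\im(M^\gp)=0$, and the fundamental sequence simplifies. Moreover the idempotent attached to the zero of $N$ gives a ring decomposition $\Z[N]\cong(\Z[M]/\Z[\mathfrak{a}])\times\Z$ under which $\Z[M]\to\Z[N]$ becomes (the quotient map, the augmentation); since $B$ is local and the structural map $N\to B$ sends the zero of $N$ into $J$, every $B$-module is a module over the factor $S:=\Z[M]/\Z[\mathfrak{a}]$, so $H_n(\Z[M],\Z[N],W)=H_n(\Z[M],S,W)$ for all $n$. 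One checks $A\otimes_{\Z[M]}S=A/I$, and, since $\mathfrak{a}\subseteq\mathfrak{m}_M$ and $\alpha$ is local, that the augmentation ideal generates the unit ideal of $A$, whence $A\otimes^{L}_{\Z[M]}\Z=0$; thus condition (ii) amounts to: $\ker(A/I\to B)$ is $A/I$-regularly generated and $\Tor_1^{\Z[M]}(A,S)=0$.

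For (ii) $\Rightarrow$ (i): from $\Tor_1^{\Z[M]}(A,S)=0$, base change for André--Quillen homology along $\Z[M]\to A$ (using $A\otimes_{\Z[M]}S=A/I$) gives $H_n(\Z[M],S,W)\xrightarrow{\sim}H_n(A,A/I,W)$ for $n\le 2$ and every $B$-module $W$; and since $\ker(A/I\to B)$ is regularly generated one has $H_{\ge 2}(A/I,B,W)=0$, so the Jacobi--Zariski sequence of $A\to A/I\to B$ yields $H_n(A,A/I,W)\xrightarrow{\sim}H_n(A,B,W)$ for $n\ge 2$ and $H_1(A,A/I,W)\hookrightarrow H_1(A,B,W)$. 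Composing, the canonical map $H_n(\Z[M],\Z[N],W)\to H_n(A,B,W)$ of the fundamental sequence is an isomorphism for $n=2$ and injective for $n=1$. Injectivity in degree $1$ forces the boundary $H_2((A,M),(B,N),W)\to H_1(\Z[M],\Z[N],W)$ to vanish, so the degree-$2$ map $H_2(A,B,W)\oplus\Tor_1^\Z(M^\gp,W)\to H_2((A,M),(B,N),W)$ is surjective; the isomorphism in degree $2$, together with the vanishing of the comparison map $H_2(\Z[M],\Z[N],W)\to\Tor_1^\Z(M^\gp,W)$ (forced since the relevant comparison lands in the $X_{N|M}$-term and is governed by $N^\gp/\im(M^\gp)=0$), pins its kernel down to $H_2(A,B,W)\oplus 0$. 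This is exactly (i).

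For (i') $\Rightarrow$ (ii): first upgrade (i') to (i) by a Nakayama argument along the lines of Proposition~\ref{4.05}, all modules involved being finite over the local ring $B$ by Proposition~\ref{4.04}. Reading (i) through the fundamental sequence, the boundary $H_2((A,M),(B,N),W)\to H_1(\Z[M],\Z[N],W)$ vanishes and the degree-$2$ map is onto with kernel $H_2(A,B,W)\oplus 0$; running the identifications of the previous paragraph in reverse, this gives $H_{\ge 2}(A/I,B,W)=0$ for all $W$ (i.e.\ $\ker(A/I\to B)$ is regularly generated) and that $H_n(\Z[M],S,W)\to H_n(A,A/I,W)$ is an isomorphism in degrees $\le 2$, which in degree $1$ is precisely $\Tor_1^{\Z[M]}(A,S)=0$, equivalently $\Tor_1^{\Z[M]}(A,\Z[N])=0$.

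The delicate point in both directions is controlling the base-change map along $\Z[M]\to A$ in degrees $1$ and $2$ from the single hypothesis $\Tor_1^{\Z[M]}(A,\Z[N])=0$ rather than from vanishing of all higher $\Tor$: one must verify that this lone vanishing already forces the cotangent-complex base-change map to be an isomorphism in those two degrees, exploiting the explicit presentation $S=\Z[M]/\Z[\mathfrak{a}]$ with $\Z[\mathfrak{a}]$ a monomial ideal. A secondary technicality is extracting the vanishing of the logarithmic comparison map on $H_2$ from the triviality of $N^\gp$.
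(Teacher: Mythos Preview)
Your overall architecture matches the paper's: close the cycle via (i')$\Rightarrow$(ii)$\Rightarrow$(i), work through the fundamental exact sequence, and reduce $\Z[N]$ to the quotient $S=\Z[M]/\Z[\alpha_M^{-1}(J)]$ (the paper achieves the same reduction by localising at $1-a$ rather than by the idempotent splitting). However, three steps you flag as routine are precisely where the real content lies, and as written they are gaps.

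\textbf{Base change with only $\Tor_1=0$.} You assert that $\Tor_1^{\Z[M]}(A,S)=0$ already makes $H_n(\Z[M],S,W)\to H_n(A,A/I,W)$ an isomorphism for $n\le 2$. Standard base change (Andr\'e 4.54) needs \emph{all} higher $\Tor$ to vanish, and the ``monomial ideal'' remark does not buy this. The paper does not claim an isomorphism: it invokes Andr\'e's five-term sequence [An, 15.18]
\[
H_2(\Z[M],\Z[N],W)\xrightarrow{\theta_2}H_2(A,A/I,W)\to\Tor_1^{\Z[M]}(A,\Z[N])\otimes_AW\to H_1(\Z[M],\Z[N],W)\xrightarrow{\theta_1}H_1(A,A/I,W)\to 0,
\]
from which $\Tor_1=0$ yields only that $\theta_2$ is \emph{surjective} and $\theta_1$ is bijective. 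That is exactly enough for (ii)$\Rightarrow$(i) (surjectivity of $\psi_2$ and injectivity of $\psi_1$ suffice), and in the converse direction the same sequence is how $\Tor_1=0$ is eventually extracted.

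\textbf{Vanishing of the comparison to $\Tor_1^\Z(M^{\gp},W)$.} You say this is ``forced since $N^{\gp}/\im(M^{\gp})=0$''. It is not: triviality of $N^{\gp}$ only identifies $X_{N|M}\simeq M^{\gp}[1]$, it says nothing about the map into it. The paper computes this vanishing explicitly: the map factors through the conormal homomorphism $\nu$ of Proposition~\ref{3.01}, and $\nu\otimes_{\Z[M]}B=0$ because every element of $\alpha_M^{-1}(J)$ dies in $B$. The same computation, one degree lower, gives $\pr_2\gamma_1=0$, which is essential in the (i')$\Rightarrow$(ii) direction.

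\textbf{The direction (i')$\Rightarrow$(ii).} Upgrading (i') to (i) ``by Nakayama along the lines of Proposition~\ref{4.05}'' is not justified: that proposition trades \emph{vanishing} over $k$ for vanishing over all $W$, not a structural statement about a map being zero on one summand and an isomorphism on another. The paper does not attempt this upgrade; it proves (i')$\Rightarrow$(ii) directly by comparing the fundamental sequences for $(A,M)\to(A/I,N)$ and $(A,M)\to(B,N)$ via Jacobi--Zariski columns. The chase uses the computed vanishing $\pr_2\gamma_1=0$ to pass from injectivity of $\gamma_1$ to injectivity of $\pr_1\gamma_1$, then the base-change surjectivity of $\pr_1\beta_1$ (from \cite[2.6.2]{MR}, since $A\otimes_{\Z[M]}\Z[N]=A/I$) to conclude that $\alpha_1$ is injective, whence $H_2(A/I,B,k)=0$; the $\Tor_1$ vanishing then drops out of the same [An, 15.18] sequence. ``Running the identifications in reverse'' does not recover these steps.
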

\begin{proof}
	First note that $\ker(\Z[M]\to\Z[N])$ is the ideal $\aaa$ of $\Z[M]$ generated by the elements $a-b$ with $a,b\in\alpha_M^{-1}(J)$, and in particular $\aaa A\subset I$. If $x\in \alpha(M)\cap J \subset \mmm$, $1-x$ is a unit in $A$, and so $x(1-x)=x-x^2\in\aaa A$ implies $x\in\aaa A$. Therefore $\aaa A=I$. Similarly,
	$\Tor_*^{\Z[M]}(A,\Z[N])=\Tor_*^{\Z[M]}(A,\Z[M]/I')$ where $I'$ is the ideal of $\Z[M]$ generated by $\alpha_M^{-1}(J)$, since for $a\in\alpha_M^{-1}(J)$ the localization
	\[ \Z[N]_{1-a} = \Z[M]_{1-a}/\aaa\Z[M]_{1-a} = \Z[M]_{1-a}/I'\Z[M]_{1-a} =\bigg( \Z[M]/I'\bigg)_{1-a} \]
	as above, and the image of $1-a$ in $A$ being a unit,
	\[ \Tor_*^{\Z[M]}(A,X)=\Tor_*^{\Z[M]}(A,X)_{1-a}=\Tor_*^{\Z[M]}(A,X_{1-a}) .\]
	
	(i') $\Rightarrow$ (ii) Consider the following diagram with exact rows (Theorem~\ref{4.03}) and columns (Jacobi-Zariski exact sequence)
	\[
	\hspace{-0.02\textwidth}
	\begin{tikzpicture}[baseline= (a).base]
	\node[scale=0.85] (a) at (0,0){
		\begin{tikzcd}[row sep=2em, column sep=1em]
		& H_3(A/ I,B,k) \arrow[equal]{r} \arrowd & H_3(A/ I,B,k) \arrowd & & \\
		H_2(\Z[M],\Z[N],k) \arrowr{\beta_2}\arrow[equal]{d}  &  H_2(A,A/ I,k)\oplus H_2(X) \arrowr\arrowd{\alpha_2\oplus \id} & H_2((A,M),(A/ I,N),k) \arrowr\arrowd & H_1(\Z[M],\Z[N],k) \arrowr{\beta_1}\arrow[equal]{d} & H_1(A,A/I,k)\oplus H_1(X) \arrowd{\alpha_1\oplus\id} \\
		H_2(\Z[M],\Z[N],k) \arrowr{\gamma_2} & H_2(A,B,k)\oplus H_2(X) \arrowr\arrowd{\mu} & H_2((A,M),(B,N),k) \arrowr\arrowd & H_1(\Z[M],\Z[N],k) \arrowr{\gamma_1} & H_1(A,B,k)\oplus H_1(X) \\
		& H_2(A/ I,B,k) \arrow[equal]{r} \arrowd{\lambda} & H_2(A/ I,B,k) & & \\
		& H_1(A,A/ I,k) \arrowd{\alpha_1} & & & \\
		& H_1(A,B,k) & & &
		\end{tikzcd}
	};
	\end{tikzpicture}\]
	where $H_2(X)=\Tor_1^{\Z}(\ker(M^\gp \to N^\gp),k)$ and $H_1(X)=\ker(M^\gp \to N^\gp)\otimes_\Z k$ as in Theorem~\ref{4.03}. If $H_2(X)\to H_2((A,M),(B,N),k)$ is an isomorphism, then $\gamma_1$ is injective. Moreover, the composition
	\[ H_1(\Z[M],\Z[N],k) \overset{\gamma_1}{\longrightarrow} H_1(A,B,k)\oplus H_1(X) \overset{\pr_2}{\longrightarrow} H_1(X) \]
	is zero since it is induced by the map of Proposition~\ref{3.01}
	\[\alpha_M^{-1}(J)\Z[M]/\alpha_J^{-1}(J)^2\Z[M]\otimes_{\Z[N]}k \longrightarrow  \ker(M^\gp \to N^\gp)\otimes_\Z k \]
	defined by
	\[ \overline{\sum_{i}\lambda_i m_i} \otimes w \longmapsto \sum_i m_i \otimes \lambda_i\widetilde{m}_iw=0 ,\]
	where $\widetilde{m}_i$ is the image of $m_i\in B$ which is zero since $m_i\in\alpha_M^{-1}(J)$.
	
	Therefore $\gamma_1$ injective and $\pr_2\gamma_1=0$ imply that
	\[
	\begin{tikzcd}[column sep=3em,row sep=1em]
	H_1(\Z[M],\Z[N],k) \arrowr{\pr_1\gamma_1} & H_1(\Z[M],\Z[N],k)
	\end{tikzcd}
	\]
	is injective too. By the commutativity of the diagram, the homomorphism 
	\[
	\begin{tikzcd}[column sep=3em,row sep=1em]
	H_1(\Z[M],\Z[N],k) \arrowr{\pr_1\beta_1} & H_1(A,A/I,k)
	\end{tikzcd}
	\]
	is injective. It is also an isomorphism, since it is surjective by \cite[2.6.2]{MR} (since $A\otimes_{\Z[M]}\Z[N]=A/\aaa A=A/I$), and then, again by the commutativity of the diagram, $\alpha_1$ is injective. We deduce $\lambda=0$. Since \[ H_2(A,B,k)\longrightarrow H_2((A,M),(B,N),k) \]
	is the zero map, we deduce $\mu=0$. Therefore
	\[ H_2(A/I,B,k)=0\]
	which by \cite[6.25]{An-1974} means that  $\ker(A/I\to B)$ is generated by a regular sequence.
	
	Now we will see that $\Tor_1^{\Z[M]}(A,\Z[N])=0$. We have $H_2(A/I,B,k)=0$ and then $H_3(A/I,B,k)=0$ by \cite[6.25]{An-1974}. Therefore in the diagram
	\[
	\begin{tikzcd}[column sep=3em,row sep=4em]
	H_2(\Z[M],\Z[N],k) \arrowr{\pr_1\beta_2}\arrow[equal]{d} & H_2(A,A/I,k) \arrowd{\alpha_2} \\
	H_2(\Z[M],\Z[N],k) \arrowr{\pr_1\gamma_2} & H_2(A,B,k)
	\end{tikzcd}
	\]
	$\alpha_2$ is an isomorphism. Since the lower map is surjective by hypothesis, the upper one is also surjective. From the exact sequence \cite[15.18]{An-1974}
	\[
	\begin{tikzcd}[column sep=3em,row sep=1em]
	& H_2(\Z[M],\Z[N],k) \arrowr{\pr_1\beta_2} & H_2(A,A/I,k) \arrowr & \Tor_1^{\Z[M]}(A,\Z[N])\otimes_Ak \arrowr & \ \\
	\arrowr & H_1(\Z[M],\Z[N],k) \arrowr{\pr_1\beta_1} & H_1(A,A/I,k) & 
	\end{tikzcd}
	\]
	we deduce $\Tor_1^{\Z[M]}(A,\Z[N])\otimes_Ak=0$ and then, since $A$ is noetherian local, $\Tor_1^{\Z[M]}(A,\Z[N])=0$.
	
	(ii) $\Rightarrow$ (i) By \cite[15.18]{An-1974} we have an exact sequence for each $B$-module $W$
	\[
	\begin{tikzcd}[column sep=3em,row sep=1em]
	& H_2(\Z[M],\Z[N],W) \arrowr{\theta_2} & H_2(A,A/I,W) \arrowr & \Tor_1^{\Z[M]}(A,\Z[N])\otimes_AW \arrowr & \ \\
	\arrowr & H_1(\Z[M],\Z[N],W) \arrowr{\theta_1} & H_1(A,A/I,W) \arrowr & 0  .
	\end{tikzcd}
	\]
	By hypothesis, $\Tor_1^{\Z[M]}(A,\Z[N])=0$, so $\theta_2$ is surjective and $\theta_1$ injective. Since $H_n(A/I,B,W)=0$ for all $n\geq 2$, we deduce that in the diagram
	\[
	\begin{tikzcd}[column sep=3em,row sep=4em]
	H_2(\Z[M],\Z[N],W) \arrowr{\theta_2}\arrow[equal]{d} & H_2(A,A/I,W) \arrowd{\alpha_2} \\
	H_2(\Z[M],\Z[N],W) \arrowr{\psi_2} & H_2(A,B,W)
	\end{tikzcd}
	\]
	the map $\alpha_2$ is bijective and so $\psi_2$ is surjective.
	
	In the diagram
	\[
	\begin{tikzcd}[column sep=3em,row sep=4em]
	& H_2(A/I,B,W)=0 \arrowd \\
	H_1(\Z[M],\Z[N],W) \arrowr{\theta_1}\arrow[equal]{d} & H_1(A,A/I,W) \arrowd{\alpha_1} \\
	H_1(\Z[M],\Z[N],W) \arrowr{\psi_1} & H_1(A,B,W)
	\end{tikzcd}
	\]
	$\theta_1$ is injective and then $\psi_1$ is also injective.
	
	From the fundamental exact sequence
	\[ H_2(\Z[M],\Z[N],W) \overset{\psi_2\oplus\omega_2}{\longrightarrow} H_2(A,B,W)\oplus\Tor_1^{Z[M]}(\ker(M^\gp\to N^\gp),W) \longrightarrow H_2((A,M),(N,N),W) \longrightarrow \]
	\[\longrightarrow H_1(\Z[M],\Z[N],W) \overset{\psi_1\oplus\omega_1}{\longrightarrow} H_1(A,B,W)\oplus\ker(M^\gp\to N^\gp)\otimes_\Z W \]
	we see that we are done if we prove that the map
	\[ H_2(\Z[M],\Z[N ],W) \longrightarrow \Tor_1^{Z[M]}(\ker(M^\gp\to N^\gp),W) \]
	vanishes.
	
	We have a surjective natural homomorphism \cite[Theorem 6.16]{Quillen-MIT}
	\[ \Tor_2^{\Z[M]}(\Z[N],W)\longrightarrow H_2(\Z[M],\Z[N],W) , \]
	so it is sufficient to show that the composition
	\[ \Tor_2^{\Z[M]}(\Z[N],W)\longrightarrow H_2(\Z[M],\Z[N],W)\longrightarrow\Tor_1^{\Z}(\ker(M^\gp\to N^\gp),W)\]
	is zero. We have isomorphisms
	\[\Tor_1^{\Z[M]}(\aaa,W)=\Tor_2^{\Z[M]}(\Z[N],W) , \]
	\[\Tor_1^{\Z}(\ker(M^\gp\to N^\gp),W)=\Tor_1^{\Z[M]}(\Z[M]\otimes_\Z\ker(M^\gp\to N^\gp),W)\]
	where $\aaa=\ker(\Z[M]\to\Z[N])$. Since the map
	\[ \aaa/\aaa^2 \longrightarrow \Z[M]\otimes_\Z\ker(M^\gp \to N^\gp) \]
	of Proposition~\ref{3.01} vanishes after tensoring by $\otimes_{\Z[M]}B$ and $W$ is a  $B$-module, the map
	\[ \Tor_1^{\Z[M]}(\aaa,W)\longrightarrow\Tor_1^{Z[M]}(\Z[M]\otimes_\Z\ker(M^\gp\to N^\gp),W)\]
	is zero as desired.
\end{proof}

\begin{definition}(see \cite[Theorem 6.1]{Ka-TS})\label{6.02}
	Let $((A,\mmm,k),M)$ be a noetherian local prelog ring, $J$ a proper ideal of $A$ and $B=A/J$. Let $N=M/\alpha_M^{-1}(J)$ be the quotient of the monoid $M$ by the ideal $\alpha_M^{-1}(J)$ and let $I$ be the ideal of $A$ generated by $\alpha(M)\cap J$. We say that $J$ is a \emph{log regular ideal} if the equivalent conditions of Theorem~\ref{6.01} hold.
	
	We say that $((A,\mmm,k),M)$ is a \emph{log regular local ring} if $\mmm$ is a log regular ideal.
\end{definition}

\begin{theorem}\label{6.03}
	Let $((A,\mmm,k),M)$ be a noetherian local prelog ring, and $((A,\mmm,k),M')$ another prelog structure inducing the same log structure. Assume that $M$ and $M'$ are integral. For any proper ideal $J$ of $A$, $J$ is log regular in $((A,\mmm,k),M)$ if and only if it is log regular in $((A,\mmm,k),M')$.
\end{theorem}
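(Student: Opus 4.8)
The plan is to reduce everything to the characterization of log regular ideals supplied by Theorem~\ref{6.01}(ii) and Definition~\ref{6.02}. Write $B=A/J$, $N=M/\alpha_M^{-1}(J)$, $N'=M'/\alpha_{M'}^{-1}(J)$, and let $I$ (resp.\ $I_{M'}$) denote the ideal of $A$ generated by $\alpha_M(M)\cap J$ (resp.\ by $\alpha_{M'}(M')\cap J$). By those two statements, $J$ is log regular in $((A,\mmm,k),M)$ precisely when (a) $\ker(A/I\to B)$ is generated by a regular sequence and (b) $\Tor_1^{\Z[M]}(A,\Z[N])=0$, and similarly (with $I_{M'}$, $N'$) for $M'$; so it is enough to match the pairs (a)--(b) for the two charts.

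For (a) the point is that $I$ depends only on the log structure induced on $A$. The image of $\alpha^\llog\colon M^\llog\to A$ is $A^*\cdot\alpha_M(M)$, and since $J$ is an ideal, $u\,\alpha_M(m)\in J$ with $u\in A^*$ and $m\in M$ forces $\alpha_M(m)\in J$; hence the ideal generated by $\alpha^\llog(M^\llog)\cap J$ equals $I$. As $M$ and $M'$ induce the same log structure, $\alpha^\llog(M^\llog)=\alpha^\llog((M')^\llog)$ as subsets of $A$, so $I=I_{M'}$; thus $A/I$ and the surjection $A/I\to B$ are literally the same in both cases, and (a) is the same condition. (Note also that $\alpha_M^{-1}(J)=\varnothing$ iff $\alpha^\llog(M^\llog)\cap J=\varnothing$, so this too is chart-independent.) Condition (b) is the heart of the matter.

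Since $A\otimes_{\Z[M]}\Z[N]=A/I$ (as in the proof of Theorem~\ref{6.01}), the base change exact sequence \cite[15.18]{An-1974} for $\Z[M]\to A$ gives
\begin{gather*}
H_2(\Z[M],\Z[N],k)\longrightarrow H_2(A,A/I,k)\longrightarrow \Tor_1^{\Z[M]}(A,\Z[N])\otimes_A k\longrightarrow\\
\longrightarrow H_1(\Z[M],\Z[N],k)\longrightarrow H_1(A,A/I,k),
\end{gather*}
whose last map is surjective by \cite[2.6.2]{MR}; as $A$ is noetherian local and $\Tor_1^{\Z[M]}(A,\Z[N])$ is a finitely generated $A$-module, Nakayama turns (b) into: $H_2(\Z[M],\Z[N],k)\to H_2(A,A/I,k)$ is surjective and $H_1(\Z[M],\Z[N],k)\to H_1(A,A/I,k)$ is injective. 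Because $I$, hence $H_\ast(A,A/I,k)$, is chart-independent, it remains to compare $H_1$ and $H_2$ of $\Z[M]\to\Z[N]$, together with these two maps, for $M$ and for $M'$. For that I would feed them into the fundamental exact sequence (Theorem~\ref{4.03}) for $(A,M)\to(B,N)$: the groups $H_n(A,B,k)$ are chart-independent, and $N^\gp$ is a quotient of $M^\gp$ which is in fact trivial as soon as $\alpha_M^{-1}(J)\neq\varnothing$, so $\coker(M^\gp\to N^\gp)=0$ and $\ker(M^\gp\to N^\gp)$ is $M^\gp$ (or $0$); the comparison map $H_\ast(\Z[M],\Z[N],k)\to H_\ast(A,B,k)$ is then recovered from this sequence, so everything comes down to identifying $H_n((A,M),(B,N),k)$ with $H_n((A,M'),(B,N'),k)$ compatibly.

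This last identification is where the real obstacle sits. Although $M$ and $M'$ are integral and share a log-ification, the quotients $N=M/\alpha_M^{-1}(J)$ and $N'=M'/\alpha_{M'}^{-1}(J)$ are \emph{not} integral — they acquire an absorbing element — so Proposition~\ref{4.14} cannot be invoked verbatim for $(A,M)\to(B,N)$. I expect the way around this is to argue directly on the monoid side, in the spirit of the proof of Proposition~\ref{4.14}: factor $M\to N$ and $M^\llog\to N^\llog$ through cofibration--trivial fibration resolutions differing only by a free action of $A^*$ (the log-invariant datum), so that the corresponding complexes $\Z[R]$, and hence the groups $H_\ast(\Z[M],\Z[N],k)$ and $H_n((A,M),(B,N),k)$ as they enter the fundamental exact sequence, depend only on the log structure. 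Combined with the description of $\ker(M^\gp\to N^\gp)$ above, this yields the equivalence of (b) for $M$ and $M'$ and finishes the proof.
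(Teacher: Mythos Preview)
Your reduction to conditions (a) and (b) of Theorem~\ref{6.01}(ii), and the verification that $I=I_{M'}$ via the log structure, match the paper's opening moves. The divergence begins with (b), and there your argument remains incomplete.

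The paper does \emph{not} pass through the log Andr\'e--Quillen groups at all to handle (b). It proves directly that
\[
\Tor_1^{\Z[M]}(A,\Z[N])\;=\;\Tor_1^{\Z[M^\llog]}\bigl(A,\Z[N]\otimes_{\Z[M]}\Z[M^\llog]\bigr),
\]
by decomposing the passage $M\to M^\llog$ into the three elementary pushouts
\[
M\longrightarrow M_1\longrightarrow M_2\longrightarrow M^\llog
\]
coming from the factorization $\alpha^{-1}(A^*)\to\alpha^{-1}(A^*)^\gp\to\alpha^{-1}(A^*)^\gp/\ker v\to A^*$. For the first and third steps the base change $\Z[N_1]\to\Z[N_2]$ is flat (a localization, resp.\ a group ring along an injective group homomorphism), so the Tor is preserved trivially. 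For the middle step one quotients by a group acting freely on a simplicial resolution and invokes Remark~\ref{4.11}. This is a short, concrete computation with monoid algebras; it never touches $H_n((A,M),(B,N),k)$ and so never runs into the non-integrality of $N$.

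Your route, by contrast, reformulates (b) as a statement about the maps $H_i(\Z[M],\Z[N],k)\to H_i(A,A/I,k)$ and then tries to control these through the fundamental exact sequence and an identification $H_n((A,M),(B,N),k)\cong H_n((A,M'),(B,N'),k)$. You explicitly flag this identification as ``the real obstacle'' and offer only a heuristic (``resolutions differing only by a free action of $A^*$'') in lieu of a proof. That heuristic is not far in spirit from what the paper does, but as written it is not an argument: you would need to specify the resolution, identify the group acting, verify the action is free, and check that the quotient computes the right thing---which is essentially the content of the paper's three-pushout calculation, only applied to a harder target (the log cotangent complex rather than a single Tor group). There is also a mismatch you gloss over: the fundamental sequence gives the map $H_\ast(\Z[M],\Z[N],k)\to H_\ast(A,B,k)$, whereas your Nakayama reformulation needs the map to $H_\ast(A,A/I,k)$; relating these requires extra bookkeeping with $A\to A/I\to B$ that you have not supplied.

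In short: the gap is real, and the fix is to abandon the detour through log cohomology and prove the Tor invariance directly via the three-step decomposition of $M\to M^\llog$, as the paper does.
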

\begin{proof}
	It is sufficient to show the result for $M'=M^\llog$. With the notation of Definition~\ref{6.02}, we have $I:=<\alpha_M(M)\cap J>_A=<\alpha_{M^\llog}(M^\llog)\cap J)>$ since for any element $x=(m,u)\in M\oplus_{\alpha_M^{-1}(A^*)}A^*=M^\llog$, we have $\alpha_{M^\llog}(x)=\alpha(m)\cdot u$ where $u$ is a unit in $A$. In particular, $\ker(A/I\to B)$ do not change replacing $M$ by $M^\llog$.
	
	We also have
	\[ \Z\big[M^\llog / \alpha_{M^\llog}^{-1}(J)\big]=
	\Z\big[M / \alpha_{M}^{-1}(J)\big]\otimes_{\Z[M]}\Z[M^\llog]=\Z[N]\otimes_{\Z[M]}\Z[M^\llog] , \]
	so it is enough to show that
	\[ \Tor_1^{\Z[M]}(A,\Z[N]) = \Tor_1^{\Z[M^\llog]}(A,\Z[N]\otimes_{\Z[M]}\Z[M^\llog]) . \]
	For simplicity, let us denote $\alpha=\alpha_M\colon M\to A$, and factorize $\alpha^{-1}(A^*)\to A^*$ first as $\alpha^{-1}(A^*)\overset{u}{\longrightarrow} \alpha^{-1}(A^*)^\gp \overset{v}{\longrightarrow} A^*$ and factorize again the second map: $\alpha^{-1}(A^*)\overset{u}{\longrightarrow} \alpha^{-1}(A^*)^\gp \overset{v_1}{\longrightarrow} \alpha^{-1}(A^*)^\gp/\ker v \overset{v_2}{\longrightarrow} A^*$.
	
	Therefore, the pushout that defines $M^\llog$
	\[
	\begin{tikzcd}[column sep=4em,row sep=4em]
	\alpha^{-1}(A^*) \arrowr\arrowd & A^* \arrowd \\
	M \arrowr & M^\llog
	\end{tikzcd}
	\]
	can be descompose in a commutative diagram of pushouts
	\[
	\begin{tikzcd}[column sep=3em,row sep=4em]
	\alpha^{-1}(A^*) \arrowr{u}\arrowd{r} & \alpha^{-1}(A^*)^\gp \arrowr{v_1}\arrowd{s} &
	\alpha^{-1}(A^*)^\gp/\ker(v) \arrowr{v_2}\arrowd & A^* \arrowd \\
	M \arrowr & M_1 \arrowr & M_2 \arrowr & M^\llog
	\end{tikzcd}
	\]
	Note that $s$ is injective since it is a localization of the injective homomorphism $r$ and $M$ is integral.
	
	Now, we denote as
	\[
	\begin{tikzcd}[column sep=4em,row sep=4em]
	N_1 \arrowr\arrowd & N_2 \arrowd \\
	N_3 \arrowr & N_4
	\end{tikzcd}
	\]
	any of previous pushouts and consider $W$ a $\Z[M^\llog]$-module and $P=N_3/T$ with $T$ an ideal of $N_3$. Let $N_3\to X\to P$ a factorization cofibration-trivial fibration in the category of simplicial monoids. Then $\Z[X]$ is a projective $\Z[N_3]$-resolution of $\Z[P]$ \cite[I.2.2.3]{Il-CC} and so
	\begin{align*}
	\Tor_*^{\Z[N_3]}(W,\Z[P])&=H_*(W\otimes_{\Z[N_3]}\Z[X])=H_*(W\otimes_{\Z[N_4]}\Z[N_4]\otimes_{\Z[N_3]}\Z[X]) \\
	&\overset{(1)}{=}\Tor_*^{\Z[N_4]}(W,\Z[N_4]\otimes_{\Z[N_3]}\Z[P])
	\end{align*}
	where the equality (1) follows from:
	\begin{align*}
	H_*(\Z[N_4]\otimes_{\Z[N_3]}\Z[X])&=H_*(\Z[N_3\oplus_{N_1}N_2]\otimes_{\Z[N_3]}\Z[X]) \\
	&=H_*(\Z[N_3]\otimes_{\Z[N_1]}\Z[N_2]\otimes_{\Z[N_3]}\Z[X]) \\
	&= H_*(\Z[N_2]\otimes_{\Z[N_1]}\Z[X]) \overset{(2)}{=} \Z[N_2]\otimes_{\Z[N_1]}\Z[P] \\
	&= \Z[N_4]\otimes_{\Z[N_3]}\Z[P]
	\end{align*}
	where the equality (2) follows (depending of the pushout) from:
	\begin{itemize}
		\item In the first pushout, since $\Z[N_2]$ is a flat $\Z[N_1]$ module (because $u$ is a localization and then $\Z[u]$ is a localization too).
		\item In the second one, it follows by Remark~\ref{4.11}, since $\Z[N_2]\otimes_{\Z[N_1]}\Z[X]=\Z[X/\ker(v)]$ is a resolution of $\Z[P/\ker(v)]=\Z[N_2]\otimes_{\Z[N_1]}\Z[P]$. Note that $P$ is $M/T\oplus_M M_1=M_1/\widetilde{T}$ in this case (where $\widetilde{T}$ denotes the ideal image of $T$ in $M_1$) and $\ker(v)\to \alpha^{-1}(A^*)^\gp \to M_1$ is injective. Moreover, since $\tilde{T}$ is an ideal of $M_1$, $\ker(v)\to M_1/\tilde{T}$ is injective. On the other hand, the action of $\ker(v)$ on $X$ is free, since
		\[X=M_1\oplus\N^{\widetilde{X}} ,\]
		and $M_1$ is integral (the homomorphism $u$ is integral by \cite[6.2.5.(i)]{GR} and so $M_1$ is integral because $M$ it is also integral).
		
		\item In the last pushout, it follows because $\Z[N_2]$ is a free $\Z[N_1]$ module (since $v_2$ is injective homomorphism of groups).
	\end{itemize}
	Finally, applying successively this process to the three pushouts, we obtain as desired
	\[
	\Tor_*^{\Z[M]}(A,\Z[N])=\Tor_*^{\Z[M^\llog]}(A,\Z[N]\otimes_{\Z[M]}\Z[M^\llog]) .	\]	
\end{proof}

\begin{corollary}\label{6.04}
	With the notation of Theorem~\ref{6.01}, assume that $M^\gp$ is a free abelian group. Then $J$ is log regular  if and only if $H_2((A,M),(B,N),k)=0$.
\end{corollary}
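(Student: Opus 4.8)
The plan is to derive this immediately from Theorem~\ref{6.01}, the point being that the freeness hypothesis on $M^\gp$ annihilates the torsion term occurring in condition (i') there.

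First I would record that $\ker(M^\gp\to N^\gp)$ is a subgroup of the free abelian group $M^\gp$, hence torsion-free, hence a flat $\Z$-module; consequently $\Tor_1^\Z(\ker(M^\gp\to N^\gp),k)=0$. (Since $((A,\mmm,k),M)$ is a noetherian local prelog ring, $M$ is finitely generated, so $M^\gp$ and this subgroup are in fact free of finite rank, but only torsion-freeness is used.) This is the only input beyond Theorem~\ref{6.01}.

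With this vanishing, condition (i') of Theorem~\ref{6.01} reads as follows: the canonical map $H_2(A,B,k)\to H_2((A,M),(B,N),k)$ (the contribution of the first summand) is zero, and the zero map $0\to H_2((A,M),(B,N),k)$ (the contribution of the now-trivial second summand) is an isomorphism. The second clause says precisely that $H_2((A,M),(B,N),k)=0$, whereupon the first clause is automatic; so if $J$ is log regular then $H_2((A,M),(B,N),k)=0$. Conversely, if $H_2((A,M),(B,N),k)=0$ then both clauses of (i') hold trivially (the zero map $0\to 0$ is an isomorphism, and the map out of the first summand is zero), so $J$ is log regular by Theorem~\ref{6.01}. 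Combining the two implications gives the stated equivalence.

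I do not expect any genuine obstacle here: the entire content is the specialization of Theorem~\ref{6.01}(i') to the case where $\Tor_1^\Z(\ker(M^\gp\to N^\gp),k)$ vanishes, together with the elementary observation that subgroups of free (equivalently, torsion-free) abelian groups carry no $\Tor_1^\Z$. If anything needs a word of care, it is only the remark that one may assume $M^\gp$ free without loss of generality in applications, which is exactly what Theorem~\ref{6.03} (and Lemma~\ref{6.06}) provide.
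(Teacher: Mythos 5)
Your proof is correct and is exactly the argument the paper leaves implicit: since $\ker(M^\gp\to N^\gp)$ is a subgroup of the free (hence torsion-free) group $M^\gp$, the $\Tor_1^\Z$ term in condition (i') of Theorem~\ref{6.01} vanishes, and the condition collapses to $H_2((A,M),(B,N),k)=0$. Nothing further is needed.
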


By Lemma~\ref{6.06}, we can always work in this case ($M^\gp$ free).

\begin{lemma}\label{6.05}
	Let $p\colon G_1\to G_2$ be a surjective homomorphism of abelian groups, $i:N\to G_2$ a homomorphism of monoids. Then $(G_1\times_{G_2}N)^\gp=G_1\times_{G_2}N^\gp$.
\end{lemma}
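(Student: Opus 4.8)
The plan is to unwind both sides as explicit sub(monoid/group)s of products and check directly that the natural comparison map is bijective. First I would fix notation: since $G_2$ is a group, the monoid homomorphism $i\colon N\to G_2$ extends uniquely to a group homomorphism $i^\gp\colon N^\gp\to G_2$, and $G_1\times_{G_2}N^\gp$ denotes the fibre product of abelian groups $\{(g,x)\in G_1\times N^\gp: p(g)=i^\gp(x)\}$, a subgroup of $G_1\times N^\gp$. Likewise $G_1\times_{G_2}N=\{(g,n)\in G_1\times N: p(g)=i(n)\}$ is a submonoid of $G_1\times N$ (with unit $(1,1)$), and composing its inclusion with $\id\times(N\hookrightarrow N^\gp)$ gives a monoid homomorphism from $G_1\times_{G_2}N$ into the group $G_1\times_{G_2}N^\gp$; by the universal property of group completion this factors through a canonical homomorphism $\phi\colon(G_1\times_{G_2}N)^\gp\to G_1\times_{G_2}N^\gp$. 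The claim is that $\phi$ is an isomorphism. I will use that every element of $(G_1\times_{G_2}N)^\gp$ has the form $[(g_1,n_1)]\cdot[(g_2,n_2)]^{-1}$ with $(g_j,n_j)\in G_1\times_{G_2}N$, and that such an element equals $[(h_1,m_1)]\cdot[(h_2,m_2)]^{-1}$ precisely when there exists $(a,b)\in G_1\times_{G_2}N$ with $g_1h_2a=g_2h_1a$ in $G_1$ and $n_1m_2b=n_2m_1b$ in $N$.

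For surjectivity of $\phi$: given $(g,x)\in G_1\times_{G_2}N^\gp$, write $x=n_1n_2^{-1}$ with $n_1,n_2\in N$. Since $p$ is surjective, choose $g_2\in G_1$ with $p(g_2)=i(n_2)$ and put $g_1:=gg_2$; then $p(g_1)=p(g)p(g_2)=i^\gp(x)i(n_2)=i(n_1)$, so $(g_1,n_1)$ and $(g_2,n_2)$ lie in $G_1\times_{G_2}N$ and $\phi\big([(g_1,n_1)]\cdot[(g_2,n_2)]^{-1}\big)=(g_1g_2^{-1},n_1n_2^{-1})=(g,x)$. For injectivity: suppose $\phi\big([(g_1,n_1)][(g_2,n_2)]^{-1}\big)=\phi\big([(h_1,m_1)][(h_2,m_2)]^{-1}\big)$, i.e. $g_1g_2^{-1}=h_1h_2^{-1}$ in $G_1$ and $n_1n_2^{-1}=m_1m_2^{-1}$ in $N^\gp$. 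The first equality gives $g_1h_2=g_2h_1$ in $G_1$, hence $g_1h_2a=g_2h_1a$ for every $a\in G_1$. The second gives $n_1m_2=n_2m_1$ in $N^\gp$, so there is $c\in N$ with $n_1m_2c=n_2m_1c$ in $N$; choosing $a\in G_1$ with $p(a)=i(c)$ (possible since $p$ is onto), the pair $(a,c)$ lies in $G_1\times_{G_2}N$ and witnesses the equality $[(g_1,n_1)][(g_2,n_2)]^{-1}=[(h_1,m_1)][(h_2,m_2)]^{-1}$ in $(G_1\times_{G_2}N)^\gp$. Thus $\phi$ is bijective.

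The only hypothesis actually used is the surjectivity of $p$, invoked to lift elements of $G_2$ through $G_1$. No step is a genuine obstacle; the one point requiring care is the injectivity argument, where one must use the correct ``there exists a cancelling element'' description of equality in a group completion (rather than naive cancellation), so that the group completion on the $N$-side is exactly what supplies the element $c$ needed to build the witness $(a,c)$.
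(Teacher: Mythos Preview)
Your proof is correct and follows essentially the same approach as the paper: both construct the canonical comparison map $(G_1\times_{G_2}N)^\gp\to G_1\times_{G_2}N^\gp$ and show it is a bijection, using the surjectivity of $p$ to lift $i(n_2)$ to some $h\in G_1$. The paper simply writes down the inverse $(g,n_1/n_2)\mapsto (gh,n_1)/(h,n_2)$ and declares it well defined, whereas you unpack the same content as separate surjectivity and injectivity checks; your explicit handling of the cancelling element $c\in N$ in the injectivity step is exactly what the paper's ``well defined'' claim tacitly relies on.
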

\begin{proof}
	The homomorphism
	\begin{align*}
	(G_1\times_{G_2}N)^\gp \;&\longrightarrow\; G_1\times_{G_2}N^\gp \\
	\frac{(g_1,n_1)}{(g_2,n_2)} \;&\longmapsto\; \bigg(\frac{g_1}{g_2},\frac{n_1}{n_2}\bigg)
	\end{align*}
	is well defined and has inverse
	\[\bigg(g,\frac{n_1}{n_2}\bigg) \;\longmapsto\; \frac{(gh,n_1)}{(h,n_2)}\]	where $h\in p^{-1}(i(n_2))$.
\end{proof}

\begin{lemma}\label{6.06}
	Let $(B,N)$ be a log ring. There exists a monoid $P$ and a homomorphism of monoids $P\to N$ such that $(B,N)=(B,P)^\llog$ and $P^\gp$ is a free abelian group.
	
	Moreover, if $N$ is integral, then $P$ can be chosen integral too.
\end{lemma}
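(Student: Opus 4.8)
The plan is to construct $P$ as a fibre product, using Lemma~\ref{6.05} to force its group completion to be free. Write $\alpha\colon N\to B$ for the structural map. Since $(B,N)$ is a log ring, $\alpha$ restricts to an isomorphism $\alpha^{-1}(B^*)\overset{\sim}{\longrightarrow}B^*$; moreover $\alpha^{-1}(B^*)$ is exactly the unit group $N^*$ of $N$ (if $n\in\alpha^{-1}(B^*)$, choose $n'\in\alpha^{-1}(B^*)$ with $\alpha(n')=\alpha(n)^{-1}$; then $\alpha(nn')=1=\alpha(1)$, and injectivity of $\alpha^{-1}(B^*)\to B^*$ forces $nn'=1$). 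Choose a free abelian group $F$ with a surjective homomorphism $\phi\colon F\to N^\gp$ (for instance $F=\Z^{(N^\gp)}$, $e_g\mapsto g$), write $i\colon N\to N^\gp$ for the canonical map, and set
\[ P:=F\times_{N^\gp}N , \qquad f\colon P\to N \ \text{the second projection}. \]
The prelog structure on $(B,P)$ will be $\beta:=\alpha\circ f\colon P\to B$.

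First I would check that $P^\gp$ is free: this is exactly where Lemma~\ref{6.05} is used. Applying it to the surjection $\phi\colon F\to N^\gp$ and the monoid homomorphism $i\colon N\to N^\gp$ gives $P^\gp=F\times_{N^\gp}N^\gp$, where now the map $N^\gp\to N^\gp$ is $i^\gp=\id$; the first projection therefore identifies this fibre product with $F$, so $P^\gp\cong F$ is free. If $N$ is integral then $i$ is injective, hence so is the first projection $P\to F$ (if $(x,n),(x,n')\in P$ then $i(n)=\phi(x)=i(n')$), and as $F$ is a group this exhibits $P$ as a submonoid of $P^\gp$, i.e.\ $P$ is integral.

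Next I would identify $(B,P)^\llog$ and show the map $f$ realizes it as $(B,N)$. From $\alpha^{-1}(B^*)=N^*$ we get $\beta^{-1}(B^*)=f^{-1}(N^*)=F\times_{N^\gp}N^*$, and every element $(x,u)$ of this submonoid is invertible in $P$ (inverse $(x^{-1},u^{-1})$, legitimate since $\phi(x^{-1})=\phi(x)^{-1}=i(u)^{-1}=i(u^{-1})$); hence $\beta^{-1}(B^*)=P^*$. The second projection restricts to a homomorphism $P^*\to N^*$, surjective because $\phi$ is, with kernel $\ker\phi$ embedded in $P^*$ as $\{(x,1):\phi(x)=1\}$; composing with $N^*\overset{\sim}{\longrightarrow}B^*$ this is precisely the homomorphism $\beta^{-1}(B^*)\to B^*$ occurring in the pushout defining $P^\llog$. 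Being a surjection of groups, it makes the pushout $P^\llog=P\oplus_{P^*}B^*$ the quotient of $P$ by the translation action of $\ker\phi$, and $f$ induces an isomorphism $P/\ker\phi\overset{\sim}{\longrightarrow}N$: surjectivity comes from surjectivity of $\phi$, injectivity because two elements $(x,n),(x',n)$ of $P$ with equal image in $N$ differ by $(xx'^{-1},1)\in\ker\phi$. A direct check that this isomorphism commutes with the structural maps to $B$ gives $(B,P)^\llog=(B,N)$.

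The delicate point is the third paragraph: getting $\beta^{-1}(B^*)=P^*$ right and recognising the defining pushout of $P^\llog$ as $P/\ker\phi$. Once that is done, identifying the quotient with $N$ is immediate, and the freeness of $P^\gp$ is precisely what Lemma~\ref{6.05} delivers; everything else is bookkeeping.
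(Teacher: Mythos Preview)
Your proof is correct and follows essentially the same approach as the paper: both construct $P$ as the fibre product $F\times_{N^\gp}N$ for a surjection $F\to N^\gp$ from a free abelian group, invoke Lemma~\ref{6.05} for freeness of $P^\gp$, and identify $(B,P)^\llog$ with $(B,N)$ by quotienting out $\ker\phi$. The only cosmetic difference is that you explicitly identify $\beta^{-1}(B^*)$ with $P^*$ before computing the pushout, whereas the paper works directly with $H:=\beta^{-1}(B^*)$ and uses the identity $P\oplus_H B^*=(P/\ker\tau)\oplus_{H/\ker\pi}B^*=N\oplus_{B^*}B^*=N$; the underlying computation is the same.
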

\begin{proof} \cite[12.1.35]{GR}
	Let $G$ be a free abelian group with a surjective homomorphism of groups $\rho\colon G\to N^\gp$. Let $P:=G\times_{N^\gp}N$. We have a commutative diagram of pullbacks
	\[
	\begin{tikzcd}[row sep=4em, column sep=3em]
	H:=(\beta\tau)^{-1}(B^*) \arrowr{\pi}\arrowd & B^*\arrowd \\
	P \arrowr{\tau}\arrowd{j} & N \arrowd{i} \\
	G \arrowr{\rho} & N^\gp
	\end{tikzcd}
	\]
	where $\beta\colon N\to B$ is the structural homomorphism. So $\ker \pi \; (:=\pi^{-1}(\{1\}))=\ker\tau=\ker\rho$ and $\ker\rho$ is a group. If $P/\ker(\tau)$ denotes the quotient by the action of $\ker(\tau)$ on $P$, it is easy to check that $P/\ker(\tau)=N$ and $H/\ker(\pi)=B^*$.
	
	Moreover,
	$$P\oplus_HB^*=P/\ker(\tau)\oplus_{H/\ker(\pi)}B^*=N\oplus_{B^*}B^*=N$$
	and so $(B,P)^\llog=(B,N)$. Finally, $P^\gp=G$ by Lemma~\ref{6.05}, and so it is a free abelian group.
	
	Finally, if $N$ is integral, that is, $i$ is injective, then $j$ is injective, and therefore $P$ is integral.
\end{proof}

\begin{lemma}\label{6.07}
	Let $((A,\mmm,k),M)$ be a noetherian local prelog ring and $\mmm_M=M-M^*$ the maximal ideal of $M$. Assume that $M$ is integral. If $\Tor_1^{\Z[M]}(A,\Z[M/\mmm_M])=0$, then $\Tor_1^{\Z[M]}(A,\Z[M/ I])=0$ for any ideal $ I$ of $M$.
\end{lemma}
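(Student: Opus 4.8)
The plan is to reduce the statement to a vanishing result for the modules $\Z[M]/I\Z[M]$ (that is, $\Z[M]$ modulo the monomial ideal generated by $I$) and then to run a noetherian induction on the monoid ideal $I$.

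\emph{Reduction.} In $\Z[M/I]$ the class $e_0$ of the absorbing element is an idempotent with $[m]\,e_0=e_0$ for all $m\in M$, so $e_0$ and $1-e_0$ are orthogonal idempotents and
\[ \Z[M/I]=\Z[M/I]\,e_0\ \oplus\ \Z[M/I](1-e_0) \]
as $\Z[M]$-modules, with $\Z[M/I]\,e_0\cong\Z$ on which $M$ acts through the augmentation $\Z[M]\to\Z$ (a summand independent of $I$) and $\Z[M/I](1-e_0)\cong\Z[M]/I\Z[M]$ with its quotient structure. Hence $\Tor_n^{\Z[M]}(A,\Z[M/I])\cong\Tor_n^{\Z[M]}(A,\Z)\oplus\Tor_n^{\Z[M]}(A,\Z[M]/I\Z[M])$. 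Taking $I=\mmm_M$, the hypothesis gives both $\Tor_1^{\Z[M]}(A,\Z)=0$ and $\Tor_1^{\Z[M]}(A,\Z[M]/\mmm_M\Z[M])=0$, so it remains to prove $\Tor_1^{\Z[M]}(A,\Z[M]/I\Z[M])=0$ for every ideal $I$ of $M$.

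\emph{Induction.} The map $I\mapsto I\Z[M]$ is an order-preserving injection from the ideals of $M$ into the ideals of the noetherian ring $\Z[M]$, so if the vanishing failed there would be an ideal $I_0$ of $M$ maximal with $\Tor_1^{\Z[M]}(A,\Z[M]/I_0\Z[M])\neq 0$. As $\Z[M]/M\Z[M]=0$ and $I_0\ne\mmm_M$ by the reduction step, $I_0$ is a proper ideal with $I_0\subsetneq\mmm_M$; fix $m_0\in\mmm_M\setminus I_0$. Since $M$ is integral, hence cancellative, $(I_0\Z[M]:m_0)=(I_0:m_0)\Z[M]$ and $I_0\Z[M]+m_0\Z[M]=(I_0\cup m_0M)\Z[M]$, giving an exact sequence of $\Z[M]$-modules
\[ 0\to\Z[M]/(I_0:m_0)\Z[M]\ \xrightarrow{\ \cdot m_0\ }\ \Z[M]/I_0\Z[M]\to\Z[M]/(I_0\cup m_0M)\Z[M]\to 0. \]
Applying $A\otimes_{\Z[M]}-$, the right-hand Tor vanishes because $I_0\cup m_0M\supsetneq I_0$ is not a counterexample, so the long exact sequence yields a surjection $\Tor_1^{\Z[M]}(A,\Z[M]/(I_0:m_0)\Z[M])\twoheadrightarrow\Tor_1^{\Z[M]}(A,\Z[M]/I_0\Z[M])$. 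If $(I_0:m_0)\supsetneq I_0$, this ideal cannot be all of $M$ (else $m_0\in I_0$), so it is $\mmm_M$ or lies strictly between $I_0$ and $\mmm_M$, and in both cases the left term is $0$ (by the reduction step, resp. by maximality of $I_0$); then $\Tor_1^{\Z[M]}(A,\Z[M]/I_0\Z[M])=0$, a contradiction. If $(I_0:m_0)=I_0$, the displayed $\cdot m_0$ is an endomorphism of $\Z[M]/I_0\Z[M]$, so the surjection becomes multiplication by $\alpha(m_0)$ on $T:=\Tor_1^{\Z[M]}(A,\Z[M]/I_0\Z[M])$; since $\Z[M]$ and $A$ are noetherian, $T$ is a finitely generated $A$-module, and $\alpha(m_0)\in\mmm$ because $m_0\in\mmm_M$ and the prelog ring is local, so Nakayama's lemma gives $T=0$, again a contradiction. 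Hence $\Tor_1^{\Z[M]}(A,\Z[M]/I\Z[M])=0$ for all $I$, and with the reduction step $\Tor_1^{\Z[M]}(A,\Z[M/I])=0$.

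\emph{Main obstacle.} The key point is choosing the right exact sequence: one must work with the modules $\Z[M]/I\Z[M]$ (which are stable under the colon and sum operations used) rather than with $\Z[M/I]$ directly, and handle the genuinely unavoidable degenerate case $(I_0:m_0)=I_0$ — which already occurs for $M=\N^2$ and $I_0=(1,0)M$, where no choice of $m_0$ enlarges $(I_0:m_0)$ — by invoking finiteness of the Tor module over $A$ together with Nakayama.
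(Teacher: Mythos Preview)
Your proof is correct and follows the same route as the paper: noetherian induction on $I$, the same short exact sequence built from the colon ideal $(I_0:m_0)$ and $I_0\cup m_0M$, and the same dichotomy settled by Nakayama when $(I_0:m_0)=I_0$. Your explicit reduction from the monoid algebra $\Z[M/I]$ to $\Z[M]/I\Z[M]$ via the idempotent $e_0$ is a useful clarification---the paper's displayed sequence is literally exact only for the latter modules, and the paper implicitly relies on the same identification (carried out via localization at $1-a$ in the proof of Theorem~\ref{6.01}).
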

\begin{proof}
	Since $M$ is noetherian, there exists a maximal element of the set of ideals of $M$ such that $\Tor_1^{\Z[M]}(A,\Z[M/ I])\neq0$, assuming this set is not empty. Clearly, $ I\subset \mmm_M$ (if not $ I=M$), and then by hypothesis, $I\subsetneq \mmm_M$. Let $a\in\mmm_M- I$. Let $T=\{x\in M / xa\in  I\}$, so that we have an exact sequence
	\[ 0\longrightarrow \Z[M/T]\overset{\cdot a}{\longrightarrow}\Z[M/ I]\longrightarrow\Z[M/ I\cup Ma]\longrightarrow 0 . \]
	We have $ I\subset T$. If $ I\subsetneq T$, $\Tor_1^{\Z[M]}(A,\Z[M/T])=0=\Tor_1^{\Z[M]}(A,\Z[M/ I\cup Ma])$ by the maximality of $I$. So $\Tor_1^{\Z[M]}(A,\Z[M/ I])=0$.
	
	If $ I=T$, we have a surjective homomorphism of $A$-modules of finite type
	\[
	\Tor_1^{\Z[M]}(A,\Z[M/ I])\overset{\cdot\tilde{a}}{\longrightarrow}\Tor_1^{\Z[M]}(A,\Z[M/ I])
	\]
	where $\tilde{a}:=\alpha(a)\in\mmm\subset A$. By Nakayama Lemma, $\Tor_1^{\Z[M]}(A,\Z[M/ I])=0$.	
\end{proof}

\begin{corollary}\label{6.08}
	Let $((A,\mmm,k),M)$ be a noetherian local prelog ring and $\mmm_M=M-M^*$ the maximal ideal of $M$. Assume that $M$ is integral. If  $\Tor_1^{\Z[M]}(A,\Z[M/\mmm_M])=0$, then
	\[\Tor_n^{\Z[M]}(A,\Z[M/\mmm_M])=0\]
	for all $n\geq 1$.
\end{corollary}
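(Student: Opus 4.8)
The plan is to induct on $n$, the case $n=1$ being Lemma~\ref{6.07} itself, which moreover gives $\Tor_1^{\Z[M]}(A,\Z[M/I])=0$ for \emph{every} ideal $I$ of $M$. So fix $n\geq 2$, assume $\Tor_{n-1}^{\Z[M]}(A,\Z[M/I])=0$ for every ideal $I$ of $M$, and prove the same for $n$; specializing to $I=\mmm_M$ then yields the corollary. Two preliminary remarks make the induction run. First, for an ideal $I\neq M$ of $M$ and $a\in I$, the natural surjection $\Z[M/I]\to\Z[M]/\langle I\rangle$ onto the monomial quotient becomes an isomorphism after inverting $1-a$, and $1-\alpha(a)$ is a unit of $A$ since $\alpha$ is a local homomorphism; hence $\Tor_*^{\Z[M]}(A,\Z[M/I])=\Tor_*^{\Z[M]}(A,\Z[M]/\langle I\rangle)$, and one may pass freely between the two. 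Second, since $\Z[M]$ is noetherian, every $\Z[M/I]$ is a finitely generated $\Z[M]$-module, so all the modules $\Tor_*^{\Z[M]}(A,\Z[M/I])$ are finite over the noetherian local ring $A$.

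For the inductive step I would imitate the proof of Lemma~\ref{6.07}: run a Noetherian induction on the ideal $I$ of $M$ (monoid ideals of the finitely generated monoid $M$ satisfy the ascending chain condition), the only non-formal input being the short exact sequence used there. For $I\subsetneq\mmm_M$ and $a\in\mmm_M\setminus I$, with $T=\{x\in M:xa\in I\}$, one has
\[0\longrightarrow \Z[M/T]\overset{\cdot a}{\longrightarrow}\Z[M/I]\longrightarrow \Z[M/(I\cup Ma)]\longrightarrow 0,\]
with $I\subseteq T$ and $I\subsetneq I\cup Ma$. The associated long exact $\Tor$-sequence, together with $\Tor_{n-1}^{\Z[M]}(A,\Z[M/T])=0$ (outer hypothesis) and $\Tor_{n}^{\Z[M]}(A,\Z[M/(I\cup Ma)])=0$ (inner hypothesis, as $I\cup Ma\supsetneq I$), exhibits $\Tor_n^{\Z[M]}(A,\Z[M/I])$ as a quotient of $\Tor_n^{\Z[M]}(A,\Z[M/T])$: if $T\supsetneq I$ this vanishes by the inner hypothesis, and if $T=I$ the relevant map is multiplication by $\alpha(a)\in\mmm$ on the finite $A$-module $\Tor_n^{\Z[M]}(A,\Z[M/I])$, which therefore vanishes by Nakayama — exactly as for $\Tor_1$ in Lemma~\ref{6.07}.

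The hard part will be the ideal $\mmm_M$ itself, for which nothing lies outside and the sequence above is unavailable. Take a minimal finite system $a_1,\dots,a_r$ of monoid-ideal generators of $\mmm_M$. If $r\leq 1$ then $\langle\mmm_M\rangle$ is a principal ideal of $\Z[M]$ generated by a nonzerodivisor (here integrality of $M$ enters, making $m\mapsto ma_1$ injective), so $\Z[M]/\langle\mmm_M\rangle$ has a free resolution of length one and $\Tor_n^{\Z[M]}(A,\Z[M]/\langle\mmm_M\rangle)=0$ for $n\geq2$ automatically. If $r\geq2$, set $J=\langle a_1,\dots,a_{r-1}\rangle\subsetneq\mmm_M$ and apply the displayed sequence with $I=J$, $a=a_r$ (so $I\cup Ma=\mmm_M$); its long exact sequence together with $\Tor_{n-1}^{\Z[M]}(A,\Z[M/T])=0$ presents $\Tor_n^{\Z[M]}(A,\Z[M/\mmm_M])$ as a quotient of $\Tor_n^{\Z[M]}(A,\Z[M/J])$, so the corollary drops out once the vanishing is known for the strictly smaller ideal $J$. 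The cost is that $\mmm_M$ must be treated \emph{last}, after all its proper sub-ideals, rather than first as the top of the poset; one then has to check that, when handling a proper $I\subsetneq\mmm_M$, the auxiliary ideals $T$ and $I\cup Ma$ can be kept strictly below $\mmm_M$ (using that in a minimal generating set no $a_i$ divides another $a_j$, so $a$ can be chosen not dividing a remaining generator), so that no step loops back to $\mmm_M$. Getting this bookkeeping right — or, alternatively, deducing the higher-degree vanishing from the degree-one vanishing via the local criterion of flatness for $\Z[M]\to A$ along $\langle\mmm_M\rangle$ — is the real obstacle; everything else is the formal apparatus already set up for Lemma~\ref{6.07}.
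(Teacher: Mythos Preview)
Your outer induction on $n$ and the reduction to Lemma~6.07 for the base case are correct, and your instinct that the only difficulty is the maximal ideal $\mmm_M$ is right. But the bookkeeping you flag as ``the real obstacle'' genuinely fails: you cannot always keep the auxiliary ideals below $\mmm_M$. Take $M=\N^2$ and $I=M(0,1)\cup M(2,0)$. Then $\mmm_M\setminus I=\{(1,0)\}$, so the only admissible $a$ is $(1,0)$, and one computes $I\cup Ma=\mmm_M$ and $T=\{x:x+a\in I\}=\mmm_M$ as well. So for this $I$ your inner induction is forced to invoke the vanishing at $\mmm_M$, which is exactly what you postponed. The local-criterion-of-flatness alternative does not close the gap either: it would require $\Tor_1^{\Z[M]}(A,N)=0$ for \emph{all} $\Z[M]/\langle\mmm_M\rangle$-modules $N$, not just $N=\Z[M]/\langle\mmm_M\rangle$, and the monomial-ideal vanishing from Lemma~6.07 does not supply this.

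The paper avoids the whole inner Noetherian induction by a different dimension shift. Having $\Tor_n^{\Z[M]}(A,\Z[M/I])=0$ for all ideals $I$, one observes (this is the content of the reference to \cite[6.1.27]{GR}) that every finitely generated $M$-set $T$ admits a finite filtration whose successive quotients are of the form $M/J$ for monoid ideals $J$; hence $\Tor_n^{\Z[M]}(A,\Z[T])=0$ for every $M$-set $T$. In particular this applies to $T=I$ itself, since an ideal of $M$ is an $M$-set (and for $M$ integral each principal piece $Ma\cong M$, so the filtration is easy to write down). Now the short exact sequence
\[
0\longrightarrow \Z[I]\longrightarrow \Z[M]\longrightarrow \Z[M/I]\longrightarrow 0
\]
gives
\[
0=\Tor_{n+1}^{\Z[M]}(A,\Z[M])\longrightarrow \Tor_{n+1}^{\Z[M]}(A,\Z[M/I])\longrightarrow \Tor_{n}^{\Z[M]}(A,\Z[I])=0,
\]
which is the inductive step. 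The point you are missing is that one should shift degrees via the sequence relating $\Z[M/I]$ to the $M$-set $I$, rather than by rerunning the Lemma~6.07 argument at each level.
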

\begin{proof}
	By Lemma~\ref{6.07}, it is sufficient to show that if $\Tor_n^{\Z[M]}(A,\Z[M/ I])$ for any ideal $ I$ of $M$ for some $n\geq 1$, then $\Tor_{n+1}^{\Z[M]}(A,\Z[M/ I])=0$ for any ideal $ I$ of $M$. So assume $\Tor_n^{\Z[M]}(A,\Z[M/ I])=0$ for any ideal $ I$ of $M$. By \cite[6.1.27(i)-(iii)]{GR}, we have then $\Tor_n^{\Z[M]}(A,\Z[T])=0$ for any $M$-set $T$. Applying $\Tor$ to the exact sequence
	\[
	0 \longrightarrow\Z[I]\longrightarrow \Z[M]\longrightarrow \Z[M/ I]\longrightarrow 0 ,
	\]
	we obtain
	\[
	0=\Tor_{n+1}^{\Z[M]}(A,\Z[M])\longrightarrow \Tor_{n+1}^{\Z[M]}(A,\Z[M/I])\longrightarrow \Tor_n^{\Z[M]}(A,\Z[ I])=0
	\]
	and so $\Tor_{n+1}^{\Z[M]}(A,\Z[M/ I])=0$.
\end{proof}

\begin{theorem}\label{6.09}
	Let $((A,\mmm,k),M)$ be a noetherian local prelog ring and $\mmm_M=M-M^*$. The following are equivalent:
	\begin{enumerate}
		\item[(i)] $(A,M)$ is a log regular local ring.
		\item[(ii)] The map
		\[ H_2(A,k,k)\oplus \Tor_1^{\Z}(\ker(M^\gp \to (M/\mmm_M)^\gp),k) \longrightarrow H_2((A,M),(k,M/\mmm_M),k) \]
		is zero on the first summand and an isomorphism on the second one.
	\end{enumerate}
	If moreover $M$ is integral, then these conditions imply:
	\begin{enumerate}
		\item[(iii)] $H_n((A,M),(k,M/\mmm_M),k)=0$ for any $n\geq 3$.
	\end{enumerate}
\end{theorem}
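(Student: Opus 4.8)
The equivalence (i)$\Leftrightarrow$(ii) should be immediate once the dictionary is in place. Since the structural homomorphism $\alpha\colon M\to A$ is local, one has $\alpha_M^{-1}(\mmm)=M-M^*=\mmm_M$, so the prelog ring $(B,N)$ attached in Theorem~\ref{6.01} to the choice $J=\mmm$ is exactly $(k,N)$ with $N:=M/\mmm_M$, and the ideal $I$ there is $\langle\alpha(M)\cap\mmm\rangle_A$. With these identifications condition~(i') of Theorem~\ref{6.01} is word for word condition~(ii) of the present statement (note that $N^\gp=(M/\mmm_M)^\gp$). Hence $(A,M)$ is a log regular local ring, that is, $\mmm$ is a log regular ideal in the sense of Definition~\ref{6.02}, if and only if (ii) holds.

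For the implication to (iii) I would first read off from condition~(ii) of Theorem~\ref{6.01} (applied with $J=\mmm$, $B=k$) two facts: the kernel $\mmm/I$ of $A/I\to k$ is generated by a regular sequence, so that $A/I$ is a regular local ring with residue field $k$; and $\Tor_1^{\Z[M]}(A,\Z[N])=0$. Since $M$ is integral, Corollary~\ref{6.08} upgrades this to $\Tor_n^{\Z[M]}(A,\Z[N])=0$ for every $n\geq1$. Recalling from the proof of Theorem~\ref{6.01} that $I=\aaa A$ where $\aaa:=\ker(\Z[M]\to\Z[N])$, so that $A/I=A\otimes_{\Z[M]}\Z[N]$, Tor-independent base change for Andr\'e--Quillen homology \cite[4.54]{An-1974} yields natural isomorphisms $H_n(\Z[M],\Z[N],k)\cong H_n(A,A/I,k)$ for all $n\geq0$. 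On the other hand, $A/I$ being regular local forces $H_n(A/I,k,k)=0$ for $n\geq2$, so the Jacobi--Zariski exact sequence of $A\to A/I\to k$ gives natural isomorphisms $H_n(A,A/I,k)\cong H_n(A,k,k)$ for $n\geq2$. Composing, the natural map $\psi_n\colon H_n(\Z[M],\Z[N],k)\to H_n(A,k,k)$ occurring in the fundamental exact sequence of Theorem~\ref{4.03} is an isomorphism for every $n\geq2$.

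I would then feed this into the fundamental exact sequence of Theorem~\ref{4.03} for the homomorphism $(A,M)\to(k,N)$ and the module $W=k$. For $n\geq4$ the relevant portion is
\[H_n(\Z[M],\Z[N],k)\xrightarrow{\psi_n}H_n(A,k,k)\to H_n((A,M),(k,N),k)\to H_{n-1}(\Z[M],\Z[N],k)\xrightarrow{\psi_{n-1}}H_{n-1}(A,k,k),\]
and since $\psi_n$ and $\psi_{n-1}$ are isomorphisms, exactness forces $H_n((A,M),(k,N),k)=0$. For $n=3$ the portion reads
\[H_3(\Z[M],\Z[N],k)\xrightarrow{\psi_3}H_3(A,k,k)\to H_3((A,M),(k,N),k)\to H_2(\Z[M],\Z[N],k)\xrightarrow{\gamma_2}H_2(A,k,k)\oplus\Tor_1^\Z(\ker(M^\gp\to N^\gp),k),\]
where $\psi_3$ is surjective and $\gamma_2$ is injective because its first component is precisely $\psi_2$, which is an isomorphism; exactness again gives $H_3((A,M),(k,N),k)=0$, which is (iii).

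The only point that I expect to require genuine care is the assertion that the composite isomorphism $H_n(\Z[M],\Z[N],k)\cong H_n(A,A/I,k)\cong H_n(A,k,k)$ coincides with the map $\psi_n$ produced by the distinguished triangle of Theorem~\ref{4.03}. This is a compatibility of the cotangent-complex formalism with respect to the commutative square of rings $\Z[M]\to A$, $\Z[N]\to A/I$ (together with $A/I\to k$), and should follow from the functoriality of $\LLL$ with no new input; everything else is a diagram chase in exact sequences already available.
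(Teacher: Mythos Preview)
Your argument is correct and follows essentially the same route as the paper: the equivalence (i)$\Leftrightarrow$(ii) is the specialization $J=\mmm$ of Theorem~\ref{6.01}, and for (iii) the paper also invokes Corollary~\ref{6.08}, base change \cite[4.54]{An-1974}, and regularity of $A/I$ to conclude that the map $H_i(\Z[M],\Z[N],k)\to H_i(A,k,k)$ in the fundamental exact sequence is an isomorphism for $i\geq 2$, whence the vanishing. Your explicit treatment of the case $n=3$ (splitting off the $\Tor_1^\Z$ summand and noting that injectivity of $\gamma_2$ follows from that of its first component $\psi_2$) and your remark on the naturality of the composite isomorphism make explicit what the paper leaves implicit, but there is no substantive difference.
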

\begin{proof}
	Equivalence of (i) and (ii) follows from Theorem~\ref{6.01}. We will see (iii). By Corollary~\ref{6.08}, $\Tor_n^{\Z[M]}(A,\Z[M/\mmm_M])=0$ for all $n>0$ and so by base change \cite[4.54]{An-1974} we have isomorphisms
	\[ H_i(\Z[M],\Z[M/\mmm_M],k) \longrightarrow H_i(A,A/\alpha(\mmm_M)A,k)\]
	for all $i\geq 0$. Since $A/\alpha(\mmm_M)A$ is regular, we have isomorphisms \cite[6.26, 5.1]{An-1974}
	\[ H_i(A,A/\alpha(\mmm_M)A,k) \longrightarrow H_i(A,k,k)\]
	for all $i\geq 2$. Therefore
	\[ H_i(\Z[M],\Z[M/\mmm_M],k) \longrightarrow H_i(A,k,k) \]
	are isomorphisms for all $i\geq 2$, and then from the fundamental exact sequence we deduce
	\[ H_n((A,M),(k,M/\mmm_M),k)=0 \]
	for all $n\geq 3$. 
\end{proof}

\begin{definition}\label{6.10}
	We say that a local prelog ring $((A,\mmm,k),M)$ is \emph{log complete intersection} if\linebreak
	$H_3((A,M),(k,M/\mmm_M),k)=0$. In particular, a log regular ring with $M$ integral is log complete intersection.
\end{definition}

From the fundamental exact sequence we obtain the following theorem.

\begin{theorem}\label{6.11}
	Let $((A,\mmm,k),M)$ be a noetherian local prelog ring with $M$ an integral monoid.
	\begin{enumerate}
		\item[(i)] If $(A,M)$ is log regular, then $A$ is a regular local ring if and only if $\ker(\Z[M]\to\Z[N])$ is generated by a regular sequence.
		\item[(ii)] If $A$ is a regular local ring, then $(A,M)$ is log complete intersection if and only if $\ker(\Z[M]\to\Z[M/\mmm_M])$ is generated by a regular sequence.
	\end{enumerate}
\end{theorem}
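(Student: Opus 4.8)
The plan is to extract everything from the fundamental exact sequence (Theorem~\ref{4.03}) applied to $(A,M)\to(B,N)$ (for part (i), with $N=M/\alpha_M^{-1}(J)$ and $J=\mmm$, so $B=k$) respectively $(A,M)\to(k,M/\mmm_M)$ (for part (ii)), combined with the Quillen--André characterizations of regularity ($H_2(A,k,k)=0$) and complete intersection ($H_3(A,k,k)=0$), and of a regular sequence ($H_2$ of the quotient vanishes, equivalently the conormal module is free and $H_2$ vanishes; see \cite[6.25]{An-1974}).

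For part (i): since $(A,M)$ is log regular, Theorem~\ref{6.01} gives that the map $H_2(A,k,k)\oplus\Tor_1^\Z(\ker(M^\gp\to N^\gp),k)\to H_2((A,M),(k,N),k)$ kills the first summand and is an isomorphism on the second; moreover $\Tor_1^{\Z[M]}(A,\Z[N])=0$, hence (as in the proof of Theorem~\ref{6.09}) $H_i(\Z[M],\Z[N],k)\to H_i(A,A/I,k)$ is an isomorphism for all $i$, where $I=<\alpha(M)\cap\mmm>_A$, and in particular all the André--Quillen homology of $\Z[M]\to\Z[N]$ sits inside that of $A\to A/I$. I would feed this into the fundamental exact sequence in degrees $2$ and $3$. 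If $A$ is regular, then $H_2(A,k,k)=0=H_3(A,k,k)$; tracking through the sequence and using that the $X_{N|M}$-contributions are the $\Tor_1^\Z(\ker(M^\gp\to N^\gp),-)$ and $N^\gp/\im(M^\gp)\otimes-$ terms (which by Theorem~\ref{6.01}(i) match the cotangent homology of $(A,M)\to(k,N)$ exactly in degree $2$), one reads off $H_2(\Z[M],\Z[N],k)=0$, which by \cite[6.25]{An-1974} (or rather its $\Z[M]$-analogue, combined with noetherianity of $\Z[M]$) is equivalent to $\ker(\Z[M]\to\Z[N])$ being generated by a regular sequence. Conversely, if $\ker(\Z[M]\to\Z[N])$ is generated by a regular sequence, then $H_n(\Z[M],\Z[N],k)=0$ for $n\ge 2$, and the fundamental exact sequence together with the log-regularity isomorphism on the $X_{N|M}$-part forces $H_2(A,k,k)=0$, i.e.\ $A$ is regular.

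For part (ii): here $A$ is assumed regular, so $H_n(A,k,k)=0$ for $n\ge 2$ (by \cite[6.26, 5.1]{An-1974}, via the intermediate regular ring $A/\alpha(\mmm_M)A$ as in the proof of Theorem~\ref{6.09}). The fundamental exact sequence for $(A,M)\to(k,M/\mmm_M)$ then degenerates: in degrees $\ge 3$ it gives $H_n((A,M),(k,M/\mmm_M),k)\cong H_{n}(\Z[M],\Z[M/\mmm_M],k)$ (the $X$-complex contributes nothing in degree $\ge 2$ and the $A\to k$ part contributes nothing in degree $\ge 2$ either), so $H_3((A,M),(k,M/\mmm_M),k)=0$ iff $H_3(\Z[M],\Z[M/\mmm_M],k)=0$; and since $\Z[M]$ is noetherian, the Quillen criterion \cite[corollary~10.12]{Quillen-MIT} / \cite[6.25]{An-1974} says the latter holds iff $\ker(\Z[M]\to\Z[M/\mmm_M])$ is generated by a regular sequence. (One should double-check the bookkeeping in the transitional degree $2$ to be sure no residual term from $\Tor_1^\Z(\ker(M^\gp\to(M/\mmm_M)^\gp),k)$ leaks into degree $3$; it does not, because that module is concentrated in homological degrees $0,1$.)

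The main obstacle I expect is the careful diagram chase through the fundamental exact sequence in the "transitional" low degrees ($n=2,3$), precisely keeping track of which summand of the $X_{N|M}$-term contributes where and invoking Theorem~\ref{6.01}(i) to cancel exactly the right pieces — this is where the hypothesis "$(A,M)$ log regular" (resp.\ "$A$ regular") does its work, and getting the splitting of the (non-canonically split) $\Gamma$/$\Lambda$-type sequences to align with the cotangent homology is the delicate point. The rest is a routine combination of known André--Quillen vanishing criteria with the noetherianity of $\Z[M]$.
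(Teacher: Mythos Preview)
Your approach to part (i) is essentially the paper's: once $(A,M)$ is log regular you have $H_3((A,M),(k,N),k)=0$ (Theorem~\ref{6.09}(iii)) and the structure of the degree-$2$ map from Theorem~\ref{6.01}(i'), and a chase through the fundamental exact sequence identifies $H_2(\Z[M],\Z[N],k)\cong H_2(A,k,k)$. (Your alternative route via $\Tor_1^{\Z[M]}(A,\Z[N])=0$ and regularity of $A/I$ --- as in the proof of Theorem~\ref{6.09}(iii) --- also yields this isomorphism directly and is fine.)

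Part (ii), however, contains a genuine error in the degree bookkeeping. The fundamental sequence is a \emph{long} exact sequence coming from a distinguished triangle: vanishing of the middle term $H_n(A,k,k)\oplus H_n(X\otimes k)$ for $n\ge 2$ does not give $H_n((A,M),(k,N),k)\cong H_n(\Z[M],\Z[N],k)$ in the same degree --- it gives a shift. Concretely, with $H_n(A,k,k)=0$ for $n\ge 2$ the sequence at $n=3$ reads
\[
0\longrightarrow H_3((A,M),(k,N),k)\longrightarrow H_2(\Z[M],\Z[N],k)\xrightarrow{\;(\psi_2,\omega_2)\;} 0\oplus\Tor_1^\Z(\ker(M^\gp\to N^\gp),k),
\]
so $H_3((A,M),(k,N),k)\cong\ker\omega_2$. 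The missing ingredient is precisely the vanishing of $\omega_2$, proved at the end of the proof of Theorem~\ref{6.01}; with that, one gets $H_3((A,M),(k,N),k)\cong H_2(\Z[M],\Z[N],k)$, and then \cite[6.25]{An-1974} gives the equivalence with ``$\ker(\Z[M]\to\Z[N])$ generated by a regular sequence''. Your claimed criterion ``$H_3(\Z[M],\Z[N],k)=0$ iff the kernel is a regular sequence'' is false in general (e.g.\ $C=k[x,y]/(xy)\twoheadrightarrow k$: $C$ is complete intersection so $H_3(C,k,k)=0$, yet $(x,y)$ is not a regular sequence). So the heart of part (ii) is exactly the fact you deferred as ``double-check the bookkeeping'': it is the vanishing of $\omega_2$ and the resulting degree shift that make the argument work.
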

\begin{proof}
	It follows from Theorem~\ref{4.03}, Theorem~\ref{6.09} and \cite[6.25]{An-1974}, having in mind that we have seen at the end of the proof of Theorem~\ref{6.01} that the map \[ H_2(\Z[M],\Z[M/\mmm_M],k)\longrightarrow \Tor_1^{\Z[M]}(\ker(M^\gp\to(M/\mmm_M)^\gp),k) \] vanishes.
\end{proof}

\begin{theorem}\label{6.12}
	Let $(A,M)$ be a noetherian local prelog ring, $J$ a proper ideal of $A$, $B=A/ J$, $N=M/\alpha_M^{-1}( J)$ where $\alpha_M:M\to A$ is the structural map.
	\begin{enumerate}
		\item[(i)] If $(A,M)$ is log regular and $(B,N)$ is log complete intersection, then $ J$ is a log regular ideal.
		\item[(ii)] If $(B,N)$ is log regular and $ J$ is a log regular ideal, then $(A,M)$ is a log regular local ring.
		\item[(iii)] Assume that $M$ is integral and $M^\gp$ free (for instance if $M$ is saturated and sharp \cite[I.1.3.5]{Og}). If $(A,M)$ is log regular (or log complete intersection) and $J$ is log regular, then $(B,N)$ is log complete intersection.
	\end{enumerate}
\end{theorem}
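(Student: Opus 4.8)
All three parts share the same engine: the logarithmic Jacobi--Zariski exact sequence (Proposition~\ref{4.09}) for the composite $(A,M)\to(B,N)\to(k,M/\mmm_M)$ with coefficients in $k$, compared — through the fundamental exact sequences of Theorem~\ref{4.03} — with the classical Jacobi--Zariski sequences of $A\to B\to k$ and of $\Z[M]\to\Z[N]\to\Z[N/\mmm_N]$; this is the same three-layer diagram used to prove Theorem~\ref{6.01}. The preliminary checks are that $N^*=M^*$ and $N/\mmm_N=M/\mmm_M$, so $(k,M/\mmm_M)$ is simultaneously the residue prelog ring of $(A,M)$ and of $(B,N)$, and that $\ker(M^\gp\to N^\gp)=\ker(M^\gp\to(M/\mmm_M)^\gp)$: if $\alpha_M^{-1}(J)=\emptyset$ then $N=M$ and, by Example~\ref{4.02}, everything collapses to the classical theorems of Quillen and André about regular sequences, regular rings and complete intersections; if $\alpha_M^{-1}(J)\neq\emptyset$ then $N$ and $M/\mmm_M$ both acquire an absorbing element, both group completions are trivial, and both kernels equal $M^\gp$. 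I would then rewrite each hypothesis as a statement about this diagram using: ``$J$ log regular'' $\Leftrightarrow$ the $H_2((A,M),(B,N),k)$-condition of Theorem~\ref{6.01}(i$'$) (equivalently $\ker(A/I\to B)$ generated by a regular sequence plus $\Tor_1^{\Z[M]}(A,\Z[N])=0$); ``$(A,M)$ log regular'' $\Leftrightarrow$ the $H_2((A,M),(k,M/\mmm_M),k)$-condition of Theorem~\ref{6.09}(ii), with the higher vanishing $H_n((A,M),(k,M/\mmm_M),k)=0$ ($n\ge3$) available when $M$ is integral; and ``log complete intersection'' $=$ vanishing of the relevant $H_3$ (Definition~\ref{6.10}).

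With these translations each part is a short exact-sequence chase mirroring its classical analogue. For (iii): $M^\gp$ free makes ``$J$ log regular'' say exactly $H_2((A,M),(B,N),k)=0$ (Corollary~\ref{6.04}), while ``$(A,M)$ log regular or log complete intersection'' gives $H_3((A,M),(k,M/\mmm_M),k)=0$; the segment $H_3((A,M),(k,M/\mmm_M),k)\to H_3((B,N),(k,N/\mmm_N),k)\to H_2((A,M),(B,N),k)$ then squeezes $H_3((B,N),(k,N/\mmm_N),k)$ to $0$, so $(B,N)$ is log complete intersection (the analogue of ``$A$ c.i.\ and $J$ generated by a regular sequence $\Rightarrow A/J$ c.i.''). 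For (i): ``$(B,N)$ log complete intersection'' gives $H_3((B,N),(k,N/\mmm_N),k)=0$, so $H_2((A,M),(B,N),k)$ injects into $H_2((A,M),(k,M/\mmm_M),k)$; using that $(A,M)$ is log regular to identify the target with $\Tor_1^\Z(M^\gp,k)$ (and the map from $H_2(A,k,k)$ zero), and that by naturality of Theorem~\ref{4.03} the natural map $\Tor_1^\Z(\ker(M^\gp\to N^\gp),k)\to H_2((A,M),(B,N),k)$ composed with this injection is the identity, one forces $H_2((A,M),(B,N),k)\cong\Tor_1^\Z(\ker(M^\gp\to N^\gp),k)$ and $H_2(A,B,k)\to H_2((A,M),(B,N),k)$ zero; by Theorem~\ref{6.01} this is ``$J$ log regular'' (the analogue of ``$A$ regular and $A/J$ c.i.\ $\Rightarrow J$ generated by a regular sequence''). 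For (ii), which is the subtlest, ``$(B,N)$ log regular'' makes $H_2((B,N),(k,N/\mmm_N),k)$ vanish (its torsion term is $\Tor_1^\Z(\ker(N^\gp\to(N/\mmm_N)^\gp),k)=\Tor_1^\Z(1,k)=0$) and ``$J$ log regular'' shapes $H_2((A,M),(B,N),k)$; feeding this into the Jacobi--Zariski sequence shows $\Tor_1^\Z(M^\gp,k)\to H_2((A,M),(k,M/\mmm_M),k)$ is surjective and that $H_2(A,k,k)$ maps to $0$, and one then wants injectivity of that map — equivalently the connecting morphism $H_3((B,N),(k,N/\mmm_N),k)\to H_2((A,M),(B,N),k)$ to vanish — to conclude, via Theorem~\ref{6.09}, that $(A,M)$ is log regular (the analogue of ``$A/J$ regular and $J$ generated by a regular sequence $\Rightarrow A$ regular'').

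The main obstacle is precisely the last point in (ii) and, more generally, the $\Z$-torsion (and, in cohomology, $\Ext_\Z^1$) bookkeeping across the triple diagram: one must verify that the contributions of $M^\gp$, $N^\gp$ and $(M/\mmm_M)^\gp$ assemble coherently so that the clean squeezes above really apply, and that enough higher log homology vanishes to kill the relevant connecting maps. On the ring/monoid side this is where one needs $(A,M)$ log regular $\Rightarrow\Tor_n^{\Z[M]}(A,\Z[N])=0$ for all $n\ge1$, which follows from Lemma~\ref{6.07} and Corollary~\ref{6.08}, hence uses integrality of $M$, and where one needs $(B,N)$ log regular to propagate to log complete intersection, again via integrality. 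When $M$ is integral there is a clean shortcut: by Theorem~\ref{6.03} and Lemma~\ref{6.06} one may assume $M^\gp$ free from the start, which annihilates every $\Z$-torsion term and reduces (i) and (ii) to the same two-term squeezes as (iii) and as the classical André arguments; the residual work is to run (i) and (ii) for a general, possibly non-integral, prelog structure directly through the diagram, which is where the bulk of the verification lies.
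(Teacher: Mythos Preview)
Your overall architecture is exactly that of the paper: split on whether $\alpha_M^{-1}(J)$ is empty, compare the fundamental exact sequences of Theorem~\ref{4.03} for $(A,M)\to(B,N)$ and for $(A,M)\to(k,M/\mmm_M)$ via the Jacobi--Zariski map $\alpha$, and chase through the resulting commutative diagram. Parts~(i) and~(iii) are essentially identical to the paper's argument.

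Where you diverge from the paper is in part~(ii). You worry that to obtain injectivity of $\gamma_2\colon\Tor_1^\Z(M^\gp,k)\to H_2((A,M),(k,M/\mmm_M),k)$ you must control the connecting map $H_3((B,N),(k,N/\mmm_N),k)\to H_2((A,M),(B,N),k)$, and you propose reducing to $M^\gp$ free (hence invoking integrality) to sidestep this. But these two statements are not equivalent, and neither is needed: the injectivity of $\gamma_2$ is \emph{automatic}, because the fundamental sequence places $\ker\gamma_2$ in the image of the map $H_2(\Z[M],\Z[M/\mmm_M],k)\to\Tor_1^\Z(M^\gp,k)$, and this map was shown to vanish at the end of the proof of Theorem~\ref{6.01} (the same fact you correctly quote for $\beta_2$ in part~(i)). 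Once $\gamma_2$ is known to be injective, the paper's short diagram chase finishes: given $x\in H_2(A,k,k)$, surjectivity of $\alpha$ and the isomorphism $\beta_2$ produce $z\in\Tor_1^\Z(M^\gp,k)$ with $\gamma_1(x)=\gamma_2(z)$; then $(x,-z)$ lies in the image of $H_2(\Z[M],\Z[M/\mmm_M],k)$, forcing $z=0$ by the vanishing just mentioned, hence $\gamma_1(x)=0$. No $H_3$ information and no integrality assumption on $M$ are required for~(ii), so your proposed detour through Lemma~\ref{6.06} is unnecessary here.
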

\begin{proof}
	(i) Assume first that $\alpha_M^{-1}(J)\neq\emptyset$, so $N^\gp=\{1\}$.
	
	If $(B,N)$ is log complete intersection, then the map
	\[ \alpha\colon H_2((A,M),(B,N),k) \longrightarrow H_2((A,M),(k,M/\mmm_M),k)\]
	is injective. In the commutative diagram
	\[
	\begin{tikzcd}[row sep=4em, column sep=4em]
	H_2(\Z[M],\Z[N],k) \arrowr\arrowd & H_2(\Z[M],\Z[M/\mmm_M],k) \arrowd \\
	H_2(A,B,k)\oplus \Tor_1^{\Z}(M^\gp,k) \arrowr{\varepsilon\oplus\id}\arrowd{\beta_1\oplus\beta_2} & H_2(A,k,k)\oplus \Tor_1^{\Z}(M^\gp,k) \arrowd{\gamma_1\oplus\gamma_2} \\
	H_2((A,M),(B,N),k) \arrowr{\alpha} & H_2((A,M),(k,M/\mmm_M),k)
	\end{tikzcd}
	\]
	we know that $\gamma_2$ is an isomorphism and $\gamma_1=0$. Therefore $\beta_2$ is surjective and $\beta_1=0$. But $\beta_2$ is injective (we have seen in the last lines of the proof of Theorem~\ref{6.01} that the map
	$H_2(\Z[M],\Z[N],k) \longrightarrow \Tor_1^{\Z}(M^\gp,k)$
	vanishes), and then $J$ is log regular by Theorem~\ref{6.01}.
	
	Now if $\alpha_M^{-1}(J)= \emptyset$, then $M=N$, so $\ker(M^\gp\to N^\gp)=\{1\}$. Therefore $J$ is a log regular ideal if and only if $H_2((A,M),(B,N),k)=0$. As in the previous diagram, we obtain that the map
	\[ \beta_1\colon H_2(A,B,k) \longrightarrow H_2((A,M),(B,N),k) \]
	vanishes. But since $M=N$, this map is an isomorphism (Example \ref{4.02}).
	
	(ii) Assume first that $\alpha_M^{-1}(J)\neq \emptyset$. We have the above commutative diagram again, where now $\alpha$ is surjective (since $0= \Tor_1^{\Z}(N^\gp,k)=H_2((B,N),(k,M/\mmm_M),k)$), $\beta_1=0$ and $\beta_2$ is an isomorphism. This implies that $\gamma_2$ is an isomorphism since the map $H_2(\Z[M],\Z[M/\mmm_M],k)\to \Tor_1^{\Z}(M^\gp,k)$ is zero as above. Also, $\gamma_1=0$ by diagram chasing. If $\alpha_M^{-1}(J)=\emptyset$, the proof is similar to the proof of (i) when $\alpha_M^{-1}(J)\neq \emptyset$.
	
	(iii) By Corollary~\ref{6.04}, $J$ is log regular if and only if $H_2((A,M),(B,N),k)=0$. Then the result follows from the Jacobi-Zariski exact sequence and Theorem~\ref{6.09}.(iii).
\end{proof}

\begin{example}\label{6.13}
	Let $(A,\mmm,k),M)$ be a noetherian local prelog ring with $M$ integral saturated and sharp. Let $\hat{A}$ be the completion of $A$ for the $\mmm$-adic topology and consider the prelog ring $(\hat{A},M)$. We have $H_n((A,M),(\hat{A},M),k)=0$ for all $n\geq0$ by Example~\ref{4.02} and \cite[10.22]{An-1974}. Then from the Jacobi-Zariski exact sequence we obtain
	\[ H_n((A,M),(k,M/\mmm_M),k)=H_n((\hat{A},M),(k,M/\mmm_M),k) \]
	for all $n\geq0$. In particular, $(A,M)$ is log regular (resp. log complete intersection) if and only if so is $(\hat{A},M)$. Let $R:=C[[M]][[x_1,\ldots,x_n]]\to\hat{A}$ be a surjective homomorphism of rings with $C$ a Cohen ring or a field \cite[I.3.6]{Og}. Then $(R,M)$ is log regular (\cite[Theorem 3.2]{Ka-TS} or \cite[III.1.11.2]{Og}) and by Theorem~\ref{6.12} $(A,M)$ is log complete intersection if and only if $\ker(R\to\hat{A})$ is log regular. So $(A,M)$ is log complete intersection if and only if $(\hat{A},M)$ is the quotient of a log regular ring by a regular ideal. This fact, together with theorems~\ref{6.01} and \ref{6.09}, gives a description of log complete intersection rings.
\end{example}

Now we show how these homological methods allow us to compare log smoothness and log regularity. The following result is due to Kato \cite[8.3]{Ka-TS} in the finitely generated case.

\begin{theorem}\label{6.14}
	Let $((B,\nnn,l),N)$ be a noetherian local prelog ring with $N$ integral and saturated, and let $k\subset B$ be a field.
	\begin{enumerate}
		\item[(i)] If $(B,N)$ is a log formally smooth over $(k,1)$ for the $\nnn$-adic topology, then $(B,N)$ is log regular.
		\item[(ii)] Assume that $l|k$ is separable. If $(B,N)$ is log regular, then $(B,N)$ is a log formally smooth over $(k,1)$ for the $\nnn$-adic topology.
	\end{enumerate}
\end{theorem}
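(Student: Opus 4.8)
The plan is to combine the homological characterizations already available — Theorem~\ref{5.14} for log formal smoothness and Theorem~\ref{6.01} for log regularity — with the classical non\nobreakdash-logarithmic analogues, and to prove that $(B,N)$ is log formally smooth over $(k,1)$ for the $\nnn$-adic topology \emph{if and only if} $(B,N)$ is log regular and $l|k$ is separable; assertions (i) and (ii) then follow immediately. As a first reduction, note that by Proposition~\ref{5.04} and Theorem~\ref{6.03} both properties depend only on the log structure $\mathcal M:=N^\llog$ on $B$ (using that $N$ is integral). The characteristic monoid $\bar{\mathcal M}:=\mathcal M/B^*$ is sharp, fine and saturated, so $\bar{\mathcal M}^\gp$ is a free abelian group and the extension $0\to B^*\to\mathcal M^\gp\to\bar{\mathcal M}^\gp\to 0$ splits; a splitting carries $\bar{\mathcal M}$ into $\mathcal M$ and produces a \emph{sharp} chart $N'\xrightarrow{\,\sim\,}\bar{\mathcal M}$ with $(B,N')^\llog\cong\mathcal M$. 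Replacing $N$ by $N'$ we may assume $N$ sharp; then $\mmm_N=N\setminus\{1\}$ and $N/\mmm_N=\{1,0\}$ (the case $N=\{1\}$ being the classical characterization of regular local rings over a field).

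The key step is to pass to the non\nobreakdash-logarithmic situation: $(B,N)$ is log formally smooth over $(k,1)$ for the $\nnn$-adic topology \emph{iff} $B$ is formally smooth over $k[N]$ for the $\nnn$-adic topology. Factor $(k,1)\to(k[N],N)\to(B,N)$, where $\Z[N]\to B$ is the canonical map and the monoid stays $N$. A direct computation with the fundamental exact sequence (Theorem~\ref{4.03}), using $N^\gp$ free, gives $H_1((k,1),(k[N],N),l)=0$; and $H_1((k[N],N),(B,N),l)=H_1(k[N],B,l)$ by Example~\ref{4.02}. The Jacobi--Zariski sequence (Proposition~\ref{4.09}) then reads
\[
0\to H_1((k,1),(B,N),l)\to H_1(k[N],B,l)\overset{\partial}{\to} l\otimes_\Z N^\gp\to\Omega_{(B,N)|(k,1)}\otimes_B l\to\cdots .
\]
In the cocartesian square defining $\Omega_{(B,N)|(k,1)}$ (Definition~\ref{3.09}), after $-\otimes_B l$ the map $l\otimes_{\Z[N]}\Omega_{\Z[N]|\Z}\to l\otimes_\Z N^\gp$ sending $1\otimes dn\mapsto\overline{\alpha(n)}\otimes\bar n$ vanishes, since $N$ sharp forces $\alpha(n)\in\nnn$ for every $n\in\mmm_N$ (and $dn=0$ for $n=1$); hence $l\otimes_\Z N^\gp$ is a direct summand of $\Omega_{(B,N)|(k,1)}\otimes_B l$ and $\partial=0$. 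Thus $H_1((k,1),(B,N),l)\cong H_1(k[N],B,l)$, and — the relevant modules of differentials being automatically projective over the field $l$ — Theorem~\ref{5.14} and, on the non\nobreakdash-logarithmic side, the classical characterization of formal smoothness (\cite{An-1974}) yield the claim.

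It remains to match conditions. Put $\qqq:=\ker(k[N]\to B\to l)$. One checks (as in the proof of Theorem~\ref{6.01}) that $k[N]/\qqq=k$ and $\qqq B=I:=\langle\alpha(\mmm_N)\rangle_B$, so $\kappa(\qqq)=k$ and $B\otimes_{k[N]}\kappa(\qqq)=B/I$; moreover $\Z[N/\mmm_N]\otimes_\Z k=k[N]/\qqq\times k[N]/\qqq'$ for the complementary maximal ideal $\qqq'$ (where $\mmm_N\mapsto 1$), which satisfies $\qqq'B=B$ since $\alpha(n)-1\in B^*$ for $n\in\mmm_N$. Now $B$ formally smooth over $k[N]$ for the $\nnn$-adic topology forces $B$ flat over $k[N]$ (\cite[0$_{\text{IV}}$.19.7.1]{EGA4}), hence $\Tor_1^{\Z[N]}(B,\Z[N/\mmm_N])=\Tor_1^{k[N]}(B,k[N/\mmm_N])=0$; and it makes $B$ formally smooth over $k[N]_\qqq$ for the $\nnn$-adic topology, which by \cite[0$_{\text{IV}}$.19.7.1]{EGA4} means that $B/I=B\otimes_{k[N]_\qqq}\kappa(\qqq)$ is geometrically regular over $k$, i.e.\ that $B/I$ is a regular local ring and $l|k$ is separable. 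Conversely, if $(B,N)$ is log regular — by Theorem~\ref{6.01}, $B/I$ regular and $\Tor_1^{\Z[N]}(B,\Z[N/\mmm_N])=0$ — and $l|k$ is separable, then $\Tor_1^{k[N]}(B,k[N]/\qqq)=0$, being a direct summand of the vanishing $\Tor_1^{k[N]}(B,k[N/\mmm_N])$, so $B$ is flat over $k[N]_\qqq$, while $B/I$ regular together with $l|k$ separable make $B/I$ geometrically regular over $k$; by \cite[0$_{\text{IV}}$.19.7.1]{EGA4} again, $B$ is formally smooth over $k[N]_\qqq$, hence over $k[N]$, for the $\nnn$-adic topology. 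Comparing with Theorem~\ref{6.01} gives the asserted equivalence, and (i), (ii) are immediate.

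The steps I expect to cost the most effort are: the chart reduction of the first paragraph, carried out so that sharpness, integrality and saturation all survive; the vanishing of the connecting map $\partial$, which is exactly what merges the logarithmic and non\nobreakdash-logarithmic pictures and relies on $N$ being sharp; and the commutative\nobreakdash-algebra bookkeeping of the last paragraph, since $k[N]$ is not local and $\Z[N/\mmm_N]$ is supported at two points of $\Spec k[N]$, so that one must localize with care and quote the appropriate statements on adic formal smoothness.
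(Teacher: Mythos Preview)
Your argument is correct and takes a genuinely different route from the paper's proof. Both proofs begin with the same reduction to a sharp chart (so that $N^\gp$ is free), but then diverge.

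The paper works ``from above'': it applies the Jacobi--Zariski sequence to $(k,1)\to(B,N)\to(l,N/\mmm_N)$, invokes Theorem~\ref{6.09} (log regular $\Leftrightarrow H_2((B,N),(l,N/\mmm_N),l)=0$) and Theorem~\ref{5.14} (log formally smooth $\Leftrightarrow H_1((k,1),(B,N),l)=0$), and then reduces everything to an explicit computation of $H_i((k,1),(l,N/\mmm_N),l)$ for $i=1,2$ via the fundamental exact sequence and the ring $\Z[x]/(x^2-x)$. You work ``from below'': you apply Jacobi--Zariski to $(k,1)\to(k[N],N)\to(B,N)$, establish the isomorphism $H_1((k,1),(B,N),l)\cong H_1(k[N],B,l)$ (your vanishing of $\partial$ via sharpness is the crux), and then translate log formal smoothness into ordinary formal smoothness of $B$ over $k[N]$, which you analyse with \cite[0$_{\text{IV}}$.19.7.1]{EGA4} and match against the concrete criterion of Theorem~\ref{6.01}. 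Your approach yields the sharper statement that log formal smoothness over $(k,1)$ is \emph{equivalent} to log regularity together with separability of $l|k$, and gives a clean conceptual bridge (``log smooth over a field $=$ smooth over the monoid algebra''); the paper's approach is more self-contained, avoids the EGA input, and its auxiliary computation of $H_i((k,1),(l,N/\mmm_N),l)$ is reused verbatim in the proof of Theorem~\ref{6.15}.

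Two points in your write-up deserve a sentence of justification. First, the asserted equality $\Tor_1^{\Z[N]}(B,\Z[N/\mmm_N])=\Tor_1^{k[N]}(B,k[N/\mmm_N])$ follows from the change-of-rings spectral sequence for $\Z[N]\to k[N]$, which degenerates because $\Tor_q^{\Z[N]}(k[N],\Z[N/\mmm_N])=\Tor_q^\Z(k,\Z[N/\mmm_N])=0$ for $q>0$ ($\Z[N/\mmm_N]$ being $\Z$-free); this is not immediate from flatness of $B$ over $k[N]$ alone. Second, when you invoke \cite[0$_{\text{IV}}$.19.7.1]{EGA4} you are tacitly passing to the local ring $k[N]_\qqq$ (as you note later); the deduction ``$B$ flat over $k[N]$'' then holds because $k[N]\to B$ factors through $k[N]_\qqq$, so $\Tor_i^{k[N]}(B,-)=\Tor_i^{k[N]_\qqq}(B,(-)_\qqq)$.
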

\begin{proof}
	By Theorem~\ref{6.03}, Proposition~\ref{5.04} and \cite[12.1.36]{GR} we can assume that $N$ is also sharp, and then that $N^\gp$ is free \cite[I.1.3.5]{Og}. Theorem~\ref{6.09} says then that $(B,N)$ is log regular if and only if $H_2((B,N),(l,N/\mmm_N),l)=0$, while Theorem~\ref{5.14} says that $(B,N)$ is a log formally smooth over $(k,1)$ if and only if $H_1((k,1),(B,N),l)=0$. Since we have a Jacobi-Zariski exact sequence
	\[
	\begin{tikzcd}[column sep=2.5em,row sep=0.8em]
	& & H_2((k,1),(l,N/\mmm_N),l) \arrowr & H_2((B,N),(l,N/\mmm_N),l) \arrowr & \ \\
	\arrowr & H_1((k,1),(B,N),l) \arrowr & H_1((k,1),(l,N/\mmm_N),l) \ ,  & 	\end{tikzcd}
	\]
	it suffices to show that $H_2((k,1),(l,N/\mmm_N),l)=0$, and that if $l|k$ is separable then $H_1((k,1),(l,N/\mmm_N),l)=0$.
	
	Since $N/\mmm_N=\{0,1\}$, we have a fundamental exact sequence
	\[
	\begin{tikzcd}[column sep=2.5em,row sep=0.8em]
	& & H_2(k,l,l) \arrowr & H_2((k,1),(l,N/\mmm_N),l) \arrowr & \ \\
	\arrowr & H_1(\Z,\Z[x]/(x-x^2),l) \arrowr & H_1(k,l,l) \arrowr & H_1((k,1),(l,N/\mmm_N),l) \\
	\arrowr & H_0(\Z,\Z[x]/(x-x^2),l)&  &
	\end{tikzcd}
	\]
	where $\varepsilon\colon \Z[x]\to l$ is the map sending $x$ to zero.
	
	By \cite[7.4, 7.11, 7.13]{An-1974} we have $H_2(k,l,l)=0$, and if $l|k$ is separable then $H_1(k,l,l)=0$. Therefore it suffices to show $H_i(\Z,\Z[x]/(x-x^2),l)=0$ for $i=0,1$. We have an exact sequence
	\[
	\begin{tikzcd}[column sep=2.5em,row sep=0.8em]
	& H_1(\Z,\Z[x],l) \arrowr & H_1(\Z,\Z[x]/(x-x^2),l) \arrowr & H_1(\Z[x],\Z[x]/(x-x^2),l) \arrowr{\varphi} & \ \\
	\arrowr{\varphi} & H_0(\Z,\Z[x],l) \arrowr & H_0(\Z,\Z[x]/(x-x^2),l) \arrowr & H_0(\Z[x],\Z[x]/(x-x^2),l)
	\end{tikzcd}
	\]
	which by \cite[3.36, 6.1, 6.3]{An-1974} takes the form
	\[
	\begin{tikzcd}[column sep=2.5em,row sep=0.8em]
	& 0 \arrowr & H_1(\Z,\Z[x]/(x-x^2),l) \arrowr & (x-x^2)\otimes_{\Z[x]}l \arrowr{\varphi} & \ \\
	\arrowr{\varphi} & \Omega_{\Z[x]|\Z}\otimes_{\Z[x]}l \arrowr & H_0(\Z,\Z[x]/(x-x^2),l) \arrowr & 0 \ .
	\end{tikzcd}
	\]
	Since
	\[ (x-x^2)\otimes 1 \overset{\varphi}{\longmapsto} (dx-2xdx)\otimes1=dx\otimes1-2dx\otimes\varepsilon(x)=dx\otimes1-2dx\otimes0=dx\otimes1 , \]
	we have that $\varphi$ is an isomorphism of rank 1 $l$-vector spaces, and so $H_i(\Z,\Z[x]/(x-x^2),l)=0$ for $i=0,1$.
\end{proof}

\begin{theorem}\label{6.15}
	Let $(B,N)$ be a noetherian local prelog ring with $N$ integral and saturated containing a perfect field $k$. If $(B,N)$ is log formally smooth over $(k,1)$ for the topology of the maximal ideal, then $\Omega_{(B,N)|(k,1)}$ is a flat $B$-module.
\end{theorem}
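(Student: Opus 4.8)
The plan is to realise $(B,N)$ as a filtered colimit of algebras smooth over the toric chart $k[N]$, and then to read off the flatness of $\Omega_{(B,N)|(k,1)}$ from the logarithmic Jacobi--Zariski triangle of $(k,1)\to(k[N],N)\to(B,N)$.

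First I would carry out the usual reductions of the proof of Theorem~\ref{6.14}: by Theorem~\ref{6.03}, Proposition~\ref{5.04}, \cite[12.1.36]{GR}, Lemma~\ref{6.06} and \cite[I.1.3.5]{Og} we may assume that $N$ is fine, saturated and sharp, so that $N^\gp$ is free; passing to another prelog structure with the same associated log structure leaves $\Omega_{(B,N)|(k,1)}$ unchanged, since the vanishing $H_n((B,P),(B,P)^\llog,W)=0$ from the proof of Proposition~\ref{4.14}, together with the Jacobi--Zariski sequence for $(k,1)\to(B,P)\to(B,P)^\llog$, shows that $H_0((k,1),(-),W)$ is not affected. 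By Theorem~\ref{6.14}.(i), $(B,N)$ is then log regular, so by Theorem~\ref{6.01}.(ii) and Corollary~\ref{6.08} the ring $B/I$ is a regular local ring (here $I:=\alpha_N(\mmm_N)B$) and $\Tor_n^{\Z[N]}(B,\Z[N]/\mmm_N\Z[N])=0$ for all $n\geq 1$.

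The crux is to deduce that the structure homomorphism $k[N]\to B$ is a \emph{regular} homomorphism of noetherian rings. Reducing the $\Tor$-vanishing to the prime field $k_0\subseteq k$ (if $\widetilde K_\bullet\to\Z[N]/\mmm_N\Z[N]$ is a free $\Z[N]$-resolution, then $\widetilde K_\bullet\otimes_\Z k_0$ is a free $k_0[N]$-resolution of $k_0=k_0[N]/\mmm_N k_0[N]$, whence $\Tor_n^{k_0[N]}(B,k_0)=0$; flat base change along $k_0[N]\to k[N]$ then yields $\Tor_n^{k[N]}(B,k)=0$ for $n\geq 1$, with $k=k[N]/\mmm_N k[N]$). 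Since $\alpha_N(\mmm_N)\subseteq\nnn$, the map $k[N]\to B$ sends $k[N]\setminus\mmm_N k[N]$ into $B^*$, hence factors through the noetherian local ring $A:=k[N]_{\mmm_N k[N]}$ (residue field $k$), and $B=B\otimes_{k[N]}A$; as an $A$-module $B$ is $\mmm_A$-adically ideally separated (every finitely generated $B$-submodule $X$ satisfies $\bigcap_m\mmm_A^m X\subseteq\bigcap_m\nnn^m X=0$), so the local criterion for flatness together with $\Tor_1^A(k,B)=0$ gives that $B$ is $A$-flat, hence flat over $k[N]$. Its fibre over $\mmm_N k[N]$ is $B\otimes_{k[N]}k=B/I$, a regular local ring which is geometrically regular over $k$ because $k$ is perfect; as $\Spec B$ is local, flatness plus geometric regularity of the closed fibre forces $A\to B$ (hence $k[N]\to B$) to be a regular homomorphism (standard, cf.\ \cite{EGA4}). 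By N\'eron--Popescu desingularization $B=\varinjlim_i B_i$ is a filtered colimit of smooth $k[N]$-algebras, so $\LLL_{B|k[N]}\simeq\Omega_{B|k[N]}[0]$ and $\Omega_{B|k[N]}=\varinjlim_i\Omega_{B_i|k[N]}\otimes_{B_i}B$ is a filtered colimit of flat $B$-modules, hence flat over $B$.

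To conclude I would compute $\LLL_{(k[N],N)|(k,1)}$ and invoke Jacobi--Zariski. The fundamental triangle of Theorem~\ref{4.03} for $(k,1)\to(k[N],N)$ reads
\[
\LLL_{\Z[N]|\Z}\otimes_{\Z[N]}k[N]\longrightarrow \LLL_{k[N]|k}\oplus(k[N]\otimes_\Z N^\gp)\longrightarrow \LLL_{(k[N],N)|(k,1)}\longrightarrow\,,
\]
and since $k[N]=k\otimes_\Z\Z[N]$ the first arrow identifies $\LLL_{k[N]|k}$ with $\LLL_{\Z[N]|\Z}\otimes_{\Z[N]}k[N]$; hence $\LLL_{(k[N],N)|(k,1)}\simeq(k[N]\otimes_\Z N^\gp)[0]$, a free $k[N]$-module in degree $0$. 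Feeding this and $\LLL_{(B,N)|(k[N],N)}=\LLL_{B|k[N]}\simeq\Omega_{B|k[N]}[0]$ (Example~\ref{4.02}, the monoid being unchanged) into the Jacobi--Zariski triangle (Proposition~\ref{4.09}) for $(k,1)\to(k[N],N)\to(B,N)$, whose left term is then $(B\otimes_\Z N^\gp)[0]$, we find that $\LLL_{(B,N)|(k,1)}$ is concentrated in degree $0$ and that there is a short exact sequence of $B$-modules
\[
0\longrightarrow B\otimes_\Z N^\gp\longrightarrow \Omega_{(B,N)|(k,1)}\longrightarrow \Omega_{B|k[N]}\longrightarrow 0
\]
with flat outer terms; therefore $\Omega_{(B,N)|(k,1)}$ is flat over $B$. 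The main obstacle is precisely the crux above: extracting from log regularity the flatness of $B$ over the toric chart with regular closed fibre, which is the geometric input that lets N\'eron--Popescu apply; the homological bookkeeping and the reduction to the prime field are routine.
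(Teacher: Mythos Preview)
Your proof is correct but follows a genuinely different route from the paper's.

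The paper argues by localization: from $(B,N)$ log regular it invokes \cite[12.5.47]{GR} to get $(B_\qqq,N_\qqq)$ log regular at every prime $\qqq$, applies Theorem~\ref{6.14} again to obtain log formal smoothness at each $\qqq$, and then combines Theorem~\ref{6.09} with the Jacobi--Zariski sequence to get $H_n((k,1),(B,N),k(\qqq))=0$ for all $n\geq 1$ and all $\qqq$. A standard bootstrap argument (\cite[Suppl\'ement, Prop.~29]{An-1974}) upgrades this to all $B$-modules, and flatness of $\Omega_{(B,N)|(k,1)}$ drops out of the universal coefficient exact sequence.

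Your approach instead exploits the toric chart: you show $k[N]\to B$ is a regular homomorphism and invoke N\'eron--Popescu to control $\LLL_{B|k[N]}$, compute $\LLL_{(k[N],N)|(k,1)}$ directly, and conclude via the Jacobi--Zariski triangle. This trades the deep localization result for log regularity \cite[12.5.47]{GR} against two classical inputs: Andr\'e's localization of formal smoothness and Popescu's theorem. Two points deserve tightening. First, the implication ``flat with geometrically regular closed fibre $\Rightarrow$ regular'' is \emph{not} true in general; it needs the base $A=k[N]_{\mmm_N k[N]}$ to be a $G$-ring (e.g.\ quasi-excellent), which holds here since $A$ is essentially of finite type over a field---you should say so rather than ``standard, cf.\ \cite{EGA4}''. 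Second, to identify the first arrow of the fundamental triangle for $(k,1)\to(k[N],N)$ with the base-change isomorphism you should note that one can take a resolution with $X_n=\emptyset$ (i.e.\ $F_n=k[R_n]$), which is possible precisely because $k[N]$ is generated over $k$ by the image of $N$; then the map $\alpha_n$ in the proof of Theorem~\ref{4.03} is literally the identity. With these clarifications your argument is complete.
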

\begin{proof}
	$(B,N)$ is log regular by Theorem~\ref{6.14}, and by \cite[12.5.47]{GR} for any prime ideal $\qqq$ of $B$, $(B_\qqq,N_\qqq)$ is log regular, where $N_\qqq:=T^{-1}N$ with $T=N-\alpha_B^{-1}(\qqq)$. Since $N_\qqq$ is integral and saturated, again by Theorem~\ref{6.14}, $(B_\qqq,N_\qqq)$ is log formally smooth over $(k,1)$ for the $\qqq B_\qqq$-adic topology. By Proposition~\ref{4.10} and Theorem~\ref{5.14} we have
	\[ H_1((k,1),(B,N),W) \,=\, H_1((k,1),(B_\qqq,N_\qqq),W)\,=\,0 \]
	for any $k(\qqq)$-module $W$, where $k(\qqq)$ is the residue field of $B_\qqq$. 
	
	By \cite[12.1.36]{GR} there exits $N'\to N_\qqq$, with $N'$ integral saturated and sharp, inducing the same log structure in $B$. Therefore by propositions~\ref{4.10} and \ref{4.14} we have isomorphisms
	\[ H_{n}((k,1),(B,N),W) \,=\, H_{n}((k,1),(B_\qqq,N_\qqq),W) \,=\, H_{n}((k,1),(B_\qqq,N'),W)\]
	for any $n\geq0$ and any $k(\qqq)$-module $W$.
		
	Now consider the Jacobi-Zariski exact sequence
	\[
	\begin{tikzcd}[column sep=2.5em,row sep=0.8em]
	& H_{n+1}((k,1),(k(\qqq),N'/\mmm_{N'}),W) \arrowr & H_{n+1}((B_\qqq,N'),(k(\qqq),N'/\mmm_{N'}),W) \arrowr & \ \\
	\arrowr & H_n((k,1),(B_\qqq,N'),W) \arrowr & H_{n}((k,1),(k(\qqq),N'/\mmm_{N'}),W)  &
	\end{tikzcd}
	\]
	
	By Theorem~\ref{6.09} we have $H_{n+1}((B_\qqq,N'),(k(\qqq),N'/\mmm_{N'}),W)=0$ for all $n\geq 2$ and any $k(\qqq)$-module $W$. Since $N'/\mmm_{N'}=\{0,1\}$, we also have $H_{n}((k,1),(k(\qqq),N'/\mmm_{N'}),W)=0$ for all $n\geq1$ as we saw in the proof of Theorem~\ref{6.14}. Therefore $H_n((k,1),(B_\qqq,N'),W)=0$ for all $n\geq2$ and any $k(\qqq)$-module $W$.
	
	We have proved
	\[ H_n((k,1),(B,N),W)=0\]
	for all $n\geq1$, any $k(\qqq)$-module $W$ and any prime ideal $\qqq$ of $B$. Now the same proof as in \cite[Suppl\'ement, Proposition 29]{An-1974} gives
	\[ H_n((k,1),(B,N),X)=0\]
	for all $n\geq1$ and any $B$-module $X$.
	
	Finally, the universal coefficient exact sequence
	\[ H_1((k,1),(B,N),B)\otimes_BX \longrightarrow H_1((k,1),(B,N),X) \longrightarrow \Tor_1^B(H_0((k,1),(B,N),B),X)\longrightarrow 0 \]
	gives us that
	$\Omega_{(B,N)|(k,1)}=H_0((k,1),(B,N),B)$ is a flat $B$-module.
\end{proof}

\bigskip

\end{document}